\DeclareMathOperator{\Stab}{Stab}
\newtheorem{theorem}{Theorem}[section]
\newtheorem{lemma}[theorem]{Lemma}
\newtheorem{proposition}[theorem]{Proposition}
\newtheorem{corollary}[theorem]{Corollary}
\newtheorem{conjecture}[theorem]{Conjecture}
\theoremstyle{definition}
\newtheorem{definition}[theorem]{Definition}
\newtheorem{example}[theorem]{Example}
\newtheorem{remark}[theorem]{Remark}
\numberwithin{equation}{section}
\newtheorem{thmy}{Theorem}
\newenvironment{thmx}{\stepcounter{theorem}\begin{thmy}}{\end{thmy}}
\newcommand{\C}{\mathbb{C}}
\newcommand{\Z}{\mathbb{Z}}
\newcommand{\R}{\mathbb{R}}
\DeclareMathOperator{\GL}{GL}
\newcommand{\Cstar}{\mathbb{C}^{\ast}}
\newcommand{\flag}{F\ell}
\newcommand{\Xwo}[1]{X_{#1}^{\circ}}
\newcommand{\Owo}[1]{\Omega_{#1}^{\circ}}
\newcommand{\Ow}[1]{\Omega_{#1}}
\newcommand{\Owho}[1]{\Omega_{#1,h}^{\circ}}
\newcommand{\Owh}[1]{\Omega_{#1,h}}
\newcommand{\Awh}[1]{A_{#1,h}}
\newcommand{\swh}[1]{\sigma_{#1,h}}
\DeclareMathOperator{\Hess}{Hess}
\DeclareMathOperator{\Hom}{Hom}
\newcommand{\Gh}{\Gamma_h}
\DeclareMathOperator{\codim}{\codim}
\newcommand{\Sym}{\mathcal{S}}
\DeclareMathOperator{\supp}{supp}
\newcommand{\uni}{\underline{i}}
\newcommand{\Gwh}{G_{w,h}}
\newcommand{\Ghw}{G_{w,h}}	
\newcommand{\vPQi}{w_{P,Q}^{(i)}}
\newcommand{\tvPQi}{\widetilde{w}_{P,Q}^{(i)}}
\newcommand{\tsvPQi}{\widetilde{\sigma}_{P,Q}^{(i)}}
\newcommand{\La}{\mathcal{G}_k}
\newcommand{\Gk}{\mathcal{G}_k}
\newcommand{\hats}{\widehat{\sigma}}
\tikzstyle{triple line} = [
\newcommand{\la}{\lambda}
\newcommand{\x}{\mathbf{x}}
\newlength\cellsize \setlength\cellsize{18\unitlength}
\newcommand\cellify[1]{\def\thearg{#1}\def\nothing{}%
\ifx\thearg\nothing\vrule width0pt height\cellsize depth0pt%
  \else\hbox to 0pt{\usebox2\hss}\fi%
  \vbox to \cellsize{\vss\hbox to \cellsize{\hss$_{#1}$\hss}\vss}}
\newcommand\tableau[1]{\vtop{\let\\=\cr
\setlength\baselineskip{-1000pt}
\setlength\lineskiplimit{1000pt}
\setlength\lineskip{0pt}
\ialign{&\cellify{##}\cr#1\crcr}}}
\newlength{\celldim} \setlength{\celldim}{22\unitlength}
\newsavebox{\cell}
\sbox{\cell}{%
\begin{picture}(22,22)\linethickness{0.6pt} %
  \put(0,0){\line(1,0){22}} \put(0,0){\line(0,1){22}}
  \put(22,0){\line(0,1){22}} \put(0,22){\line(1,0){22}}
\end{picture}}
\newcommand\cellifying[1]{%
  \def\thearg{#1}\def\nothing{}%
  \ifx\thearg\nothing \vrule width0pt height\celldim depth0pt\else
  \hbox to 0pt{\usebox{\cell} \hss}\fi%
  \vbox to \celldim{ \vss \hbox to
  \celldim{\hss$#1$\hss} \vss}
}
\newcommand\ttableau[1]{\vtop{\let\\\cr
\baselineskip -16000pt \lineskiplimit 16000pt \lineskip 0pt
\ialign{&\cellifying{##}\cr#1\crcr}}}
\newcommand\bas[1]{\omit \vbox to \cellsize{ \vss \hbox to \celldim{\hss$#1$\hss} \vss}}
\begin{document}

\title[Bases of the  equivariant cohomologies  of Hessenberg varieties]{Bases of the  equivariant cohomologies of \\ regular semisimple Hessenberg varieties}

\author{Soojin Cho}
\address{Department of Mathematics, Ajou University, Suwon  16499, Republic of Korea}
\email{chosj@ajou.ac.kr}

\author{Jaehyun Hong}
\address{Center for Complex Geometry,  Institute for Basic Science, Daejeon 34126, Republic of Korea}
\email{jhhong00@ibs.re.kr}

\author{Eunjeong Lee}
\address{Center for Geometry and Physics, Institute for Basic Science (IBS), Pohang 37673, Korea}
\curraddr{Department of Mathematics, Chungbuk National University, Cheongju 28644, Republic of Korea}
\email{eunjeong.lee@chungbuk.ac.kr}

\thanks{Cho was supported by the National Research Foundation of Korea (NRF-2020R1A2C1A01011045).
	Hong was supported by the Institute for Basic Science(IBS-R032-D1).
	Lee was supported by the Institute for Basic Science(IBS-R003-D1) and the National Research Foundation of Korea(NRF) grant funded by the Korea government(MSIT) (RS-2022-00165641).}

\begin{abstract}  We consider bases for the cohomology space of  regular semisimple  Hessenberg varieties, consisting of the classes that naturally arise from the  Białynicki-Birula decomposition of the Hessenberg varieties. We give an explicit combinatorial description of the support of each class, which enables us to compute the symmetric group actions on the classes in our bases. We then successfully apply the results to the permutohedral varieties to explicitly write down each class and to construct permutation submodules that  constitute summands of a decomposition of cohomology space of each degree. This resolves the problem posed by Stembridge on the geometric construction of permutation module decomposition  and also the conjecture posed by Chow on the construction of bases for the equivariant cohomology spaces of permutohedral varieties.
\end{abstract}

\keywords{Hessenberg varieties, equivariant cohomologies, representations of symmetric groups, permutohedral varieties}

\subjclass[2010]{Primary 14M15, 14C15; Secondary 05E05, 14L30}
\maketitle

\setcounter{tocdepth}{1}
\tableofcontents

\section{Introduction} \label{sec:intro}

\emph{Hessenberg varieties} were introduced and investigated by De Mari,  Shayman  and Procesi~\cite{DeMari_87, DeMari_Shayman_88, DPS_Hessenberg_var} around 1990 as a  generalization of subvarieties of the full flag variety  $\flag(\C^n)$ that arise in the study of Hessenberg matrices. Important varieties such as Springer fibers, Peterson varieties, and toric varieties associated with the Weyl chambers, as well as the flag varieties appear as examples of  Hessenberg varieties. In addition,  there have been many interesting results on the structure of the Hessenberg varieties.  We refer the reader to the survey article \cite{AbeHoriguchi} and references therein for more details.

Tymoczko applied GKM theory to \emph{regular semisimple} Hessenberg varieties in \cite{T1, T2} to obtain a combinatorial description of the (equivariant) cohomology spaces and to define Weyl group actions, called \emph{dot actions} on them.

Our main concern is the structure of the equivariant  cohomology\footnote{Throughout the whole paper, we will use the $\C$-coefficients for cohomology rings unless otherwise specified. } of regular semisimple Hessenberg varieties  $\Hess(S, h)$ of type $A$, described as follows:
\[
\Hess(S, h)= \{ \{0\} \subsetneq V_1 \subsetneq V_2 \subsetneq \cdots \subsetneq V_{n-1} \subsetneq \C^n  \mid S V_{i} \subset V_{h(i)} \, \text{ for all }1 \leq i \leq n \}\subset  \flag(\C^n)\,,
\]
where $S$ is a regular semisimple $n \times n$ matrix and $h: \{1, \dots, n\} \rightarrow \{1, \dots, n\}$ is a nondecreasing function satisfying $h(i)\geq i$ for all $i$, called a \emph{Hessenberg function}. The symmetric group $\mathfrak{S}_n$ is the Weyl group of type $A_{n-1}$, and Tymoczko's dot action admits an $\mathfrak{S}_n$-module structure on the space of equivariant cohomology of $\Hess(S, h)$.

The toric variety  $\mathcal H_n$ associated with the Weyl chambers of type $A_{n-1}$,  also known as the \emph{permutohedral variety}, is $\Hess(S, h)$ with the Hessenberg function $h$ given by $h(i)=i+1$ for $i<n$. The symmetric group $\mathfrak{S}_n$ acts on the Weyl chambers of type $A_{n-1}$, and it naturally defines an action on $H^*(\mathcal H_n)$.  It is known that this action coincides with Tymoczko's dot action.
The $\mathfrak{S}_n$-module structure of $H^*(\mathcal H_n)$ was first considered by Procesi in \cite{P1}, then by Stanley in \cite{S3} in the language of symmetric functions. Following the work of Stanley, Stembridge constructed a graded $\mathfrak{S}_n$-module carrying a permutation representation, which is isomorphic to $H^*(\mathcal H_n)$, by defining combinatorial objects called codes in~\cite{Ste}.  In the same paper, and also in \cite{Ste2}, Stembridge posed the problem to provide a \emph{geometric} explanation of the fact that $H^*(\mathcal H_n)$ can be decomposed as a direct sum of permutation modules. We provide a solution to this problem  in this paper by applying the general theory we develop  in Section~\ref{sec_description_minus_cell} and Section~\ref{sec:hong} to the permutohedral variety.

We note that the \emph{permutation module} $M^\la$  of $\mathfrak{S}_n$ for a partition $\la=(\la_1, \dots, \la_{\ell})$ of $n$ is the vector space of formal linear sums of the ordered tuples  $(J_1, \dots, J_{\ell})$ of disjoint subsets of $\{1, 2, \dots, n\}$ satisfying $|J_s|=\la_s$, $s=1, \dots, \ell$, and $|\bigcup_s J_s|=n$, where the permutations in $\mathfrak{S}_n$ act on $(J_1, \dots, J_{\ell})$ naturally. The permutation modules are direct sums of copies of irreducible $\mathfrak{S}_n$-modules.
Under the Frobenius characteristic map, $M^\la$ maps to the complete homogeneous symmetric function $h_\la$, whereas the irreducible $\mathfrak{S}_n$-modules maps to the Schur functions. We refer the book \cite{Sag} for the representation theory of the symmetric groups.

\emph{Chromatic symmetric functions} were introduced by Stanley in \cite{S1} as a generalization of the chromatic polynomials of graphs, and they were refined to chromatic \emph{quasi}symmetric functions by Shareshian and Wachs in \cite{SW}.  The Stanley--Stembridge conjecture on chromatic symmetric functions states that the chromatic symmetric function of the incomparability graph of a $(3+1)$-free poset  expands positively as a sum of elementary symmetric functions, that is $e$-positive, and it is refined by Shareshian and Wachs to a conjecture on chromatic quasisymmetric functions.    Work by Guay-Paquet \cite{G-P1} shows that for the $e$-positivity conjecture it is enough to consider the incomparability graphs of \emph{natural unit interval orders} which can each be identified with a Hessenberg function.

For a Hessenberg function $h$, let $G(h)$ be the incomparability graph of the natural unit interval order corresponding to $h$.
In \cite{SW}, Shareshian and Wachs made a conjecture that the regular semisimple Hessenberg varieties of type $A$ and the chromatic quasisymmetric functions are related in the following way, which was  independently proved by Brosnan and Chow \cite{BrosnanChow18}, and Guay-Paquet~\cite{G-P2}.

\begin{theorem}\label{prop:BC_G-P} For a Hessenberg function $h$,
	\[ \sum_k \mathrm{ch} H^{2k}(\Hess(S,h))\,t^k=\omega X_{G(h)}(\mathbf x, t)\,,\]
	where $\mathrm{ch}$ is the Frobenius characteristic map and $\omega$ is the involution on the symmetric function algebra, which sends elementary symmetric functions to complete homogeneous symmetric functions; $\omega(e_i)=h_i$.
\end{theorem}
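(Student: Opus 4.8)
The plan is to follow the geometric strategy by which this result -- the Shareshian--Wachs conjecture -- is established: realize Tymoczko's dot action as a monodromy representation and then extract its character via the local invariant cycle theorem. Writing $\mathfrak{g}=\mathfrak{gl}_n(\C)$, the first step is to assemble the universal family
\[
\mathcal{B}_h=\{(X,V_\bullet)\in\mathfrak{g}\times\flag(\C^n)\mid XV_i\subseteq V_{h(i)}\text{ for all }i\},\qquad \pi\colon\mathcal{B}_h\to\mathfrak{g},
\]
whose fiber over $X$ is $\Hess(X,h)$. Over the regular semisimple locus $\mathfrak{g}^{\mathrm{rs}}\subseteq\mathfrak{g}$ the morphism $\pi$ is a smooth projective fiber bundle, so $\pi_1(\mathfrak{g}^{\mathrm{rs}})$ acts on $H^*(\Hess(S,h))$ by monodromy; and since the Grothendieck--Springer incidence variety $\widetilde{\mathfrak{g}}=\{(X,V_\bullet)\mid XV_i\subseteq V_i\ \forall i\}$ restricts over $\mathfrak{g}^{\mathrm{rs}}$ to a Galois cover with group $\mathfrak{S}_n$ (the fiber being the $n!$ orderings of the eigenvalues), along which $\mathcal{B}_h|_{\mathfrak{g}^{\mathrm{rs}}}$ trivializes, this monodromy action factors through $\mathfrak{S}_n$.

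The conceptual heart of the argument is the next step: proving that the resulting $\mathfrak{S}_n$-action on $H^*(\Hess(S,h))$ coincides with Tymoczko's dot action -- a statement with content precisely because the dot action is \emph{not} induced by any algebraic $\mathfrak{S}_n$-action on $\Hess(S,h)$. I would check this on the GKM side: both actions permute the torus fixed points $\{wB\}_{w\in\mathfrak{S}_n}$ and act compatibly on the equivariant parameters, and following a small loop in $\mathfrak{g}^{\mathrm{rs}}$ based at $S$ that interchanges two adjacent eigenvalues shows that it induces exactly the simple-reflection operator in Tymoczko's presentation of the equivariant cohomology.

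With the dot action identified as monodromy, the character is extracted by degeneration. For a partition $\mu\vdash n$, pick a regular element $X_\mu\in\mathfrak{g}$ (centralizer of the minimal dimension $n$) whose semisimple part has eigenvalue multiplicities $\mu$. Via the Springer resolution the local monodromy about the stratum through $X_\mu$ has image the Young subgroup $\mathfrak{S}_\mu$, and applying the local invariant cycle theorem to a transverse disk -- together with the fact, basic to this paper's approach, that $H^*(\Hess(S,h))$ vanishes in odd degrees, which forces the invariant cycle map to be an isomorphism rather than merely a surjection -- identifies $H^*(\Hess(X_\mu,h))$ with $H^*(\Hess(S,h))^{\mathfrak{S}_\mu}$. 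Setting $P_\mu(t)=\sum_k\dim_\C H^{2k}(\Hess(X_\mu,h))\,t^k$, this yields
\[
P_\mu(t)=\Bigl\langle\, \sum_k \mathrm{ch}\,H^{2k}(\Hess(S,h))\,t^k,\ h_\mu\,\Bigr\rangle,
\]
and since $\langle\omega X_{G(h)},h_\mu\rangle=\langle X_{G(h)},e_\mu\rangle$ and $\{h_\mu\}_{\mu\vdash n}$ is a basis, the theorem reduces to a single combinatorial-geometric identity: the Betti numbers of the regular Hessenberg varieties $\Hess(X_\mu,h)$, ranging over all $\mu$, must reproduce the numbers $\langle X_{G(h)},e_\mu\rangle$ obtained from the Shareshian--Wachs combinatorial formula for $X_{G(h)}(\mathbf{x},t)$ -- which is a genuine symmetric function, as Shareshian and Wachs proved. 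I would close that gap by computing $P_\mu(t)$ directly from an affine paving (or a torus degeneration) of $\Hess(X_\mu,h)$ and matching it with the $e$-expansion combinatorics of the natural unit interval order attached to $h$.

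The main obstacle is twofold: the identification of the monodromy representation with the dot action, and the hands-on control of the intermediate regular, non-semisimple, Hessenberg varieties $\Hess(X_\mu,h)$ whose Poincaré polynomials must be computed and matched to the $e_\mu$-coefficients of $X_{G(h)}$. A second, essentially combinatorial route -- the one of Guay-Paquet -- bypasses the Hodge-theoretic input by producing a Hopf algebra, equivalently a system of ``modular law'' recursions in the Hessenberg function $h$, of which both $\sum_k \mathrm{ch}\,H^{2k}(\Hess(S,h))\,t^k$ and $\omega X_{G(h)}(\mathbf{x},t)$ are the same distinguished solution with matching initial conditions; there the difficulty migrates to exhibiting the geometric relations among $\Hess(S,h)$ for adjacent Hessenberg functions that realize those recursions on cohomology.
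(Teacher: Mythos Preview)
Your proposal sketches the Brosnan--Chow monodromy/local-invariant-cycle argument (and mentions Guay-Paquet's Hopf-algebra/modular-law alternative), and as an outline of those original proofs it is reasonable. However, the paper under review does \emph{not} prove this theorem at all: it is stated in the introduction purely as background, with the attribution ``which was independently proved by Brosnan and Chow~\cite{BrosnanChow18}, and Guay-Paquet~\cite{G-P2},'' and no proof is given anywhere in the paper. The result is invoked only to motivate Conjecture~\ref{conj:h-positivity} and the permutation-module decomposition problem that the paper actually addresses. So there is nothing to compare your attempt against; the ``paper's own proof'' does not exist here.
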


Theorem~\ref{prop:BC_G-P} plays a crucial role in connecting geometry, combinatorics, and representation theory.
The $e$-positivity conjecture on chromatic (quasi)symmetric functions  translates into the statement that $\mathrm{ch} H^*(\Hess(S,h))$ is positively expanded as a sum of complete homogeneous symmetric functions.

\begin{conjecture}[Stanley--Stembridge \cite{SS, S1}, Shareshian--Wachs \cite{SW}]\label{conj:h-positivity}
	Under the dot action of the symmetric group, the $(2k)$th cohomology space $H^{2k}(\Hess(S,h))$ is decomposed as a direct sum of permutation modules $M^\la$ for each degree $2k$.
\end{conjecture}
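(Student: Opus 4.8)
The plan is to use the Białynicki--Birula (BB) machinery of Sections~\ref{sec_description_minus_cell}--\ref{sec:hong} to replace the geometry by an explicit combinatorial module and then to decompose that module degree by degree. First I would fix a generic cocharacter $\Cstar\to T$ so that the BB cells of $\Hess(S,h)$ are indexed by the $T$-fixed points, i.e.\ by the permutations $w$ compatible with $h$; the closures of the cells furnish a basis $\{\sigma_{w,h}\}$ of $H^{*}(\Hess(S,h))$, with $\sigma_{w,h}$ in degree $2\ell_h(w)$ for the appropriate Hessenberg-length statistic. Combining the support description of Section~\ref{sec_description_minus_cell} with Tymoczko's GKM presentation realizes each $\sigma_{w,h}$ as an explicit element of the GKM ring $\bigoplus_{v}\C[x_1,\dots,x_n]$, supported exactly on the fixed points lying in the closed cell. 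Since the dot action permutes the $T$-fixed locus and acts on the GKM ring by permuting $x_1,\dots,x_n$, this yields a closed formula for the matrix of every simple reflection $s_i$ on $\{\sigma_{w,h}\}$ inside each $H^{2k}$, and the support description forces these matrices to be unitriangular for the partial order induced by cell containment. Reducing the conjecture this far is the part the results of this paper make essentially mechanical.

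The next step is to extract permutation submodules. Unitriangularity gives, inside each $H^{2k}(\Hess(S,h))$, a canonical $\mathfrak{S}_n$-stable filtration whose graded pieces are spanned by orbit sums of the BB classes; the goal is to show each graded piece is isomorphic to a permutation module $M^{\lambda}$, so that Theorem~\ref{prop:BC_G-P} identifies the multiplicities with the $h_\lambda$-expansion of $\omega X_{G(h)}$. For the permutohedral variety $h=(2,3,\dots,n,n)$ this is exactly what is carried out in this paper when resolving Stembridge's problem and Chow's conjecture: there the filtration splits and each graded piece is carried by an $\mathfrak{S}_n$-orbit of ordered set-partitions, matched to the BB data through Stembridge's codes. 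So the scheme for general $h$ would be to produce an analogous combinatorial model --- a set of ``$h$-codes'' in bijection with the BB cells, partitioned into $\mathfrak{S}_n$-orbits of set-tuples and compatible with the computed $s_i$-matrices --- and to read off the decomposition from that model.

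The construction of such a model is the main obstacle, and in full generality it is equivalent to the Stanley--Stembridge $e$-positivity conjecture itself, so any argument along these lines is necessarily incomplete at precisely this point. The most promising route is induction on $n$ combined with the modular-law recursion satisfied by $X_{G(h)}$ (Guay-Paquet, Abreu--Nigro): for a one-step modification one expects a fibration-type relation among $\Hess(S,h)$ and its two neighbours giving a short exact sequence $0\to H^{*}(\Hess(S,h'))\to H^{*}(\Hess(S,h))\to H^{*}(\Hess(S,h''))\to 0$ of dot-action modules compatible with the BB bases; the inductive hypothesis would supply permutation-module structures on the two outer terms, and one would need the sequence to split $\mathfrak{S}_n$-equivariantly and compatibly with those structures. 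The explicit supports from Section~\ref{sec_description_minus_cell} should control the connecting map and the extension class, but establishing the compatible splitting --- equivalently, choosing the ``$h$-codes'' coherently across the recursion --- is the remaining gap, and it is the step I expect to be hardest.
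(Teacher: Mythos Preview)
The statement you are attempting to prove is Conjecture~\ref{conj:h-positivity}, and the paper does \emph{not} prove it: it is stated there as an open conjecture, equivalent (via Theorem~\ref{prop:BC_G-P}) to the Stanley--Stembridge/Shareshian--Wachs $e$-positivity conjecture. You recognize this yourself when you write that the construction of ``$h$-codes'' in general ``is equivalent to the Stanley--Stembridge $e$-positivity conjecture itself, so any argument along these lines is necessarily incomplete at precisely this point.'' That is an accurate self-assessment; there is no proof in the paper to compare against, and your proposal is, as you say, a strategy with a known gap rather than a proof.

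What the paper does establish is the special case $h=(2,3,\dots,n,n)$ (the permutohedral variety), in Theorem~\ref{thm:erasing conjecture}. Even there the method differs from your outline in several respects. The paper does not build an $\mathfrak{S}_n$-stable filtration with permutation-module graded pieces, nor does it invoke the modular-law recursion. Instead it (i) singles out a distinguished set $\mathcal G_k\subset\mathfrak S_n$ of permutations whose supports contain $w_0$, (ii) symmetrizes each $\sigma_w$ over the Young subgroup $\mathfrak S_w$ produced by Chow's ``erasing marks'' procedure to obtain classes $\widehat\sigma_w$, (iii) shows directly that $\mathbb C\mathfrak S_n\cdot\widehat\sigma_w\cong M^{\widehat{\mathbf a}(w)}$, and (iv) proves these submodules span $H^{2k}$ by an upper-triangular argument in the lexicographic order on compositions, with the dimension match coming from the already-known $h$-expansion of $\omega X_{G(h)}$ for this $h$ (Proposition~\ref{prop:expansion}). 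In particular the decomposition is exhibited as an internal direct sum of cyclic submodules, not extracted from a filtration.

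Two smaller technical points in your sketch are also worth flagging. First, the $s_i$-action on the BB classes is not literally unitriangular in the cell-closure order: Proposition~\ref{prop: edge remaining 2} shows extra terms $\tau_u$, $\tau_{s_iu}$ appear when $w\to s_iw$, and these need not be single basis elements $\sigma_{v,h}$. Second, the existence of a ``canonical $\mathfrak S_n$-stable filtration whose graded pieces are spanned by orbit sums of the BB classes'' is not established anywhere in the paper and does not follow from the support description alone; for general $h$ it is unclear that such a filtration exists, and producing one would already be a substantial step toward the conjecture.
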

Conjecture~\ref{conj:h-positivity} is proved to be true for some special cases; see \cite{S1, ChoHong, GS, DW, CH, HP, HNY}. Even in those cases, the geometric construction of permutation module decomposition for $H^{*}(\Hess(S,h))$ has not been provided except for trivial cases.
Our work in this paper is motivated by Conjecture~\ref{conj:h-positivity} particularly on the permutation module decomposition of $H^{*}(\Hess(S,h))$. For the construction of permutation submodules in   $H^{*}(\Hess(S,h))$, it is essential to choose a good basis that behaves well under the dot action. This is not trivial work and there are only some sets of cohomology classes conjectured to form  good bases of  $H^{*}(\Hess(S,h))$ for certain $h$ or for some permutation modules of certain type: the Erasing marks conjecture due to Chow (personal communication, cf.~\cite{Chow_erasing}) is for toric varieties  $\mathcal H_n$ for example. We first consider a basis of $H_T^{*}(\Hess(S,h))$ that  naturally arises from the geometric structure of  $\Hess(S,h)$ and examine the properties of the basis elements. We then compute the dot actions using our bases and finally apply the results to construct a permutation module decomposition of the cohomology space of the permutohedral variety. The rest of this section summarizes our work.

Any regular semisimple Hessenberg variety $\Hess(S,h)$ admits an affine paving, which is a Bia{\l}ynicki--Birula decomposition (see~\cite{DPS_Hessenberg_var}). Considering the closure $\Omega_{w,h}$ of each minus cell $\Omega_{w,h}^{\circ}$, we obtain an equivariant cohomology class $\swh{w}$ for each $w \in \mathfrak{S}_n$ (see Definition~\ref{def_swh_classes}). The set $\{ \swh{w} \mid w \in \mathfrak{S}_n\}$ forms a basis of the equivariant cohomology ring $H^{\ast}_T(\Hess(S,h))$.
Our basis satisfies nice properties which are already considered in other known results.
Indeed, when $\Hess(S,h) = \flag(\C^n)$, our basis coincides with the basis constructed by Tymoczko~\cite{T1}. When $\Hess(S,h)$ is a permutohedral variety, which is a smooth projective toric variety, then our basis is the  `canonical basis' provided by Pabiniak and Sabatini~\cite{PabiniakSabatini}. Moreover, the basis $\swh{w}$ is a `flow-up basis' studied by Teff~\cite{Teff_DDO,Teff_thesis} (see Proposition~\ref{prop_property_of_swh}).

We identify the class $\swh{w}$ as an element of $\bigoplus_{v \in \mathfrak S_n} \C[t_1,\dots,t_n]$ using GKM theory. Then, to
study the dot action of $\mathfrak{S}_n$ on each class $\swh{w}$, we have to specify the \textit{support} of $\swh{w}$, the set of elements $v \in \mathfrak{S}_n$ such that $\swh{w}(v) \neq 0$. We note that the support $\swh{w}$ is the same as the fixed point set $\Omega_{w,h}^T$ (see Proposition~\ref{prop_property_of_swh}). One of the primary goals of this manuscript is to present an explicit description of the support of $\swh{w}$ in terms of $w$ and~$h$. In order to introduce our result, we prepare some terminologies (see Section~\ref{sec_description_minus_cell} for precise definitions).
For a positive integer~$n$, we use $[n]$ to denote the set $\{1, 2, \dots, n\}$. Let $h \colon [n] \to [n]$ be a Hessenberg function and $w \in \mathfrak{S}_n$. We define a directed graph $\Gwh$ with the vertex set $[n]$ such that for each pair of indices $1 \leq j < i \leq n$, there is an edge $j \to i$ in the graph $\Gwh$ if and only if $j < i \leq h(j)$ and $w(j) < w(i)$. For example, when $n = 5$,  the graph $G_{15342,(3,3,4,5,5)}$ is given as follows:
\begin{center}
	\begin{tikzpicture}
	\foreach \x in {1,2,3,4,5}
	\node[circle,draw,inner sep=0pt,text width=5mm,align=center] (\x) at (\x,0) {\x};
	\draw[->] (1)--(2);
	\draw[->] (1) to [bend left = 45] (3);
	\draw[->] (3)--(4);
	\end{tikzpicture}
\end{center}

We say a vertex $i$ is \textit{reachable} from $j$ in the graph $G_{w,h}$ if there exists a directed path from $j$ to $i$.
For subsets $A = \{1 \leq a_1 < \dots < a_k \leq n \}$ and $B = \{1 \leq b_1 < \dots < b_k\leq n\}$ of $[n]$, we say that $A$ is \textit{reachable} from $B$ in the graph $G_{w,h}$ if there exists a permutation $\sigma \in \mathfrak{S}_k$ such that $a_{\sigma(d)}$ is reachable from $b_d$ for all $d = 1,\dots,k$ in the graph $\Gwh$.
For example, $\{3,4\}$ is reachable from $\{1,3\}$, whereas $\{5\}$ is not reachable from $\{3\}$ in the  graph $G_{15342,(3,3,4,5,5)}$. Using these expressions, we state our first main theorem.
\begin{thmx}[Theorem~\ref{thm_reachable_nonzero_minor}]\label{thmx:A}
	Let $h$ be a Hessenberg function and $w \in \mathfrak{S}_n$. An element $u \in \mathfrak{S}_n$ is in $\supp(\swh{w})$ if and only if $\{ w^{-1}(u(1)),\dots,w^{-1}(u(j))\}$ is reachable from $[j]$ in the graph $\Gwh$ for all $j = 1,\dots,n$.
\end{thmx}

The GKM graphs of regular semisimple Hessenberg varieties are subgraphs of the Bruhat graphs of the symmetric group having the same vertex set $\mathfrak{S}_n$. For two permutations $v, w \in \mathfrak{S}_n$ such that
$v=w s_{j,k}$ and $\ell(w)>\ell(v)$, we use $w \rightarrow v$ ($w \dasharrow v$, respectively) if  $v$ and  $w$ are connected (not connected, respectively) by an edge in the GKM graph. Here, $s_{j,k}$ is a transposition that exchanges $j$ and $k$.

We  follow the same line of the computation of the $\frak S_n$-action on the equivariant cohomology space $H_T^*(G/B)$ of the full flag variety done by Brion  \cite{Br1} to compute the dot action on $H_T^*(\Hess(S,h))$ for any Hessenberg variety. To handle the minus cells $\Owho{w}$ with general $h$, we must perform delicate analyses. The main results are stated as follows.
When $w \rightarrow s_iw$, define $\mathcal A_{s_i,w}$ to be the set of all $u \in    \Omega_{s_{i}w,h}^T \cap \Omega_w^T$ such that $\dim (\Omega_u^{\circ} \cap \Omega_{s_{i}w,h})  =\dim \Omega_{w,h}$ and $u \dashrightarrow s_iu$. For $u \in \mathcal A_{s_i,w}$,  $\tau_u$ ($\tau_{s_iu}$, respectively) is the equivariant class induced by the closure of $\Omega_u^{\circ} \cap \Omega_{s_{i}w,h}$ ($\Omega_{s_iu}^{\circ} \cap \Omega_{s_{i}w,h}$, respectively).

\begin{thmx}[Propositions~\ref{prop: edge deleted} and~\ref{prop: edge remaining 2}]\label{thmx:B} Let $h: [n] \rightarrow [n]$ be a Hessenberg function. Let $w \in \mathfrak S_n$ be a permutation and let $s_i = s_{i,i+1}\in \mathfrak S_n$ be a simple reflection.
	\begin{enumerate}
		\item If $ w  \dashrightarrow s_iw$ or $s_iw \dasharrow w$, then $s_i \cdot\sigma_{w,h} =\sigma_{s_i w, h}$.
		\item
		\begin{enumerate}
			\item If $s_i w \rightarrow w$, then $s_i\cdot \sigma_{w,h} - \sigma_{w,h}=0$.
			\item If $w \rightarrow s_iw$, then $\left( s_i \cdot\sigma_{w,h} + \sum_{v \in \mathcal A_{s_i,w}} \tau_{s_iv} \right) - \left(\sigma_{w,h} + \sum_{v \in \mathcal A_{s_i,w} } \tau_v \right)= (t_{i+1}- t_{i})\sigma_{s_i w, h}$, and the intersection $\mathcal A_{s_i,w} \cap s_i\mathcal A_{s_i,w}$ is empty.
		\end{enumerate}
	\end{enumerate}
\end{thmx}

As previously mentioned, Conjecture~\ref{conj:h-positivity} is true for the permutohedral varieties $\mathcal H_n$; however, no geometric explanation (construction) was provided before. We apply Theorem~\ref{thmx:A} and Theorem~\ref{thmx:B} to $H_T^{*}(\mathcal H_n)$ to construct an explicit permutation module decomposition of each  $H^{2k}(\mathcal H_n)$ resolving a question by Stembridge posed in \cite{Ste}.
We present a nice set of permutations of order ${n-1}\choose{k}$;
that is the number of permutation modules appearing in the decomposition of $H^{2k}(\mathcal H_n)$. We then construct one permutation submodule for each selected permutation, where we use the `erasing marks' due to Chow to form a submodule of the right dimension. We use the same notation $\sigma_w\in H^*(\mathcal H_n)$ for the image of the class $\sigma_w\in H_T^*(\mathcal H_n)$ by the  quotient map.

It is well known that the number of descents $\mathrm{des}(w)\colonequals \# \{ i \mid w(i)>w(i+1)\}$ of a permutation $w\in \frak{S}_n$ determines the degree of the corresponding cohomology class of $w$; $\sigma_{w} \in H_T^{2k}(\mathcal H_n)$ if  $\mathrm{des}(w)=k$, where we use $\sigma_w$ instead of $\sigma_{w, h}$ because we fix a Hessenberg function $h$ defined by $h(i) =i+1$ for $i<n$.
We compute the dot actions given in Theorem~\ref{thmx:B} in more explicit ways for each class  $\sigma_{w} \in H_T^{2k}(\mathcal H_n)$, in which the descents of $w$ play important roles; see Proposition~\ref{prop:s_i action on S_n(a)}  and Proposition~\ref{prop_si_action_on_sigma_i}.
The final step in the construction of permutation module decomposition of $H^*(\mathcal H_n)$ is to choose an appropriate module generator of each permutation submodule. Let $\mathcal G_k$ be the set of $w \in \mathfrak S_n$ with $\mathrm{des}(w)=k$  such that  $\supp(\sigma_w)$ contains $w_0$, the longest element in $\mathfrak S_n$. Then $|\mathcal G_k| = {n-1 \choose k}$ so $\sigma_w\in H^*(\mathcal H_n)$, $w\in\mathcal G_k$, are reasonable candidates for the generators; however, their isotropy subgroups do not make the right dimension. We `erase marks (descents)' of each candidates by   symmetrization, where `erasing marks' is defined by Chow in his conjecture on bases for $H_T^*(\mathcal H_n)$ (see~\cite{Chow_erasing}).
The \emph{erasing descents} from a permutation $w$ erases the descent $1$ if $1$ is a descent of $w$, and descent $d$ if both $(d-1)$ and $d$ are descents of $w$. We then symmetrize $\sigma_w$ to obtain  $\widehat{\sigma}_w=\sum_{u\in\mathfrak{S}_w} u\sigma_w$, where $\mathfrak{S}_w$ is the subgroup  of permutations that permute the elements in each newly created block by erasing (see Definition~\ref{def:generator}).
As a $\mathbb C \mathfrak S_n$-module,
$H^{2k}(\mathcal{H}_n)$ is decomposed as permutation submodules:

\begin{thmx}[Theorem~\ref{thm:erasing conjecture}]\label{thmx:E} For each $k$,
	$$H^{2k}(\mathcal{H}_n)=\bigoplus_{w\in \mathcal G_k} \mathbb C \mathfrak S_n (\widehat{\sigma}_w).$$
\end{thmx}

This paper is organized as follows. In Section~\ref{sec:preliminaries}, we recall  Bia{\l}ynicki-Birula decompositions on regular semisimple Hessenberg varieties and the basis of the equivariant cohomology of a regular semisimple Hessenberg variety constructed by them.
In Section~\ref{sec_description_minus_cell}, we provide an explicit description of the support of the basis we constructed in terms of the reachability of a certain acyclic directed graph. The dot actions of the symmetric group on the basis elements are calculated in Section~\ref{sec:hong}.
Sections~\ref{sec:cho} and~\ref{sec:lee_permutohedral} are devoted to the construction of a decomposition of the cohomology space of the permutohedral variety into permutation modules.

\section{Preliminaries}\label{sec:preliminaries}
\subsection{Hessenberg varieties and Bia{\l}ynicki-Birula decompositions}
\label{sec_preliminaries}

In this subsection, we first review some properties of the flag variety and then we present the definition of Hessenberg varieties and Bia{\l}ynicki-Birula decompositions on them.

Let $G$ be the general linear group $\GL_n(\C)$ and let $B$ be the set of upper triangular matrices in $G$. Let $T$ be the set of diagonal matrices in $G$, i.e., $T \cong (\Cstar)^n$. We denote the set of lower triangular matrices in $G$ by $B^-$. The flag variety $\flag(\C^n)$ is isomorphic to the quotient space~$G/B$:
\[
\flag(\C^n) = \{ V_{\bullet} = (\{0\} \subsetneq V_1 \subsetneq V_2 \subsetneq \cdots \subsetneq V_{n-1} \subsetneq \C^n) \mid \dim_{\C} V_i = i \quad \text{ for all }  1\leq i \leq n\} \cong G/B.
\]
For $g \in G$, the corresponding element in $\flag(\C^n)$ to $gB \in G/B$ is given by the column vectors~$v_1,\dots,v_n$ of the matrix $g$:
\[
gB = (\{0\} \subsetneq \langle v_1 \rangle \subsetneq \langle v_1, v_2 \rangle \subsetneq \cdots \subsetneq \langle v_1,\dots,v_i \rangle \subsetneq \cdots \subsetneq \C^n).
\]

The left multiplication of $T$ on $G$ induces an action of $T$ on the flag variety $\flag(\C^n)$, which is indeed the same as that induced by  coordinate-wise multiplication of $T$ on $\C^n$.
The set of $T$-fixed points in $\flag(\C^n)$ is identified with the set $\mathfrak{S}_n$ of permutations on $[n] \colonequals \{1,\dots,n\}$, which is the Weyl group of $G$ (cf.~\cite[Lemma~2 in~\S10.1]{Fulton_YoungT}). More precisely, if we denote the standard basis vectors in $\C^n$ by $e_1,\dots,e_n$, for a permutation $w = w(1) w(2) \cdots w(n) \in \mathfrak{S}_n$, the corresponding element in $\flag(\C^n)$ is a coordinate flag
\begin{equation}\label{eq_wB}
\dot{w}B = (\{0\} \subsetneq \langle e_{w(1)} \rangle \subsetneq \langle  e_{w(1)}, e_{w(2)} \rangle \subsetneq
\cdots \subsetneq \langle e_{w(1)}, \dots, e_{w(i)} \rangle \subsetneq \cdots \subsetneq \C^n)\,.
\end{equation}
Here, $\dot{w}$ is the column permutation matrix of $w$, i.e., it has $1$ on $(w(i),i)$-entry and the others are all zero.

The flag variety $G/B$ admits a cell decomposition called the \textit{Bruhat decomposition}
\begin{equation}
G/B = \bigsqcup_{w \in \mathfrak{S}_n} B\dot{w}B/B = \bigsqcup_{w \in \mathfrak{S}_n} B^- \dot{w} B/B.
\end{equation}
For each $w \in \mathfrak{S}_n$, we denote by
\begin{equation}\label{eq_def_Xwo}
\Xwo{w} = B\dot{w}B/B \quad \text{ and } \quad \Owo{w} = B^- \dot{w}B/B
\end{equation}
  the \textit{Schubert cell} and the \textit{opposite Schubert cell}, respectively. Both are indeed affine cells of dimension $\ell(w)$ and codimension $\ell(w)$, respectively.
Here, $\ell(w)$ is the \textit{length} of $w$ defined as follows. The symmetric group $\mathfrak{S}_n$ is generated by \textit{simple reflections} $s_i$, which are the adjacent transpositions exchanging $i$ and $i+1$ for $i = 1,\dots,n-1$. Any element $w \in \mathfrak{S}_n$ can be written as a product of generators $w = s_{i_1} s_{i_2} \cdots s_{i_k}$ for $i_1,\dots,i_k \in [n-1]$. If $k$ is minimal among all such expressions for $w$, then $k$ is called the \textit{length} of $w$. We note that the length $\ell(w)$ is the same as the number of inversions of $w$, that is,
\begin{equation}\label{eq_length_and_inversions}
\ell(w) = \# \{(j,i) \mid 1 \leq j < i \leq n, w(j) > w(i)\}.
\end{equation}

To define Hessenberg varieties, we first present the definition of Hessenberg functions.
A function $h \colon [n] \to [n]$ is called  a \textit{Hessenberg function} if
\begin{itemize}
	\item it is weakly increasing: $1 \leq h(1) \leq h(2) \leq \cdots \leq h(n) \leq n$, and
	\item $h(i) \geq i$ for all $1 \leq i \leq n$.
\end{itemize}
\begin{definition}\label{def_Hess_var}
	Let $h$ be a Hessenberg function and let $L$ be a linear operator.
	Then the \textit{Hessenberg variety} $\Hess(L,h)$ is a subvariety of the flag variety defined as follows:
	\[
	\Hess(L,h) = \{ V_{\bullet} \in \flag(\C^n) \mid L V_{i} \subset V_{h(i)} \quad \text{ for all }1 \leq i \leq n \}.
	\]
	When the linear operator $L$ is regular (semisimple, or nilpotent) then we call $\Hess(L, h)$ regular (semisimple, or nilpotent).
\end{definition}

In this article, we  focus on \textit{regular semisimple Hessenberg varieties}, i.e., the Jordan canonical form of the linear operator $L$ is the diagonal matrix
\begin{equation}\label{eq_Jordan_canonical_form}
S\colonequals
\begin{pmatrix}
c_1 & 0 & \cdots & 0 \\
0 & c_2 & \cdots & 0 \\
\vdots & \vdots & \rotatebox[origin=c]{-45}{$\cdots$} & \vdots \\
0 & 0 & \cdots & c_n
\end{pmatrix}
\end{equation}
with distinct eigenvalues $c_1,c_2,\dots,c_n$.
There is an isomorphism
\[
\Hess(L,h) \cong \Hess(gLg^{-1}, h)
\]
for any  $g \in G$ by sending $V_{\bullet}$ to $g V_{\bullet}$. Accordingly, we may assume that the linear operator~$L$ is of the form~\eqref{eq_Jordan_canonical_form}. To emphasize, we consider only regular semisimple Hessenberg varieties in this manuscript, and we denote the linear operator by $S$.

\begin{example}
	\begin{enumerate}
		\item Let $h = (n,n,\dots,n)$. Then $\Hess(S,h)$ is the flag variety $\flag(\C^n)$.
		\item Let $h = (2,3,4,\dots,n,n)$. Then $\Hess(S,h)$ is a toric variety $\mathcal H_n$ called \textit{permutohedral variety}, with the fan consisting of Weyl chambers of the root system of type $A_{n-1}$ \textup{(}see~\cite[Theorem~11]{DPS_Hessenberg_var}\textup{)}.
	\end{enumerate}
\end{example}

The torus action on the flag variety preserves a regular semisimple Hessenberg variety.
Using this torus action, De Mari, Procesi, and Shayman~\cite{DPS_Hessenberg_var} computed the Poincar\'e polynomial of $\Hess(S,h)$.
\begin{proposition}[{\cite{DPS_Hessenberg_var}}]\label{prop_Betti_numbers}
	Let $\Hess(S,h)$ be a regular semisimple Hessenberg variety. Then $\Hess(S,h)$ is smooth of $\C$-dimension $\sum_{i=1}^n (h(i) - i)$. The Poincar\'e polynomial $\mathscr{P}\textup{oin}(\Hess(S,h), q)$ is given by
	\[
	\mathscr{P}\textup{oin}(\Hess(S,h) ,q )
	\colonequals \sum_{k \geq 0} \dim_{\mathbb{C}} H^{k}(\Hess(S,h))  q^k
	= \sum_{w \in \mathfrak{S}_n} q^{2\ell_h(w)}.
	\]
	Here, the number $\ell_h(w)$ is defined by
	\begin{equation}\label{eq_def_of_lhw}
	\ell_h(w) \colonequals \# \{ (j,i) \mid 1 \leq j < i \leq h(j), w(j) > w(i)\}.
	\end{equation}
\end{proposition}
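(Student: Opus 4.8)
The plan is to follow the classical route of De Mari, Procesi and Shayman: first establish smoothness and the dimension formula by inspecting the torus fixed points, and then extract the Poincar\'e polynomial from the Bia{\l}ynicki--Birula decomposition attached to a generic one-parameter subgroup of $T$. Recall first that, $S$ being diagonal, it preserves every coordinate subspace, so all $n!$ coordinate flags $\dot{w}B$ ($w\in\mathfrak S_n$) lie in $\Hess(S,h)$; these are exactly the $T$-fixed points of $\Hess(S,h)$, since they are already all the $T$-fixed points of $\flag(\C^n)$.

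For the dimension and smoothness I would build $\Hess(S,h)$ up one condition at a time. Set $X^{(0)}=\flag(\C^n)$ and $X^{(i)}=\{V_\bullet\in X^{(i-1)}\mid SV_i\subseteq V_{h(i)}\}$, so $\Hess(S,h)=X^{(n)}$. On $X^{(i-1)}$ one already has $SV_{i-1}\subseteq V_{h(i-1)}\subseteq V_{h(i)}$, so carving out $X^{(i)}$ inside $X^{(i-1)}$ is the zero locus of a section of the rank-$(n-h(i))$ bundle $\mathcal{H}om(\mathcal{V}_i/\mathcal{V}_{i-1},\,\C^n/\mathcal{V}_{h(i)})$ induced by $S$ (here $\mathcal{V}_\bullet$ is the tautological flag of subbundles); hence every irreducible component of $\Hess(S,h)$ has complex dimension at least $\binom{n}{2}-\sum_i(n-h(i))=\sum_i(h(i)-i)$. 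For the reverse bound I would compute the tangent space at a fixed point $\dot{w}B$: writing a first-order deformation as $(\mathrm{id}+\epsilon\phi)\cdot\dot{w}B$ with $\phi$ representing a class in $T_{\dot{w}B}\flag(\C^n)\cong\mathfrak{gl}_n/\mathrm{Ad}(\dot{w})\mathfrak{b}$, the Hessenberg conditions linearize to the requirement that the $(w(l),w(k))$-entry of $\phi$ vanish whenever $l>h(k)$, the relevant coefficients being $c_{w(l)}-c_{w(k)}$ --- nonzero precisely because $S=\mathrm{diag}(c_1,\dots,c_n)$ has distinct eigenvalues. This identifies $T_{\dot{w}B}\Hess(S,h)$ with the span of the root vectors $E_{w(l),w(k)}$ for $k<l\le h(k)$, which has dimension $\sum_i(h(i)-i)$ for \emph{every} $w$ and $T$-weights $\{t_{w(l)}-t_{w(k)}\mid 1\le k<l\le n,\ l\le h(k)\}$. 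Matching this against the lower bound shows that each $\dot{w}B$ is a smooth point and that every component of $\Hess(S,h)$ through it is $\sum_i(h(i)-i)$-dimensional; since $\Hess(S,h)$ is projective and $T$-stable, every component contains a $T$-fixed point by the Borel fixed point theorem, so $\Hess(S,h)$ is equidimensional of dimension $\sum_i(h(i)-i)$, and its singular locus --- being closed, $T$-stable and disjoint from the fixed locus --- is empty. Thus $\Hess(S,h)$ is smooth of the asserted dimension.

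Now that $\Hess(S,h)$ is a smooth projective variety, choose a one-parameter subgroup $\lambda\colon\Cstar\to T$, $\lambda(z)=(z^{\mu_1},\dots,z^{\mu_n})$, with $\mu_1>\mu_2>\cdots>\mu_n$, so that $\Hess(S,h)^\lambda=\Hess(S,h)^T=\{\dot{w}B\mid w\in\mathfrak S_n\}$ is finite. The Bia{\l}ynicki--Birula theorem then gives an affine paving $\Hess(S,h)=\bigsqcup_{w}C_w$ with $C_w\cong\C^{d_w}$, where $d_w$ is the number of $T$-weights of $T_{\dot{w}B}\Hess(S,h)$ that pair positively with $\lambda$. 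A weight $t_{w(l)}-t_{w(k)}$ with $k<l\le h(k)$ pairs positively with $\lambda$ exactly when $\mu_{w(l)}>\mu_{w(k)}$, i.e. when $w(l)<w(k)$; hence $d_w=\#\{(j,i)\mid 1\le j<i\le h(j),\ w(j)>w(i)\}=\ell_h(w)$. Since an affine paving of a smooth projective variety produces a basis of homology given by the closures of the cells and forces the odd cohomology to vanish, we obtain $\dim_\C H^{2k}(\Hess(S,h))=\#\{w\in\mathfrak S_n\mid\ell_h(w)=k\}$, which is the claimed Poincar\'e polynomial; consistently, $\max_w\ell_h(w)=\ell_h(w_0)=\sum_i(h(i)-i)=\dim_\C\Hess(S,h)$.

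I expect the main obstacle to be the tangent space computation at the fixed points --- in particular, verifying carefully that the distinct eigenvalues of $S$ make the linearized Hessenberg equations independent and yield the same tangent dimension $\sum_i(h(i)-i)$ at every $\dot{w}B$. Once that is settled, the dimension/smoothness bootstrap and the Bia{\l}ynicki--Birula count are routine. (If one prefers to stay closer to the ``minus cell'' conventions used later in the paper, one runs the same argument with the repelling sets of $\lambda$: the cell at $\dot{w}B$ then has dimension $\sum_i(h(i)-i)-\ell_h(w)$, so its closure has codimension $\ell_h(w)$, and counting these basis classes by degree gives the same Poincar\'e polynomial.)
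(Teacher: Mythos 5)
Your proof is correct and follows essentially the same route as the paper, which likewise obtains the Poincar\'e polynomial from the Bia{\l}ynicki--Birula decomposition attached to a regular one-parameter subgroup of $T$ by counting the positive weights in the tangent spaces $T_{\dot{w}B}\Hess(S,h)$, identified as $\{t_{w(i)}-t_{w(j)} \mid j<i\le h(j)\}$. You additionally supply a clean self-contained argument for smoothness and the dimension formula (a section-count lower bound, the tangent-space computation at each fixed point, and the Borel fixed-point theorem applied to the singular locus), details which the paper simply cites from De Mari--Procesi--Shayman.
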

We notice that the number $\ell_h(w)$ is the same as $\ell(w)$ if $h = (n,\dots,n)$ (see~\eqref{eq_length_and_inversions}).
To prove the previous proposition, De Mari, Procesi, and Shayman applied the theory of Bia{\l}ynicki-Birula~\cite{BB} to Hessenberg varieties.
We briefly review their proof. The Hessenberg variety is $T$-stable and the fixed point set is again identified with $\mathfrak{S}_n$ as in the case of the flag variety  (see~\eqref{eq_wB}).
Fix a homomorphism $\lambda \colon \Cstar \to T, t \mapsto \textup{diag}(t^{\lambda_1},t^{\lambda_2},\dots,t^{\lambda_n})$ determined by integers $\lambda_1 > \lambda_2 > \cdots > \lambda_n$, that is, $\lambda$ is a regular dominant weight. For each $w \in \mathfrak{S}_n,$ consider the \textit{plus cells}  and \textit{minus cells}.
\begin{align}
X_{w,h}^{\circ} &\colonequals \left \{ gB \in \Hess(S,h) \mid \lim_{t \to 0} \lambda(t) gB = \dot{w} B \right\},  \\
\Owho{w} &\colonequals \left \{ gB \in \Hess(S,h) \mid \lim_{t \to \infty} \lambda(t) gB = \dot{w} B \right\}. \label{eq_def_m_cell}
\end{align}
Then, both $X_{w,h}^{\circ}$ and $\Owho{w}$ are isomorphic to affine spaces and they define the plus and minus decompositions. We note that when we take $h = (n,\dots,n)$ then $\Hess(S,h)$ is the flag variety, and moreover, we have  $X_{w,h}^{\circ} = \Xwo{w}$ and $\Owho{w} = \Owo{w}$ (cf.~\eqref{eq_def_Xwo}). Counting the numbers of positive and negative weights in the  tangent space of $\Hess(S,h)$ at $\dot{w}B$, we obtain
\begin{equation}\label{eq_dim_codim_X_Omega}
\dim_{\C} X_{w,h}^{\circ} = \ell_h(w)  = \dim_{\C}(\Hess(S,h)) - \dim_{\C} \Owho{w}
\end{equation}
which implies the aforementioned proposition.

\begin{example}
	Suppose that $n = 3$ and $h = (2,3,3)$.
	For a permutation $w = 231$, there are two inversions on locations $(1,3)$ and $(2,3)$ because $w(1) > w(3)$ and $w(2) > w(3)$. However, because of the condition $j \leq h(i)$, we only count $(2,3)$ to compute $ \ell_h(w)$. Therefore, we obtain $ \ell_h(w) = \# \{(2,3)\} = 1$. By a similar computation, we present $ \ell_h(w)$.
	\begin{center}
		\begin{tabular}{c|cccccc}
			\toprule
			$w$ & $123$ & $132$ & $213$ & $231$ & $312$ & $321$ \\
			\midrule
			$ \ell_h(w)$ & $0$ & $1$ & $1$ & $1$ & $1$ & $2$ \\
			\bottomrule
		\end{tabular}
	\end{center}
\end{example}

\begin{remark}\label{rmk_not_cellular}
	It is known that the closure of a Schubert cell is a union of other Schubert cells:
	\[
	X_w\colonequals \overline{\Xwo{w}} = \bigsqcup_{v \leq w } \Xwo{v}, \qquad
	\Omega_w \colonequals \overline{\Owo{w}} = \bigsqcup_{v \geq w } \Owo{v},
	\]
where $\leq$ is the Bruhat order. We refer the reader to \cite[\S 10.2 and \S10.5]{Fulton_YoungT} for the definition of the Bruhat order and the above description. As explained in~\cite[Remark in Section~5]{DeMari_Shayman_88}, the boundary of a cell of dimension $k$ in an arbitrary regular semisimple Hessenburg variety is not always contained in a union of cells of dimension $\leq (k-1)$.
\end{remark}

\begin{remark}\label{rmk_every_hess_affine_paving}
	The Hessenberg varieties are defined for any reductive linear algebraic group and any linear operator. Particularly, it is proved that Hessenberg varieties corresponding to a reductive linear group  and a regular element are paved by affine spaces (see~\cite{Precup_affine, Tymoczko_paving} and references therein).
\end{remark}

\subsection{Basis of the equivariant cohomology}\label{sec:basis_of_equivariant_cohomology}

In this subsection, we consider the torus action on regular semisimple Hessenberg varieties, and then define a basis $\{ \swh{w} \mid w\in \mathfrak{S}_n\}$ of the  equivariant cohomology ring in Definition~\ref{def_swh_classes}.
Moreover, we study their combinatorial properties in Proposition~\ref{prop_property_of_swh}.

We first briefly recall the GKM theory from~\cite{GKM98, T2}.
Let $T$ be a complex torus $(\Cstar)^n$. Let $\Sym = \Z[t_1,\dots,t_n]$ be the symmetric algebra over $\Z$ of the character group $M \colonequals \Hom(T,\Cstar)$.
Let $X$ be a complex projective variety with an action of $T$. Then $X$ is called a \textit{GKM space} if it satisfies the following three conditions:
\begin{enumerate}
	\item $X$ has finitely many $T$-fixed points,
	\item $X$ has finitely many (complex) one-dimensional $T$-orbits, and
	\item $H^{\text{odd}}(X)$ vanishes.\footnote{In the original definition, the third condition (3) is `$X$ is equivariantly formal', which is a technical property that holds when $H^{\text{odd}}(X)$ vanishes.}
\end{enumerate}
Toric varieties are GKM spaces, and the flag variety, Grassmannians, and Schubert varieties are also GKM spaces.
Because there are finitely many fixed points, the closure $\overline{\mathcal{O}}$ of a one-dimensional orbit $\mathcal{O}$ is homeomorphic to $\C P^1$.

The \textit{GKM graph} $\Gamma = (V, E, \alpha)$ of a GKM space is a directed graph with label $\alpha$ on edges defined as follows:
\begin{enumerate}
	\item $V = X^T$ is the set of $T$-fixed points of $X$.
	\item $ (v \to w) \in E$ if and only if there exists a one-dimensional orbit $\mathcal{O}_{v,w}$ whose closure $\overline{\mathcal{O}_{v,w}}$ contains $v$ and $w$.
	\item We label each edge $v \to w $ with the weight $ \alpha({v \to w }) \in \Sym$ of the $T$-action corresponding to  the closure $\overline{\mathcal{O}_{v,w}}$  at the fixed point  $v$.
\end{enumerate}
According to the second condition, we have that $(v \to w)  \in E$ if and only if $(w \to v) \in E$. We consider an orientation on edge because of the labelling $\alpha(v \to w)$ on each edge $v \to w$. Indeed,
\[
\alpha({v \to w }) = - \alpha({w \to v}).
\]
Moreover, at each fixed point $v \in V = X^T$, the set $\{\alpha(v \to w) \mid (v \to w) \in E \}$ consists of the weights of $T_vX$.

\begin{example}\label{example_Hess_GKM}
	Let $h \colon [n] \to [n]$ be a Hessenberg function.
	The regular semisimple Hessenberg variety $\Hess(S,h)$ is a GKM space (cf.~\cite[Section~III]{DPS_Hessenberg_var} or~\cite[Proposition~5.4(1)]{T2}).
	The GKM graph $\Gh = (V,E,\alpha)$ of $\Hess(S,h)$ is given as follows:
	\begin{enumerate}
		\item $V = \mathfrak{S}_n$.
		\item $(w \to w s_{j,i}) \in E$ if and only if $j < i \leq h(j)$.
		\item $\alpha(w \to w s_{j,i}) = t_{w(i)} - t_{w(j)}$.
	\end{enumerate}
	Here, $s_{j,i}$ is the  transposition in $\mathfrak{S}_n$, exchanging $j$ and $i$.
	Because of the  equality $s_{j,i} w =  w s_{w^{-1}(j), w^{-1} (i)}$,
	the second condition is equivalent to the following:
	\begin{itemize}
		\item $(w \to s_{j,i}w )\in E$ if and only if $w^{-1}(j) < w^{-1}(i) \leq h(w^{-1}(j))$. \\
		With this description, $\alpha(w \to s_{j,i} w) = t_{i} - t_{j}$.
	\end{itemize}
	For example, assume that $n = 3$ and $h = (2,3,3)$. For $w = 132$, there are two edges starting at $w$.
	\[
	(w \to w s_{1}) = (132 \to 312), \quad (w \to w s_2) = (132 \to 123).
	\]
	For each edge, the labeling is given by $\alpha{(132 \to 312)} = t_{3}- t_1$ and $\alpha{(132 \to 123)} = t_2 - t_3$.
There also exist edges $(312 \to 132)$ and $(123 \to 132)$ having opposite labels.
	In Figure~\ref{fig_GKM_n3}, we present GKM graphs of regular semisimple Hessenberg varieties for $n = 3$, and $h = (2,3,3)$, $h = (3,3,3)$. The indegree and outdegree of each vertex coincide with the $\C$-dimension of $\Hess(S,h)$.
\end{example}
\begin{figure}
	\begin{tikzpicture}[scale = 0.5,
arrowmark/.style 2 args={decoration={markings,mark=at position #1 with {\arrow[line width = 0.5pt,scale=1]{#2}}}}]

	\node[shape = circle, fill=black, scale = 0.2] (312) at (30:2) {};
	\node[shape = circle, fill=black, scale = 0.2] (321) at (90:2) {};
	\node[shape = circle, fill=black, scale = 0.2] (231) at (150:2) {};
	\node[shape = circle, fill=black, scale = 0.2] (132) at (-30:2) {};
	\node[shape = circle, fill=black, scale = 0.2] (123) at (-90:2) {};
	\node[shape = circle, fill=black, scale = 0.2] (213) at (-150:2) {};
	
	\node[left] at (213) {$213$};
	\node[left] at (231) {$231$};
	\node[below] at (123) {$123$};
	\node[right] at (132) {$132$};
	\node[right] at (312)  {$312$};
	\node[above] at (321) {$321$};

\foreach \x/\y in {123/213, 312/321}{
\draw[postaction={decorate},
    arrowmark={.3}{>},
	arrowmark={.7}{<}] (\x) to (\y);
}

\foreach \x/\y in {123/132, 231/321}{
\draw[postaction={decorate},
    arrowmark={.3}{>},
	arrowmark={.7}{<}, blue, double] (\x) to (\y);
}

\foreach \x/\y in {132/312, 213/231}{
\draw[postaction={decorate},
    arrowmark={.3}{>},
	arrowmark={.7}{<}, red, triple line] (\x) to (\y);
\draw[red] (\x) to (\y);
\draw[red] (\y) to (\x);

}
	\node[below of = 2cm] at (123) {$h = (2,3,3)$};
	\end{tikzpicture}
	\hspace{1cm}
	\begin{tikzpicture}[scale = 0.5,
arrowmark/.style 2 args={decoration={markings,mark=at position #1 with {\arrow[line width = 0.5pt,scale=1]{#2}}}}]
	\node[shape = circle, fill=black, scale = 0.2] (312) at (30:2) {};
	\node[shape = circle, fill=black, scale = 0.2] (321) at (90:2) {};
	\node[shape = circle, fill=black, scale = 0.2] (231) at (150:2) {};
	\node[shape = circle, fill=black, scale = 0.2] (132) at (-30:2) {};
	\node[shape = circle, fill=black, scale = 0.2] (123) at (-90:2) {};
	\node[shape = circle, fill=black, scale = 0.2] (213) at (-150:2) {};
	
	\node[left] at (213) {$213$};
	\node[left] at (231) {$231$};
	\node[below] at (123) {$123$};
	\node[right] at (132) {$132$};
	\node[right] at (312)  {$312$};
	\node[above] at (321) {$321$};

\foreach \x/\y in {123/213, 312/321, 132/231}{
\draw[postaction={decorate},
    arrowmark={.3}{>},
	arrowmark={.7}{<}] (\x) to (\y);
}

\foreach \x/\y in {123/132, 231/321, 213/312}{
\draw[postaction={decorate},
    arrowmark={.3}{>},
	arrowmark={.7}{<}, blue, double] (\x) to (\y);
}

\foreach \x/\y in {132/312, 213/231, 123/321}{
\draw[postaction={decorate},
    arrowmark={.3}{>},
	arrowmark={.7}{<}, red, triple line] (\x) to (\y);
\draw[red] (\x) to (\y);
\draw[red] (\y) to (\x);

}

	\node[below of = 2cm] at (123) {$h = (3,3,3)$};
	\end{tikzpicture}
	\hspace{1cm}
	\begin{tikzpicture}[
arrowmark/.style 2 args={decoration={markings,mark=at position #1 with {\arrow[line width = 0.5pt,scale=1]{angle 90}}}}]
	\draw[postaction={decorate}, arrowmark={.55}{}] (0,2)--(0.7,2);
	\node at(2,2) {$\pm (t_2 - t_1)$};
	\draw[blue, double, postaction={decorate}, arrowmark={.55}{}] (0,1.3)--(0.7,1.3);
	\node at (2,1.3) {$\pm (t_3-t_2)$};
	\draw[postaction={decorate},
    arrowmark={.55}{>}, red, triple line] (0,0.6)--(0.7,0.6);
	\draw[red] (0,0.6)--(0.7,0.6);
	\node at (2,0.6) {$\pm (t_3 - t_1)$};
	\end{tikzpicture}
	\caption{GKM graphs of $\Hess(S,h)$ for $h = (2,3,3)$ and $h = (3,3,3)$.} \label{fig_GKM_n3}
\end{figure}

Goresky--Kotwitz--MacPherson~\cite{GKM98} provides a description of the equivariant cohomology ring of a GKM space.
\begin{theorem}[{\cite{GKM98}}]\label{thm_GKM}
	Let $T = (\Cstar)^n$ and let $(X,T)$ be a GKM space. Let $\Gamma = (V, E, \alpha)$ be the GKM graph. Then,
	\[
	H^{\ast}_T(X;\C) \cong
	\left\{
	(p(v)) \in \bigoplus_{v \in V} \C[t_1,\dots,t_n] ~\Bigg|~ \alpha{(v \to w)} |(p(v) - p(w)) \quad \text{ for all }(v \to w) \in E
	\right\}.
	\]
\end{theorem}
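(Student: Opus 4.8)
The plan is to deduce the theorem from the localization theorem in equivariant cohomology together with the equivariant formality guaranteed by condition~(3). Write $\iota\colon X^T \hookrightarrow X$ for the inclusion of the fixed locus, so that the restriction map is $\iota^{\ast}\colon H^{\ast}_T(X;\C)\to H^{\ast}_T(X^T;\C)=\bigoplus_{v\in V}\C[t_1,\dots,t_n]$. Since $H^{\mathrm{odd}}(X)=0$ and $X$ has finitely many $T$-fixed points, $X$ is equivariantly formal, hence $H^{\ast}_T(X;\C)$ is a free $\C[t_1,\dots,t_n]$-module; and by the localization theorem of Atiyah--Bott and Borel, $\iota^{\ast}$ becomes an isomorphism after inverting the nonzero homogeneous elements of $\C[t_1,\dots,t_n]$. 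In particular $\iota^{\ast}$ is injective and $\operatorname{rank}_{\C[t_1,\dots,t_n]}H^{\ast}_T(X;\C)=|X^T|$. It therefore suffices to identify the image of $\iota^{\ast}$ with the subring $\mathcal{R}$ of tuples $(p(v))$ satisfying the edge congruences $\alpha(v\to w)\mid p(v)-p(w)$.

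The inclusion $\operatorname{im}\iota^{\ast}\subseteq\mathcal{R}$ is the direct part. For an edge $(v\to w)\in E$, let $C=\overline{\mathcal{O}_{v,w}}\cong\C P^{1}$ be the corresponding invariant curve, on which $T$ acts through the character $\alpha\colonequals\alpha(v\to w)$ (up to the finite kernel of the $T$-action on $C$). A direct computation of $H^{\ast}_T(\C P^{1};\C)$ — or the localization theorem applied to $C$ itself — shows that the restriction of any $T$-equivariant class on $C$ to its two fixed points takes values $p(v),p(w)\in\C[t_1,\dots,t_n]$ with $p(v)\equiv p(w)\pmod{\alpha}$. Restricting a global class first to $C$ and then to $\{v,w\}$ produces exactly this congruence, so every element of $\operatorname{im}\iota^{\ast}$ lies in $\mathcal{R}$.

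For the reverse inclusion I would invoke the Chang--Skjelbred lemma: equivariant formality implies $\operatorname{im}\iota^{\ast}=\bigcap_{H}\operatorname{im}\big(H^{\ast}_T(X^{H};\C)\to H^{\ast}_T(X^{T};\C)\big)$, the intersection running over the codimension-one subtori $H\subset T$. Because $X$ is a GKM space, each $X^{H}$ is a disjoint union of isolated fixed points and finitely many invariant copies of $\C P^{1}$, namely those curves $\overline{\mathcal{O}_{v,w}}$ whose weight $\alpha(v\to w)$ vanishes on $H$; by the curve computation above, $\operatorname{im}\big(H^{\ast}_T(X^{H})\to H^{\ast}_T(X^{T})\big)$ consists precisely of the tuples satisfying the congruences along those edges. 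Intersecting over all $H$ — equivalently, over all weights occurring as edge labels — yields all the edge congruences, so $\mathcal{R}\subseteq\operatorname{im}\iota^{\ast}$ and the theorem follows.

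The main obstacle is exactly this reverse inclusion, i.e. the content behind Chang--Skjelbred: one must show that a tuple satisfying every edge congruence is the restriction of a genuine global class, not merely compatible on each one-skeleton piece. This is where equivariant formality is indispensable, entering through the degeneration of the long exact sequences relating $H^{\ast}_T(X)$, $H^{\ast}_T(X^{H})$, and the relative groups $H^{\ast}_T(X, X^{H})$. An alternative, more hands-on route — well suited to the present setting — is to filter $X$ by the closed unions of Białynicki--Birula minus cells $\Owho{w}$ and build the desired global class cell by cell, at each stage extending across the attaching locus using that $H^{\ast}_T$ of an affine cell equivariantly contracting to a fixed point is $\C[t_1,\dots,t_n]$ and tracking the weights of the normal directions, which reproduce precisely the edge labels of $\Gamma$. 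Either way, the freeness of $H^{\ast}_T(X;\C)$ of rank $|X^T|$ over $\C[t_1,\dots,t_n]$ is what forbids the intersection from being strictly larger than $\operatorname{im}\iota^{\ast}$.
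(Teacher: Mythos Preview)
The paper does not prove this theorem; it is stated with a citation to \cite{GKM98} and used as a black box. Your proposal is the standard proof of the GKM theorem: injectivity of $\iota^{\ast}$ via equivariant formality and localization, the easy inclusion $\operatorname{im}\iota^{\ast}\subseteq\mathcal{R}$ by restriction to each invariant $\C P^{1}$, and the hard inclusion via the Chang--Skjelbred lemma. This is correct and is essentially the argument of \cite{GKM98}. One small caveat: your alternative ``hands-on route'' via a Bia{\l}ynicki--Birula filtration is not available for an arbitrary GKM space as defined here (the definition does not assume smoothness or the existence of such a decomposition), so it should be understood as applying only in the paper's setting of regular semisimple Hessenberg varieties, where that decomposition is indeed present.
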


Now we concentrate on the equivariant cohomology ring of a regular semisimple Hessenberg variety.
One may wonder whether there exist nice classes which form a $H^{\ast}(BT)$-basis of the equivariant cohomology ring.
This paper studies a basis defined using the minus cell decomposition:
\begin{equation}\label{eq_decomp_Hess}
\Hess(S,h) = \bigsqcup_{w \in \mathfrak{S}_n} \Owho{w}.
\end{equation}
When $h = (n,\dots,n)$, each cell $\Owho{w}$ agrees with the opposite Schubert cell $\Owo{w}$.

The closure $\Owh{w}  \colonequals \overline{\Owho{w}}$ of a minus cell $\Owho{w}$ might not be smooth even though any regular semisimple Hessenberg variety $\Hess(S,h)$ is smooth.
Indeed, Schubert varieties (and opposite Schubert varieties) are not always smooth.
The closure  $\Owh{w} $ defines a class $[\Owh{w}]$ in the equivariant Chow ring $A^{\ast}_T(\Hess(S,h))$, which is graded by codimension. (See~\cite{Br1} for more details on \textit{equivariant Chow rings}.)
Furthermore, since the $T$-fixed points are isolated and $\Hess(S,h)$ is smooth, the cycle map is an isomorphism by~\cite[Corollary~2 in Section~3.2]{Br1}:
\begin{equation}\label{eq_cycle_map_is_iso}
cl_{\Hess(S,h)}^T \colon A_{T}^{\ast}(\Hess(S,h))_{\mathbb{Q}} \stackrel{\cong}{\longrightarrow} H_T^{2\ast}(\Hess(S,h); \mathbb{Q}).
\end{equation}
 Using the cycle map, we provide the following definition.
\begin{definition}\label{def_swh_classes}
	Let $\Hess(S,h)$ be a regular semisimple Hessenberg variety.
	For $w \in \mathfrak{S}_n$, we define an equivariant class $\swh{w} \in H^{2 \ell_h(w)}_{T}(\Hess(S,h);\C)$ to be the
	image of the class $[\Owh{w}] \in  A^{\ell_h(w)}_T(\Hess(S,h))_{\mathbb{Q}}$ under the cycle map~\eqref{eq_cycle_map_is_iso}.
\end{definition}
Because the Hessenberg variety is decomposed by the minus cells as described in~\eqref{eq_decomp_Hess},
the equivariant classes $\swh{w}$ form a basis of the equivariant cohomology.
\begin{proposition}\label{prop_swh_form_a_basis}
	Let $\Hess(S,h)$ be a regular semisimple Hessenberg variety. Then the classes $\{ \swh{w} \mid w  \in \mathfrak{S}_n \} $ form a basis of the equivariant cohomology ring $H^{\ast}_T(\Hess(S,h); \C)$.
\end{proposition}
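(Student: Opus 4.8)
The plan is to deduce the statement from the affine paving of $\Hess(S,h)$ by the minus cells together with standard facts about Borel–Moore homology (equivalently, Chow groups) of varieties paved by affines. First I would recall that by the results reviewed in Section~\ref{sec_preliminaries} (De Mari–Procesi–Shayman, Bia{\l}ynicki-Birula), $\Hess(S,h)=\bigsqcup_{w\in\mathfrak S_n}\Owho{w}$ is a decomposition into $T$-stable affine cells, with $\Owho{w}\cong\mathbb A^{\dim_{\mathbb C}\Hess(S,h)-\ell_h(w)}$. A variety admitting such a filterable affine paving has free Chow groups (and free Borel–Moore homology, concentrated in even degrees) with a basis given by the classes $[\overline{\Owho{w}}]=[\Owh{w}]$ of the cell closures: this is the classical argument that the boundary maps in the spectral sequence / long exact sequences of the pair vanish for dimension-parity reasons, so that $A_*^T(\Hess(S,h))$ is free on $\{[\Owh{w}]\}_{w\in\mathfrak S_n}$, with $[\Owh{w}]$ sitting in codimension $\ell_h(w)$.

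Next I would transport this through the equivariant setting. Since $\Hess(S,h)$ is smooth with isolated $T$-fixed points, the equivariant cycle class map \eqref{eq_cycle_map_is_iso} is an isomorphism $A_T^{*}(\Hess(S,h))_{\mathbb Q}\xrightarrow{\ \cong\ }H_T^{2*}(\Hess(S,h);\mathbb Q)$ (Brion, \cite[Corollary~2 in Section~3.2]{Br1}). Combined with the previous paragraph, the classes $\{[\Owh{w}]\}$ form an $A_T^*(\mathrm{pt})_{\mathbb Q}$-basis of $A_T^*(\Hess(S,h))_{\mathbb Q}$, hence their images $\swh{w}$ (Definition~\ref{def_swh_classes}) form an $H_T^*(\mathrm{pt})$-basis of $H_T^*(\Hess(S,h);\mathbb Q)$, with $\swh{w}\in H_T^{2\ell_h(w)}$. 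One then extends scalars from $\mathbb Q$ to $\mathbb C$; since everything in sight is free over $\mathbb Z$ (or $\mathbb Q$), tensoring with $\mathbb C$ preserves the basis property, giving a basis of $H_T^*(\Hess(S,h);\mathbb C)$.

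Alternatively — and this is probably the cleanest route to write — I would argue via the GKM description (Theorem~\ref{thm_GKM}) and the known Poincaré polynomial (Proposition~\ref{prop_Betti_numbers}): $H_T^*(\Hess(S,h))$ is free over $\Sym=\C[t_1,\dots,t_n]$ of rank $|\mathfrak S_n|$, with exactly $\#\{w:\ell_h(w)=k\}$ generators in degree $2k$. So it suffices to show the $\swh{w}$ are linearly independent over $\Sym$, and for this one uses the restriction/triangularity with respect to fixed points: the class $\swh{w}$ restricted to the fixed point $\dot w B$ equals $\prod$ of the weights of the normal directions to $\Owho{w}$ at $\dot w B$ (a product of $\ell_h(w)$ distinct root-type linear forms $t_{w(i)}-t_{w(j)}$), which is nonzero, while $\swh{w}(\dot v B)=0$ whenever $\dot v B\notin\Owh{w}$, in particular whenever $\ell_h(v)<\ell_h(w)$ (since the cell closure $\Owh{w}$ meets only cells of codimension $\le\ell_h(w)$). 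This gives an upper-triangularity of the coefficient matrix $(\swh{w}(\dot v B))_{v,w}$ with nonzero diagonal after ordering $\mathfrak S_n$ compatibly with $\ell_h$, hence linear independence over the fraction field of $\Sym$; counting dimensions degree by degree against Proposition~\ref{prop_Betti_numbers} upgrades this to a $\Sym$-basis.

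The only genuine subtlety — the step I expect to need the most care — is the vanishing/triangularity claim: that $\swh{w}$ restricts to $0$ at $\dot v B$ for $v$ outside $\Owh{w}$, and to a nonzero product of weights at $\dot w B$. The first part follows because $\swh{w}=cl^T[\Owh{w}]$ is supported on $\Owh{w}$, so its localization at a fixed point not in $\Owh{w}$ vanishes; the self-restriction is computed by the equivariant self-intersection formula at the smooth point $\dot w B\in\Owho{w}$, where $\Owho{w}$ is a $T$-stable affine cell and its normal weights at $\dot w B$ are exactly the negative-weight directions counted in \eqref{eq_dim_codim_X_Omega}, namely $\{t_{w(i)}-t_{w(j)} : j<i\le h(j),\ w(j)>w(i)\}$ — pairwise distinct nonzero linear forms, so their product is nonzero in $\Sym$. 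Granting this, the argument is routine. (Remark: the cell closures $\Owh{w}$ need not be smooth, cf.\ Remark~\ref{rmk_not_cellular}, but this does not affect the argument, since we only use smoothness of $\Hess(S,h)$ and of the open cell $\Owho{w}$ near $\dot w B$.)
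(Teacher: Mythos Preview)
Your first approach (affine paving $\Rightarrow$ equivariant Chow basis $\Rightarrow$ transport via the cycle-map isomorphism~\eqref{eq_cycle_map_is_iso}) is correct and is exactly the one-line reasoning the paper gives just before stating the proposition.

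Your alternative ``cleanest route'' via triangularity, however, contains an error at precisely the point you flag as delicate. You claim $\swh{w}(\dot v B)=0$ whenever $\ell_h(v)<\ell_h(w)$, justified by ``the cell closure $\Owh{w}$ meets only cells of codimension $\le\ell_h(w)$'' (presumably you intend $\ge$). This is exactly what Remark~\ref{rmk_not_cellular} warns against: for general $h$, the boundary of a minus cell of dimension $k$ need \emph{not} be contained in cells of dimension $\le k-1$, so the BB decomposition is not cellular with respect to $\ell_h$, and the triangularity you assert fails. The repair is easy: order by the full length $\ell$ rather than $\ell_h$. By Proposition~\ref{prop_property_of_swh}(1), every $v\in\Owh{w}^T$ admits a descending chain $v\to\cdots\to w$ in $\Gh^o$, each step strictly decreasing $\ell$; hence $v\in\supp(\swh{w})$ forces $\ell(v)\ge\ell(w)$, with equality only for $v=w$. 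With any total order refining $\ell$, the matrix $(\swh{w}(v))_{v,w}$ is genuinely upper-triangular with nonzero diagonal by Proposition~\ref{prop_property_of_swh}(3), and your linear-independence-plus-rank-count argument then goes through.
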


Let $\Gh = (V, E, \alpha)$ be the GKM graph of a regular semisimple Hessenberg variety $\Hess(S,h)$.
We consider a subgraph $\Gh^o = (V, E^o, \alpha)$, where $E^{o} = \{(v \to w) \in E \mid \ell(v) > \ell(w)\}$. That is, we choose one edge from a pair of edges $v \to w$ and $w \to v$ comparing the lengths of elements.
With abuse of notation, we write $(v \to w) \in \Gh^o$ if $(v \to w)$ is an edge of the graph $\Gh^o$.
We present $\Gh^o$ for $h = (2,3,3)$ and $h = (3,3,3)$ in Figure~\ref{fig_GKM_n3_o}.
\begin{figure}
	\centering
	\begin{tikzpicture}[scale = 0.5,
arrowmark/.style 2 args={decoration={markings,mark=at position #1 with {\arrow[line width = 0.5pt,scale=1]{angle 90}}}}]
	\node[shape = circle, fill=black, scale = 0.2] (312) at (30:2) {};
	\node[shape = circle, fill=black, scale = 0.2] (321) at (90:2) {};
	\node[shape = circle, fill=black, scale = 0.2] (231) at (150:2) {};
	\node[shape = circle, fill=black, scale = 0.2] (132) at (-30:2) {};
	\node[shape = circle, fill=black, scale = 0.2] (123) at (-90:2) {};
	\node[shape = circle, fill=black, scale = 0.2] (213) at (-150:2) {};
	
	\node[left] at (213) {$213$};
	\node[left] at (231) {$231$};
	\node[below] at (123) {$123$};
	\node[right] at (132) {$132$};
	\node[right] at (312)  {$312$};
	\node[above] at (321) {$321$};
	
\foreach \x/\y in {213/123, 321/312}{
\draw[postaction={decorate},
    arrowmark={.55}{}] (\x) to (\y);
}

\foreach \x/\y in {321/231, 132/123}{
\draw[postaction={decorate},
    arrowmark={.55}{}, blue, double] (\x) to (\y);
}

\foreach \x/\y in {231/213, 312/132}{
\draw[postaction={decorate},
    arrowmark={.55}{>}, red, triple line] (\x) to (\y);
\draw[red] (\x) to (\y);
}
	
	\node[below of = 2cm] at (123) {$h = (2,3,3)$};
	\end{tikzpicture}
	\hspace{1cm}		
	\begin{tikzpicture}[scale = 0.5,
arrowmark/.style 2 args={decoration={markings,mark=at position #1 with {\arrow[line width = 0.5pt,scale=1]{angle 90}}}}]
	\node[shape = circle, fill=black, scale = 0.2] (312) at (30:2) {};
	\node[shape = circle, fill=black, scale = 0.2] (321) at (90:2) {};
	\node[shape = circle, fill=black, scale = 0.2] (231) at (150:2) {};
	\node[shape = circle, fill=black, scale = 0.2] (132) at (-30:2) {};
	\node[shape = circle, fill=black, scale = 0.2] (123) at (-90:2) {};
	\node[shape = circle, fill=black, scale = 0.2] (213) at (-150:2) {};
	
	\node[left] at (213) {$213$};
	\node[left] at (231) {$231$};
	\node[below] at (123) {$123$};
	\node[right] at (132) {$132$};
	\node[right] at (312)  {$312$};
	\node[above] at (321) {$321$};

\foreach \x/\y in {213/123, 321/312}{
\draw[postaction={decorate},
    arrowmark={.55}{}] (\x) to (\y);
}

\draw[postaction={decorate},
    arrowmark={.3}{}] (231) to (132);

\foreach \x/\y in {321/231, 132/123}{
\draw[postaction={decorate},
    arrowmark={.55}{}, blue, double] (\x) to (\y);
}
\draw[postaction={decorate},
    arrowmark={.3}{}, blue, double] (312) to (213);

\foreach \x/\y in {231/213, 312/132}{
\draw[postaction={decorate},
    arrowmark={.55}{>}, red, triple line] (\x) to (\y);
\draw[red] (\x) to (\y);
}

\draw[postaction={decorate},
    arrowmark={.3}{>}, red, triple line] (321) to (123);
\draw[red] (321) to (123);

	\node[below of = 2cm] at (123) {$h = (3,3,3)$};
	\end{tikzpicture}
	\caption{$\Gh^o$ for $h = (2,3,3)$ and $h = (3,3,3)$}\label{fig_GKM_n3_o}
\end{figure}
For an equivariant cohomology class $p = (p(v))_{v \in X^T}$, the \textit{support $\supp(p)$ of $p$} is defined to be
\begin{equation}
\supp(p) \colonequals \{ v \in X^T \mid p(v) \neq 0\}.
\end{equation}
From the definition of the equivariant class $\swh{w}$, we obtain subsequent properties.
\begin{proposition}\label{prop_property_of_swh}
	\begin{enumerate}
		\item For each $v \in \Owh{w}^T$, there exists a descending chain $v \to \cdots \to w$ in the graph $\Gh^o$.
		\item The support $\supp(\swh{w})$ of $\swh{w}$ is $\Owh{w}^T$, and $\swh{w}(v)$ is homogeneous of degree $\ell_h(w)$ for each $v \in \Owh{w}^T$.
		\item $\displaystyle\swh{w}(w) = \prod_{(w \to v) \in \Gh^o} \alpha(w \to v)$.
	\end{enumerate}
\end{proposition}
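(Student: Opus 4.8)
The plan is to prove the three parts in the order (3), (2), (1). In all of them one uses that, by \eqref{eq_cycle_map_is_iso}, the cycle map commutes with restriction to the $T$-fixed points and is the identity on a point, so that $\swh{w}(\dot vB)=[\Owh{w}]|_{\dot vB}$ inside $\Sym$. For \textbf{(3)}, I would use that $\Owho{w}$ is smooth (an affine space) and open dense in $\Owh{w}$, hence $\Owh{w}$ is smooth at $\dot wB\in\Owho{w}$; therefore $[\Owh{w}]|_{\dot wB}$ is the equivariant Euler class of the normal space $T_{\dot wB}\Hess(S,h)/T_{\dot wB}\Owho{w}$, i.e.\ the product of its $T$-weights. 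By the GKM description (Example~\ref{example_Hess_GKM}) the weights of $T_{\dot wB}\Hess(S,h)$ are the \emph{distinct} linear forms $\alpha(w\to u)$, $(w\to u)\in E$, and $T_{\dot wB}\Owho{w}$ is the span of the ones with $\langle\lambda,\alpha\rangle<0$ (the contracting directions of the minus cell, there being $\dim\Hess(S,h)-\ell_h(w)$ of them); the remaining weights are exactly the $\alpha(w\to v)$ with $\langle\lambda,\alpha(w\to v)\rangle>0$, which — writing $v=ws_{j,i}$ and $\alpha(w\to v)=t_{w(i)}-t_{w(j)}$ — are precisely those with $w(j)>w(i)$, i.e.\ with $\ell(v)<\ell(w)$, i.e.\ the $\Gh^o$-edges out of $w$. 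Multiplying gives the product formula.

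For \textbf{(2)}, the degree statement is immediate since $[\Owh{w}]\in A^{\ell_h(w)}_T$ and restriction to a point preserves degree. Both inclusions for the support follow from the general fact (see~\cite{Br1}) that, for a closed $T$-invariant subvariety $Z$ of a smooth $T$-variety with isolated fixed points and for $P\in Z^T$, one has $[Z]|_P\ne 0$ if and only if $P\in Z$: for ``$\Leftarrow$'' one writes $[Z]|_P=e_T(T_P\Hess(S,h))\cdot e_P(Z)$ and invokes the nonvanishing of Brion's equivariant multiplicity $e_P(Z)$ at a point $P\in Z$, while ``$\Rightarrow$'' is the vanishing of the class of a closed subvariety on the open complement. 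Hence $\supp(\swh{w})=\{v:\dot vB\in\Owh{w}\}=\Owh{w}^T$.

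For \textbf{(1)}, I would first note $\Owho{w}=\Owo{w}\cap\Hess(S,h)$, so $\Owh{w}\subseteq\overline{\Owo{w}}=\Ow{w}$ and every $\dot vB\in\Owh{w}^T$ has $v\ge w$ in Bruhat order; then induct on $\ell(v)-\ell(w)\ge 0$, the case $v=w$ being trivial. Since $\Hess(S,h)$ is smooth projective, its minus-cell decomposition is filterable (cf.~\cite{BB}), so $\Owh{w}=\overline{\Owho{w}}$ is a union of minus cells; as $\Owho{w}$ is the dense one, $v\ne w$ with $\dot vB\in\Owh{w}^T$ forces $\dim\Owho{v}<\dim\Owh{w}$, i.e.\ $\ell_h(v)>\ell_h(w)$. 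Because $\Hess(S,h)$ is smooth, every component of $\Owh{w}\cap\overline{X_{v,h}^{\circ}}$ has dimension at least $\dim\Owh{w}+\dim\overline{X_{v,h}^{\circ}}-\dim\Hess(S,h)=\ell_h(v)-\ell_h(w)\ge 1$, and the component through $\dot vB$ meets the open subset $X_{v,h}^{\circ}$; hence the $\lambda$-repelling locus $R_v:=\Owh{w}\cap X_{v,h}^{\circ}$ of $\dot vB$ in $\Owh{w}$ is positive-dimensional. Now $R_v$ is a $T$-stable closed subvariety of the affine space $X_{v,h}^{\circ}$, on which $\lambda$ contracts everything to $\dot vB$ as $t\to 0$ and which has $\dot vB$ as its unique $T$-fixed point; so for any $x\in R_v\setminus\{\dot vB\}$ the orbit closure $\overline{Tx}\subseteq R_v$ is an affine toric variety with torus-fixed point $\dot vB$ of dimension $\ge1$, hence contains a one-dimensional $T$-orbit $\mathcal O$ with $\dot vB$ as the $t\to 0$ endpoint of $\overline{\mathcal O}$. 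Letting $\dot{v'}B$ be the other endpoint, we get $\dot{v'}B\in\overline{\mathcal O}\subseteq\Owh{w}$, the edge $(v\to v')$ lies in $E$, and $\langle\lambda,\alpha(v\to v')\rangle>0$ (as $\dot vB$ is the $t\to 0$ endpoint) means $\ell(v')<\ell(v)$, so $(v\to v')\in\Gh^o$; since also $\ell(v')\ge\ell(w)$, the inductive hypothesis yields a descending $\Gh^o$-chain $v'\to\cdots\to w$, and prepending $v\to v'$ finishes the induction.

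The step I expect to be most delicate is extracting the outgoing $\Gh^o$-edge in (1): one has to control the local behaviour of the $\lambda$-flow at the (possibly singular) point $\dot vB$ of $\Owh{w}$, which is exactly what the dimension estimate on $R_v$ — together with the facts that $X_{v,h}^{\circ}$ is a contracting affine space and that $\Owh{w}$ is a union of minus cells — is designed to circumvent. A subsidiary point is the nonvanishing $e_P(Z)\ne 0$ for $P\in Z$ used in (2): it is standard, but it genuinely needs Brion's theory of equivariant multiplicities rather than a naive self-intersection formula, precisely because $\Owh{w}$ need not be smooth away from $\Owho{w}$.
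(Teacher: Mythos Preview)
Your approach to parts (2) and (3) is essentially the paper's: both go through Brion's equivariant multiplicities (Lemma~\ref{lemma_Brion_equivariant_mutliplicities}), and your Euler-class computation at the smooth point $\dot wB$ for (3) is exactly what the paper does. One small omission in (2): the nonvanishing $e_P(Z)\ne 0$ for $P\in Z$ is not unconditional---Lemma~\ref{lemma_Brion_equivariant_mutliplicities}(2) requires $P$ to be \emph{attractive} in $Z$. The paper checks this (every $v\in\Owh{w}^T$ is attractive in $\Owh{w}$, essentially because $\Owh{w}\subset\Ow{w}$ and the inverse $\lambda$-flow contracts a neighbourhood to $v$); you flag the subtlety in your last paragraph but never supply the attractiveness hypothesis, so this step is not quite complete as written.

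For (1) your route genuinely differs from the paper's, which simply cites Carrell--Sommese~\cite{CarrellSommese83}. Your direct argument---filterability of the Bia\l{}ynicki-Birula decomposition to get $\ell_h(v)>\ell_h(w)$, then the smooth-ambient dimension bound on $\Owh{w}\cap\overline{X_{v,h}^\circ}$ to force the repelling locus $R_v$ to be positive-dimensional, then extracting a one-dimensional $T$-orbit from the affine toric piece $\overline{Tx}\subset R_v$---is correct and self-contained. It is close in spirit to what Carrell--Sommese prove in general, so what you gain is transparency (the geometry of finding the outgoing $\Gh^o$-edge is made explicit) at the cost of length; the paper's citation is of course much shorter. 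Either way the statement is established.
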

Here and from now on, we identify $v \in \mathfrak S_n$ with $\dot{v}B \in G/B$. \\

In the forthcoming section, we will provide a concrete description of the support~$\supp(\swh{w})$ in Theorem~\ref{thm_support_of_Owh}.
To present a proof of Proposition~\ref{prop_property_of_swh}, we use the following result of Brion~\cite{Br1}:
\begin{lemma}[{\cite[Theorems~4.2 and~4.5, Proposition~4.4]{Br1}}]\label{lemma_Brion_equivariant_mutliplicities}
	Let $X$ be a complex projective variety with an action of $T$, let $x \in X$ be an isolated fixed point.
	Let $\chi_1,\dots,\chi_m$ be the weights of $T_xX$, where $m = \dim_{\C}(X)$.
	\begin{enumerate}
		\item There exists a unique $\Sym$-linear map
		\[
		e_x \colon A^{\ast}_T(X) \to \frac{1}{\chi_1 \cdots \chi_m} \Sym
		\]
		such that $e_x[x] = 1$ and that $e_x[Y] = 0$ for any $T$-invariant subvariety $Y \subset X$ which does not contain $x$.
		\item If $x$ is an attractive point in $X$, that is, all weights in the tangent space $T_xX$ are contained in some open half-space of $M \otimes_{\Z} \R$,  then $e_x[X] \neq 0$.

		\item For any $T$-invariant subvariety $Y \subset X$, the rational function $e_x[Y]$ is homogeneous of degree $(-\dim_{\C}(Y))$.
		\item The point $x$ is nonsingular in $X$ if and only if
		\[
		e_x[X] = \frac{1}{\chi_1 \cdots \chi_m}.
		\]
		Moreover, in this case, for any $T$-invariant subvariety $Y \subset X$, we have
		\[
		[Y]_x = e_x[Y] \chi_1 \cdots \chi_m\,,
		\]
		where $[Y]_x$ denotes the pull-back of $[Y]$ by the inclusion of $x$ into $X$.
	\end{enumerate}
\end{lemma}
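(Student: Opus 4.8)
\medskip
\noindent\textbf{Proof proposal.}
This lemma is Brion's theorem on equivariant multiplicities \cite[\S4]{Br1}; since we invoke it only for the smooth variety $\Hess(S,h)$, the plan is to describe an argument that streamlines the smooth case and then indicate how the general statement follows. The backbone is the \emph{localization theorem} for $T$-equivariant Chow groups: with $Q = \operatorname{Frac}(\Sym)$, the pushforward $\bigoplus_{F\subseteq X^T} A^{\ast}_T(F)\otimes_{\Sym}Q \to A^{\ast}_T(X)\otimes_{\Sym}Q$ from the connected components $F$ of the fixed locus is an isomorphism. I would prove this by Noetherian induction on $T$-invariant closed subsets, reducing it to the vanishing $A^{\ast}_T(Z)\otimes_{\Sym}Q = 0$ for a $T$-variety with $Z^T = \varnothing$; via generic slices for the torus action and d\'evissage this reduces further to $Z = T/T'$ with $T'\subsetneq T$, where $A^{\ast}_T(T/T')\cong A^{\ast}_{T'}(\mathrm{pt})$ is a proper quotient of $\Sym$ and hence is annihilated after tensoring with $Q$ by any character that is trivial on $T'$ but not on $T$ (such a character is a nonzerodivisor lying in the kernel).

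\smallskip
Granting localization, an isolated fixed point $x$ is its own component of $X^T$ and contributes the line $A^{\ast}_T(\{x\})\otimes_{\Sym}Q = Q\cdot[x]$; I would define $e_x$ as the composite of $A^{\ast}_T(X)\hookrightarrow A^{\ast}_T(X)\otimes_{\Sym}Q$ with the projection onto this line, identified with $Q$ by $[x]\mapsto 1$. When $X$ is smooth the normal bundle of $\{x\}$ is $T_xX$, so $[x]$ restricts at $x$ to the equivariant Euler class $\chi_1\cdots\chi_m$, and this $e_x$ coincides with $i_x^{\ast}(\,\cdot\,)/(\chi_1\cdots\chi_m)$ for $i_x\colon\{x\}\hookrightarrow X$. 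For part~(1), $\Sym$-linearity and $e_x[x]=1$ are built in, while $e_x[Y]=0$ for a $T$-invariant $Y$ with $x\notin Y$ follows from base change, since $[Y]=(\iota_Y)_{\ast}(1)$ for the proper inclusion $\iota_Y$ and $\{x\}\times_X Y=\varnothing$ forces $i_x^{\ast}(\iota_Y)_{\ast}(1)=0$ (for singular $Y$ one argues after an equivariant resolution, or via the localization sequence for $X\setminus\{x\}$); uniqueness is then forced, because localization exhibits the isolated point classes together with the images of the positive-dimensional fixed components as a $Q$-spanning set of $A^{\ast}_T(X)\otimes_{\Sym}Q$, and any map with the listed properties kills all of these except $[x]$. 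Part~(3) is a degree count: $[Y]$ has cohomological degree $\operatorname{codim}Y$ in the $\Sym$-grading, $i_x^{\ast}$ preserves this degree, and division by the degree-$m$ element $\chi_1\cdots\chi_m$ leaves $e_x[Y]$ homogeneous of degree $\operatorname{codim}Y-m=-\dim_{\C}Y$.

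\smallskip
For part~(4), when $X$ is nonsingular at $x$ the inclusion $i_x$ is a regular embedding, so $i_x^{\ast}$ is defined, $i_x^{\ast}[X]=1$, hence $e_x[X]=1/(\chi_1\cdots\chi_m)$, and unwinding the definitions gives $[Y]_x=i_x^{\ast}[Y]=e_x[Y]\,\chi_1\cdots\chi_m$ — the only case needed for Proposition~\ref{prop_property_of_swh}. For the general equivalence I would use that, by Sumihiro's theorem, $e_x$ depends only on a $T$-stable affine neighborhood of $x$ and is unchanged under deformation to the normal cone, so that $e_x[X]=e_0[C_xX]$ for the tangent cone $C_xX\subseteq T_xX$; since $[C_xX]=[T_xX]$ as cycles exactly when $X$ is smooth at $x$, and $e_0[T_xX]=1/(\chi_1\cdots\chi_m)$ because $[T_xX]$ generates $A^{\ast}_T(T_xX)\cong\Sym$ with $[0]=(\chi_1\cdots\chi_m)[T_xX]$, one direction is immediate; the converse uses the positivity below. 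Finally part~(2): if $x$ is attractive, choose $\lambda\colon\Cstar\to T$ with $\langle\lambda,\chi_i\rangle>0$ for all $i$; replacing $X$ by $C_xX$ inside $V=T_xX$ does not change $e_x[X]$, and $C_xX$ is then a cone whose defining weights all lie in the open half-space $\{\langle\lambda,\cdot\rangle>0\}$. Writing $e_0[C_xX]=\sum_i m_i\,e_0[C_i]$ over irreducible components, each $e_0[C_i]$ is, via its interpretation through the $T$-equivariant Hilbert series of $\mathcal O_{C_i,0}$, a ratio of products of linear forms whose value at a point deep in the interior of the dual cone is strictly positive; hence no cancellation occurs and $e_x[X]=e_0[C_xX]\neq 0$.

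\smallskip
The main obstacle is twofold. First, the localization theorem itself is not formal: it rests on the structure theory of torus actions — generic slices, stabilizer subtori, and the computation of $A^{\ast}_T(T/T')$ — and everything else hangs on it. Second, the positivity behind part~(2) and the converse half of part~(4) is a genuine input: ruling out cancellation among the equivariant multiplicities of the components of a cone lying in a half-space requires a real positivity statement about $e_0$ of such a cone, not merely Chow-group bookkeeping.
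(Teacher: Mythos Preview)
The paper does not prove this lemma at all: it is stated with the attribution ``\cite[Theorems~4.2 and~4.5, Proposition~4.4]{Br1}'' and used as a black box to establish Proposition~\ref{prop_property_of_swh} and Proposition~\ref{cor:regular class}. So there is no ``paper's own proof'' to compare against; your proposal is effectively a sketch of Brion's original arguments, and as such it is broadly faithful to the structure of \cite[\S4]{Br1} --- localization over $Q=\operatorname{Frac}(\Sym)$, definition of $e_x$ by projection to the $[x]$-line, reduction to the tangent cone, and a positivity argument for attractive points. For the purposes of this paper only the smooth case of part~(4) and the nonvanishing in part~(2) are ever invoked, and your treatment of those is adequate; the more delicate converse in~(4) and the Hilbert-series positivity you flag as obstacles are precisely the places where one must defer to Brion rather than reprove things here.
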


\begin{proof}[Proof of Proposition~\ref{prop_property_of_swh}]
	\noindent (1) The first statement follows from the result of Carrell and Sommese~\cite[Lemma~1 in Section~IV]{CarrellSommese83}.
	\smallskip
	
	\noindent (2)
	It is enough to show that the claim holds for $[\Owh{w}]$, that is, $\supp([\Owh{w}]) = \Owh{w}^T$, and $[\Owh{w}]_v$ is homogeneous of degree~$\ell_h(w)$.
	If $v \notin \Owh{w}^T$, then
	\[
	[\Owh{w}]_{v} \stackrel{(4)}{=} e_{v}[\Owh{w}] \chi_1 \cdots \chi_m \stackrel{(1)}{=} 0
	\]
	using (1) and (4) in Lemma~\ref{lemma_Brion_equivariant_mutliplicities}, where $\chi_1,\dots,\chi_m$ are weights in the tangent space $T_v \Hess(S,h)$.
	Moreover, since all $v \in \Owh{w}^T$ are attractive in $\Owh{w}$ (cf.~\cite[\S 6.5]{Br1}), we obtain $e_{v} [\Owh{w}] \neq 0$ by Lemma~\ref{lemma_Brion_equivariant_mutliplicities}(2). Then by Lemma~\ref{lemma_Brion_equivariant_mutliplicities}(3) and (4), we have $[\Owh{w}]_v \neq 0$ and is homogeneous of degree $\dim_{\C}(\Hess(S,h)) - \dim_{\C} \Owh{w} = \ell_h(w)$.  Therefore, we obtain the second statement.
	
	\smallskip
	\noindent(3) Note that the point $\dot{w}B$ is nonsingular in $\Owh{w}$.
	By Lemma~\ref{lemma_Brion_equivariant_mutliplicities}(4), we obtain
	\[
	[\Owh{w}]_w = e_w[\Owh{w}] \chi_1 \cdots \chi_m
	= \frac{ \prod_{(w \to v) \in \Gh} \alpha(w \to v)}{ \prod_{(w \to v) \notin \Gh^o} \alpha(w \to v)}
	= \prod_{(w \to v) \in \Gh^o} \alpha(w \to v).
	\]
	Here, the second equality is deduced from the fact the $\Owh{w}$ is the closure of the minus cell and the weights of $T_w \Owho{w}$ are given by $\alpha(w \to v)$ for $(w \to v) \notin \Gh^o$. This completes the proof of the third statement.
\end{proof}
\begin{remark}
	There are several attempts to find bases of the equivariant cohomology of a complex variety with an action of $T$. Guillemin and Zara~\cite{GZ02,GZ03} introduced `equivariant Thom classes' which can be considered as the equivariant Poincar\'e duals of the closures of the minus cells when the closures are smooth. Goldin and Tolman~\cite{GoldinTolman} considered a similar problem for Hamiltonian $(S^1)^k$-manifolds.
	However, their bases are not uniquely defined in general. Recently, Pabiniak and Sabatini~\cite{PabiniakSabatini} defined `canonical bases' for symplectic toric manifolds which are uniquely defined.
	For regular semisimple Hessenberg varieties, Tymoczko~\cite{T1} focused on the case of $\Hess(S,h) = \flag(\C^n)$ and studied a basis called `Knutson--Tao classes' which are uniquely determined. Moreover, Teff~\cite{Teff_DDO,Teff_thesis} studied bases of the equivariant cohomology rings of arbitrary regular semisimple Hessenberg varieties called `flow-up bases'.
	
	Our basis $\swh{w}$ satisfies nice properties which are already considered in the known results.
	Indeed, when $\Hess(S,h) = \flag(\C^n)$, our basis coincides with the basis constructed by Tymoczko. When $\Hess(S,h)$ is toric, then our basis is the  `canonical basis' provided by Pabiniak and Sabatini. Moreover, by Proposition~\ref{prop_property_of_swh}, the basis $\swh{w}$ is a `flow-up basis' studied by Teff.
\end{remark}

We close this subsection with a property of an equivariant cohomology class, which will be used in Section~\ref{sec:lee_permutohedral}. For a nonsingular GKM space $(X,T)$ with the GKM graph $(V,E,\alpha)$, suppose that $Y \subset X$ is a $T$-invariant nonsingular subvariety.
Since the set $\{\alpha(v \to w) \mid (v \to w)\in E\}$ forms the weights of $T_vX$ and the subset $\{\alpha(v \to w) \mid (v \to w)\in E \text{ and }w \in \supp([Y])\}$ forms the weights of $T_vY$, by Lemma~\ref{lemma_Brion_equivariant_mutliplicities}(4), we obtain the following:
\begin{proposition}\label{cor:regular class}
	Let $(X,T)$ be a nonsingular GKM space  and let $\Gamma = (V, E, \alpha)$ be the GKM graph.
	Let $Y \subset X$ be a $T$-invariant nonsingular subvariety.
	For $v \in \supp([Y])$, if $Y$ is smooth at $v$, then we have
	\[
	[Y](v) = \prod_{\substack{(v \to w) \in E, \\ w \notin \supp([Y])}} \alpha(v \to w).
	\]
Here, $[Y]$ is the equivariant cohomology class determined by $Y$ via the cycle map~\eqref{eq_cycle_map_is_iso}.
\end{proposition}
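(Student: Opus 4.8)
The plan is to imitate the proof of Proposition~\ref{prop_property_of_swh}(3), applying Brion's computation of equivariant multiplicities (Lemma~\ref{lemma_Brion_equivariant_mutliplicities}) twice: inside $X$, and inside $Y$. Recall from the discussion preceding the statement that the multiset of weights of $T_vX$ is $\{\alpha(v\to w)\mid (v\to w)\in E\}$, while the multiset of weights of $T_vY$ is $\{\alpha(v\to w)\mid (v\to w)\in E,\ w\in\supp([Y])\}$; the latter identification is the one place where the nonsingularity of $Y$ at $v$ is really used, together with the GKM hypothesis that the weights at $v$ are pairwise independent, so that a $T$-submodule of $T_vX$ is a direct sum of weight lines of $T_vX$.

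Since $X$ is nonsingular, $v$ is a nonsingular point of $X$, so Lemma~\ref{lemma_Brion_equivariant_mutliplicities}(4) gives
\[
[Y](v)=[Y]_v=e_v[Y]\prod_{(v\to w)\in E}\alpha(v\to w).
\]
On the other hand, $Y$ is a projective $T$-variety with isolated fixed points, so the equivariant multiplicity $e_v$ is defined on it as well, and $e_v[Y]$ does not depend on the ambient variety but only on a neighbourhood of $v$ in $Y$ (cf.~\cite{Br1}). Since $v$ is a nonsingular point of $Y$, the nonsingularity criterion in Lemma~\ref{lemma_Brion_equivariant_mutliplicities}(4), applied to $Y$ in place of $X$, yields
\[
e_v[Y]=\Bigg(\prod_{\substack{(v\to w)\in E\\ w\in\supp([Y])}}\alpha(v\to w)\Bigg)^{-1}.
\]
Combining the two displays and cancelling common factors gives $[Y](v)=\prod_{(v\to w)\in E,\ w\notin\supp([Y])}\alpha(v\to w)$, as asserted.

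For completeness one should also record the equality $\supp([Y])=Y^T$ used implicitly above: the inclusion $\subseteq$ follows from Lemma~\ref{lemma_Brion_equivariant_mutliplicities}(1) and~(4), since $e_v[Y]=0$ whenever $v\notin Y$, hence $[Y]_v=0$; the reverse inclusion follows from the displayed nonvanishing of $e_v[Y]$ for $v\in Y^T$, which forces $[Y]_v\neq 0$. The only genuinely non-formal ingredient in the whole argument is the identification of $T_vY$ inside $T_vX$ as the span of exactly the weight lines indexed by the neighbours of $v$ that lie in $\supp([Y])$; once this is granted, the two invocations of Brion's lemma are routine, so I would expect this tangent-space bookkeeping to be the main obstacle to a fully rigorous write-up.
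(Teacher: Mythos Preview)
Your proposal is correct and follows essentially the same approach as the paper: the paper's proof is the one-sentence remark immediately preceding the proposition, which identifies the weights of $T_vX$ and $T_vY$ with the edge labels at $v$ (all neighbours, respectively neighbours in $\supp([Y])$) and then invokes Lemma~\ref{lemma_Brion_equivariant_mutliplicities}(4). Your write-up is more explicit about the two separate applications of Brion's lemma and about the equality $\supp([Y])=Y^T$, but the underlying argument is the same.
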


\section{Fixed points in the closure of a minus cell}\label{sec_description_minus_cell}
In this section, we provide a concrete formula for the support of each class $\swh{w}$ in Theorem~\ref{thm_support_of_Owh} using the \textit{reachability} of a certain graph.
To prove the theorem, we study an explicit description of the minus cell $\Owho{w}$ and its properties in Proposition~\ref{prop_reachable_nonzero} and Theorem~\ref{thm_reachable_nonzero_minor}.

\subsection{Support of $\swh{w}$}
\begin{definition}\label{def_graph_Ghw}
	Let $h$ be a Hessenberg function and $w \in \mathfrak{S}_n$.
	We define a directed graph~$\Gwh$ with the vertex set~$[n]$ such that for each pair of indices $1 \leq j < i \leq n$, there is an edge $j \to i$ in $\Gwh$ if and only if
	\begin{equation}\label{eq_def_of_Ghw_edges}
	j < i \leq h(j), \quad w(j) < w(i).
	\end{equation}
\end{definition}

\begin{example}\label{example_Gwh}
	We assume that $n = 5$ and $h = (3,3,4,5,5)$.
	We present the graphs $G_{24135, h}$,  $G_{15342, h}$, and $G_{12345,h}$ in Figure~\ref{fig_graphs_Gwh}.
	\begin{figure}[t]
		\begin{subfigure}[b]{0.3\textwidth}
			\begin{tikzpicture}
			\foreach \x in {1,2,3,4,5}
			\node[circle,draw,inner sep=0pt,text width=5mm,align=center] (\x) at (\x,0) {\x};
			\draw[->] (1)--(2);
			\draw[->] (3)--(4);
			\draw[->] (4)--(5);
			\end{tikzpicture}
			\caption{$G_{24135,h}$.}\label{subfigure_24135}
		\end{subfigure}
		\hspace{0.5cm}
		\begin{subfigure}[b]{0.3\textwidth}
			\begin{tikzpicture}
			\foreach \x in {1,2,3,4,5}
			\node[circle,draw,inner sep=0pt,text width=5mm,align=center] (\x) at (\x,0) {\x};
			\draw[->] (1)--(2);
			\draw[->] (1) to [bend left = 45] (3);
			\draw[->] (3)--(4);
			\end{tikzpicture}
			\caption{$G_{15342,h}$.}\label{subfigure_15342}
		\end{subfigure}
		\hspace{0.5cm}
		\begin{subfigure}[b]{0.3\textwidth}
			\begin{tikzpicture}
			\foreach \x in {1,2,3,4,5}
			\node[circle,draw,inner sep=0pt,text width=5mm,align=center] (\x) at (\x,0) {\x};
			\draw[->] (1)--(2);
			\draw[->] (1) to [bend left = 45] (3);
			\draw[->] (2)--(3);
			\draw[->] (3)--(4);
			\draw[->] (4)--(5);
			\end{tikzpicture}
			\caption{$G_{12345,h}$.}\label{subfigure_12345}
		\end{subfigure}
		\caption{Graphs $\Gwh$ for $h = (3,3,4,5,5)$.}\label{fig_graphs_Gwh}
	\end{figure}
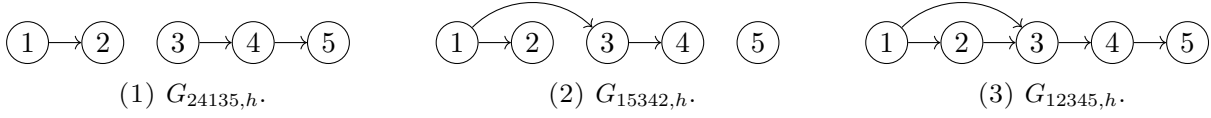
\end{example}
By the definitions of the graph $\Gwh$ and the length $\ell_h(w)$ (see~Definition~\ref{def_graph_Ghw} and~\eqref{eq_def_of_lhw}), and by the dimension formula in~\eqref{eq_dim_codim_X_Omega}, we obtain
\[
\# E(\Gwh) = \dim_{\C} \Owho{w}.
\]
We note that the graph $G_{e,h}$ agrees with an acyclic orientation of the \textit{incomparability graph} of the natural unit interval order determined by the Hessenberg function $h$, where $e$ is the identity element in $\mathfrak{S}_n$; see \cite{SW}.

We say a vertex $i$ is \textit{reachable} from $j$ in~$\Gwh$ if there exists a sequence of vertices $v_0, v_1,\dots,v_k$ such that $ j = v_0 \to v_1 \to \dots \to v_{k-1} \to v_k = i$. We allow the length of a sequence to be $0$, that is, $j$ is reachable from $j$.
Let $A = \{a_1 < a_2 < \dots < a_k\}$ and $B  = \{ b_1 < b_2 < \dots < b_k\}$ be subsets of $[n]$.
We say that $A$ is \textit{reachable} from $B$ in~$\Gwh$ if there exists a permutation $\sigma \in \mathfrak{S}_k$ such that $a_{\sigma(d)}$ is reachable from $b_d$ for all $d = 1,\dots,k$.
We will simply say `$i$ is reachable from $j$' without mentioning the graph $\Gwh$ when no confusion can arise.

For two vertices $j$ and~$i$ in the graph $\Gwh$, the \textit{distance} $d(j,i)$ is defined to be the  length of the shortest sequences  $j = v_0 \to v_1 \to \dots \to v_{k-1} \to v_k = i$. We set $d(j,j) = 0$ and if $i$ is not reachable from $j$, then $d(j,i) = \infty$.

For $ 1 \leq j \leq n$, we define the following set of ordered tuples:
\[
I_{j,n} \colonequals  \{(i_1,\dots,i_j) \in \Z^j \mid 1 \leq i_1 < \dots < i_j \leq n \}.
\]
For $u \in \mathfrak{S}_n$, and $(i_1,\dots,i_j) \in I_{j,n}$, we set
\[
u \cdot (i_1,\dots,i_j) \colonequals \{u(i_1),\dots,u(i_j)\}\!\!\uparrow.
\]
Here, for a $j$-tuple $(i_1,\dots,i_j)$ of distinct integers, $\{i_1,\dots,i_j\}\!\!\uparrow$
denotes the ordered $j$-tuple obtained from $i_1,\dots,i_j$ by arranging numbers in ascending order.
Furthermore, we set
\[
u^{(j)} = u \cdot (1,\dots,j) \quad \text{ for }1 \leq j \leq n.
\]

Using the reachability of the  graph $\Gwh$, we define the set $J_{w,h,j} \subset I_{j,n}$:
\begin{equation}
J_{w,h,j} \colonequals \{
(i_1,\dots,i_j) \in I_{j,n} \mid
\{ i_1,\dots,i_j\} \text{ is reachable from }[j] \text{ in~$\Gwh$} \}.
\end{equation}
\begin{example}\label{example_Lwh}
	We continue with Example~\ref{example_Gwh}. Let $h = (3,3,4,5,5)$.
	Using graphs in Figure~\ref{fig_graphs_Gwh}, we obtain the subsets $J_{w,h,j}$ as follows.
	\begin{enumerate}
		\item When $w = 24135$, we have
		\begin{gather*}
		J_{w,h,1} = \{ (1), (2)\},
		J_{w,h,2} = \{ (1,2)\},
		J_{w,h,3} = \{(1,2,3), (1,2,4), (1,2,5)\}, \\
		J_{w,h,4} = \{ (1,2,3,4), (1,2,3,5), (1,2,4,5)\},
		J_{w,h,5} = \{(1,2,3,4,5)\}.
		\end{gather*}
		\item When $w = 15342$, we have
		\begin{gather*}
		J_{w,h,1} = \{(1),(2),(3),(4)\}, J_{w,h,2} = \{(1,2),(2,3),(2,4)\}, J_{w,h,3} = \{(1,2,3),(1,2,4),(2,3,4)\}, \\
		J_{w,h,4} = \{(1,2,3,4)\}, J_{w,h,5} = \{(1,2,3,4,5)\}.
		\end{gather*}
		\item When $w = 12345$, we have
		\begin{gather*}
		J_{w,h,j} = I_{j,n} \quad \text{ for } 1\leq j \leq 5.
		\end{gather*}
	\end{enumerate}
\end{example}

Using these terminologies, we define a subset of $\mathfrak{S}_n$ which is determined by a Hessenberg function~$h$ and a permutation $w \in \mathfrak{S}_n$.
\begin{definition}\label{def_of_Awh}
	Let $h$ be a Hessenberg function on $[n]$ and $w \in \mathfrak{S}_n$.
	We define a subset~$\Awh{w} \subset \mathfrak{S}_n$ to be
	\[
	\Awh{w} \colonequals \{ u \in \mathfrak{S}_n \mid u^{(j)} \in \{
	w \cdot (i_1,\dots,i_j) \mid (i_1,\dots,i_j) \in J_{w,h,j}\}
	\quad \text{ for all }1 \leq j \leq n\}.
	\]
\end{definition}

The subsequent theorem is the main result of this section whose proof will be provided in Subsection~\ref{sec_proof_of_support_theorem}.
\begin{theorem}\label{thm_support_of_Owh}
	Let $h$ be a Hessenberg function and $w \in \mathfrak{S}_n$. Then the $T$-fixed points in the closure~$\Owh{w}$  of the minus cell is  $\Awh{w}$, that is,  $\Awh{w} = \Owh{w}^T = \supp(\swh{w})$.
\end{theorem}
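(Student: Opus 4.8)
The plan is to prove the chain of equalities $\Awh{w} = \Owh{w}^T = \supp(\swh{w})$ by first establishing the second equality, which is already done for free: Proposition~\ref{prop_property_of_swh}(2) tells us $\supp(\swh{w}) = \Owh{w}^T$. So the entire content is to show $\Awh{w} = \Owh{w}^T$, i.e., that the combinatorial set defined via reachability in $\Gwh$ is exactly the set of coordinate flags $\dot u B$ lying in the closure of the minus cell $\Owho{w}$. I would split this into the two inclusions, and I expect the real work to be an explicit parametrization of $\Owho{w}$ itself.

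The first step is to write down an explicit affine chart for the minus cell $\Owho{w}$. Since $\Owho{w} = \{\, gB \in \Hess(S,h) \mid \lim_{t\to\infty}\lambda(t)gB = \dot w B\,\}$, and near $\dot w B$ the flag variety is an affine space with coordinates indexed by the ``inversions'' available, the Hessenberg condition $SV_i \subset V_{h(i)}$ cuts this down so that the free coordinates are exactly the edges of $\Gwh$ — this matches the already-noted count $\#E(\Gwh) = \dim_\C \Owho{w}$. Concretely, I would represent a point of $\Owho{w}$ by a matrix obtained from the permutation matrix $\dot w$ by adding, in column $j$, entries in rows $w(i)$ for each edge $j\to i$ of $\Gwh$ (with a suitable normalization forced by the Hessenberg/limit conditions, analogous to the lower-triangular normalization in the $h=(n,\dots,n)$ case where $\Owho{w} = B^-\dot w B/B$). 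Then the closure $\Owh{w}$ is the Zariski closure of this image inside $\Hess(S,h)$, and a fixed point $\dot u B$ lies in $\Owh{w}$ iff it is a limit of flags of the above shape.

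The second step translates membership of $\dot u B$ in the closure into a statement about flags of subspaces, degenerating the parameters. A $T$-fixed flag $\dot u B$ has $V_j = \langle e_{u(1)},\dots,e_{u(j)}\rangle$. Taking torus-equivariant degenerations of the explicit family, the $j$-th subspace $V_j$ of a nearby flag is spanned by $j$ vectors whose leading (lowest-$\lambda$-weight, i.e.\ appropriately chosen) supports lie in $\{w(1),\dots,w(j)\}$ together with rows reachable via the edge structure of $\Gwh$; a monomial-degeneration (tropical) argument then shows the coordinate subspaces $V_j$ obtainable in the limit are precisely those $\langle e_{k} : k \in w\cdot(i_1,\dots,i_j)\rangle$ with $(i_1,\dots,i_j)$ such that $\{i_1,\dots,i_j\}$ is reachable from $[j]$ in $\Gwh$. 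The permutation $\sigma$ appearing in the definition of reachability of sets is exactly the matching of ``old index $b_d$'' to ``new index $a_{\sigma(d)}$'' realized by following disjoint directed paths in $\Gwh$; disjointness (needed to keep $j$ linearly independent vectors in the limit) is automatic because the paths use distinct starting vertices $1,\dots,j$ and the graph $\Gwh$ is acyclic with at most one relevant contribution per target in a generic degeneration. This yields $\dot u B \in \Owh{w}$ iff $u^{(j)} \in \{\, w\cdot(i_1,\dots,i_j) : (i_1,\dots,i_j)\in J_{w,h,j}\,\}$ for all $j$, which is exactly the condition defining $\Awh{w}$.

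The main obstacle is the closure step: controlling which $T$-fixed flags actually arise as limits from the affine chart, and in particular proving that the reachability condition is not just necessary but \emph{sufficient} — i.e.\ that whenever $\{i_1,\dots,i_j\}$ is reachable from $[j]$ for every $j$ \emph{simultaneously and compatibly}, there is a single one-parameter degeneration of a point of $\Owho{w}$ whose limit is $\dot u B$. Handling all $j$ at once requires choosing the path-systems coherently across the flag (nested subspaces), and it is here that one must check the explicit family is rich enough; the cleanest route is probably to reduce to one-parameter subgroups $\mu\colon \Cstar\to T$ generic enough to separate the monomials, compute $\lim_{t\to 0}\mu(t)\cdot g$ for $g$ in the chart, and read off that the attainable limits are governed precisely by the combinatorics of $\Gwh$. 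I defer the detailed verification of this family and of the curve-selection argument to Subsection~\ref{sec_proof_of_support_theorem}, where it is carried out via Proposition~\ref{prop_reachable_nonzero} and Theorem~\ref{thm_reachable_nonzero_minor}.
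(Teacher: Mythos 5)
Your overall structure is right: reduce to $\Awh{w} = \Owh{w}^T$ using Proposition~\ref{prop_property_of_swh}(2), parametrize $\Owho{w}$ by the free coordinates indexed by $E(\Gwh)$ (this is exactly Corollary~\ref{cor_xij_description_using_the_graph}), and then do the two inclusions with minors/Plücker coordinates governed by reachability. The easy inclusion $\Owh{w}^T \subset \Awh{w}$ is correctly sketched: it is the ``only if'' half of Theorem~\ref{thm_reachable_nonzero_minor} (vanishing of $p_{A,B}$ when $A$ is not reachable from $B$), which propagates to the closure.

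Where your plan diverges and has a genuine gap is the hard inclusion $\Awh{w} \subset \Owh{w}^T$. You propose to produce, for each $u \in \Awh{w}$, an explicit one-parameter degeneration (``generic $\mu\colon \Cstar \to T$ to separate monomials,'' ``tropical/curve-selection''). This is a substantially harder route than the one the paper uses, and you have not closed it: you would have to choose, simultaneously for all $j$, coherent path systems and a single curve whose limit is $\dot u B$, and you flag this as the main obstacle without resolving it. The paper instead takes a cleaner step that you do not identify: it finds a \emph{single} generic point $\dot w x \in \Owho{w}$ whose Plücker support is as large as possible, namely $L_{\dot w x} = \bigcup_j \{ w \cdot \underline{i} \mid \underline{i} \in J_{w,h,j}\}$ (this is where the generic choice of $c_1,\dots,c_n$ in Theorem~\ref{thm_reachable_nonzero_minor} is used), and then invokes the Gelfand--Serganova theorem (Lemma~\ref{lemma_GelfandSerganova}) to conclude that $\dot u B \in \overline{T \cdot (\dot w x)B} \subset \Owh{w}$ for every $u$ with $u^{(j)} \in L_{\dot w x}$ for all $j$. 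Gelfand--Serganova replaces your entire degeneration/curve-selection step with an off-the-shelf description of $T$-fixed points in a single torus-orbit closure in terms of Plücker support; it is exactly the tool that makes the ``simultaneous compatibility across $j$'' concern evaporate.

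Separately, your assertion that disjointness of paths ``is automatic because the paths use distinct starting vertices $\dots$ and the graph is acyclic'' is not correct as stated and is in fact the delicate point. Different permutations $\sigma$ in the definition of reachability can contribute cancelling terms to the minor $p_{A,B}(x)$, and Theorem~\ref{thm_reachable_nonzero_minor}'s proof has to work (by induction and a careful divisibility analysis of the coefficient Laurent polynomials $F_\sigma, G_\sigma, J, K$, plus a generic choice of eigenvalues) to rule out identical vanishing. So the technical heart you are deferring to is genuinely nontrivial, and your proposal as written does not indicate how it is established.
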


\begin{example}
	We continue with Example~\ref{example_Lwh}. Let $h = (3,3,4,5,5)$.
	\begin{enumerate}
		\item Suppose that $w = 24135$.
		Then the subset $\Awh{w}$ is given by
		\[
		\Awh{w} = \{ 24135, 24153, 24351, 24315, 24531, 24513,
		42135, 42153, 42351, 42315, 42531, 42513\}.
		\]
		\item Suppose that $w = 15342$. Then the subset $\Awh{w}$ is given by
		\[
		\Awh{w} = \{ 15342, 15432, 35142, 35412, 45132, 45312,
		51342, 51432, 53142, 53412, 54132, 54312\}.
		\]
		\item Suppose that $w = 12345$. Then the subset $\Awh{w}$ is the same as the set $\mathfrak{S}_n$ of permutations.
	\end{enumerate}
\end{example}

\begin{example}\label{example_reachability_full_flag}
	Suppose that $h = (n,n,\dots,n)$. Then we have $\Hess(S,h) = \flag(\C^n)$. In this case, for $1 \leq j < i \leq n$, the graph $\Gwh$ has an edge $j  \to i$ if and only if   $w(j) < w(i)$. Accordingly, for $\underline{i} = (i_1,\dots,i_j) \in I_{j,n}$, the set $\{i_1,\dots,i_j\}$ is  reachable from $[j]$ in~$G_{w,h}$ if and only if $w \cdot \underline{i} \geq w^{(j)}$. Here, we use an order on $I_{j,n}$, defined as follows: For $(a_1,\dots,a_j), (b_1,\dots,b_j) \in I_{j,n}$,
	\[
	(a_1,\dots,a_j) \geq (b_1,\dots,b_j) \iff a_t \geq b_t \quad \text{ for all }1 \leq t \leq j.
	\]
	Therefore, we have $J_{w,h,j} = \{ \underline{i} \mid w \cdot \underline{i} \geq w^{(j)}\}$, and moreover,
	\[
	A_{w,h} = \{ u \in \mathfrak{S}_n \mid u^{(j)} \geq w^{(j)} \quad \text{ for all }j\}.
	\]
	This proves that $A_{w,h} = \{ u \in \mathfrak{S}_n \mid u \geq w \text{ (in the Bruhat order)}\}$ in this case because  $u \geq w$ in the Bruhat order if and only if $u^{(j)} \geq w^{(j)}$ for all $j$ (see, for example,~\cite[\S 3.2]{BilleyLakshmibai}).
\end{example}

\subsection{Description of minus cells}
By the definition of the minus cell $\Owho{w}$, we have
\[
\Owho{w} = \Hess(S,h) \cap \Owo{w}.
\]
We note that elements in the opposite Schubert cell $\Owo{w}$ are described by
\begin{equation}\label{eq_Owo}
\Owo{w} =
\left\{
g = (g_{i,j}) \in \GL_n(\C) \mid
g_{w(j),j} = 1, \quad
g_{i,j} = 0 \text{ if $i < w(j)$ or $w^{-1}(i)<j$}
\right\}B \subset G/B.
\end{equation}

We recall the result in~\cite{DeMari_Shayman_88} which describes the minus cell $\Owho{w}$ explicitly. The authors of \cite{DeMari_Shayman_88} consider only the case  when a Hessenberg function has the form $h(i) = i+ p$, while their results can be extended to all Hessenberg functions.  Recall that $n$ distinct numbers $c_1, \dots, c_n$ are the diagonal entries of our regular semisimple matrix $S$.
\begin{proposition}{\cite[\S 5]{DeMari_Shayman_88}}\label{prop_minus_cell_description}
	Let $x = (x_{i,j})$ be a lower triangular matrix having $1$ on its diagonal.
	Then $\dot{w}x \in \Owho{w}$ if and only if
	\begin{enumerate}
		\item 	$x_{i,j} = 0$   if $w(i) < w(j)$;
		\item if $w(i) > w(j)$  and $i > h(j)$, we have
		\[
		x_{i,j} = \frac{-1}{c_{w(i)}- c_{w(j)}}
		\left[
		\sum_{t=1}^{i-j-1} \sum_{ i > \gamma_1 > \gamma_2 > \dots > \gamma_t > j} (-1)^t (c_{w(\gamma_t)} - c_{w(j)}) x_{i, \gamma_1} x_{\gamma_1, \gamma_2} \cdots x_{\gamma_t, j}
		\right].
		\]
	\end{enumerate}
\end{proposition}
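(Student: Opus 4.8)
The plan is to reduce the whole statement to a triangular linear-algebra computation over the basis given by the columns of $\dot w x$, and then extract the recursion by back-substitution. Write $g=\dot w x$ with columns $g_1,\dots,g_n$; since $(\dot w x)_{k,j}=x_{w^{-1}(k),j}$, we have $g_j=\sum_i x_{i,j}\,e_{w(i)}$. First I would check that condition (1) is precisely the statement $\dot w xB\in\Owo{w}$: using $x=\dot w^{-1}g$, i.e.\ $x_{i,j}=g_{w(i),j}$, together with the description \eqref{eq_Owo}, the requirements $g_{w(j),j}=1$ and $g_{i,j}=0$ for $w^{-1}(i)<j$ hold automatically for a lower triangular unipotent $x$, while $g_{i,j}=0$ for $i<w(j)$ says exactly $x_{i,j}=0$ whenever $w(i)<w(j)$; conversely every point of $\Owho{w}\subseteq\Owo{w}$ has the form $\dot w xB$ with such an $x$. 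Since $\Owho{w}=\Hess(S,h)\cap\Owo{w}$, it then suffices, assuming (1), to translate the Hessenberg condition $SV_i\subseteq V_{h(i)}$ into (2).

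Next I would exploit the echelon structure coming from (1): each $g_l=e_{w(l)}+\sum_{i>l,\ w(i)>w(l)}x_{i,l}e_{w(i)}$, so reordering as $g_{w^{-1}(1)},\dots,g_{w^{-1}(n)}$ gives a unipotent triangular change of basis with leading entries in positions $1,\dots,n$; in particular $V_k=\langle g_1,\dots,g_k\rangle$. Because $h$ is nondecreasing, $SV_i\subseteq V_{h(i)}$ for all $i$ is equivalent to $Sg_j\in V_{h(j)}$ for all $j$, and since $g_j\in V_j\subseteq V_{h(j)}$ this is equivalent to $y_j:=Sg_j-c_{w(j)}g_j\in V_{h(j)}$, where $y_j=\sum_i(c_{w(i)}-c_{w(j)})x_{i,j}e_{w(i)}$ is supported on the positions $w(i)$ with $w(i)>w(j)$. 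Expanding $y_j=\sum_i\mu_i g_i$ in the full $g$-basis, the triangular structure makes this a back-substitution
\[
\mu_i=(c_{w(i)}-c_{w(j)})x_{i,j}-\sum_{i':\,w(i')<w(i)}x_{i,i'}\,\mu_{i'},
\]
and an induction on $w(i)$ using (1) shows $\mu_i=0$ whenever $w(i)<w(j)$. As $\{g_i\}$ is a basis, $y_j\in V_{h(j)}$ amounts to $\mu_i=0$ for all $i>h(j)$.

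Unrolling the recursion for $\mu_i$ yields
\[
\mu_i=(c_{w(i)}-c_{w(j)})x_{i,j}+\sum_{t\ge1}\ \sum_{i>\gamma_1>\dots>\gamma_t>j}(-1)^t(c_{w(\gamma_t)}-c_{w(j)})\,x_{i,\gamma_1}x_{\gamma_1,\gamma_2}\cdots x_{\gamma_t,j},
\]
where the inequalities on $w$-values that appear along the way are automatically enforced by (1) (terms violating them vanish), so only the index order $i>\gamma_1>\dots>\gamma_t>j$ survives and $t\le i-j-1$. Since $c_{w(i)}\ne c_{w(j)}$, imposing $\mu_i=0$ for each $i>h(j)$ with $w(i)>w(j)$ and solving for $x_{i,j}$ gives exactly formula (2); for $i>h(j)$ with $w(i)<w(j)$ the vanishing $\mu_i=0$ is already a consequence of (1); and conversely (2) together with (1) forces every $\mu_i$ with $i>h(j)$ to vanish, so $\dot w x\in\Hess(S,h)$. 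This also confirms that (2) is a legitimate recursive definition: each factor $x_{a,b}$ in the formula for $x_{i,j}$ has $a-b<i-j$, so the determined entries are obtained by induction on $i-j$ from the free entries $\{x_{i,j}:j<i\le h(j),\ w(i)>w(j)\}$, whose count is $\ell_h(w)=\dim_{\C}\Owho{w}$.

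The step I expect to be the main obstacle is the combinatorial bookkeeping in the last display: verifying the signs $(-1)^t$, the terminal coefficient $c_{w(\gamma_t)}-c_{w(j)}$, and the fact that the $w$-value inequalities accumulated during the unrolling coincide exactly with those already imposed by (1) on the surviving terms, so that no extra constraints appear. One may alternatively package the last two steps as the single assertion that the matrix of $S$ in the $g$-basis, namely $x^{-1}\mathrm{diag}(c_{w(1)},\dots,c_{w(n)})x$, has no nonzero entry in row $a$, column $b$ with $a>h(b)$; but this is the same computation in different clothing and does not circumvent the bookkeeping.
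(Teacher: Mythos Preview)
The paper does not give its own proof of this proposition; it is quoted from \cite[\S5]{DeMari_Shayman_88} (with the remark that those authors treated only $h(i)=i+p$, but the argument extends). Your proposal is therefore not being compared to anything in the paper, but it is a correct and complete direct proof: the reduction of $\dot w xB\in\Owo{w}$ to condition~(1) via \eqref{eq_Owo} is accurate, the reformulation of $SV_i\subset V_{h(i)}$ as $\mu_i=0$ for $i>h(j)$ is right (the monotonicity of $h$ is used exactly where you say), and the unrolling of the triangular recursion for $\mu_i$ gives precisely the displayed formula, with the $w$-value inequalities absorbed by condition~(1) as you note. Your closing remark about writing the Hessenberg condition as the vanishing of the $(a,b)$-entries with $a>h(b)$ of $x^{-1}\mathrm{diag}(c_{w(1)},\dots,c_{w(n)})x$ is in fact exactly the way the paper later packages this computation in \eqref{eq defining ideal description}--\eqref{eq defining ideal}, so your approach aligns with how the result is used downstream.
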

From the description of the minus cell, the entries $x_{i,j}$ for $w(i) > w(j)$ and  $i \leq h(j)$ are free, i.e., there is no restriction on these entries. The number of such entries agrees with the dimension of the minus cell.
Using Proposition~\ref{prop_minus_cell_description}, we obtain the following corollary.
\begin{corollary}\label{cor_xij_description_using_the_graph}
	Let $x = (x_{i,j})$ be a lower triangular matrix having $1$ on its diagonal. Then $\dot{w}x \in \Owho{w}$ if and only if
	\begin{enumerate}
		\item $x_{i,j} = 0$ if $i$ is not reachable from $j$ in~$G_{w,h}$;
		\item if $i$ is reachable from $j$ in~$G_{w,h}$ with $d(j,i) > 1$, then
		\[
		x_{i,j} = \frac{-1}{c_{w(i)}- c_{w(j)}}
\sum
(-1)^t (c_{w(\gamma_t)} - c_{w(j)}) x_{i, \gamma_1} x_{\gamma_1, \gamma_2} \cdots x_{\gamma_t, j}.
		\]
Here, the sum is over sequences $\gamma_0 = i > \gamma_1 > \gamma_2 > \dots > \gamma_t > j= \gamma_{t+1}$ such that $t > 0$ and $\gamma_{\ell}$ is reachable from $\gamma_{\ell+1}$ in~$G_{w,h}$ for $\ell=0, \dots, t$.
	\end{enumerate}
\end{corollary}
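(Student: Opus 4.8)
The plan is to deduce the corollary directly from Proposition~\ref{prop_minus_cell_description} by translating its two conditions into the language of reachability in $G_{w,h}$, the only genuinely new inputs being one elementary combinatorial observation and an induction on $i-j$. First I would record the fact ($\ast$): if $i$ is reachable from $j$ in $G_{w,h}$ and $i\neq j$, then $j<i$ and $w(j)<w(i)$ — immediate from the definition of the edges, since along any directed path both the indices and their $w$-values strictly increase. In particular $w(i)<w(j)$ forces $i$ not reachable from $j$, so the first clause of the corollary is compatible with clause (1) of Proposition~\ref{prop_minus_cell_description}. I would also note at the outset that the ``free'' entries — those $x_{i,j}$ with $w(j)<w(i)$ and $i\le h(j)$, i.e.\ those for which $j\to i$ is an edge of $G_{w,h}$, i.e.\ those with $d(j,i)=1$ — are exactly the entries left unconstrained by \emph{both} descriptions, and that their number is $\dim_\C\Owho{w}$.

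For the forward implication, suppose $\dot w x\in\Owho w$, so $x$ satisfies (1) and (2) of Proposition~\ref{prop_minus_cell_description}; I would prove that $x$ satisfies (1) and (2) of the corollary by induction on $i-j$ (for $i\le j$ there is nothing to check, since $x_{i,j}=0$ and, by ($\ast$), $i$ is not reachable from $j$). The engine of the induction is the following observation: for any strictly decreasing sequence $i=\gamma_0>\gamma_1>\dots>\gamma_{t+1}=j$ with $t\ge1$, each gap $\gamma_\ell-\gamma_{\ell+1}$ is strictly smaller than $i-j$, so by the inductive hypothesis any factor $x_{\gamma_\ell,\gamma_{\ell+1}}$ with $\gamma_\ell$ not reachable from $\gamma_{\ell+1}$ vanishes; hence the product $x_{\gamma_0,\gamma_1}x_{\gamma_1,\gamma_2}\cdots x_{\gamma_t,\gamma_{t+1}}$ is nonzero only if every consecutive pair is reachable, in which case, concatenating paths, $i$ is reachable from $j$. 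Now fix $(i,j)$ with $i>j$. If $i$ is not reachable from $j$: when $w(i)<w(j)$ clause (1) of Proposition~\ref{prop_minus_cell_description} gives $x_{i,j}=0$; when $w(i)>w(j)$ there is no edge $j\to i$, which forces $i>h(j)$, so clause (2) of Proposition~\ref{prop_minus_cell_description} applies and every term of its sum vanishes by the observation, giving $x_{i,j}=0$ — this is clause (1) of the corollary. If $i$ is reachable from $j$ with $d(j,i)>1$, then ($\ast$) gives $w(i)>w(j)$ and, there being no edge $j\to i$, also $i>h(j)$; clause (2) of Proposition~\ref{prop_minus_cell_description} then expresses $x_{i,j}$ by its sum, and the observation shows that the surviving terms are precisely those indexed by sequences with all consecutive pairs reachable, which is exactly the formula in clause (2) of the corollary.

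For the reverse implication I would argue that each system of conditions determines every non-free entry from the free ones by a recursion on $i-j$ (for the corollary this is clause (2) read as a definition, for the proposition it is condition (2)), so each cuts out a graph over the affine space of free entries inside the space of lower-triangular unipotent matrices; since the forward implication shows the proposition's solution set is contained in the corollary's and both project bijectively onto the same affine space of free entries, they coincide. Alternatively one can rerun the same argument directly: assuming $x$ satisfies the corollary's conditions, the vanishing observation (now an immediate consequence of clause (1), with no induction needed) shows that the right-hand side of clause (2) of Proposition~\ref{prop_minus_cell_description} collapses to the corollary's sum when $i$ is reachable from $j$ and to $0$ otherwise, recovering (1) and (2) of the proposition. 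The main obstacle, such as it is, is bookkeeping: one must keep straight the three regimes for a pair $(i,j)$ — $w(i)<w(j)$, $w(i)>w(j)$ with $i\le h(j)$, and $w(i)>w(j)$ with $i>h(j)$ — and verify that the trichotomy ``$i$ not reachable / reachable with $d(j,i)=1$ / reachable with $d(j,i)>1$'' refines it compatibly, the one nontrivial point being that a non-reachable pair in the third regime has $x_{i,j}$ equal to a sum that vanishes term by term.
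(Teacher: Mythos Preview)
Your proof is correct and follows exactly the approach the paper intends: the paper states the corollary as an immediate consequence of Proposition~\ref{prop_minus_cell_description} without giving any explicit argument, and your induction on $i-j$ together with the observation $(\ast)$ is precisely the routine verification that makes this deduction rigorous. The only difference is that you have carefully spelled out the case analysis (the trichotomy on $(i,j)$ and the term-by-term vanishing) that the paper leaves to the reader.
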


\begin{example}\label{example_xij_not_free}
	We continue with Example~\ref{example_Gwh}. Let $h = (3,3,4,5,5)$ and $x = (x_{i,j})$ be a lower triangular matrix having $1$ on its diagonal.
	\begin{enumerate}
		\item Suppose that $w = 24135$. By considering the reachability of $\Gwh$ (see Figure~\ref{subfigure_24135}), the entries $x_{3,1}, x_{4,1}, x_{5,1}, x_{3,2}, x_{4,2}, x_{5,2}$ vanish. 		
		Moreover, three edges $(1 \to 2)$, $(3 \to 4)$, and $(4 \to 5)$ define free variables $x_{2,1}$, $x_{4,3}$, and $x_{5,4}$.
		The variable $x_{5,3}$ is given by the following equation.
		\[
		x_{5,3}  = \frac{-1}{c_{5}-c_{1}} \left[
		(-1)^1(c_{3} - c_{1}) x_{5,4}x_{4,3}
		\right] = \frac{(c_3-c_1) x_{5,4}x_{4,3}}{c_5-c_1}.
		\]
		
		\item Suppose that $w = 15342$. By considering the reachability of $\Gwh$ (see Figure~\ref{subfigure_15342}), the entries $x_{5,1}$, $x_{3,2}$, $x_{4,2}$, $x_{5,2}$, $x_{5,3}$, $x_{5,4}$ vanish, and there are three free variables $x_{2,1}, x_{3,1}$, and $x_{4,3}$. The variable $x_{4,1}$ is given by the following equation.
		\[
		x_{4,1} = \frac{-1}{c_{4} - c_{1}} \left[
		(-1)^{1}(c_3-c_1) x_{4,3}x_{3,1}
		\right] = \frac{(c_3-c_1)x_{4,3}x_{3,1}}{c_4-c_1}.
		\]
		
		\item Suppose that $w = 12345$. Then there are  five free variables $x_{2,1}, x_{3,1}, x_{3,2}, x_{4,3}$, and $x_{5,4}$, and the other variables are given as follows:
		\[
		\begin{split}
		x_{4,1} &= \frac{-1}{c_4 -c_1} \left[
		(-1)^1 (c_3-c_1) x_{4,3}x_{3,1} + (-1)^1(c_2-c_1) x_{4,2}x_{21} + (-1)^2(c_2-c_1) x_{4,3}x_{3,2}x_{2,1}
		\right], \\
		x_{5,1} &= \frac{-1}{c_5-c_1} [		(-1)^2(c_3-c_1) x_{5,4}x_{4,3}x_{3,1} + (-1)^1(c_3-c_1) x_{5,3}x_{3,1} + (-1)^1 (c_4-c_1) x_{5,4}x_{4,1} \\
		&\qquad + (-1)^3 (c_2-c_1) x_{5,4}x_{4,3}x_{3,2}x_{2,1} + (-1)^2 (c_2-c_1) x_{5,4}x_{4,2}x_{2,1} \\
		&\qquad \quad  + (-1)^2 (c_2-c_1) x_{5,3}x_{3,2}x_{2,1} + (-1)^1 (c_2-c_1) x_{5,2}x_{2,1}  ],\\
		x_{4,2} &= \frac{-1}{c_4-c_2}[(-1)^1(c_3-c_2)x_{4,3}x_{3,2}] = \frac{c_3-c_2}{c_4-c_2} x_{4,3}x_{3,2},\\
		x_{5,2} &= \frac{-1}{c_5-c_2}[ (-1)^2 (c_3-c_2) x_{5,4}x_{4,3}x_{3,2} + (-1)^1(c_4-c_2) x_{5,4}x_{4,2} + (-1)^1 (c_3-c_2) x_{5,3}x_{3,2}],\\
		x_{5,3} &= \frac{-1}{c_5-c_3}[(-1)^1 (c_4-c_3) x_{5,4}x_{4,3}] = \frac{c_4-c_3}{c_5-c_3}x_{5,4}x_{4,3}.
		\end{split}
		\]
	\end{enumerate}	
	
\end{example}

\subsection{Minors and reachability}\label{section_reachability}
In this subsection, we study the expression of $x_{i,j}$ in terms of free variables.
Applying Corollary~\ref{cor_xij_description_using_the_graph} iteratively, one may get this expression of $x_{i,j}$ in terms of free variables.
For notational simplicity, we define a set of paths connecting $i$ and $j$, which have length greater than $1$.
\[
\mathcal P_{w,h}(i,j) \colonequals
\left\{ \gamma_{\bullet} = (\gamma_{t+1} \to \gamma_t \to \dots \to \gamma_1 \to \gamma_0) ~~\middle|~~
\begin{array}{l}
i = \gamma_0, j = \gamma_{t+1}, t >0, \text{ and } \\
 \gamma_{\ell} \to \gamma_{\ell-1} \text{ is an edge in } G_{w,h}  \text{ for all }\ell
\end{array} \right\}.
\]
Each path $\gamma_{\bullet} \in \mathcal P_{w,h}(i,j)$ indexes a monomial  in the expression of $x_{i,j}$.
Furthermore, we have
\begin{equation}\label{eq_xij_monomial_expression}
x_{i,j} = \sum_{\gamma_{\bullet} \in \mathcal P_{w,h}(i,j)}
C(i, \gamma_1,\dots,\gamma_t,j) x_{i,\gamma_1} x_{\gamma_1,\gamma_2} \cdots x_{\gamma_t,j}
\end{equation}
Here, $C(i, \gamma_1,\dots,\gamma_t,j)$ is the coefficient of the monomial $x_{\gamma_1,\gamma_2} \cdots x_{\gamma_t,j}$. Note that each coefficient $C(i, \gamma_1,\dots,\gamma_t,j)$ is a Laurent polynomial in variables $c_1,\dots,c_n$.

For a given path $\gamma_{\bullet} \in \mathcal P_{w,h}(i,j)$, we call it \textit{minimal} if there are no  edges $\gamma_{\ell} \to \gamma_{\ell'}$  for $\ell'-\ell>1$. For example, the path $1 \to 2 \to 3 \to 4 \,\,\in \mathcal{P}_{12345, (3,3,4,5,5)}(4,1)$  is not minimal because there is an edge $1 \to 3$ (see Figure~\ref{subfigure_12345}).  We note that there is a minimal path from $j$ to $i$ if $i$ is reachable from $j$.
We will see in Proposition~\ref{prop_reachable_nonzero} that if $i$ is reachable from $j$, then $x_{i,j}$ is not identically zero; one of the coefficients in the expression~\eqref{eq_xij_monomial_expression} is not identically zero as a Laurent polynomial in variables $c_1, \dots,c_n$. Before presenting the statement, we provide an example.
\begin{example}\label{example_reachability}
	We continue with Example~\ref{example_xij_not_free}(3).
	Substituting the expression of $x_{4,2}$ in $x_{4,1}$, we obtain the following.
	\begin{align*}
	x_{4,1} &= \frac{-1}{c_4 -c_1} \left[
	(-1)^1 (c_3-c_1) x_{4,3}x_{3,1} + (-1)^1(c_2-c_1) x_{4,2}x_{2,1} + (-1)^2(c_2-c_1) x_{4,3}x_{3,2}x_{2,1}
	\right] \\
	&= \frac{-1}{c_4 -c_1} \left[
	(-1)^1 (c_3-c_1) x_{4,3}x_{3,1} + (-1)^1(c_2-c_1) \frac{(c_3-c_2)x_{4,3}x_{3,2}}{c_4-c_2} x_{2,1} + (-1)^2(c_2-c_1) x_{4,3}x_{3,2}x_{2,1}
	\right]  \\
	&= \frac{c_3-c_1}{c_4-c_1} x_{4,3}x_{3,1} - \frac{(c_2-c_1)(c_4-c_3)}{(c_4-c_1)(c_4-c_2)}x_{4,3}x_{3,2}x_{2,1}.
	\end{align*}
	There are two paths $1 \to 3 \to 4$ and $1 \to 2 \to 3 \to 4$ in $\mathcal P_{w,h}(4, 1)$ (see Figure~\ref{subfigure_12345}), and these paths define monomials $x_{4,3}x_{3,1}$ and $x_{4,3}x_{3,2}x_{2,1}$. Considering the coefficients $C(i,\gamma_1,\dots,\gamma_t, j)$ in~\eqref{eq_xij_monomial_expression}, we obtain
	\[
	C(4, 3, 1) = \frac{c_3-c_1}{c_4-c_1}, \quad C(4, 3, 2, 1) = - \frac{(c_2-c_1)(c_4-c_3)}{(c_4-c_1)(c_4-c_2)}.
	\]
	Here, we notice that the path $1 \to 3 \to 4$ is minimal while $1 \to 2 \to 3 \to 4$ is not.
	
	Substituting the expression of $x_{4,2}$ and $x_{5,3}$ in $x_{5,2}$, we have
	\begin{align*}
	x_{5,2} &= \frac{-1}{c_5-c_2}[ (-1)^2 (c_3-c_2) x_{5,4}x_{4,3}x_{3,2} + (-1)^1(c_4-c_2) x_{5,4}x_{4,2} + (-1)^1 (c_3-c_2) x_{5,3}x_{3,2}] \\
	&= \frac{(c_3-c_2)(c_4-c_3)}{(c_5-c_2)(c_5-c_3)} x_{5,4}x_{4,3}x_{3,2}.
	\end{align*}	
	There is one path $2 \to 3 \to 4 \to 5$ connecting $2$ and $5$ in the graph $\Gwh$, and the corresponding coefficient $C(5, 4, 3, 2)$ is given by
	\[
	C(5, 4, 3, 2) = \frac{(c_4-c_3)(c_3-c_2)}{(c_5-c_2)(c_5-c_3)}.
	\]

	Substituting $x_{5,3}$, $x_{5,2}$, $x_{4,1}$ and $x_{4,2}$ to $x_{5,1}$, we obtain
\[
\begin{split}
	x_{5,1} &= \frac{-1}{c_5-c_1} [		(-1)^2(c_3-c_1) x_{5,4}x_{4,3}x_{3,1} + (-1)^1(c_3-c_1) x_{5,3}x_{3,1} + (-1)^1 (c_4-c_1) x_{5,4}x_{4,1} \\
	&\qquad + (-1)^3 (c_2-c_1) x_{5,4}x_{4,3}x_{3,2}x_{2,1} + (-1)^2 (c_2-c_1) x_{5,4}x_{4,2}x_{2,1} \\
	&\qquad \quad + (-1)^2 (c_2-c_1) x_{5,3}x_{3,2}x_{2,1} + (-1)^1 (c_2-c_1) x_{5,2}x_{2,1} ] \\
&= \frac{(c_4-c_3)(c_3-c_1)}{(c_5-c_1)(c_5-c_3)} x_{5,4}x_{4,3}x_{3,1} - \frac{(c_4-c_3)(c_2-c_1)}{(c_5-c_1)(c_5-c_2)} x_{5,4}x_{4,3}x_{3,2}x_{2,1}.
\end{split}
\]
	There are two paths $1 \to 3 \to 4 \to 5$ and $1 \to 2 \to 3 \to 4 \to 5$ in $\mathcal P_{w,h}(5, 1)$, and they define monomials $x_{5,4}x_{4,3}x_{3,1}$ and $x_{5,4}x_{4,3}x_{3,2}x_{2,1}$, respectively. Their coefficients are given by
	\[
	C(5, 4, 3, 1) = \frac{(c_4-c_3)(c_3-c_1)}{(c_5-c_1)(c_5-c_3)}, \quad C(5, 4, 3, 2, 1) = - \frac{(c_4-c_3)(c_2-c_1)}{(c_5-c_1)(c_5-c_2)}.
	\]
\end{example}

\begin{proposition}\label{prop_reachable_nonzero}
	Let $x = (x_{i,j})$ be a lower triangular matrix having $1$ on its diagonal. Suppose that  $\dot{w}x \in \Owho{w}$ and  $i$ is reachable from $j$.
	For a minimal path $\gamma_{\bullet} \in \mathcal P_{w,h}(i,j)$,
the coefficient $C(i,\gamma_1,\dots,\gamma_t,j)$ of the monomial $x_{i,\gamma_1} x_{\gamma_1,\gamma_2} \cdots x_{\gamma_t,j}$ in the expression~\eqref{eq_xij_monomial_expression} is given as follows:
	\[
	\frac{(c_{w(\gamma_1)} - c_{w(\gamma_2)}) (c_{w(\gamma_2)} - c_{w(\gamma_3)})\cdots (c_{w(\gamma_t)} - c_{w(\gamma_{t+1})})}{(c_{w(\gamma_0)} - c_{w(\gamma_{t+1})}) (c_{w(\gamma_0)} - c_{w(\gamma_{t})}) \cdots (c_{w(\gamma_0)} - c_{w(\gamma_{2})})},
	\]
where $\gamma_0=i$ and $\gamma_{t+1}=j$.
	Thus, $x_{i,j}$ is not identically zero if $i$ is reachable from $j$.
\end{proposition}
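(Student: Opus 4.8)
The plan is to induct on $t$, the number of interior vertices of the minimal path $\gamma_\bullet=(\gamma_{t+1}\to\gamma_t\to\cdots\to\gamma_1\to\gamma_0)$, where $\gamma_0=i$ and $\gamma_{t+1}=j$; throughout I abbreviate $d_\ell\colonequals c_{w(\gamma_\ell)}$, so that $d_0,d_1,\dots,d_{t+1}$ are pairwise distinct because the $c_m$ are distinct and $w$ is a bijection. For the base case $t=1$ one has $d(j,i)=2$ and $i>h(j)$, and the only reachability sequence in Corollary~\ref{cor_xij_description_using_the_graph}(2) that produces the monomial $x_{i,\gamma_1}x_{\gamma_1,j}$ is $i>\gamma_1>j$ itself; it contributes $C(i,\gamma_1,j)=\frac{-1}{d_0-d_2}\cdot(-1)(d_1-d_2)=\frac{d_1-d_2}{d_0-d_2}$, which is the asserted formula.

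For the inductive step I would apply Corollary~\ref{cor_xij_description_using_the_graph}(2) to $x_{i,j}$ and then expand every non-edge factor on the right via~\eqref{eq_xij_monomial_expression}. The key combinatorial observation is that the monomial $m_{\gamma_\bullet}\colonequals x_{\gamma_0,\gamma_1}x_{\gamma_1,\gamma_2}\cdots x_{\gamma_t,\gamma_{t+1}}$ determines the path $\gamma_\bullet$ uniquely, and every variable in it is the free variable attached to an edge between two of the $\gamma$'s; hence a reachability sequence $i=\delta_0>\delta_1>\cdots>\delta_s>\delta_{s+1}=j$ can contribute to $m_{\gamma_\bullet}$ only when it is a subsequence of $\gamma_0,\dots,\gamma_{t+1}$, say $\delta_\ell=\gamma_{a_\ell}$ with $0=a_0<a_1<\cdots<a_s<a_{s+1}=t+1$ and $s\ge1$, and then it contributes exactly once. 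For a block with $a_{\ell+1}-a_\ell\ge2$, minimality of $\gamma_\bullet$ forbids the shortcut edge $\gamma_{a_{\ell+1}}\to\gamma_{a_\ell}$, so $d(\gamma_{a_{\ell+1}},\gamma_{a_\ell})>1$, the sub-path $\gamma_{a_{\ell+1}}\to\cdots\to\gamma_{a_\ell}$ is again minimal with strictly fewer than $t$ interior vertices, and the induction hypothesis supplies the coefficient of its monomial in $x_{\gamma_{a_\ell},\gamma_{a_{\ell+1}}}$; a block with $a_{\ell+1}-a_\ell=1$ is a free variable contributing $1$, consistent with the product formula read as an empty product. Collecting the contributions gives
\[
C(i,\gamma_1,\dots,\gamma_t,j)=\frac{-1}{d_0-d_{t+1}}\sum_{\substack{0=a_0<a_1<\cdots<a_s<a_{s+1}=t+1\\ s\ge1}}(-1)^s(d_{a_s}-d_{t+1})\prod_{\ell=0}^{s}\frac{\prod_{p=a_\ell+1}^{a_{\ell+1}-1}(d_p-d_{p+1})}{\prod_{p=a_\ell+2}^{a_{\ell+1}}(d_{a_\ell}-d_p)}.
\]
The block numerators multiply to $\prod_{p\in\{1,\dots,t\}\setminus\{a_1,\dots,a_s\}}(d_p-d_{p+1})$, so after factoring $\prod_{p=1}^{t}(d_p-d_{p+1})$ out of the whole expression the claim reduces to the rational identity
\[
\sum_{\substack{0=a_0<a_1<\cdots<a_s<a_{s+1}=t+1\\ s\ge1}}\frac{(-1)^s(d_{a_s}-d_{t+1})}{\bigl(\prod_{p\in\{a_1,\dots,a_s\}}(d_p-d_{p+1})\bigr)\bigl(\prod_{\ell=0}^{s}\prod_{p=a_\ell+2}^{a_{\ell+1}}(d_{a_\ell}-d_p)\bigr)}=\frac{-1}{\prod_{p=2}^{t}(d_0-d_p)}
\]
in the indeterminates $d_0,d_1,\dots,d_{t+1}$.

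Verifying this identity is the main obstacle. I would treat both sides as rational functions of $d_0$ with $d_1,\dots,d_{t+1}$ as parameters and argue by the standard poles-and-residues method: the a priori poles of the left side other than the simple poles at $d_0=d_2,\dots,d_0=d_{t+1}$ cancel, the residue at each $d_0=d_p$ matches that of the right side (an identity in one fewer variable, handled by a secondary induction), and both sides vanish as $d_0\to\infty$, so the two rational functions coincide. Equivalently one may clear denominators and induct on $t$ by peeling off the endpoint $\gamma_{t+1}=j$; in fact the product formula is equivalent to the one-step recursion $C(i,\gamma_1,\dots,\gamma_t,j)=\frac{d_t-d_{t+1}}{d_0-d_{t+1}}\,C(i,\gamma_1,\dots,\gamma_t)$, which is probably the most economical statement to prove directly.

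Finally, the non-vanishing claim is immediate from the formula: each factor $d_\ell-d_{\ell+1}=c_{w(\gamma_\ell)}-c_{w(\gamma_{\ell+1})}$ and each $d_0-d_\ell$ is nonzero because the $c_m$ are distinct, so $C(i,\gamma_1,\dots,\gamma_t,j)$ is a nonzero rational function of $c_1,\dots,c_n$; since distinct paths in $\mathcal P_{w,h}(i,j)$ contribute distinct monomials in the free variables to~\eqref{eq_xij_monomial_expression}, no cancellation can occur, and therefore $x_{i,j}$ is not identically zero whenever $i$ is reachable from $j$.
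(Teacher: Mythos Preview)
Your setup is correct and matches the paper's: induction on $t$, expansion via Corollary~\ref{cor_xij_description_using_the_graph}(2), and the observation that only subsequences $\gamma_{a_0},\gamma_{a_1},\dots,\gamma_{a_{s+1}}$ of the minimal path can contribute to the monomial $m_{\gamma_\bullet}$. You also correctly identify that the one-step recursion
\[
C(i,\gamma_1,\dots,\gamma_t,j)=\frac{d_t-d_{t+1}}{d_0-d_{t+1}}\,C(i,\gamma_1,\dots,\gamma_t)
\]
is equivalent to the product formula and is ``the most economical statement to prove directly.'' That recursion is exactly what the paper proves; the gap in your proposal is that you only sketch two possible verifications (poles/residues, or a secondary induction) without carrying either one out, and this is the entire content of the proof.

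The paper bypasses the rational identity altogether with a pairing argument you are missing. After restricting to the free variables in $\mathcal I=\{x_{\gamma_0,\gamma_1},\dots,x_{\gamma_t,\gamma_{t+1}}\}$, apply the induction hypothesis not to the inner blocks but to the single entry $x_{\gamma_{s_k},j}$, using the minimal sub-path $j\to\gamma_t\to\cdots\to\gamma_{s_k}$; together with the induction hypothesis for $x_{\gamma_{s_k},\gamma_t}$ this gives
\[
x_{\gamma_{s_k},j}\big|_{\mathcal I}
=\frac{d_t-d_{t+1}}{d_{s_k}-d_{t+1}}\,x_{\gamma_{s_k},\gamma_t}\big|_{\mathcal I}\cdot x_{\gamma_t,j}.
\]
Plugging this into Corollary~\ref{cor_xij_description_using_the_graph}(2), each subsequence $\{s_1,\dots,s_k\}\subset[t-1]$ with $s_k\neq t$ produces a term that cancels on the nose against the subsequence $\{s_1,\dots,s_k,t\}$ (they differ by a sign and the same factor $(d_t-d_{t+1})$). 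Only the lone subsequence $\{t\}$, i.e.\ the term $x_{i,\gamma_t}x_{\gamma_t,j}$, survives, which gives the recursion immediately. This two-line telescoping replaces your rational-function identity; once you see it, no partial fractions or residue bookkeeping is needed.
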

\begin{proof}
	To obtain the coefficient of a certain monomial, we substitute $x_{k,l} = 0$ for variables which do not appear in the monomial. Furthermore, for a given minimal path $\gamma_{\bullet} = (j \to \gamma_t \to \cdots \to \gamma_{1} \to i) \in \mathcal P_{w,h}(i,j)$, we set $\mathcal I = \{ x_{i,\gamma_1}, x_{\gamma_1,\gamma_2},\dots,x_{\gamma_t,j}\}$. Then, we have
	\[
	x_{i,j}|_{\{x_{k,l} = 0 \mid x_{k,l} \notin \mathcal I\}} = C(i, \gamma_1,\dots,\gamma_t,j) x_{i,\gamma_1} x_{\gamma_1,\gamma_2} \cdots x_{\gamma_t,j}.
	\]

	We use an induction on $t$, which is  one less than the length of the minimal path.
We assume that $t = 0$, i.e., there is an edge $ j \to i$ in the graph $\Gwh$. In this case, $x_{i,j}$ itself is a free variable, and the result follows.
	We assume that $t = 1$. Then, by Corollary~\ref{cor_xij_description_using_the_graph}(2), we have
	\[
	x_{i,j}|_{\{x_{k,l} = 0 \mid x_{k,l} \notin \mathcal I\}} = \frac{-1}{c_{w(i)}- c_{w(j)}} (-1)^1 (c_{w(\gamma_1)} - c_{w(j)}) x_{i, \gamma_1} x_{\gamma_1, j}
	= \frac{c_{w(\gamma_1)} - c_{w(j)}}{c_{w(i)}- c_{w(j)}}  x_{i, \gamma_1} x_{\gamma_1, j}.
	\]
	Therefore, the proposition holds.
	
	Now,  for $t>1$, we assume that the proposition holds for any path having the length less than~$t$. We claim the following. Suppose that we have a minimal path $\gamma_{\bullet} = (j \to \gamma_t \to \cdots \to \gamma_1 \to i)\in \mathcal P_{w,h}(i,j)$. Then, for every nonempty subset $S = \{s_1,\dots,s_k\} \subset [t-1]$, we have
	\begin{equation}\label{eq_prop3.11_1}
	[(c_{w(\gamma_t)}-c_{w(j)}) x_{i,\gamma_{s_1}}x_{\gamma_{s_1},\gamma_{s_2}}\cdots x_{\gamma_{s_k},\gamma_t} x_{\gamma_t,j} - (c_{w(\gamma_{s_k})} - c_{w(j)}) x_{i,\gamma_{s_1}} x_{\gamma_{s_1},\gamma_{s_2}} \cdots x_{\gamma_{s_k},j}]\big|_{\{x_{k,l} = 0 \mid x_{k,l} \notin \mathcal I\}} = 0.
	\end{equation}
	By the induction assumption, using the minimal path $j \to \gamma_t \to \gamma_{t-1} \to \dots \to \gamma_{s_k+1} \to \gamma_{s_k}$, we obtain
	\begin{equation}\label{eq_prop3.11_2}
	\begin{split}
	x_{\gamma_{s_k},j}|_{\{x_{k,l} = 0 \mid x_{k,l} \notin \mathcal I\}} &= \frac{(c_{w(\gamma_{s_k+1})} - c_{w(\gamma_{s_k+2})}) \cdots (c_{w(\gamma_t)} - c_{w(j)})} {(c_{w(\gamma_{s_k})} - c_{w(j)})(c_{w(\gamma_{s_k})} - c_{w(\gamma_t)}) \cdots (c_{w(\gamma_{s_k})} -c_{w(\gamma_{s_k+2})})}	x_{\gamma_{s_k}, \gamma_{s_k+1}} \cdots x_{\gamma_t, j} \\
	&= \frac{c_{w(\gamma_t)} - c_{w(j)}}{c_{w(\gamma_{s_k})} - c_{w(j)}} x_{\gamma_{s_k}, \gamma_t} x_{\gamma_t, j}.
	\end{split}
	\end{equation}
	Here, the second equality comes from  applying the induction hypothesis to the path $\gamma_t \to \gamma_{t-1} \dots \to \gamma_{s_k+1} \to \gamma_{s_k}$.
Claim~\eqref{eq_prop3.11_1} follows immediately from   \eqref{eq_prop3.11_2}.

	We complete a proof of the inductive step using the equality~\eqref{eq_prop3.11_1}. Suppose that $j \to \gamma_t \to \gamma_{t-1} \to \dots \to \gamma_1 \to i$ is a minimal path.
	Then the monomials appearing on the right hand side of the formula given in Corollary~\ref{cor_xij_description_using_the_graph}(2)
in which the term $x_{i, \gamma_1} x_{\gamma_1,\gamma_2}\cdots x_{\gamma_t, j}$ appears after substituting to obtain an expression in the free variables
 are given as follows:
	\begin{itemize}
		\item $x_{i,\gamma_t} x_{\gamma_t, j}$;
		\item for any nonempty subset $\{s_1,\dots,s_k\} \subset [t-1]$,
		\[
		x_{i, \gamma_{s_1}}x_{\gamma_{s_1}, \gamma_{s_2}} \cdots x_{\gamma_{s_k}, \gamma_t} x_{\gamma_t, j} \quad \text{ and } \quad x_{i, \gamma_{s_1}} x_{\gamma_{s_1},\gamma_{s_2}} \cdots x_{\gamma_{s_k},j}.\]
	\end{itemize}
	By summing up all terms above with appropriate coefficients, we have
	\[
	\begin{split}
	&\frac{-1}{c_{w(i)}-c_{w(j)}}
	\Bigg[
	\sum_{\emptyset \neq \{s_1,\dots,s_k\} \subset [t-1]} \{
	(-1)^{k+1} (c_{w(\gamma_t)} -c_{w(j)}) x_{i, \gamma_{s_1}} x_{\gamma_{s_1}, \gamma_{s_2}} \cdots x_{\gamma_{s_k}, \gamma_t} x_{\gamma_t, j} \\
	& \qquad \qquad \qquad \qquad \qquad \qquad\qquad+ (-1)^k (c_{w(\gamma_{s_k})} - c_{w(j)}) x_{i, \gamma_{s_1}} x_{\gamma_{s_1}, \gamma_{s_2}} \cdots x_{\gamma_{s_k}, j}
	\} \\
	& \qquad \qquad\qquad\qquad + (-1)(c_{w(\gamma_t)} -c_{w(j)}) x_{i, \gamma_t} x_{\gamma_t, j}
	\Bigg] \Bigg|_{\{x_{k,l} = 0 \mid x_{k,l} \notin \mathcal I\}}\\
	&\quad = \frac{c_{w(\gamma_t)} -c_{w(j)}}{c_{w(i)}-c_{w(j)}} x_{i, \gamma_t} x_{\gamma_t,j} \\
	&\quad =\frac{c_{w(\gamma_t)} -c_{w(j)}}{c_{w(i)}-c_{w(j)}}
	\cdot
	\frac{(c_{w(\gamma_1)} - c_{w(\gamma_2)}) \cdots (c_{w(\gamma_{t-1})} - c_{w(\gamma_t)})}{(c_{w(i)} - c_{w(\gamma_t)}) \cdots (c_{w(i)} - c_{w(\gamma_2)})} x_{i, \gamma_1} x_{\gamma_1,\gamma_2}\cdots x_{\gamma_{t-1},\gamma_t} x_{\gamma_t, j}.
	\end{split}
	\]
	Here, the first equality comes from the equation~\eqref{eq_prop3.11_1}.
	This proves the proposition.
\end{proof}

Let $A = \{   a_1 < a_2 < \dots < a_k \}$ and $B  = \{   b_1 < b_2 < \dots < b_k  \}$ be subsets of $[n]$.
For an element $x = (x_{i,j}) \in \GL_n(\C)$, we define $p_{A,B}(x)$ to be the $k \times k$ minor of $x$ with row indices $A$ and the column indices $B$, that is,
\begin{equation}\label{eq_p_AB_x}
p_{A,B}(x) \colonequals \det(x_{a,b})_{a \in A, b \in B}.
\end{equation}

\begin{theorem}\label{thm_reachable_nonzero_minor}
	Let $h \colon [n] \to [n]$ be a Hessenberg function and $w \in \mathfrak{S}_n$.
	Let $A = \{a_1 < a_2 < \dots < a_k  \}$ and $B = \{   b_1 < b_2 < \dots < b_k    \}$ be subsets of $[n]$.
	Then there exists $n$ distinct numbers $c_1, \dots, c_n$ satisfying the following: If we set a regular semisimple matrix $S$ to be the diagonal matrix $\textup{diag}(c_1,\dots,c_n)$, then for  a lower triangular matrix $x = (x_{i,j})$ having $1$ on its diagonal and satisfying $\dot{w} x \in \Owho{w}$, we have that
	$p_{A,B}(x)$ is not identically zero as a polynomial with the free variables $\{ x_{i,j} \mid (j \to i) \in \Gwh\}$ if and only if $A$ is reachable from~$B$.
\end{theorem}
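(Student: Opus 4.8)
The plan is to prove the two implications separately, using the Leibniz expansion of the minor throughout, and exploiting the freedom to choose the eigenvalues $c_1,\dots,c_n$. For the \emph{necessity} of reachability no special choice of the $c_i$ is needed. Writing
\[
p_{A,B}(x)=\sum_{\sigma\in\mathfrak S_k}\operatorname{sgn}(\sigma)\prod_{d=1}^{k}x_{a_{\sigma(d)},b_d},
\]
Corollary~\ref{cor_xij_description_using_the_graph}(1) shows that the factor $x_{a_{\sigma(d)},b_d}$ vanishes identically unless $a_{\sigma(d)}$ is reachable from $b_d$ in $\Gwh$; hence a summand can be nonzero only if $a_{\sigma(d)}$ is reachable from $b_d$ for every $d$, which is exactly the condition that $A$ be reachable from $B$. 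So if $A$ is not reachable from $B$ then $p_{A,B}(x)\equiv 0$ for every choice of distinct $c_1,\dots,c_n$.

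For the converse, the key preliminary is a \emph{positivity observation}: assume $c_1<c_2<\cdots<c_n$. Along any directed path of $\Gwh$ both the vertex labels and their $w$-images increase strictly (an edge $j\to i$ requires $j<i$ and $w(j)<w(i)$), so in the formula of Proposition~\ref{prop_reachable_nonzero} and in the recursion of Corollary~\ref{cor_xij_description_using_the_graph}(2) every difference $c_{w(\gamma_\ell)}-c_{w(\gamma_{\ell+1})}$ and $c_{w(\gamma_0)}-c_{w(\gamma_\ell)}$ is positive. Consequently every coefficient $C(i,\gamma_1,\dots,\gamma_t,j)$ in~\eqref{eq_xij_monomial_expression} is strictly positive, so each $x_{i,j}$ is a polynomial in the free variables all of whose coefficients are positive, and is a nonzero polynomial precisely when $i$ is reachable from $j$ (reproving the last sentence of Proposition~\ref{prop_reachable_nonzero}). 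It therefore remains to show: if $A$ is reachable from $B$, then $p_{A,B}(x)$ is a nonzero polynomial in the free variables. Since there are only finitely many pairs $(A,B)$, exhibiting in each case a coefficient of $p_{A,B}(x)$ that is a nonzero rational function of $c_1,\dots,c_n$ then lets us pick $c_1<\cdots<c_n$ avoiding the finitely many hypersurfaces on which those coefficients vanish.

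To prove this nonvanishing I would induct on $k=|A|=|B|$, the case $k=1$ being Proposition~\ref{prop_reachable_nonzero}. For the inductive step set $b_1=\min B$ and expand $p_{A,B}(x)$ along the column $b_1$. Because every path of $\Gwh$ increases in the vertex labels, a path starting at some $b_d$ with $d\ge 2$ (so $b_d>b_1$) never revisits $b_1$, and a path starting at $b_1$ uses $b_1$ only as its initial vertex; hence each free variable attached to an edge out of $b_1$ appears linearly, and only inside the entries $x_{a_e,b_1}$, never in the complementary minors $p_{A\setminus\{a_e\},B\setminus\{b_1\}}(x)$. Choosing $\sigma$ realizing the reachability of $A$ from $B$ together with a minimal path from $b_1$ to $a_{\sigma(1)}$ whose first edge is $b_1\to v_0$, one extracts from $p_{A,B}(x)$ the coefficient of the free variable $x_{(b_1\to v_0)}$; using Proposition~\ref{prop_reachable_nonzero} and the recursion of Corollary~\ref{cor_xij_description_using_the_graph}(2) one checks that this coefficient inside $x_{a_e,b_1}$ equals $\dfrac{c_{w(v_0)}-c_{w(b_1)}}{c_{w(a_e)}-c_{w(b_1)}}\,x_{a_e,v_0}$, so that the extracted coefficient of $p_{A,B}(x)$ is, up to nonzero scalars, a minor of the same shape attached to the Hessenberg configuration obtained from $(h,w)$ by deleting the vertex $b_1$ and relabelling — which is again a genuine Hessenberg datum on $[n]\setminus\{b_1\}$. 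One arranges the choice of $v_0$ so that $A$ remains reachable from the new column set in the smaller graph, and invokes the inductive hypothesis.

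The main obstacle is exactly this reduction: since the path coefficients $C(\cdot)$ are not multiplicative along paths, the coefficient of the extracted variable is in general an \emph{alternating} sum over several permutations $\sigma$ and several minimal-path decompositions that contribute the same monomial, and one must rule out that it collapses to zero. Here the positivity established above is the essential leverage — after fixing a sufficiently rigid (for instance lexicographically extremal) choice of the realizing pairing and of the minimal paths one can force the surviving contributions to share a common sign, or else isolate the unique summand carrying a pole in some $c_i$ not shared by the others — and this is the step I expect to require the most careful bookkeeping. Once it is in place, the induction closes and, as noted above, a suitable choice of $c_1<\cdots<c_n$ yields the stated $n$ distinct numbers.
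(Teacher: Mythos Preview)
Your positivity observation is false, and this is fatal to the approach. You claim that with $c_1<c_2<\cdots<c_n$ every coefficient $C(i,\gamma_1,\dots,\gamma_t,j)$ in~\eqref{eq_xij_monomial_expression} is strictly positive, but Proposition~\ref{prop_reachable_nonzero} only computes these coefficients for \emph{minimal} paths; for non-minimal paths the recursion of Corollary~\ref{cor_xij_description_using_the_graph}(2) carries the signs $(-1)^t$ and the overall factor $\frac{-1}{c_{w(i)}-c_{w(j)}}$, and these do not cancel in general. Example~\ref{example_reachability} already exhibits
\[
C(4,3,2,1)=-\frac{(c_2-c_1)(c_4-c_3)}{(c_4-c_1)(c_4-c_2)}<0
\]
for $w=e$, $h=(3,3,4,5,5)$ and $c_1<\cdots<c_5$ (the path $1\to 2\to 3\to 4$ is not minimal because of the edge $1\to 3$). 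So the assertion that ``each $x_{i,j}$ is a polynomial in the free variables all of whose coefficients are positive'' is incorrect, and with it the sign argument you propose at the ``main obstacle'' loses its leverage.

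There is a second gap. Even granting the formula $\dfrac{c_{w(v_0)}-c_{w(b_1)}}{c_{w(a_e)}-c_{w(b_1)}}\,x_{a_e,v_0}$ for the coefficient of the free variable attached to $b_1\to v_0$ inside $x_{a_e,b_1}$, extracting that variable from $p_{A,B}(x)$ yields
\[
\sum_{e}(-1)^{e+1}\,\frac{c_{w(v_0)}-c_{w(b_1)}}{c_{w(a_e)}-c_{w(b_1)}}\,x_{a_e,v_0}\cdot p_{A\setminus\{a_e\},B\setminus\{b_1\}}(x),
\]
which is a \emph{row-weighted} Laplace sum with distinct scalars $\frac{1}{c_{w(a_e)}-c_{w(b_1)}}$, not a single minor. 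This is not ``a minor of the same shape attached to the Hessenberg configuration obtained by deleting $b_1$,'' and deleting a vertex does not in any case produce a Hessenberg datum on $[n]\setminus\{b_1\}$ to which the inductive hypothesis applies. The paper avoids both difficulties: it expands along the column $b_1$, picks the maximal $a_p\in A$ reachable from $b_1$ with $A\setminus\{a_p\}$ reachable from $B\setminus\{b_1\}$, and then argues via divisibility by $c_{w(a_p)}-c_{w(b_1)}$ in the ring of rational functions in the $c_i$ (the denominators arising from the other rows cannot acquire this factor because $b_1$ is the smallest column index), rather than via any sign or positivity consideration.
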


\begin{proof}
	We first note that by Corollary~\ref{cor_xij_description_using_the_graph}(1) and Proposition~\ref{prop_reachable_nonzero}, we have that
	$x_{i,j} = 0$  if and only if  $i$ is not reachable from $j$.	This proves the theorem for $k =1$.
	
	Now we consider the `only if' part of the statement: if $A$ is not reachable from $B$, then $p_{A,B}(x)$ is identically zero.
	By the definition of reachability, for every $\sigma \in \mathfrak{S}_k$, there exists an index $d_{\sigma} \in [k]$ such that $a_{\sigma(d_{\sigma})}$ is not reachable from $b_{d_{\sigma}}$. Accordingly, we have $x_{a_{\sigma(d_{\sigma})}, b_{d_\sigma}} = 0$.
	Therefore, we obtain
	\[
	p_{A,B}(x) = \sum_{\sigma \in \mathfrak{S}_k} \text{sgn}(\sigma) \prod_{i=1}^k x_{a_{\sigma(i)}, b_{i}}
	= \sum_{\sigma \in \mathfrak{S}_k} 0 = 0.
	\]
	Here, the second equality comes from $x_{a_{\sigma(d_{\sigma})}, b_{d_\sigma}} = 0$. This proves the only if part of the statement.
	
	To prove the `if' part of the statement, we use an induction argument on $|A| = |B|= k$. We already have observed that the statement holds when $k = 1$. Suppose that $k > 1$ and assume that the claim holds for subsets $A$ and $B$ satisfying $|A|= |B| < k$.
	Because of the description in Corollary~\ref{cor_xij_description_using_the_graph}~(2), we have that
	\begin{equation}\label{eq_xij_expressed_by_other_variables}
	\text{the entry $x_{i,j}$ is expressed by free variables $x_{\gamma, \gamma'}$ such that $j \leq \gamma' < \gamma \leq i$. }
	\end{equation}
	Let $a_p$ be the maximal element in $A$ which is reachable from $b_1$ and such that $A \setminus \{a_p\}$ is reachable from $B \setminus \{b_1\}$.
Considering the first column and the respective minors, we have
	\begin{equation}\label{eq_p_AB_x2}
	p_{A,B}(x) = \sum_{i=1}^k (-1)^{i+1} x_{a_i,b_1} \cdot p_{A \setminus \{a_i\}, B\setminus \{b_1\}}(x)
	=  \sum_{i = 1}^p  (-1)^{i+1} x_{a_i,b_1} \cdot p_{A \setminus \{a_i\}, B\setminus \{b_1\}}(x).
	\end{equation}

There are two possibilities: $(b_1 \to a_p) \in \Gwh$ or $(b_1 \to a_p) \notin \Gwh$. We consider the case that $b_1 \to a_p$ is an edge in $\Gwh$ first.
	Then, the term $x_{a_p,b_1} \cdot p_{A \setminus \{a_p\}, B \setminus \{b_1\}}(x)$ is not identically zero by the induction argument and  it is the only term having $x_{a_p, b_1}$ because of~\eqref{eq_xij_expressed_by_other_variables}. 	Therefore, $p_{A,B}(x)$ is not identically zero.

	Before considering the remaining case, we observe the following. Suppose that $(b_1 \to a_p) \notin G_{w,h}$.  Choose
	$\sigma \in \mathfrak{S}_k$ satisfying that $a_{\sigma(d)}$ is reachable from $b_d$ for all $d \in [k]$.
We note that $\sigma(1)$ is not necessarily $p$.
Let
\[
\gamma_{\bullet}^{(\sigma,d)} = (b_d \to \gamma_{t_d}^{(\sigma,d)} \to \dots \to \gamma_{1}^{(\sigma,d)} \to  a_{\sigma(d)})
\]
be a minimal path.
For a given minimal path $\gamma_{\bullet} = (\gamma_{t+1} \to \dots \to \gamma_1 \to \gamma_0)$, denote by
\[
X(\gamma_{\bullet}) \colonequals x_{\gamma_0,\gamma_1} \cdots x_{\gamma_t, \gamma_{t+1}}
\]
the corresponding monomial for notational convenience.
Let $X_d$ be the monomial given by the path~$\gamma_{\bullet}^{(\sigma,d)}$, that is,
$X_d = X(\gamma_{\bullet}^{(\sigma,d)})$.  We obtain the coefficient $C(a_{\sigma(d)},\gamma_{1}^{(\sigma,d)},\dots,\gamma_{t_d}^{(\sigma,d)},b_d)$ of the monomial~$X_d$ in $x_{a_{\sigma(d)},b_d}$
using Proposition~\ref{prop_reachable_nonzero}:
\[
C(a_{\sigma(d)},\gamma_{1}^{(\sigma,d)},\dots,\gamma_{t_d}^{(\sigma,d)},b_d) = \frac{N_{\sigma,d}(c)}{D_{\sigma,d}(c)},
\]
where
\begin{gather*}
N_{\sigma,d}(c) \colonequals \left(c_{w(\gamma_1^{(\sigma,d)})} - c_{w(\gamma_2^{(\sigma,d)})}\right)
\left(c_{w(\gamma_2^{(\sigma,d)})} - c_{w(\gamma_3^{(\sigma,d)})}\right) \cdots
\left(c_{w(\gamma_{t_d}^{(\sigma,d)})} - c_{w(b_d)} \right), \\
D_{\sigma,d}(c) \colonequals \left(c_{w(a_{\sigma(d)})} - c_{w(b_d)}\right)
\left(c_{w(a_{\sigma(d)})} - c_{w(\gamma_{t_d}^{(\sigma,d)})}\right) \cdots
\left(c_{w(a_{\sigma(d)})} - c_{w(\gamma_{2}^{(\sigma,d)})} \right).
\end{gather*}
We note that the polynomials $N_{\sigma,d}(c)$ and $D_{\sigma,d}(c)$ are not identically zero polynomials in variables $c_1,\dots,c_n$.
Moreover, $N_{\sigma,d}(c)$ is not divisible by $(c_{w(a_{p})} - c_{w(b_1)})$ since $N_{\sigma,d}(c)$ does not have a variable $c_{w(b_1)}$ except when $d=1$; $N_{\sigma,1}(c)$ is not divisible by $(c_{w(a_{p})} - c_{w(b_1)})$. Indeed, the fact $b_1 = \min(A \cup B)$ implies that any path $\gamma_{\bullet}^{(\sigma,d)}$ cannot visit $b_1$ except when $d = 1$.
By the similar reason, the variable $c_{w(b_1)}$ cannot appear  in the polynomial $D_{\sigma,d}(c)$ for $d \neq 1$ because $D_{\sigma,d}(c)$ consists of variables corresponding to paths $\gamma_{\bullet}^{(\sigma,d)}$  and $b_1$ is the smallest value in $B$.  Therefore,
\begin{equation}\label{equation_D_sigma_d_not_divisible_by_cwa}
(c_{w(a_{p})} - c_{w(b_1)}) \nmid D_{\sigma,d}(c) \quad d \neq 1.
\end{equation}

Accordingly, the product $\prod_{d=1}^k x_{a_{\sigma(d)}, b_d}$ has a monomial  of the following form:
	\[
	\frac{F_{\sigma}(c)}{G_{\sigma}(c)} X_1 \cdots X_k.
	\]
Here, $F_{\sigma}(c) = \prod_{d=1}^k N_{\sigma,d}(c)$ and $G_{\sigma}(c) =\prod_{d=1}^k D_{\sigma,d}(c)$.
We note that the polynomials $F_{\sigma}(c)$ and $G_{\sigma}(c)$ are not identically zero polynomials in variables $c_1,\dots,c_n$.
Moreover, $F_{\sigma}(c)$ is not divisible by $(c_{w(a_{p})} - c_{w(b_1)})$.
For the polynomial $G_{\sigma}(c)$, if $\sigma(1) = p$, then  $G_{\sigma(c)}$ is
divisible by $c_{w(a_{p})} - c_{w(b_1)}$, which is the factor coming from $D_{\sigma,1}(c)$. Moreover, the power of this factor is one by~\eqref{equation_D_sigma_d_not_divisible_by_cwa}, that is, $G_{\sigma}(c) = (c_{w(a_{p})} - c_{w(b_1)}) \tilde{G}_{\sigma}(c)$ for some polynomial $\tilde{G}_{\sigma}(c)$, and
\(
	(c_{w(a_{p})} - c_{w(b_1)}) \nmid \tilde{G}_{\sigma}(c).
\)
If $\sigma(1) \neq p$, then again by~\eqref{equation_D_sigma_d_not_divisible_by_cwa}, and the definition of $D_{\sigma,1}(c)$, we obtain
\begin{equation}\label{G_not_divisible_by_cwa}
	(c_{w(a_{p})} - c_{w(b_1)}) \nmid G_{\sigma}(c).
\end{equation}
	
	Using the above observation, we consider the remaining case: $(b_1 \to a_p) \notin \Gwh$.
Choose a permutation $\tau \in \mathfrak{S}_k$ such that $a_{\tau(d)}$ is reachable from $b_d$ for all $d  \in [k]$ and $\tau(1) = p$. Moreover, take minimal paths $\gamma_{\bullet}^{(\tau,d)} \in \mathcal P_{w,h}(a_{\tau(d)},b_d)$ for $d \in [k]$, and set
\[
X = X(\gamma_{\bullet}^{(\tau,1)}) \cdots X(\gamma_{\bullet}^{(\tau,k)}).
\]
Using the expression~\eqref{eq_p_AB_x2}, we have a monomial  of the form
	\[
	\frac{(-1)^{p+1}F(c)}{(c_{w(a_p)} - c_{w(b_1)}) \tilde{G}(c)} X
	\]
coming from $(-1)^{p+1} x_{a_p,b_1} \cdot p_{A \setminus \{a_p\}, B\setminus \{b_1\}}(x)$.
Here, $F(c) = F_{\tau}(c)$ and $\tilde{G}(c) = \tilde{G}_{\tau}(c)$.

If the summation $\sum_{  i < p} (-1)^{i+1} x_{a_i,b_1} \cdot p_{A \setminus \{a_i\}, B\setminus \{b_1\}}(x)$ does not have the monomial  $X$, then   this monomial  is not identically zero since $F(c)$ is not identically zero  as a polynomial in variables $c_1,\dots,c_n$.
	
	Suppose that the summation $\sum_{ i<p} (-1)^{i+1} x_{a_i,b_1} \cdot p_{A \setminus \{a_i\}, B\setminus \{b_1\}}(x)$  has the monomial~$X$, that is, there exist permutations $\sigma_1,\dots,\sigma_{\ell} \in \mathfrak{S}_k$ such that $a_{\sigma_{z}(d)}$ is reachable from $b_d$ for all $d \in [k]$ and $z \in [\ell]$, and moreover there exist suitable minimal paths in $\mathcal P_{w,h}(a_{\sigma_z}(d),b_d)$ generating the monomial~$X$.
Here, we note that because of the assumption on $p$, we have
\begin{equation}\label{equation_sigma_less_than_p}
\sigma_1(1),\dots,\sigma_{\ell}(1) < p.
\end{equation}
Then the coefficient of $X$ in $p_{A,B}(x)$ is expressed as
	\[
	\frac{(-1)^{p+1}F(c)}{(c_{w(a_p)} - c_{w(b_1)}) \tilde{G}(c)} + \frac{J(c)}{K(c)}
	= \frac{(-1)^{p+1}F(c) K(c) + (c_{w(a_p)} - c_{w(b_1)}) \tilde{G}(c) J(c)}{(c_{w(a_p)} - c_{w(b_1)}) \tilde{G}(c) K(c)},
	\]
where $J(c) = F_{\sigma_1}(c) \cdots F_{\sigma_{\ell}}(c)$ and $K(c) = G_{\sigma_1}(c) \cdots G_{\sigma_{\ell}}(c)$ which are not identically zero polynomials.
By~\eqref{G_not_divisible_by_cwa} and~\eqref{equation_sigma_less_than_p}, we have 	$(c_{w(a_p)} - c_{w(b_1)}) \nmid K(c)$.
	Because $(c_{w(a_p)} - c_{w(b_1)}) \nmid F(c)$,
	the polynomial $(-1)^{p+1}F(c) K(c) + (c_{w(a_p)} - c_{w(b_1)}) G(c) J(c)$ is not identically zero  as a polynomial in variables $c_1,\dots,c_n$.
	Hence the result follows.
\end{proof}

\begin{remark}
In the proof of Theorem~\ref{thm_reachable_nonzero_minor}, we choose a monomial and  prove that its coefficient  is  nonzero for almost  all $c_1, \dots, c_n$: As a polynomial in $c_1, \dots, c_n$, the coefficient is a nonzero polynomial in $c_1, \dots, c_n$.
Therefore, the set of distinct numbers $c_1,\dots,c_n$ satisfying the theorem is a dense subset. We believe that this subset is the whole set; however we could not find a proof of this.
\end{remark}

\subsection{Proof of Theorem~\ref{thm_support_of_Owh}}\label{sec_proof_of_support_theorem}
We first notice that the equivariant cohomology of Hessenberg variety $\Hess(S,h)$ does not depend on the choice of $S$ as we have observed in the description of the GKM graph in Example~\ref{example_Hess_GKM}. In addition, for any regular semisimple elements $S$ and $S'$, we have $H_{T}^{\ast}(\Hess(S,h)) \cong H_T^{\ast}(\Hess(S',h))$. Accordingly, it is enough to consider a certain regular semisimple element $S$ to provide a proof. From now on, we suppose that $S$ satisfies Theorem~\ref{thm_reachable_nonzero_minor}, that is, for a given lower triangular matrix $x$ satisfying $\dot{w}x \in \Owho{w}$, a minor $p_{A,B}(x)$ is not identically zero if and only if $A$ is reachable from $B$.

To give a proof of Theorem~\ref{thm_support_of_Owh}, we recall  the Pl\"ucker embedding of $G/B$.
For an element $g = (g_{i,j}) \in G = \GL_n(\C)$ and $\uni = (i_1,\dots,i_j) \in I_{j,n}$, the $\uni$th Pl\"ucker coordinate $p_{\uni}(g)$ of $g$ is given by the $j \times j$ minor of $g$ with row indices $i_1,\dots,i_j$ and the column indices $1,\dots,j$. That is, with the notation~\eqref{eq_p_AB_x}, we have
\begin{equation}\label{eq_Plucker_coordinates}
p_{\underline{i}}(g) = p_{\underline{i}, [j]}(g).
\end{equation}
The Pl\"ucker embedding $\psi$ is defined to be
\[
\psi \colon G/B \to \prod_{j=1}^{n-1} \C P^{{n \choose j} -1}, \quad gB \mapsto \prod_{j=1}^{n-1} \left[p_{\uni}(g)\right]_{\uni \in I_{j,n}}.
\]
The Pl\"ucker embedding $\psi$ is well-defined, and it is $T$-equivariant with respect to the action of $T$ on $\prod_{j=1}^{n-1} \C P^{{n \choose j}-1}$ given by
\[
(t_1,\dots,t_n) \cdot \left[p_{\uni}\,\right]_{\uni \in I_{j,n}} \colonequals \left[t_{i_1} \cdots t_{i_j} \cdot p_{\uni}\,\right]_{\uni \in I_{j,n}}
\]
for $(t_1,\dots,t_n) \in T$ and $\uni = (i_1,\dots,i_j)$.
We note that for a fixed point $\dot{w}B \in G/B$ and for $\underline{i} \in I_{j,n}$, we obtain
\begin{equation}\label{eq_plucker_fixed_point}
p_{\underline{i}}(\dot{w}) \neq 0 \iff
\underline{i} = w^{(j)}.
\end{equation}
\begin{example}
	Suppose that $G = \GL_3(\C)$. For an element $g = (g_{i,j}) \in G$, the image $\psi(gB)$ is given by
	\[
	\begin{split}
	&([p_1(g): p_2(g): p_3(g)], [p_{12}(g): p_{13}(g): p_{23}(g)])  \\
	&\qquad = ([g_{1,1}: g_{2,1}: g_{3,1}], [g_{1,1}g_{2,2} - g_{2,1}g_{1,2}: g_{1,1}g_{3,2} - g_{3,1}g_{1,2}: g_{2,1}g_{3,2} - g_{3,1}g_{2,2}]).
	\end{split}
	\]
\end{example}

Before presenting a proof of Theorem~\ref{thm_support_of_Owh}, we recall the following result of Gelfand and Serganova~\cite{GelfandSerganova}.
\begin{lemma}[{\cite[Proposition~5.2.1]{GelfandSerganova}}]\label{lemma_GelfandSerganova}
	Let $gB \in G/B$, and let
	\[
	L_{g} = \left\{ \uni \in \bigcup_{j=1}^{n} I_{j,n} \mid p_{\uni}(g) \neq 0 \right\}.
	\]
	Then, for $u \in \mathfrak{S}_n$, a point $\dot{u}B$ is contained in $\overline{T \cdot gB}$ if and only if $u^{(j)} \in L_g$ for all $1 \leq j\leq n-1$.
\end{lemma}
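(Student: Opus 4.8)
The plan is to derive both implications from three elementary facts about the Pl\"ucker embedding $\psi\colon G/B\to\prod_{j=1}^{n-1}\C P^{\binom{n}{j}-1}$. First, $\psi$ is an injective, continuous, $T$-equivariant morphism and $G/B$ is complete. Second, for a cocharacter $\lambda\colon\Cstar\to T$, $\lambda(t)=\mathrm{diag}(t^{\lambda_1},\dots,t^{\lambda_n})$, and $\uni=(i_1,\dots,i_j)$ one has $p_{\uni}(\lambda(t)g)=t^{\lambda_{i_1}+\cdots+\lambda_{i_j}}\,p_{\uni}(g)$, obtained by pulling a factor $t^{\lambda_{i_r}}$ out of the $r$-th row of the defining $j\times j$ minor. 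Third, by~\eqref{eq_plucker_fixed_point}, in the $j$-th factor the point $\psi(\dot uB)$ has exactly one nonzero homogeneous coordinate, the one indexed by $u^{(j)}$; equivalently, $L_{\dot u}=\{u^{(1)},\dots,u^{(n)}\}$.

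For the ``only if'' direction I would argue by contradiction. Suppose $\dot uB\in\overline{T\cdot gB}$ but $p_{u^{(j)}}(g)=0$ for some $j\le n-1$. In the $j$-th factor the coordinate hyperplane $H_j\colonequals\{x_{u^{(j)}}=0\}$ is closed and $T$-stable, and it contains $\psi(gB)$, hence contains $\psi(T\cdot gB)$; since $\psi$ is continuous, $\psi^{-1}(H_j)$ is closed and contains $T\cdot gB$, so it contains $\overline{T\cdot gB}\ni\dot uB$. But the $u^{(j)}$-coordinate of $\psi(\dot uB)$ is nonzero, so $\dot uB\notin\psi^{-1}(H_j)$ --- a contradiction. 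Hence $p_{u^{(j)}}(g)\ne0$, i.e.\ $u^{(j)}\in L_g$, for all $1\le j\le n-1$.

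For the ``if'' direction, assume $u^{(j)}\in L_g$ for all $1\le j\le n-1$. The crucial step is to produce a single cocharacter that ``sees'' all the sets $u^{(j)}$ simultaneously: choose integers with $\lambda_{u(1)}<\lambda_{u(2)}<\cdots<\lambda_{u(n)}$ and set $\lambda(t)=\mathrm{diag}(t^{\lambda_1},\dots,t^{\lambda_n})$. Because the $\lambda_a$ are distinct and increase along $u(1),u(2),\dots$, for every $j$ the set $u^{(j)}=\{u(1),\dots,u(j)\}$ is the \emph{unique} minimizer of $I\mapsto\sum_{a\in I}\lambda_a$ over all $j$-element subsets $I\subseteq[n]$. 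Since $G/B$ is complete, $P\colonequals\lim_{t\to0}\lambda(t)gB$ exists; applying $\psi$, equivariance, and the transformation rule, the $j$-th factor of $\psi(\lambda(t)gB)$ is $\bigl[t^{\sum_{a\in\uni}\lambda_a}p_{\uni}(g)\bigr]_{\uni\in I_{j,n}}$, whose limit as $t\to0$ has nonzero coordinates exactly at those $\uni$ with $p_{\uni}(g)\ne0$ minimizing $\sum_{a\in\uni}\lambda_a$. As $u^{(j)}$ is the global minimizer and lies in $L_g$, this limit is $[e_{u^{(j)}}]$, so $\psi(P)=\psi(\dot uB)$; injectivity of $\psi$ gives $P=\dot uB$, and $P\in\overline{T\cdot gB}$ since it is a limit of points of $T\cdot gB$.

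The only genuinely nontrivial point --- and hence the main obstacle to watch --- is the simultaneity in the ``if'' direction: a \emph{single} cocharacter must detect $u^{(j)}$ for every $j$ at once. This works precisely because $u^{(1)}\subset u^{(2)}\subset\cdots\subset u^{(n)}$ is a flag, and a strictly increasing weight ordering realizes exactly this flag as its sequence of greedy minimizers; the hypothesis $u^{(j)}\in L_g$ is what forces the global minimizer over all $j$-subsets to also be the minimizer over the support $L_g$, so that the limit lands on $\dot uB$ rather than on some other coordinate flag.
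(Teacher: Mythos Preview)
Your proof is correct. Note, however, that the paper does not give its own proof of this lemma: it is quoted verbatim from Gel{'}fand--Serganova~\cite{GelfandSerganova} and used as a black box in the proof of Theorem~\ref{thm_support_of_Owh}. So there is no ``paper's proof'' to compare against; you have supplied a complete, self-contained argument where the paper relies on the literature.

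Your argument is essentially the standard one. The ``only if'' direction is immediate from the $T$-invariance and closedness of the vanishing locus of a single Pl\"ucker coordinate. For the ``if'' direction, the key idea---choosing a single one-parameter subgroup $\lambda$ with $\lambda_{u(1)}<\lambda_{u(2)}<\cdots<\lambda_{u(n)}$ so that $u^{(j)}$ is the unique weight-minimizing $j$-subset for every $j$ simultaneously---is exactly the right mechanism, and your closing paragraph correctly identifies why this works: the nested flag structure $u^{(1)}\subset u^{(2)}\subset\cdots$ is precisely what a strict linear order on $[n]$ encodes. One small stylistic remark: the existence of $\lim_{t\to0}\lambda(t)gB$ can be justified either by the valuative criterion of properness for $G/B$, as you do, or more concretely by noting that the limit visibly exists in each projective factor and that $\psi(G/B)$ is closed; either is fine.
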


\begin{proof}[Proof of Theorem~\ref{thm_support_of_Owh}]
	We first recall Theorem~\ref{thm_reachable_nonzero_minor}.
	Let $x = (x_{i,j})$ be a lower triangular matrix having $1$ on its diagonal. Suppose that $\dot{w}x \in \Owho{w}$. Then, for $\underline{i} = (i_1,\dots,i_j) \in I_{j,n}$
\begin{equation}\label{eq_Omega_condition}
p_{\underline{i}}(x) \text{ is not identically zero } \iff (i_1,\dots,i_j) \in J_{w,h,j}.
\end{equation}
 Moreover, $p_{\underline{k}}(\dot{w}x)$ is not identically zero if and only if $\underline{k} = w \cdot (i_1,\dots,i_j)$ for some $(i_1,\dots,i_j) \in J_{w,h,j}$, that is,
\[
\Omega_{w,h}\subset \bigcap_{j=1}^{n-1} \{ gB \in \Hess(S,h) \mid
		p_{w \cdot \underline{i}}(g) = 0 \text{ for }\underline{i}\in I_{j,n} \setminus J_{w,h,j} \}.
\]
Accordingly, we have
\begin{equation}\label{eq_Omega_contained_in_Awh}
\Omega_{w,h}^T \subset \{ u \in \mathfrak{S}_n \mid p_{w \cdot \underline{i}}(\dot{u}) = 0 \text{ for } \underline{i} \in I_{j,n} \setminus J_{w,h,j} \text{ and for }1 \leq j \leq n-1\}.
\end{equation}
We first note that $p_{w \cdot \underline{i}}(\dot{u}) \neq 0$ if and only if $w \cdot \underline{i} = u^{(j)}$ because of~\eqref{eq_plucker_fixed_point}. Then, we can see that a permutation $u$ in the right hand side of~\eqref{eq_Omega_contained_in_Awh} satisfies the condition $u^{(j)}=w\cdot \underline{i}$ for some $\underline{i} \in J_{w,h,j}$ for all $1\leq j\leq n-1$,
and  hence is in the set $A_{w,h}$. This proves $\Omega_{w,h}^T \subset A_{w,h}$.

	To prove the opposite inclusion, by~\eqref{eq_Omega_condition}, there exists a point $\dot{w}x \in \Owho{w}$ such that
	\[
	L_{\dot{w}x} = \bigcup_{j=1}^n \{ w \cdot (i_1,\dots,i_j) \mid (i_1,\dots,i_j) \in J_{w,h,j} \}.
	\]
	By  Lemma~\ref{lemma_GelfandSerganova}, a point $\dot{u}B$ is contained in the closure $\overline{T \cdot (\dot{w}x)B}$ if and only if $u^{(j)} \in  \{ w \cdot (i_1,\dots,i_j)  \mid (i_1,\dots,i_j) \in J_{w,h,j} \}$ for all $1 \leq j \leq n-1$. This proves $\Omega_{w,h}^T \supset A_{w,h}$ and the result follows.
\end{proof}

As an application of Theorem~\ref{thm_support_of_Owh}, we compute some classes $\swh{w}$ in the following example.
\begin{example}\label{example_2444_2143_2413_2341}
Let $h = (2,4,4,4)$. For $w = 2143, 2413, 2341$, one can obtain the supports of classes~$\swh{w}$ by Theorem~\ref{thm_support_of_Owh}. Indeed, one may see that each support forms a \emph{regular} subgraph of the GKM graph of $\Hess(S,h)$. See Figure~\ref{fig_GKM_2444} for the GKM graph of $\Hess(S,h)$. Here, a regular graph is a graph such that each vertex has the same number of neighbors. One can check that the values $\swh{w}(v)$ for $v \in \supp(\swh{w})$ are completely determined by the GKM condition with the value $\swh{w}(w)$ that is given by Proposition~\ref{prop_property_of_swh} as shown in Table~\ref{table_class_description_2444}.
\end{example}
\begin{table}[bt]
\begin{small}
\begin{tabular}{c|cccccc}
\toprule
$v$ & $1234$ & $1243$ & $1324$ & $1342$ & $1423$ & $1432$ \\
\midrule
$\swh{2143}(v)$ & 0 & 0 & 0 & 0 & 0 & 0 \\
$\swh{2413}(v)$ &  0 & 0 & 0 & 0 & 0 & 0 \\
$\swh{2341}(v)$  & 0 & 0 & 0 & 0 & 0 & 0 \\
\midrule
\midrule
$v$ & $2134$ & $2143$ & $2314$ & $2341$ & $2413$ & $2431$ \\
\midrule
$\swh{2143}(v)$	& 0 & $t_{1,2}t_{3,4}$ & 0 & $-t_{2,3}t_{1,4}$ & $-t_{2,4}t_{3,4}$
	& $-t_{2,4}t_{1,4}$ \\
$\swh{2413}(v)$ & 0 & 0 & 0 & 0 & $t_{1,4}t_{3,4}$ & $t_{1,4}t_{3,4}$ \\
$\swh{2341}(v)$  & 0 & 0 & 0 & $t_{1,3}t_{1,4}$ & 0 & $t_{1,3}t_{1,4}$ \\
\midrule
\midrule
$v$ & $3124$ & $3142$ & $3214$ & $3241$ & $3412$ & $3421$ \\
\midrule
$\swh{2143}(v)$  & 0 & 0 & 0 & 0 & 0 & 0 \\
$\swh{2413}(v)$ & 0 & 0 & 0 & 0 & 0 & 0 \\
$\swh{2341}(v)$ & 0 & 0 & 0 & $t_{1,2}t_{1,4}$ & 0 & $t_{1,2}t_{1,4}$\\
\midrule
\midrule
$v$ & $4123$ & $4132$ & $4213$ & $4231$ & $4312$ & $4321$ \\
\midrule
$\swh{2143}(v)$ & 0 & 0 & 0 & 0 & 0 & 0 \\
$\swh{2413}(v)$ & 0 & 0 & $-t_{1,2}t_{2,3}$ & $-t_{1,2}t_{2,3}$ & 0 & 0 \\
$\swh{2341}(v)$ & 0 & 0 & 0 & $t_{1,2}t_{1,3}$ & 0 & $t_{1,2}t_{1,3}$\\
\bottomrule
\end{tabular}
\end{small}
\caption{Description of classes $\swh{w}$ for $w = 2143, 2413, 2341$ and $h = (2,4,4,4)$.
Here, we set $t_{i,j} \colonequals t_i - t_j$.}\label{table_class_description_2444}
\end{table}

\section{Symmetric group action}\label{sec:hong}
 \label{subsec_4.1}
%
\subsection{Geometric interpretation of the dot action}

In~\cite{T1}, Tymoczko introduces an action of the symmetric group $\mathfrak S_n$ on $\bigoplus_{v \in \mathfrak S_n} \mathbb C[t_1, \dots, t_n]$ as follows: For $u \in \mathfrak S_n$ and $ p=(p(v))_{v \in \mathfrak S_n} \in \bigoplus_{v \in \mathfrak S_n} \mathbb C[t_1, \dots, t_n]$,
$$(u\cdot p)(v) \coloneqq
 p(u^{-1}v) (t_{u(1)}, \dots, t_{u(n)}).  $$
Moreover, this action induces an action of the symmetric group $\mathfrak S_n$   on the equivariant cohomology~$H_T^*(G/B)$, which is called the \emph{dot action}.

Geometrically, this action is equivalent to the one induced by the left multiplication of the symmetric group $\mathfrak S_n$ on $G/B$:   $u\cdot gB = u^{-1} gB$ for $u \in \mathfrak S_n$ and $gB \in G/B$ (Corollary~2.10 of \cite{T1}). More precisely, the action of $\mathfrak S_n$ on $ET \times_T G/B$   defined by $u\cdot(z,gB) = (z u, u^{-1}gB)$ for $z \in ET $ and $gB \in G/B$ restricts to $ET \times_T (G/B)^T$ and
we have a commutative diagram
\begin{eqnarray*}
\xymatrix{
H_T^{\ast}  (G/B) \ar[d]^{j^*}\ar[r]^{u^*} & H_T^{\ast} (G/B ) \ar[d]^{j^*} \\
H_T^{\ast}\left((G/B)^T\right) \ar[r]^{u^*} & H_T^{\ast}\left((G/B)^T\right)
}
\end{eqnarray*}
where $u^*$ is the induced map from the action of $u \in \mathfrak S_n$ on $ET \times_T G/B$ and on $ET \times_T (G/B)^T$, and $j^*$ is the induced map from the inclusion $j: ET \times_T (G/B)^T
\rightarrow ET \times_T G/B$. %
Now the action $u^*:H_T^{\ast} \left((G/B)^T\right) \rightarrow  H_T^{\ast}\left((G/B)^T\right)$ is exactly  the one Tymoczko defined on $H^{\ast}_T(G/B)$.

Under the isomorphism
\begin{equation*}
cl_{G/B}^T \colon A_{T}^{\ast}(G/B) \stackrel{\cong}{\longrightarrow} H_T^{2\ast}(G/B)
\end{equation*}
the dot action  corresponds to the action of $\mathfrak S_n$ on the equivariant Chow ring $A_T^{\ast}(G/B)$ given as follows: For an element $u \in \mathfrak S_n$ and  for an element $[Z]$ of $A_{T}^{\ast}(G/B) $ represented by a $T$-invariant subvariety $Z$ of $G/B$, the element $u \cdot [Z]$ of $A_{T}^{\ast}(G/B) $  is represented by the $T$-invariant subvariety $u^{-1}Z$.
This defines an action of $\mathfrak S_n$ on  $A_T^{\ast}(G/B)$  due to the following theorem.

\begin{theorem} [Theorem~2.1 of \cite{Br1}] \label{thm:generators and relations}
	Let $X$ be a variety with an action of a complex torus $T$. Let $M$ be the character group of $T$ and $\mathcal{S}$ be the character ring of $T$.  Then the $\mathcal{S}$-module $A_*^T(X)$ is generated by the classes $[Y]$ of  closed $T$-invariant subvarieties $Y$ of $X$ with relations
	$$[\mathrm{div}_Y(f)] -\chi[Y]$$
	where $f$ is a non-constant rational function on $Y$ which is a $T$-eigenvector of weight $\chi$.
	
\end{theorem}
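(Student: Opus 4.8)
This is Theorem~2.1 of \cite{Br1}; here is how the argument goes. The key device is the approximation of equivariant Chow groups by ordinary Chow groups of finite-dimensional quotients (Totaro, Edidin--Graham): in each fixed degree $k$ one finds a representation $V$ of $T$ and a $T$-stable open $U\subseteq V$ on which $T$ acts freely with $\mathrm{codim}_V(V\setminus U)$ large, so that $A^T_k(X)\cong A_{k+\dim U-\dim T}(X_T)$ for $X_T=X\times_T U$ once $N$ is large enough. One may take the standard approximation $U=(\C^N\setminus\{0\})^{n}$ (with $T$ scaling the factors), so that $U/T=(\C P^{N-1})^{n}$ and $q\colon X_T\to U/T$ is a fibre bundle with fibre $X$, trivial over each affine cell of $U/T$. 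The character ring $\mathcal S=\mathrm{Sym}_{\Z}(M)=A^*_T(\mathrm{pt})$ acts on $A^T_*(X)$ so that $\chi\in M$ acts as cap product with $c_1(L_\chi)$, where $L_\chi=U\times_T\C_\chi$; in the approximation $c_1(L_\chi)$ is the appropriate combination of hyperplane classes of the $\C P^{N-1}$-factors. Since $A_*(X_T)$ is, by definition, the free abelian group on subvarieties $W\subseteq X_T$ modulo the divisors $[\mathrm{div}_W(g)]$ of non-constant rational functions, the whole theorem amounts to translating subvarieties of $X_T$, and rational equivalences on $X_T$, back into the equivariant data on $X$.

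First I would build and analyse the comparison map. Let $R$ be the free $\mathcal S$-module on the closed $T$-invariant subvarieties $Y\subseteq X$ (a $d$-dimensional $Y$ placed so that $\mathcal S^j$ sends it to degree $d-j$), and let $\Phi\colon R\to A^T_*(X)$ send $[Y]$ to the equivariant class $[Y]^T=[Y\times_T U]$, extended $\mathcal S$-linearly. To see $\Phi$ kills the proposed relations, fix a $T$-invariant $Y\subseteq X$ and a $T$-eigenfunction $f\in k(Y)^\times$ of weight $\chi$, and choose a ratio $\ell$ of coordinate monomials of $V$ that is a $T$-eigenfunction of the same weight $\chi$. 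Then $f/\ell$ is $T$-invariant on $Y\times U$, so descends to $g\in k(Y\times_T U)^\times$, and multiplicativity of divisors gives
\[
\mathrm{div}_{Y\times_T U}(g)=\big(\mathrm{div}_Y(f)\big)\times_T U \;-\; Y\times_T \mathrm{div}_U(\ell).
\]
The first term represents $[\mathrm{div}_Y(f)]^T$; the divisor $\mathrm{div}_U(\ell)$ is $T$-invariant, so $Y\times_T\mathrm{div}_U(\ell)$ is a cycle on $X_T$ representing $c_1(L_\chi)\cap[Y]^T=\chi\,[Y]^T$. As the left-hand side is rationally trivial on $X_T$, we get $[\mathrm{div}_Y(f)]^T-\chi[Y]^T=0$ in $A^T_*(X)$, so $\Phi$ factors through $R$ modulo the submodule $\mathrm{Rel}$ generated by the listed relations.

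Next I would prove $\Phi$ is surjective (``the $[Y]$ generate $A^T_*(X)$''). Given a subvariety $W\subseteq X_T$, I would induct downward on $\dim\overline{q(W)}$. If $q|_W$ is not dominant onto some $\C P^{N-1}$-factor, a general hyperplane section in that factor writes $[W]$ as $\chi_j$ times a pullback class supported over a smaller base, which is handled by induction; if $q|_W$ dominates $U/T$, the trivialisation $X_T\cong X\times\C^{d}$ over the big affine cell together with homotopy invariance of Chow groups reduces $[W]$, modulo cycles supported over the (lower-dimensional) boundary, to a class $[Y\times_T U]$ with $Y\subseteq X$ a $T$-invariant subvariety. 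Taking $N$ large kills the truncation error supported on $V\setminus U$. This proves generation, and in the process identifies the $\mathcal S$-coefficients that arise with characters of $T$.

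Finally I would show $\mathrm{Rel}$ exhausts all relations, i.e.\ that $\bar\Phi\colon R/\mathrm{Rel}\to A^T_*(X)$ is injective. Given a vanishing combination $\sum_i n_i[\mathrm{div}_{W_i}(g_i)]$ in $A_*(X_T)$, I would first transport each $W_i$ into ``invariant form'' $Y_i\times_T U$ through a chain of rational equivalences that are themselves $\mathcal S$-multiples of classes $[\mathrm{div}_Y(f)]^T-\chi[Y]^T$; once $W_i=Y_i\times_T U$ one has $k(W_i)=k(Y_i\times U)^T$, and a $T$-invariant rational function there is a ratio of products of eigenfunctions of $Y_i$ paired with eigenfunctions of $U$, so by multiplicativity of $\mathrm{div}$ each $[\mathrm{div}_{W_i}(g_i)]$ unwinds to an $\mathcal S$-combination of the listed relations, and collecting terms places the original relation in $\mathrm{Rel}$. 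The hard part is exactly this last step: one must carry out the reduction of the cycles $W_i$ to invariant form and the factorisation of the functions $g_i$ simultaneously, so that every intermediate rational equivalence is visibly of the permitted type rather than merely some rational equivalence on $X_T$, and one must verify that the ``moving'' manipulations and the passage $N\to\infty$ create no spurious relations. Organising this equivariant d\'evissage compatibly with the $\mathcal S$-module structure is the technical heart of Brion's proof.
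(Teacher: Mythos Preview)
The paper does not prove this statement at all: it is quoted verbatim as Theorem~2.1 of Brion's paper \cite{Br1} and used as a black-box tool (notably in the proofs of Propositions~\ref{prop: edge deleted} and~\ref{prop: edge remaining 2}), so there is no proof in the paper to compare your proposal against. Your sketch is a plausible outline of how one would establish such a result via the Totaro/Edidin--Graham finite-dimensional approximations, and the step you flag as the hard one---showing that every relation in $A_*(X_T)$ can be rewritten as an $\mathcal S$-combination of the listed equivariant relations---is indeed where the real work lies; but since the present paper only \emph{uses} the theorem, any assessment of your argument would have to be against Brion's original proof rather than anything here.
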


Here, for a $T$-invariant subvariety $Y$ and a rational function $f$ on it, $[\mathrm{div}_Y(f)]$ denotes the class in $A_{\ast}^T(X)$ defined by  the divisor of $f$:
$$[\mathrm{div}_Y(f)] = \sum \mathrm{ord}_V(f) [V],
$$
the sum  is over all codimension one subvarieties  of $Y$.  For example, if $f$ is  a dominant morphism $Y \rightarrow \mathbb {C}P^1$, then $[\mathrm{div}_Y(f)]$ is the cycle  $[f^{-1}(0)] - [f^{-1}(\infty)]$, where $0=[1:0]$ and $\infty=[0:1]$ are the zero and infinite points of $\mathbb {C}P ^1$ (Example~1.5.1 of \cite{fulton}). \\

The edge set of the GKM graph of the full flag variety $G/B$  consists of $(w \rightarrow v)$ satisfying that  $v=w s_{j,k}$ for some $s_{j,k}$. 
By the symbol $w \rightarrow v$   we mean that the pair $(w,v)$ satisfies $\ell(w) >\ell(v)$ in addition.
By applying Theorem~\ref{thm:generators and relations} to the case $X = G/B$ and Schubert varieties, the following result  was derived.

\begin{proposition} [Proposition~6.2 of \cite{Br1}, Proposition~3.5 of \cite{T1}] \label{prop:si action in full flag}
	Assume that $\Hess(S,h)$ is the full flag variety $G/B$ of type $A$, in other words, $h(i)=n$ for all $ i \in [n]$.
	
	\begin{enumerate}
		\item If $s_iw \rightarrow w$, then $s_i \cdot \sigma_{w,h} - \sigma_{w,h}=0$.
		\item If $w \rightarrow s_i w$, then $s_i \cdot  \sigma_{w,h} - \sigma_{w,h}= (t_{i+1} - t_i) \sigma_{s_iw,h}$.
	\end{enumerate}
	
\end{proposition}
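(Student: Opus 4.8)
The plan is to follow Brion's argument (this is essentially Proposition~6.2 of \cite{Br1}), transporting the statement to the equivariant Chow ring via $cl^{T}_{G/B}$. In the full flag case $\Owh{w}$ is the closed opposite Schubert variety $\overline{\Owo{w}}$, of codimension $\ell_h(w)=\ell(w)$, and $\sigma_{w,h}=[\Owh{w}]$; moreover the dot action of $s_i$ sends $[Z]$ to $[\dot s_i^{-1}Z]$, and since $\dot s_i^{-1}$ lifts the involution $s_i$, this cycle equals $[\dot s_iZ]$ up to a left $T$-translation, which does not affect the class. So the problem reduces to comparing the two $T$-invariant cycles $\dot s_i\Owh{w}$ and $\Owh{w}$ in $A^{T}_{\ast}(G/B)$, and I would split into the two cases of the statement.

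In case (1), $s_iw\to w$, i.e.\ $\ell(s_iw)>\ell(w)$, equivalently $w^{-1}(i)<w^{-1}(i+1)$: here $\Owh{w}$ is stable under left multiplication by the minimal parabolic $P_i^-=\langle B^-,\dot s_i\rangle$. This is the standard criterion for opposite Schubert varieties (the mirror of ``$\overline{B\dot wB/B}$ is right $P_i$-stable iff $ws_i<w$''); concretely, a generic point of $\Owo{w}$ is $\dot wx$ with $x$ lower unipotent subject to $x_{a,b}=0$ whenever $w(a)<w(b)$, and because $w^{-1}(i)<w^{-1}(i+1)$ conjugation by $\dot s_i$ preserves this open cell, so $\dot s_i\Owo{w}=\Owo{w}$ and hence $\dot s_i\Owh{w}=\Owh{w}$. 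Therefore $s_i\cdot\sigma_{w,h}=\sigma_{w,h}$, which is (1).

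In case (2), $w\to s_iw$, i.e.\ $\ell(s_iw)=\ell(w)-1$ and $w^{-1}(i)>w^{-1}(i+1)$, I would set $j\colonequals w^{-1}(i+1)$ and work inside $\Owh{s_iw}$. Since $w>s_iw$ we have $\Owh{w}\subset\Owh{s_iw}$ of codimension one; applying case (1) to $s_iw$ (for which $s_i(s_iw)=w>s_iw$) shows $\Owh{s_iw}$ is $P_i^-$-stable, so $\dot s_i\Owh{w}\subset\Owh{s_iw}$ is again codimension one, and it differs from $\Owh{w}$ because $\Owh{w}$ is not $P_i^-$-stable. The key object is the rational function
\[
f\;\colonequals\;\frac{p_{w^{(j)}}}{p_{(s_iw)^{(j)}}}\Big|_{\Owh{s_iw}} .
\]
Neither Plücker coordinate vanishes identically on $\Owh{s_iw}$ (they are nonzero at $\dot wB$, resp.\ $\dot{(s_iw)}B$, both of which lie in $\Owh{s_iw}$), so $f$ is a non-constant rational function, and it is a $T$-eigenvector of weight $\sum_{a\in w^{(j)}}t_a-\sum_{a\in(s_iw)^{(j)}}t_a=t_{i+1}-t_i$, because $w^{(j)}$ and $(s_iw)^{(j)}$ agree except that the former contains $i+1$ where the latter contains $i$ (indeed $s_i\cdot w^{(j)}=(s_iw)^{(j)}$). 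The plan is then to apply Theorem~\ref{thm:generators and relations} to $Y=\Owh{s_iw}$ and this $f$, which reads $[\operatorname{div}_{\Owh{s_iw}}(f)]=(t_{i+1}-t_i)[\Owh{s_iw}]$ in $A^{T}_{\ast}(G/B)$.

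The remaining point — and the main obstacle — is the divisor identity $\operatorname{div}_{\Owh{s_iw}}(f)=\dot s_i\Owh{w}-\Owh{w}$. The containments are a short cell computation: on $\Owo{w}$ one finds $p_{(s_iw)^{(j)}}(\dot wx)=\pm\,x_{w^{-1}(i),\,w^{-1}(i+1)}$, which vanishes identically on the cell because $w(w^{-1}(i))=i<i+1=w(w^{-1}(i+1))$ forces that below-diagonal entry to be $0$ there, so $\Owh{w}$ lies in the pole locus of $f$; and $p_{w^{(j)}}(\dot s_i\dot wx)=\pm\,p_{(s_iw)^{(j)}}(\dot wx)$ then puts $\dot s_i\Owh{w}$ in the zero locus. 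What is delicate is to show that these are the \emph{only} codimension-one components and that each occurs with multiplicity exactly one, i.e.\ that $f$ has no further zeros or poles on $\Owh{s_iw}$; I would get this either from normality of Schubert varieties plus the standard straightening-law description of the vanishing of a leading Plücker coordinate on an opposite Schubert variety, or, in the spirit of the present paper, from the explicit cell parametrization of Corollary~\ref{cor_xij_description_using_the_graph}, computing $\operatorname{ord}_D(f)$ at the generic point of each prime divisor $D$ of $\Owh{s_iw}$ (a smooth point, since $\Owh{s_iw}$ is normal hence regular in codimension one). Granting this, the relation above becomes $[\dot s_i\Owh{w}]-[\Owh{w}]=(t_{i+1}-t_i)[\Owh{s_iw}]$, which transports through $cl^{T}_{G/B}$ to $s_i\cdot\sigma_{w,h}-\sigma_{w,h}=(t_{i+1}-t_i)\sigma_{s_iw,h}$; the degrees match, since $[\Owh{w}]\in A^{\ell(w)}_T$, $[\Owh{s_iw}]\in A^{\ell(w)-1}_T$ and $t_{i+1}-t_i$ has degree one.
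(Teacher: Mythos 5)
Your plan is sound, and it is essentially Brion's argument for the cited Proposition 6.2; the present paper does not re-prove this full-flag case directly but rather subsumes it (see the Remark after Proposition~\ref{prop: edge remaining 2}) under the fiber-bundle computation for general Hessenberg functions. The two routes are more closely related than they may look: on the cell $\Omega_{s_iw}^{\circ}$ your rational function $f = p_{w^{(j)}}/p_{(s_iw)^{(j)}}$ equals $\pm\,x_{k,j}$ in the lower-unipotent chart (with $j=w^{-1}(i+1)$, $k=w^{-1}(i)$), and this is exactly the rational function $\pi$ on $Z' = SL_i \times_{B_i^-}\Omega_w$ appearing in the proof of Lemma~\ref{lem:symmetric}. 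The advantage of working on $Z'$ is that $\pi$ is a genuine $\mathbb{C}P^1$-bundle projection, so $\mathrm{div}_{Z'}(\pi) = [{\pi'}^{-1}(0)] - [{\pi'}^{-1}(\infty)]$ is immediate and the identity you flag as delicate is obtained by pushing forward along the proper birational $\varphi'\colon Z'\to\Omega_{s_iw}$; the fibration makes the multiplicity-one and no-extra-component assertions automatic.

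The gap is real in the direct approach, but it can be closed without invoking straightening laws, and I would recommend the following rather than your two suggestions. From $f|_{\Omega_{s_iw}^{\circ}} = \pm\,x_{k,j}$, and since $x_{k,j}$ is one of the free cell coordinates, the part of $\mathrm{div}(f)$ meeting $\Omega_{s_iw}^{\circ}$ is $\dot s_i\Omega_w\cap\Omega_{s_iw}^{\circ}$ with multiplicity one (imposing $x_{k,j}=0$ in the chart for $\Omega_{s_iw}^{\circ}$ recovers precisely the chart for $\dot s_i\Omega_w^{\circ}$). Any remaining prime divisor of $f$ is therefore some $\Omega_v$ with $v$ a Bruhat cover of $s_iw$; a direct check shows every cover $v\neq w$ of $s_iw$ has $s_iv>v$, so each such $\Omega_v$ is $P_i^-$-stable. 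Now $\dot s_i\cdot f = \mathrm{const}\cdot f^{-1}$ gives $\dot s_i\cdot\mathrm{div}(f) = -\mathrm{div}(f)$; writing $\mathrm{div}(f) = [\dot s_i\Omega_w] + b\,[\Omega_w] + \sum_{v\neq w} n_v[\Omega_v]$ and applying $\dot s_i$ forces $n_v=0$ for all $v$ and $b=-1$, which is the identity you need. Finally, a small inaccuracy in your case (1): left translation by $\dot s_i$ does \emph{not} preserve the open cell $\Omega_w^{\circ}$ (it carries the $B^-$-orbit of $\dot wB$ to the $\dot s_iB^-\dot s_i^{-1}$-orbit of $\dot{s_iw}B$, a different locally closed set); what is true, and suffices for the conclusion $\dot s_i\Omega_w = \Omega_w$, is that the \emph{closure} $\Omega_w$ is $P_i^-$-stable whenever $s_iw>w$.
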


Let $h:[n] \rightarrow [n]$ be a Hessenberg function.
The symmetric group $\mathfrak S_n$ acts also  on $H_T^*(\Hess (S,h))$: For $u \in \mathfrak S_n$ and $p=(p(v))_{v \in \mathfrak S_n} \in \bigoplus_{v \in \mathfrak S_n} \mathbb C[t_1, \dots, t_n]$, if $p \in H_T^*(\Hess(S,h))$,  then $u\cdot p \in H_T^*(\Hess(S,h))$ (cf.~\cite[\S4.2]{T2}).
In the rest of this section, we  extend Proposition~\ref{prop:si action in full flag} to Hessenberg varieties $\Hess(S,h)$.
(See Proposition~\ref{prop: edge deleted} and Proposition~\ref{prop: edge remaining 2} below.)

Recall that we have a coordinate chart $x=(x_{{\alpha} ,{\beta}}) \in U^-  \mapsto \dot{w}xB \in \{\dot{w}x \in G \mid x=(x_{{\alpha} ,{\beta}}) \in U^-\}/B$ of $G/B$ around $\dot{w}B$, where $U^-$ is the set of lower triangular matrices having 1 on its diagonal.
Here and from now on, when we use the coordinate chart $x=(x_{{\alpha}, {\beta}})$, we identify $x \in U^-$ with $xB \in U^-B/B \subset G/B$.
   Then we have
 	$$w^{-1}\Omega_{w }^{\circ}=  \{(x_{{\alpha} ,{\beta}}) \in U^- \mid x_{{\alpha} ,{\beta}} =0  \text{ for all } ({\alpha}, {\beta}) \text{ with }  w({\alpha}) < w({\beta}) \}.$$
  By Proposition~\ref{prop_minus_cell_description}, $w^{-1}(\Omega_{w }^{\circ} \cap \Hess(S,h))$ is given by
 	\begin{eqnarray} \label{eq defining ideal description}
  \{(x_{{\alpha} ,{\beta}}) \in w^{-1}\Omega_w^{\circ} \mid   f_{{\alpha},{\beta}}^w =0  \text{ for all } ({\alpha},{\beta}) \text{ with } {\alpha} > h({\beta})\},
   \end{eqnarray}
 	where
 \begin{equation} \label{eq defining ideal}
   f^w_{{\alpha},{\beta}}= (c_{w({\alpha})}- c_{w({\beta})})x_{{\alpha},{\beta}}  +
 	\sum_{t=1}^{{\alpha}-{\beta}-1} \sum_{ {\alpha} > \gamma_1 > \gamma_2 > \dots > \gamma_t > {\beta}} (-1)^t (c_{w(\gamma_t)} - c_{w({\beta})}) x_{{\alpha}, \gamma_1} x_{\gamma_1, \gamma_2} \cdots x_{\gamma_t, {\beta}}
 \end{equation}
 and $c_1, \dots, c_n$ are eigenvalues of the diagonal matrix $S$.
Furthermore, as in~\eqref{eq_xij_monomial_expression}, we may express $w^{-1}(\Omega_{w }^{\circ} \cap \Hess(S,h))$  as the graph
 \begin{eqnarray} \label{eq graph expression}
 \{(x_{c,d}, x_{a,b}) \mid x_{a ,b} = g_{a,b}^w(x_{c  ,d})\}
 \end{eqnarray}
 of a function $g^w =(g_{a ,b}^w )$ from $\{(x_{c,d}) \mid  c>d, w(c) >w(d) \text{ and }  c \leq h(d) \}$ to $\{(x_{a,b})   \mid  a>b,  w(a) >w(b) \text{ and }  a> h(b) \}$.
 Here and afterwards, the indices for the variables $x_{c,d}$ are elements of the first set, and the indices for the variables $x_{a,b}$ are elements of  the second set.
  More precisely, each $g_{a,b}^w $ is a polynomial in variables $x_{c,d}$,
\begin{eqnarray} \label{eq coefficient}
g_{a,b} ^w    = \sum_{\gamma_{\bullet} \in \mathcal P_{w,h}(a,b)} C^w (a,\gamma_1, \dots, \gamma_t,b)x_{a, \gamma_1} \dots x_{\gamma_t,b},
\end{eqnarray}
whose coefficients $C^w (a,\gamma_1, \dots, \gamma_t,b)$ are Laurent polynomials in the eigenvalues $c_1, \dots, c_n$ of the diagonal matrix $S$.
 Note that monomials appearing in the expression~\eqref{eq coefficient} do  not depend on a choice of $S$ but only on the graph $G_{w,h}$, but their coefficients depend on a choice of $S$.

 For example, if we choose $S'=s_iSs_i=\mathrm{diag}(c_1', \dots, c_n')$, then~\eqref{eq defining ideal} becomes
 \begin{equation*} 
    {f'}^w_{{\alpha},{\beta}}= (c'_{w({\alpha})}- c'_{w({\beta})})x_{{\alpha},{\beta}}  +
 	\sum_{t=1}^{{\alpha}-{\beta}-1} \sum_{ {\alpha} > \gamma_1 > \gamma_2 > \dots > \gamma_t > {\beta}} (-1)^t (c'_{w(\gamma_t)} - c'_{w({\beta})}) x_{{\alpha}, \gamma_1} x_{\gamma_1, \gamma_2} \cdots x_{\gamma_t, {\beta}},
 \end{equation*}
and  $w^{-1}(\Omega_{w }^{\circ} \cap \Hess(S',h))$  can be described as the graph of the function ${g'}^{w}=({g'}^{w}_{a,b})$ obtained from $g^w=(g^w_{a,b})$ by substituting $ c_1', \dots, c_n' $ for $c_1, \dots, c_n$ in $C^w (a,\gamma_1, \dots, \gamma_t,b)$.

\begin{proposition} \label{prop: the induced action on chow ring}
The  action of  $ \mathfrak S_n$ on $H_T^{\ast}(\Hess(S,h))$ induces an action on $A_T^{\ast}(\Hess(S,h))$. In particular, for a simple reflection $s_i$, we have
$$ s_i\cdot [\overline{\Omega_w^{\circ} \cap \Hess(S,h)}]
=[\overline{(s_i^{-1}  \Omega_w^{\circ}) \cap \Hess(  S , h) }]$$
 in  $A_T^{\ast}(\Hess(S,h))$.
\end{proposition}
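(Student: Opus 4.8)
The plan is to realise the dot action of a simple reflection $s_i$ on $H^*_T(\Hess(S,h))$ by an explicit homeomorphism of homotopy quotients, exactly as in Section~\ref{subsec_4.1} for $G/B$, and then to read the resulting operation off on the cycle level. For the first assertion, recall from~\eqref{eq_cycle_map_is_iso} that the cycle map $cl^T_{\Hess(S,h)}$ is an isomorphism, and that $H^{\mathrm{odd}}(\Hess(S,h))=0$, so that $H^*_T(\Hess(S,h))=H^{2*}_T(\Hess(S,h))$; since the dot action preserves this space, transport of structure along $cl^T_{\Hess(S,h)}$ equips $A^*_T(\Hess(S,h))$ with an $\mathfrak S_n$-action, which is $\Sym$-semilinear with respect to the $\mathfrak S_n$-action on $\Sym$ permuting $t_1,\dots,t_n$. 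The substance of the statement is therefore the displayed formula, and the remaining steps address it.

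The key point is that left multiplication by $\dot s_i$ does \emph{not} preserve $\Hess(S,h)$: by the isomorphism $\Hess(L,h)\cong\Hess(gLg^{-1},h)$ it carries $\Hess(S,h)$ isomorphically onto $\Hess(S',h)$, where $S'\colonequals\dot s_i S\dot s_i^{-1}$ is again regular semisimple (it is $S$ with its $i$th and $(i+1)$st diagonal entries interchanged; note $\dot s_i^{-1}=\dot s_i$). Imitating the construction recalled in Section~\ref{subsec_4.1}, I would consider the homeomorphism
\[
\Psi\colon ET\times_T\Hess(S,h)\ \longrightarrow\ ET\times_T\Hess(S',h),\qquad [z,x]\longmapsto\bigl[\,z\dot s_i,\ \dot s_i^{-1}x\,\bigr],
\]
which is well defined and invertible. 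On the $T$-fixed loci $\Psi$ sends the $w$-component $BT$ to the $(s_iw)$-component $BT$ through the self-map of $BT=ET/T$ induced by right translation by $\dot s_i$; computing the effect of this self-map on $H^*(BT)=\Sym$ — the same bookkeeping as in the flag-variety case, which produces the transposition of $t_i$ and $t_{i+1}$ — shows that, under the tautological identification $H^*_T(\Hess(S,h))=H^*_T(\Hess(S',h))$ arising because the two varieties share the GKM graph $\Gh$ (which does not involve the matrix, cf.\ Example~\ref{example_Hess_GKM} and Section~\ref{sec_proof_of_support_theorem}), the pullback $\Psi^*$ on equivariant cohomology is precisely the dot action of $s_i$.

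Next I would pass to equivariant Chow rings. Since $\Psi$ is an isomorphism it has a pullback $\Psi^*\colon A^*_T(\Hess(S',h))\to A^*_T(\Hess(S,h))$ taking the class $[Z']$ of a $T$-invariant subvariety $Z'\subset\Hess(S',h)$ to the class $[\psi^{-1}(Z')]$, where $\psi=\dot s_i^{-1}\colon\Hess(S,h)\xrightarrow{\ \sim\ }\Hess(S',h)$; and the cycle map is natural along the isomorphism $\Psi$, so that $cl^T_{\Hess(S,h)}\circ\Psi^*=\Psi^*\circ cl^T_{\Hess(S',h)}$. Apply this to $Z'=\overline{\Owo{w}\cap\Hess(S',h)}$. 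On one side, $\psi$ is a homeomorphism, so $\psi^{-1}(Z')=\overline{\psi^{-1}(\Owo{w}\cap\Hess(S',h))}=\overline{(\dot s_i^{-1}\Owo{w})\cap\Hess(S,h)}$, using $\dot s_i^{-1}\Hess(S',h)=\Hess(S,h)$. On the other side, $cl^T_{\Hess(S',h)}[Z']$ is the minus-cell class $\swh w$ formed with $S'$, and $\Psi^*$ sends it to $s_i\cdot\swh w$. Finally, the class $\swh w$ does not depend on the choice of regular semisimple matrix: it is a flow-up class for the graph $\Gh$ — supported on $\Awh w$ (Theorem~\ref{thm_support_of_Owh}), with value $\prod_{(w\to v)\in\Gh^o}\alpha(w\to v)$ at $w$ (Proposition~\ref{prop_property_of_swh}(3)), neither of which involves the matrix — and such a flow-up class is uniquely determined by the GKM graph (cf.\ \cite{Teff_thesis}); hence under the identification $cl^T_{\Hess(S',h)}[Z']=\swh w=cl^T_{\Hess(S,h)}[\overline{\Owo{w}\cap\Hess(S,h)}]$. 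Chaining these equalities yields $s_i\cdot[\overline{\Owo{w}\cap\Hess(S,h)}]=[\overline{(\dot s_i^{-1}\Owo{w})\cap\Hess(S,h)}]$ in $A^*_T(\Hess(S,h))$.

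The step I expect to demand the most care is the second one: keeping track of the twists as $\Psi$ moves between $ET\times_T\Hess(S,h)$ and $ET\times_T\Hess(S',h)$ — in particular verifying that $\Psi^*$ is genuinely the dot action of $s_i$ (up to inverse, which is harmless since $s_i^2=e$) and that the tautological identification of the two equivariant cohomology rings is compatible with the minus-cell classes $\swh w$. The Chow-theoretic steps — naturality of the cycle map under an isomorphism and the behaviour of cycle classes under $\psi^{-1}$ — are routine once this equivariant bookkeeping is correctly in place.
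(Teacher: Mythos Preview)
Your overall architecture matches the paper's: realise $s_i$ via the map $[z,x]\mapsto[z\dot s_i,\dot s_i^{-1}x]$ between the homotopy quotients of $\Hess(S,h)$ and $\Hess(S',h)$ with $S'=\dot s_i S\dot s_i^{-1}$, use naturality of the cycle map, and then identify the minus-cell class for $S'$ with that for $S$ under the GKM identification. The first two steps are essentially what the paper does via its commutative diagram.

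The gap is in your final identification. You assert that the class $\swh{w}$ is independent of the regular semisimple matrix because ``such a flow-up class is uniquely determined by the GKM graph.'' But flow-up classes are \emph{not} unique in general: knowing that $\supp(\swh{w})=A_{w,h}$ and that $\swh{w}(w)=\prod_{(w\to v)\in\Gh^o}\alpha(w\to v)$ does not pin down the values $\swh{w}(u)$ at the other points $u\in A_{w,h}$. The paper's own remark after Proposition~\ref{prop_property_of_swh} explicitly notes that the Guillemin--Zara and Goldin--Tolman bases ``are not uniquely defined in general''; Teff's flow-up condition is a support condition plus a normalisation at the base vertex, and for intermediate Hessenberg functions this leaves genuine freedom. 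So you cannot conclude that the two cycle classes, formed with $S$ and with $S'$, coincide merely from their sharing a support and a single value.

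The paper closes this gap by a different, geometric argument. Using the explicit coordinate description~\eqref{eq defining ideal description}--\eqref{eq graph expression}, it observes that $w^{-1}(\Owo{w}\cap\Hess(S,h))$ and $w^{-1}(\Owo{w}\cap\Hess(S',h))$ are graphs of functions with the \emph{same} monomials and free variables, differing only in the constants $c_j$ versus $c'_j$; this yields an explicit $T$-equivariant isomorphism between the closures $\overline{\Owo{w}\cap\Hess(S,h)}$ and $\overline{\Owo{w}\cap\Hess(S',h)}$. Then Brion's Lemma~\ref{lemma_Brion_equivariant_mutliplicities} is invoked: the localisation $[\Owh{w}]_u=e_u[\Owh{w}]\,\chi_1\cdots\chi_m$, the tangent weights $\chi_1,\dots,\chi_m$ depend only on $\Gh$, and the equivariant multiplicity $e_u[\,\cdot\,]$ depends only on the intrinsic $T$-action on the subvariety, not on the embedding. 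Since the two closures are $T$-equivariantly isomorphic, all their localisations agree, hence so do their classes. To repair your proof you would need either to replace the uniqueness appeal by this equivariant-multiplicity argument, or to supply an independent proof that support plus value at $w$ determines $\swh{w}$ --- which is not available in the paper and is false for flow-up classes in general.
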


\begin{proof}
Let $\mathcal X$ be a smooth subvariety of $G/B$ invariant under the action of $T$.  Then the following diagram
\begin{eqnarray*}
\xymatrix{
H_T^{\ast} \left(s_i\mathcal X\right) \ar[d]^{j_{s_i}^*}\ar[r]^{s_i^*} & H_T^{\ast}\left(\mathcal X\right) \ar[d]^{j^*} \\
H_T^{\ast}\left((s_i \mathcal X)^T\right) \ar[r]^{{s_i}^*} & H_T^{\ast}\left(\mathcal X^T\right)
}
\end{eqnarray*}
is commutative, where $j_{s_i}:(s_i \mathcal X)^T \rightarrow s_i\mathcal X$ and $j: \mathcal X ^T\rightarrow \mathcal X $ are the inclusions.

 Putting  $\mathcal X = \Hess(S,h)$ into the above diagram,
we get
\begin{eqnarray*}
\xymatrix{
H_T^{\ast} \left( \Hess(S,h)\right) \ar[d]^{j^*} &H_T^{\ast} \left(s_i \Hess(S,h)\right) \ar[d]^{j_{s_i}^*}\ar[r]^{{s_i}^*} & H_T^{\ast}\left(\Hess(S,h)\right) \ar[d]^{j^*} \\
H_T^{\ast}\left(  \Hess(S,h) ^T\right)\ar@{=}[r] &H_T^{\ast}\left((s_i\Hess(S,h))^T\right) \ar[r]^{{s_i}^*} & H_T^{\ast}\left(\Hess(S,h)^T\right).
}
\end{eqnarray*}
Here, the equality $H_T^{\ast}\left(  \Hess(S,h) ^T\right)=H_T^{\ast}\left((s_i \Hess(S,h))^T\right)$ follows from the fact that $\Hess(S,h) ^T $  is equal to $ (G/B)^T=\mathfrak S_n $.

We claim that   $ \overline{\Omega_w^{\circ} \cap \Hess(S,h)}$  and $  \overline{\Omega_w^{\circ} \cap s_i\Hess(  S , h)}$   define the same class in $A_T^{\ast }(\Hess(S,h)^T)=A_T^{\ast }((s_i\Hess( S ,h))^T)$.
  Since $ \Omega_{w }^{\circ} \cap s_i\Hess(S,h)  = \Omega_w^{\circ} \cap \Hess(s_iSs_i^{-1}, h) $, we get a   description of $w^{-1}(\Omega_{w }^{\circ} \cap s_i\Hess(S,h))$ as a graph of some function $g'=(g'_{a,b})$ similar to~\eqref{eq graph expression}, simply by replacing the entries of the diagonal matrix $S = \text{diag}(c_1,\dots,c_n)$ with the entries of the diagonal matrix  $S'\coloneq s_iSs_i^{-1} = \text{diag} (c_1',\dots,c_n'
)$, where $c_k' = c_{s_i(k)}$.
From this description, we get a $T$-equivariant isomorphism  from $\Omega_{w}^{\circ} \cap \Hess(S,h)$ onto $\Omega_{w}^{\circ} \cap \Hess(S',h)$,
 which extends to a $T$-equivariant isomorphism from $\overline{\Omega_{w}^{\circ} \cap \Hess(S,h)}$ onto $\overline{\Omega_{w}^{\circ} \cap \Hess(S',h)}$.

As we see in Theorem~\ref{thm_support_of_Owh}, the $T$-fixed point set of the closure $\overline{\Omega_w^{\circ} \cap \Hess(S,h)} $ does not depend on a choice of $S$. In particular,
  $ \overline{\Omega_w^{\circ} \cap \Hess(S,h)} $  and $ \overline{\Omega_w^{\circ} \cap  \Hess(  S' , h)} $  have the same $T$-fixed point set.
  Furthermore, the tangent spaces of $\Hess(S,h)$ and $\Hess(S',h)$ at a $T$-fixed point $u$ have the same weights.
   Recall that, by Lemma~\ref{lemma_Brion_equivariant_mutliplicities} (4), $[\overline{\Omega_w^{\circ} \cap \Hess(S,h)}]_u$ is determined by the product $\chi_1 \dots  \chi_m$ of weights of the tangent space of $\Hess(S,h)$ at $u$  and $e_u[\overline{\Omega_w^{\circ} \cap \Hess(S,h)}  ]$.
    Note that for  a $T$-fixed point $x$ of a $T$-invariant subvariety $Y$ of $X$,  $e_x[Y]$ does not depend on the embedding of $Y$ into $X$ but  only on the action of $T$ on $Y$ (\cite[Theorem~4.2 (2)]{Br1}).
     It follows that $e_u[\overline{\Omega_w^{\circ} \cap \Hess(S,h)}  ]$ equals $e_u[\overline{\Omega_w^{\circ} \cap  \Hess(  S' , h)} ] $ for any $T$-fixed point $u$.
Consequently, $ \overline{\Omega_w^{\circ} \cap \Hess(S,h)}  $  and $\overline{\Omega_w^{\circ} \cap s_i\Hess(  S , h)} $   define the same class in $A_T^{\ast }(\Hess(S,h)^T)=A_T^{\ast }((s_i\Hess( S ,h))^T)$.

 Now the translate $s_i^{-1}( \Omega_w^{\circ} \cap s_i\Hess( S , h)) $ of  $\Omega_w^{\circ} \cap s_i\Hess( S , h)$ by $s_i^{-1}$ is $(s_i^{-1}\Omega_w^{\circ}) \cap \Hess(S,h)$, and thus the  action of $s_i \in \mathfrak S_n$ on $A_T^{\ast}(\Hess(S,h))$ is given by
\[ [\overline{\Omega_w^{\circ} \cap \Hess(S,h)}]  \in  A_T^{\ast}(\Hess(S,h)) \mapsto [ (\overline{s_i^{-1}  \Omega_w^{\circ}) \cap \Hess(  S  , h)  }] \in A_T^{\ast}(\Hess(S,h)). \qedhere
\]
 \end{proof}

Let $w,v \in \mathfrak S_n$ be such that $v=w s_{j,k}$ and $\ell(w) >\ell(v)$. Then  $(w \rightarrow v)$ is contained in the edge set of the  GKM graph of the full flag variety $G/B$. As in the case of $G/B$, by   $w \rightarrow v$ we mean that    $(w \rightarrow v)$ is contained in the edge set of the  GKM graph of $\Hess(S,h)$. In addition, by $w \dasharrow v$ we mean that  $(w \rightarrow v)$ is not contained in the edge set of the  GKM graph of $\Hess(S,h)$.

\begin{lemma} \label{lem:si and Gwh}  Let $w $ be an element of $\mathfrak S_n$ and let $s_i  $ be a simple reflection.
	
	\begin{enumerate}
		\item If $ w  \dashrightarrow s_iw$, then $E(G_{w,h}) = E(G_{s_iw,h})$ and $\ell_h(s_iw) = \ell_h(w)$ {\rm(}see Figure~\ref{fig_graphs_Gwh si deleted}{\rm)}. 
		
		\item If $ w  \rightarrow s_iw$, then $E(G_{s_iw,h})=E(G_{w,h}) \cup \{w^{-1}(i+1)\rightarrow w^{-1}(i)\}$ and $\ell_h( w) = \ell_h(s_iw)+1$ {\rm (}see Figure~\ref{fig_graphs_Gwh si remaining}{\rm)}.
	\end{enumerate}
\end{lemma}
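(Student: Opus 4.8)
The plan is to push everything through one elementary observation. Writing $a := w^{-1}(i)$ and $b := w^{-1}(i+1)$, the permutation $s_iw$ is obtained from $w$ by exchanging the two entries in positions $a$ and $b$ of the one-line notation, i.e.\ $s_iw = w s_{a,b}$. First I would translate the hypotheses into inequalities: since $\ell(s_iw) < \ell(w)$ is equivalent to $w^{-1}(i) > w^{-1}(i+1)$, the standing assumption in both items forces $b < a$; and by the description of the edges of the GKM graph of $\Hess(S,h)$ in Example~\ref{example_Hess_GKM} (namely $(w \to w s_{b,a}) \in E$ with $b < a$ iff $a \le h(b)$), the hypothesis $w \dashrightarrow s_iw$ means ``$b < a$ and $a > h(b)$'', whereas $w \rightarrow s_iw$ means ``$b < a$ and $a \le h(b)$''.

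Next I would compare $G_{w,h}$ and $G_{s_iw,h}$ pair by pair. For an ordered pair $p < q$, the inequality $q \le h(p)$ from Definition~\ref{def_graph_Ghw} is literally the same for both graphs, so the edge $p \to q$ can belong to one and not the other only when the sign conditions $w(p) < w(q)$ and $(s_iw)(p) < (s_iw)(q)$ disagree. But $s_iw$ arises from $w$ by interchanging the two consecutive values $i$ and $i+1$, an operation that reverses the relative order of a pair of values only when that pair is $\{i,i+1\}$ itself; hence the two sign conditions agree for every pair $(p,q)$ with $\{p,q\} \ne \{a,b\}$. So $G_{w,h}$ and $G_{s_iw,h}$ can differ only in the single potential edge on the vertices $a, b$. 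There $w(a) = i < i+1 = w(b)$ while $(s_iw)(a) = i+1 > i = (s_iw)(b)$, so (recalling $b < a$) the directed edge $b \to a$ is absent from $G_{w,h}$ and belongs to $G_{s_iw,h}$ exactly when $a \le h(b)$.

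Both items are then immediate. If $w \dashrightarrow s_iw$ then $a > h(b)$, so $b \to a$ lies in neither graph and $E(G_{w,h}) = E(G_{s_iw,h})$; if $w \rightarrow s_iw$ then $a \le h(b)$, so $E(G_{s_iw,h}) = E(G_{w,h}) \sqcup \{b \to a\} = E(G_{w,h}) \cup \{w^{-1}(i+1) \to w^{-1}(i)\}$. For the two assertions about $\ell_h$ I would use the identity $\# E(G_{v,h}) = \dim_{\C}\Hess(S,h) - \ell_h(v)$, which follows from $\# E(G_{v,h}) = \dim_{\C}\Owho{v}$ and~\eqref{eq_dim_codim_X_Omega}: in case (1) the edge sets coincide, so $\ell_h(s_iw) = \ell_h(w)$, and in case (2) they differ by exactly one edge, so $\ell_h(w) = \ell_h(s_iw) + 1$ (alternatively, one may count inversions directly from~\eqref{eq_def_of_lhw} using the same pair-by-pair comparison). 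There is no real obstacle here; the only step requiring care is the bookkeeping that converts the solid and dashed arrow conventions into the inequalities $b < a$ and $a \le h(b)$ (resp.\ $a > h(b)$), and keeping track that the ``only one pair changes'' argument concerns the unordered pair $\{a,b\} = \{w^{-1}(i), w^{-1}(i+1)\}$.
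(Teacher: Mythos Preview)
Your proof is correct and follows essentially the same approach as the paper's own proof: set up the positions $j:=w^{-1}(i+1)<w^{-1}(i)=:k$ (your $b<a$), observe that the GKM edge condition $(w\to s_iw)$ is exactly $k\le h(j)$, and note this is precisely the condition for the single edge $j\to k$ to appear in $G_{s_iw,h}$ (all other edges of $G_{w,h}$ and $G_{s_iw,h}$ agreeing since only the pair of values $\{i,i+1\}$ is swapped). The paper's proof is terser and leaves the pair-by-pair comparison and the $\ell_h$ consequences implicit, whereas you have spelled them out carefully.
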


\begin{proof}
	Assume that $\ell(w) >\ell(s_iw)$. Then $j \colonequals w^{-1}(i+1) <w^{-1}(i)=:k$. Furthermore, $(w \rightarrow s_iw)$ is contained in the edge set of the GKM graph of $\Hess(S,h)$ if and only if $k \leq h(j)$, or equivalently, there is an edge $j \rightarrow k$ in $G_{s_iw,h}$.  This completes the proof.
\end{proof}

\begin{figure}[t]
	\begin{subfigure}[b]{0.3\textwidth}
		\begin{tikzpicture}
		\foreach \x in {1,2,3,4}
		\node[circle,draw,inner sep=0pt,text width=5mm,align=center] (\x) at (\x,0) {\x};
		\draw[->] (2)--(3);
		\draw[->] (2) to [bend left = 45] (4);
		\draw[->] (3)--(4);
		\end{tikzpicture}
		\caption{$G_{4123,h}$.}\label{subfigure_4123}
	\end{subfigure}
	\hspace{0.5cm}
	\begin{subfigure}[b]{0.3\textwidth}
		\begin{tikzpicture}
		\foreach \x in {1,2,3,4}
		\node[circle,draw,inner sep=0pt,text width=5mm,align=center] (\x) at (\x,0) {\x};
		\draw[->] (2)--(3);
		\draw[->] (2) to [bend left = 45] (4);
		\draw[->] (3)--(4);
		\end{tikzpicture}
		\caption{$G_{3124,h}$.}\label{subfigure_3124}
	\end{subfigure}
	\caption{$h = (2,4,4,4)$ and $4123 \stackrel{s_3}{\dashrightarrow} 3124$.}\label{fig_graphs_Gwh si deleted}
\end{figure}

\begin{figure} [t]
	\begin{subfigure}[b]{0.3\textwidth}
		\begin{tikzpicture}
		\foreach \x in {1,2,3,4}
		\node[circle,draw,inner sep=0pt,text width=5mm,align=center] (\x) at (\x,0) {\x};
		\draw[->] (2)--(3);
		\draw[->] (1)--(2);
		\end{tikzpicture}
		\caption{$G_{1342,h}$.}\label{subfigure_1342}
	\end{subfigure}
	\hspace{0.5cm}
	\begin{subfigure}[b]{0.3\textwidth}
		\begin{tikzpicture}
		\foreach \x in {1,2,3,4}
		\node[circle,draw,inner sep=0pt,text width=5mm,align=center] (\x) at (\x,0) {\x};
		\draw[->] (1)--(2);
		\draw[->] (2) to [bend left = 45] (4);
		\draw[->] (2)--(3);

		\end{tikzpicture}
		\caption{$G_{1243,h}$.}\label{subfigure_1243}
	\end{subfigure}
	\caption{$h = (2,4,4,4)$ and $1342 \stackrel{s_2}{\rightarrow} 1243$.}\label{fig_graphs_Gwh si remaining}
\end{figure}

  For a simple reflection $s_i$ and $w \in \mathfrak S_n$, we have
	$$w^{-1}\Omega_{w }^{\circ}=  \{(x_{{\alpha} ,{\beta}}) \in U^- \mid x_{{\alpha} ,{\beta}} =0 \text{ for all } ({\alpha},{\beta}) \text{ with } w({\alpha}) < w({\beta}) \} $$
and
$$(s_iw)^{-1}\Omega_{s_iw }^{\circ}=  \{(x_{{\alpha} ,{\beta}}) \in U^- \mid x_{{\alpha} ,{\beta}} =0 \text{ for all } ({\alpha},{\beta}) \text{ with } s_iw({\alpha}) < s_iw({\beta}) \}.$$
If $\ell(w) > \ell(s_iw)$, or equivalently, $w^{-1}(i+1) < w^{-1}(i )$, then from
$$ \{(\alpha, \beta) \mid  \alpha > \beta,  w(\alpha) >w(\beta) \}  \sqcup \{(w^{-1}(i ), w^{-1}(i+1))\} = \{(\alpha,\beta)\mid \alpha >\beta, s_iw(\alpha) >s_i w(\beta) \}, $$ it follows that  $w^{-1}\Omega_{w }^{\circ}$ is contained in  $(s_iw)^{-1}\Omega_{s_iw }^{\circ}$, and is obtained by taking $x_{w^{-1}(i+1), w^{-1}(i)}=0$.

\begin{lemma} \label{free variables and defining equations}
Let $w $ be an element of $\mathfrak S_n$ and let $s_i$ be a simple reflection.
 \begin{enumerate}
 \item
If $w \dasharrow s_iw$,  then we have $$\Omega_{s_i w}^{\circ} \cap \Hess(S,h)  = s_i(\Omega_{w}^{\circ} \cap s_i \Hess(S,h)) .$$
 \item If $w \rightarrow s_iw$, then
 we have
 $$\Omega_{s_i w}^{\circ} \cap \Hess(S,h)   \supsetneq s_i(\Omega_{w}^{\circ} \cap s_i \Hess(S,h)) .$$
 \end{enumerate}

 \end{lemma}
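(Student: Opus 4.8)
The plan is to transport both sides into the affine chart $x\mapsto\dot{s_iw}xB$ around $\dot{s_iw}B$, where each becomes an explicit subset of $U^-$. Write $a\colonequals w^{-1}(i)$ and $b\colonequals w^{-1}(i+1)$; both hypotheses force $\ell(w)>\ell(s_iw)$, so $b<a$, and $(a,b)$ is precisely the index recorded just before the lemma, so that $w^{-1}\Omega_w^\circ=(s_iw)^{-1}\Omega_{s_iw}^\circ\cap\{x_{a,b}=0\}$ inside $U^-$. Using the identity $\dot{s_i}\dot{w}=\dot{s_iw}$ of column permutation matrices, left translation by $\dot{s_i}$ identifies the chart around $\dot{w}B$ with the one around $\dot{s_iw}B$ preserving the coordinate matrix $x$; and since $s_i\Hess(S,h)=\Hess(S',h)$ with $S'=s_iSs_i^{-1}=\mathrm{diag}(c_1',\dots,c_n')$, $c_m'=c_{s_i(m)}$, and $c'_{w(\alpha)}=c_{s_iw(\alpha)}$, the polynomials cutting out $\Omega_w^\circ\cap\Hess(S',h)$ obtained from~\eqref{eq defining ideal} by replacing each $c_m$ by $c_m'$ are literally the functions $f^{s_iw}_{\alpha,\beta}$. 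Writing $Z\colonequals\{x\in U^-\mid f^{s_iw}_{\alpha,\beta}(x)=0\text{ whenever }\alpha>h(\beta)\}$ and combining with~\eqref{eq defining ideal description}, the chart around $\dot{s_iw}B$ identifies
\[
s_i\bigl(\Omega_w^\circ\cap s_i\Hess(S,h)\bigr)\ \longleftrightarrow\ w^{-1}\Omega_w^\circ\cap Z,\qquad
\Omega_{s_iw}^\circ\cap\Hess(S,h)\ \longleftrightarrow\ (s_iw)^{-1}\Omega_{s_iw}^\circ\cap Z.
\]
Since $w^{-1}\Omega_w^\circ\subseteq(s_iw)^{-1}\Omega_{s_iw}^\circ$, the inclusion $s_i(\Omega_w^\circ\cap s_i\Hess(S,h))\subseteq\Omega_{s_iw}^\circ\cap\Hess(S,h)$ holds in both cases, and everything reduces to deciding whether $x_{a,b}$ vanishes on $(s_iw)^{-1}\Omega_{s_iw}^\circ\cap Z$.

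For part (1), $w\dasharrow s_iw$, the definition of $\dasharrow$ gives $a>h(b)$, so $f^{s_iw}_{a,b}$ is among the functions defining $Z$; and on $(s_iw)^{-1}\Omega_{s_iw}^\circ$ every monomial $x_{a,\gamma_1}x_{\gamma_1,\gamma_2}\cdots x_{\gamma_t,b}$ with $t\geq 1$ appearing in $f^{s_iw}_{a,b}$ vanishes identically, because a nonvanishing such monomial would force $s_iw(a)>s_iw(\gamma_1)>\cdots>s_iw(\gamma_t)>s_iw(b)$, i.e.\ integers strictly between $s_iw(b)=i$ and $s_iw(a)=i+1$, which is impossible. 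Hence $f^{s_iw}_{a,b}$ restricts to $(c_{i+1}-c_i)x_{a,b}$ on $(s_iw)^{-1}\Omega_{s_iw}^\circ$, so $x_{a,b}=0$ on $(s_iw)^{-1}\Omega_{s_iw}^\circ\cap Z$, whence the two displayed sets coincide; this is consistent with $E(G_{w,h})=E(G_{s_iw,h})$ and $\ell_h(w)=\ell_h(s_iw)$ from Lemma~\ref{lem:si and Gwh}(1). This collapse of the single ``dangerous'' defining equation $f^{s_iw}_{a,b}=0$ to the linear equation $(c_{i+1}-c_i)x_{a,b}=0$ is the crux of the argument and the step I expect to require the most careful bookkeeping.

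For part (2), $w\rightarrow s_iw$, the definition of $\rightarrow$ gives $a\leq h(b)$, so $f^{s_iw}_{a,b}$ is \emph{not} among the functions defining $Z$; moreover $b<a$ and $s_iw(b)=i<i+1=s_iw(a)$, so $(b\to a)$ is an edge of $G_{s_iw,h}$ (Lemma~\ref{lem:si and Gwh}(2)) and by Corollary~\ref{cor_xij_description_using_the_graph} the coordinate $x_{a,b}$ is a free variable on $\Omega_{s_iw,h}^\circ=\Omega_{s_iw}^\circ\cap\Hess(S,h)$. Thus $\Omega_{s_iw}^\circ\cap\Hess(S,h)$ contains points with $x_{a,b}\neq 0$, which do not lie in $s_i(\Omega_w^\circ\cap s_i\Hess(S,h))$; since the latter set is nonempty (it contains $\dot{s_iw}B$), the inclusion is strict.
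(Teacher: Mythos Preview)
Your proof is correct and follows essentially the same chart-based approach as the paper: both arguments pull everything back to $(s_iw)^{-1}\Omega_{s_iw}^\circ\subset U^-$ and compare the defining ideals of the two sides there. The paper phrases the conclusion of part~(1) as ``$w^{-1}\Omega^\circ_{w,h}$ and $(s_iw)^{-1}\Omega^\circ_{s_iw,h}$ have the same free variables'' (via Lemma~\ref{lem:si and Gwh}(1)), together with the coefficient-swap $c'_{w(\alpha)}=c_{s_iw(\alpha)}$, and leaves the verification that the extra equation $f^{s_iw}_{a,b}=0$ forces $x_{a,b}=0$ implicit in the graph expression~\eqref{eq graph expression}. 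Your direct observation that any nonlinear monomial of $f^{s_iw}_{a,b}$ would require an integer strictly between $s_iw(b)=i$ and $s_iw(a)=i+1$ is a cleaner and more self-contained way to see this collapse, and is exactly the ``crux'' you flagged; it makes part~(1) independent of the graph-form bookkeeping.
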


\begin{proof}

 By replacing $w$ with $s_iw$ in~\eqref{eq defining ideal description} and~\eqref{eq defining ideal}, we get
$$(s_iw)^{-1}\Omega_{s_iw,h}^{\circ} =   \{(x_{\alpha,{\beta}} ) \in (s_iw)^{-1}\Omega_{s_iw}^{\circ} \mid   f_{{\alpha},{\beta}}^{s_iw} =0  \text{ for all } ({\alpha},{\beta}) \text{ with } {\alpha} > h({\beta})\},$$ 
	where $$f_{{\alpha},{\beta}}^{s_iw}= (c_{s_iw({\alpha})}- c_{s_iw({\beta})})x_{{\alpha},{\beta}}  +
	\sum_{t=1}^{{\alpha}-{\beta}-1} \sum_{ {\alpha} > \gamma_1 > \gamma_2 > \dots > \gamma_t > {\beta}} (-1)^t (c_{s_iw(\gamma_t)} - c_{s_iw({\beta})}) x_{a, \gamma_1} x_{\gamma_1, \gamma_2} \cdots x_{\gamma_t, {\beta}}. $$
Note that $c_{s_iw({\alpha})} = c_{w({\alpha})}$ if $w({\alpha}) \not \in \{j,k\}$,
and $c_{s_iw(j)} =c_{w(k)}$ and $c_{s_iw(k)} =c_{w(j)}$, where $j\colonequals w^{-1}(i+1)$ and $k\colonequals w^{-1}(i)$.

\smallskip

(1) If $w \dasharrow s_iw$, i.e., $\ell(w) > \ell(s_iw)$ and  $w^{-1}(i ) > h(w^{-1}(i+1))$, then by Lemma~\ref{lem:si and Gwh}(1), we have
\begin{eqnarray*} \label{eq:same free variables}
\{(\alpha,\beta) \mid \alpha >\beta,  w(\alpha) >w(\beta), \text{ and } \alpha \leq h(\beta)\}   = \{(\alpha,\beta) \mid  \alpha >\beta,  s_iw(\alpha) >s_i w(\beta), \text{ and } \alpha \leq h(\beta) \}.
\end{eqnarray*}
Thus $w^{-1}\Omega^{\circ}_{w,h}$ and $(s_iw)^{-1}\Omega^{\circ}_{s_iw,h}$ have the same free variables,
and  their defining equations $f_{{\alpha} ,{\beta}}^{w}$ and $f_{{\alpha} ,{\beta}}^{s_iw}$ differ  only by the coefficients of monomials.
Furthermore, for $S'=s_iSs_i=\mathrm{diag}(c_1', \dots, c_n')$, we have  $c'_{ w({\alpha})} = c_{w({\alpha})}$ if $w({\alpha}) \not \in \{j,k\}$,
and $c'_{ w(j)} =c_{w(k)}$ and $c'_{ w(k)} =c_{w(j)}$, where $j\colonequals w^{-1}(i+1)$ and $k\colonequals w^{-1}(i)$.
Therefore, we have $(s_iw)^{-1}(\Omega_{s_iw}^{\circ} \cap \Hess(S,h)) = w^{-1}(\Omega_{w}^{\circ} \cap \Hess(s_iSs_i,h)) $ and thus
$$\Omega_{s_i w}^{\circ} \cap \Hess(S,h)  = s_i(\Omega_{w}^{\circ} \cap s_i \Hess(S,h)) .$$

\smallskip

(2) If  $w \rightarrow s_iw$, i.e., $\ell(w) > \ell(s_iw)$ and  $w^{-1}(i ) \leq h(w^{-1}(i+1))$, then by Lemma~\ref{lem:si and Gwh}(2), we have
 \begin{eqnarray*} \label{eq:not same free variables}
 && \{(\alpha,\beta)\mid \alpha >\beta, w(\alpha) >w(\beta), \text{ and } \alpha \leq h(\beta)\}  \sqcup \{(w^{-1}(i ), w^{-1}(i+1))\} \\
   & & \quad =   \{(\alpha,\beta) \mid \alpha >\beta, s_iw(\alpha) >s_i w(\beta), \text{ and } \alpha \leq h(\beta) \}.
 \end{eqnarray*}
 Thus $(s_iw)^{-1}\Omega^{\circ}_{s_iw,h}$ has one more free variable $x_{w^{-1}(i )\, w^{-1}(i+1) }$ than $w^{-1}\Omega_{w,h}^{\circ}$ does,
and  their defining equations $f_{{\alpha} ,{\beta}}^{w}$ and $f_{{\alpha} ,{\beta}}^{s_iw}$ differ  only by the coefficients of monomials.
 Therefore,
 we have $$(s_iw)^{-1}(\Omega_{s_iw}^{\circ} \cap \Hess(S,h))  \supsetneq w^{-1}(\Omega_{w}^{\circ} \cap \Hess(s_iSs_i,h)), $$
 where the latter is defined by the equation $x_{w^{-1}(i+1), w^{-1}(i)}=0$. Consequently,
\[
\Omega_{s_i w}^{\circ} \cap \Hess(S,h)  \supsetneq s_i(\Omega_{w}^{\circ} \cap s_i \Hess(S,h)) . \qedhere
\]
\end{proof}

\begin{proposition}  \label{prop: edge deleted} Let $w $ be an element of $ \mathfrak S_n$ and $s_i$  a simple reflection.
	If $ w  \dashrightarrow s_iw$ or $s_iw \dasharrow w$, then $s_i \cdot \sigma_{w,h} =\sigma_{s_i w, h}$ in $H^{\ast}_T(\Hess(S,h))$.
	
\end{proposition}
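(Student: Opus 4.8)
The plan is to reduce the two cases in the hypothesis to the single implication ``$w\dashrightarrow s_iw \Longrightarrow s_i\cdot\swh{w}=\swh{s_iw}$'', and then to prove that implication directly, by chasing a short chain of equalities among $T$-invariant subvarieties, using the geometric description of the dot action on the equivariant Chow ring from Proposition~\ref{prop: the induced action on chow ring} together with Lemma~\ref{free variables and defining equations}(1).

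First I would record the reduction. Granting the displayed implication, suppose instead $s_iw\dasharrow w$, and put $v:=s_iw$. Then $s_iv=w$, $\ell(v)=\ell(s_iw)>\ell(w)=\ell(s_iv)$, and the absent edge joining $v$ and $s_iv$ is exactly the absent edge joining $s_iw$ and $w$, so $v\dashrightarrow s_iv$. The implication then gives $s_i\cdot\swh{s_iw}=\swh{w}$; applying $s_i\cdot$ to both sides and using that the dot action is a group action of $\mathfrak{S}_n$, so that $s_i\cdot s_i\cdot=\mathrm{id}$, yields $\swh{s_iw}=s_i\cdot\swh{w}$. Hence everything reduces to the case $w\dashrightarrow s_iw$.

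For that case I would argue as follows. Since $\swh{w}$ is the cycle-map image of $[\Owh{w}]=[\overline{\Owo{w}\cap\Hess(S,h)}]$ and, by Proposition~\ref{prop: the induced action on chow ring}, the dot action on $H^{\ast}_T$ corresponds under the cycle map to $[\overline{\Owo{w}\cap\Hess(S,h)}]\mapsto[\overline{(s_i^{-1}\Owo{w})\cap\Hess(S,h)}]$ on $A^{\ast}_T$, we get $s_i\cdot\swh{w}=\big[\overline{(s_i^{-1}\Owo{w})\cap\Hess(S,h)}\big]$. Now $s_i^{-1}=s_i$ as permutation matrices, left multiplication by $s_i$ is a bijection commuting with intersection, and $s_i\big(s_i\Hess(S,h)\big)=\Hess(S,h)$; therefore
\[
(s_i^{-1}\Owo{w})\cap\Hess(S,h)=(s_i\Owo{w})\cap\big(s_i(s_i\Hess(S,h))\big)=s_i\big(\Owo{w}\cap s_i\Hess(S,h)\big),
\]
and Lemma~\ref{free variables and defining equations}(1) identifies the last expression with $\Owo{s_iw}\cap\Hess(S,h)=\Owho{s_iw}$. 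Taking closures and passing back through the cycle map gives $s_i\cdot\swh{w}=\swh{s_iw}$. As a consistency check, Lemma~\ref{lem:si and Gwh}(1) gives $\ell_h(s_iw)=\ell_h(w)$, so both sides lie in $H^{2\ell_h(w)}_T(\Hess(S,h))$, compatibly with the dot action preserving degree.

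The genuine content has already been isolated upstream: it is Lemma~\ref{free variables and defining equations}(1), the assertion that \emph{deleting} the edge $w\to s_iw$ from the GKM graph forces the translated cell $s_i(\Owho{w}\cap s_i\Hess(S,h))$ to coincide with $\Owho{s_iw}$ on the nose (not merely to agree on $T$-fixed points), which is precisely what lets me match closures and Chow classes directly; together with Proposition~\ref{prop: the induced action on chow ring}, which realizes the dot action of $s_i$ as a geometric translate. So the remaining difficulty is only bookkeeping — keeping careful track of whether each cycle lives in $\Hess(S,h)$ or in $s_i\Hess(S,h)=\Hess(s_iSs_i,h)$, and of the canonical identification of their equivariant cohomologies (both carrying the same GKM graph, hence sitting inside the same $\bigoplus_{v\in\mathfrak{S}_n}\C[t_1,\dots,t_n]$). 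I do not expect a real obstacle beyond that.
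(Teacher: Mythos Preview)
Your proposal is correct and follows essentially the same approach as the paper: reduce to the case $w\dashrightarrow s_iw$, then combine Lemma~\ref{free variables and defining equations}(1) with Proposition~\ref{prop: the induced action on chow ring} to identify $s_i\cdot[\Owh{w}]$ with $[\Owh{s_iw}]$. The paper's proof is terser (it simply writes ``We may assume that $w\dasharrow s_iw$'' and then invokes the two results), but the content is the same; your explicit reduction via $v:=s_iw$ and the intermediate rewriting $(s_i^{-1}\Owo{w})\cap\Hess(S,h)=s_i(\Owo{w}\cap s_i\Hess(S,h))$ are exactly what the paper leaves implicit.
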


\begin{proof}
We may assume that $w \dasharrow s_iw$. 	
	By Lemma~\ref{free variables and defining equations}(1), we have
 $$\Omega_{s_i w}^{\circ} \cap \Hess(S,h)  = s_i(\Omega_{w}^{\circ} \cap s_i \Hess(S,h)) .$$
 By Proposition~\ref{prop: the induced action on chow ring}, we have $s_i \cdot [\Omega_{w,h}] = [\Omega_{s_iw,h}]$.
\end{proof}

Let $w $ be an element of $\mathfrak S_n$ and let $s_i  $ be a simple reflection such that $w \rightarrow s_iw$.
To describe the action of $s_i$ on $\sigma_{w,h}$, we introduce some notations.
The Bia{\l}ynicki-Birula decomposition of $\Omega_{s_{i}w,h}$ is given by
$$\Omega_{s_{i}w,h}= \bigsqcup_{u \in \Omega_{s_{i}w,h}^T}(\Omega_u^{\circ} \cap \Omega_{s_{i}w,h}). $$
For $u \not = s_iw \in \Omega_{s_iw,h}^T$, the dimension of $\Omega_u^{\circ} \cap \Omega_{s_iw,h}$ is less than or equal to $\dim \Omega_{w,h}$ because $\Omega_{s_iw,h}$ is irreducible.
Define a subset $\mathcal A_{s_i,w}$ of $\Omega_{s_{i}w,h}^T $ by
$$ \mathcal A_{s_i,w} \coloneq \left \{u \in    \Omega_{s_{i}w,h}^T \cap \Omega_w^T \mid  \dim (\Omega_{u}^{\circ} \cap \Omega_{s_iw,h}) =\dim \Omega_{w,h} \text{ and  } u \dashrightarrow s_iu \right\}.
$$
For $u \in  \mathcal A_{s_i,w}$, define $\mathcal T_u$ and $\mathcal T_{s_iu}$ by the closure of   $ \Omega_u^{\circ} \cap \Omega_{s_{i}w,h}$ and $ \Omega_{s_iu}^{\circ} \cap \Omega_{s_{i}w,h}$, respectively. Then $\mathcal T_u$ and $\mathcal T_{s_iu}$ are irreducible because each of them is the closure of an affine cell.  Let $\tau_u$ and $\tau_{s_iu}$ denote the equivariant classes  in $H^*_T(\Hess(S,h))$ induced by $\mathcal T_u$ and $\mathcal T_{s_iu}$, respectively.

\begin{proposition} \label{prop: edge remaining 2} Let $w \in \mathfrak S_n$ and let $s_i  $ be a simple reflection.
	
	 \begin{enumerate}
		 \item
 If $s_i w \rightarrow w$, then  we get $s_i \cdot \sigma_{w,h} - \sigma_{w,h}=0$ in $H^{\ast}_T(\Hess(S,h))$.
 	\item If $w \rightarrow s_iw$, then we get
 $$\left( s_i \cdot \sigma_{w,h} + \sum_{u \in \mathcal A_{s_i,w}  } \tau_{s_iu} \right) - \left(\sigma_{w,h} + \sum_{u   \in \mathcal A_{s_i,w} } \tau_u \right)= (t_{i+1 }- t_{i})\sigma_{s_i w, h} \text{ in } H^{\ast}_T(\Hess(S,h)), $$
and the intersection $\mathcal A_{s_i,w} \cap s_i\mathcal A_{s_i,w}$ is empty.
 	\end{enumerate}
\end{proposition}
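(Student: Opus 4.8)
The plan is to run Brion's argument --- the one behind Proposition~\ref{prop:si action in full flag} for $G/B$ --- in the Hessenberg setting: work in the equivariant Chow ring $A^{\ast}_T(\Hess(S,h))$, exhibit a suitable irreducible $T$-invariant subvariety together with a nonconstant rational $T$-eigenfunction on it, apply the presentation of Theorem~\ref{thm:generators and relations}, and transport the resulting relation to $H^{\ast}_T(\Hess(S,h))$ through the cycle map~\eqref{eq_cycle_map_is_iso}. Part~(1), when $s_iw\rightarrow w$, is the easy case: here $\ell_h(s_iw)=\ell_h(w)+1$ by Lemma~\ref{lem:si and Gwh}, and comparing the chart descriptions around $\dot wB$ and $\dot{s_iw}B$ (Lemma~\ref{free variables and defining equations}, used with $s_iw$ in place of $w$) with Proposition~\ref{prop: the induced action on chow ring} shows that the cycle $\overline{(s_i^{-1}\Omega_w^{\circ})\cap\Hess(S,h)}$ representing $s_i\cdot\sigma_{w,h}$ has the same $T$-fixed point set and the same equivariant multiplicity at each fixed point as $\Omega_{w,h}$ --- the same comparison that appears in the proof of Proposition~\ref{prop: the induced action on chow ring} --- so it represents $\sigma_{w,h}$ and $s_i\cdot\sigma_{w,h}-\sigma_{w,h}=0$.

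For part~(2), assume $w\rightarrow s_iw$ and put $Y\colonequals\Omega_{s_iw,h}$, which is irreducible and $T$-invariant, being the closure of the affine cell $\Owho{s_iw}$. By Lemma~\ref{free variables and defining equations}(2), on the chart around $\dot{s_iw}B$ the cell $(s_iw)^{-1}\Owho{s_iw}$ is an affine space whose coordinate functions include exactly one ``extra'' coordinate $\xi$, namely $x_{w^{-1}(i),\,w^{-1}(i+1)}$, which does not occur among the free coordinates of $w^{-1}\Owho{w}$; view $\xi$ as a nonconstant rational function on $Y$. Reading off the $T$-action on this chart shows that $\xi$ is a $T$-eigenvector of weight $t_{i+1}-t_i$, the edge label $\alpha(s_iw\rightarrow w)$ of $\Gh$; this is what will produce the factor $t_{i+1}-t_i$ in the identity.

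The heart of the proof is the explicit computation of $\mathrm{div}_Y(\xi)$. On the dense cell $\Owho{s_iw}$ the function $\xi$ is regular, and its zero locus there is the irreducible hypersurface $(s_i^{-1}\Omega_w^{\circ})\cap\Hess(S,h)$, whose closure represents $s_i\cdot\sigma_{w,h}$ by Proposition~\ref{prop: the induced action on chow ring}; every remaining component of $\mathrm{div}_Y(\xi)$ lies in the boundary $Y\setminus\Owho{s_iw}=\bigsqcup_{u\neq s_iw}(\Omega_u^{\circ}\cap\Omega_{s_iw,h})$. A local analysis near the fixed point $\dot uB$ of each codimension-one boundary stratum --- carried out with the defining equations~\eqref{eq defining ideal}, Corollary~\ref{cor_xij_description_using_the_graph}, and the reachability description of Section~\ref{sec_description_minus_cell} --- will show that $\xi$ picks up a simple zero or pole along exactly the strata $\Omega_u^{\circ}\cap\Omega_{s_iw,h}$ with $u\in\mathcal A_{s_i,w}$ (contributing the classes $\tau_u$), along the strata $\Omega_{s_iu}^{\circ}\cap\Omega_{s_iw,h}$ with $u\in\mathcal A_{s_i,w}$ (contributing $\tau_{s_iu}$), and along the stratum $\Omega_w^{\circ}\cap\Omega_{s_iw,h}$ (whose closure is $\Omega_{w,h}$), and is regular and nonzero along every other boundary stratum; moreover these components split into a zero part and a pole part so that, after rewriting Brion's relation $[\mathrm{div}_Y(\xi)]=(t_{i+1}-t_i)[Y]$ and applying the cycle map, one obtains precisely the asserted identity in $H^{\ast}_T(\Hess(S,h))$. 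The role of the condition $u\dashrightarrow s_iu$ in the definition of $\mathcal A_{s_i,w}$ is exactly to isolate the pairs of strata $(\Omega_u^{\circ}\cap\Omega_{s_iw,h},\,\Omega_{s_iu}^{\circ}\cap\Omega_{s_iw,h})$ across which $\xi$ jumps between $0$ and $\infty$; when $u\rightarrow s_iu$ is an edge of $\Gh$ instead, $\xi$ stays finite and nonzero on both, so these contribute nothing.

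I expect this boundary divisor computation to be the main obstacle: identifying which codimension-one boundary strata of $\Omega_{s_iw,h}$ carry a zero or a pole of $\xi$, checking that each occurs with multiplicity one, and matching the zero strata and the pole strata correctly with $\mathcal A_{s_i,w}$, $s_i\mathcal A_{s_i,w}$, and the extra stratum $\Omega_w^{\circ}\cap\Omega_{s_iw,h}$. The final assertion $\mathcal A_{s_i,w}\cap s_i\mathcal A_{s_i,w}=\emptyset$ is by contrast immediate: membership of $u$ in $\mathcal A_{s_i,w}$ requires $u\dashrightarrow s_iu$, which by our convention presupposes $\ell(u)>\ell(s_iu)$; if $s_iu$ also belonged to $\mathcal A_{s_i,w}$ we would likewise need $\ell(s_iu)>\ell(u)$, a contradiction.
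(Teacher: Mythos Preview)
Your overall plan---apply Brion's presentation (Theorem~\ref{thm:generators and relations}) to a $T$-eigenfunction on the irreducible variety $\Omega_{s_iw,h}$, identify the zero component inside the big cell with the cycle for $s_i\cdot\sigma_{w,h}$, and read off the remaining terms from the boundary---is the right idea and is essentially what the paper does. But the paper carries it out on a different variety, and that difference is where all the content lives.

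Rather than work on $Y=\Omega_{s_iw,h}$ with the \emph{rational} coordinate $\xi=x_{w^{-1}(i),w^{-1}(i+1)}$, the paper passes to the auxiliary variety $Z'\subset SL_i\times_{B_i^-}\Omega_w$ (the irreducible component of $\varphi^{-1}(\Omega_{s_iw,h})$ containing $\varphi^{-1}(\Omega_{s_iw,h}^{\circ})$), applies Brion's relation to the \emph{morphism} $\pi':Z'\to SL_i/B_i^-\simeq\mathbb{CP}^1$, and then pushes forward along the birational map $\varphi':Z'\to\Omega_{s_iw,h}$. Because $\pi'$ is a genuine morphism, its divisor is literally $[\pi'^{-1}(0)]-[\pi'^{-1}(\infty)]$, and the whole problem becomes the identification of the maximum-dimensional irreducible components of these two fibers. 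This is handled by Lemmas~\ref{lem:symmetric}--\ref{lem: irreducible component of maximal dimension II}; the decisive one is Lemma~\ref{lem:symmetric}, which says that the components over $0$ and over $\infty$ are in natural bijection because the swap $\mathbf z\leftrightarrow\mathbf y$ on the base corresponds exactly to replacing $\Hess(S,h)$ by $s_i\Hess(S,h)=\Hess(s_iSs_i,h)$ in the defining conditions. That symmetry is precisely the mechanism producing your matched pairs $(\tau_u,\tau_{s_iu})$, and it is hard to see if you stay on $Y$. Your sentence ``a local analysis near the fixed point $\dot uB$ of each codimension-one boundary stratum \dots\ will show that $\xi$ picks up a simple zero or pole along exactly the strata \dots'' is the entire proof; without the resolution $\varphi':Z'\to Y$ you would be computing orders of a rational function on a possibly singular variety in charts you have not written down, and you have no substitute for Lemma~\ref{lem:symmetric} to guarantee the pairing or for Lemma~\ref{lem: exceptional curve is contained in Z'} to rule out spurious components centered at $s_iu$ with $s_iu<u$.

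Your sketch for part~(1) also has a gap. The argument you cite from the proof of Proposition~\ref{prop: the induced action on chow ring} compares $\overline{\Omega_w^{\circ}\cap\Hess(S,h)}$ with $\overline{\Omega_w^{\circ}\cap\Hess(s_iSs_i,h)}$---same cell, different regular semisimple element---not with $\overline{(s_i\Omega_w^{\circ})\cap\Hess(S,h)}$; the two comparisons are not the same and the latter does not follow from chart inspection alone. The paper proves part~(1) by the same $Z'$ construction: here $\dim Z'=\dim\Omega_{w,h}+1$, so $\varphi'_*[Z']=0$ and one gets $\varphi'_*[\pi'^{-1}(0)]=\varphi'_*[\pi'^{-1}(\infty)]$, after which one checks that $D_0$ and $D_\infty$ are the \emph{unique} maximum-dimensional components (no $\tau_u$ terms arise). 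Your final paragraph on $\mathcal A_{s_i,w}\cap s_i\mathcal A_{s_i,w}=\emptyset$ is correct and is exactly the paper's argument.
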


For a more detailed description  of the action of simple reflections
in the case when $h=(2,3,\dots, n,n)$, see  Proposition~\ref{prop_si_action_on_sigma_i}. We will give a proof of Proposition~\ref{prop: edge remaining 2} in Subsection~\ref{sect: proof of si action}.


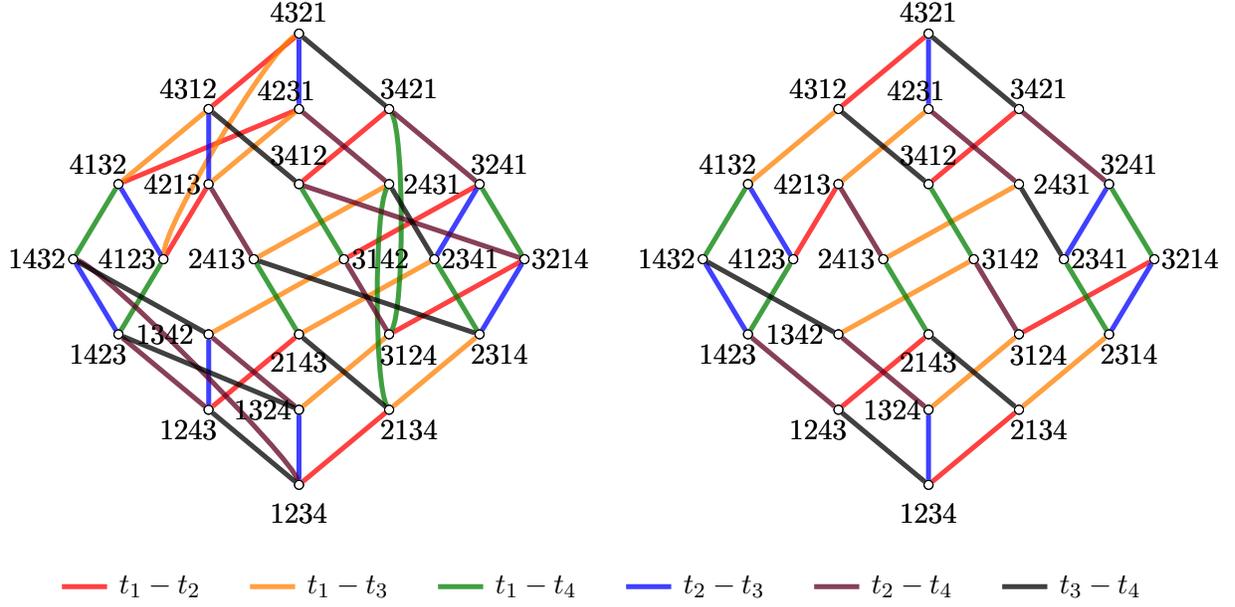
\begin{figure}
	$ 	\begin{array}{cc}
	\begin{tikzpicture}[scale=0.8]
	\tikzstyle{every node}=[font=\normalsize, label distance=-1mm]
	\matrix [matrix of math nodes,column sep={0.6cm,between origins},
	row sep={1cm,between origins},
	nodes={circle, draw, inner sep = 0pt , minimum size=1.2mm}]
	{
		& & & & & \node[label = {[label distance=-0.2cm]90:4321}] (4321) {} ; & & & & & \\
		& & & \node[label = {above left:4312}] (4312) {} ; & &
		\node[label = {[label distance = -0.2cm]160:4231}] (4231) {} ; & &
		\node[label = {above right:3421}] (3421) {} ; & & & \\
		& \node[label = {above left:4132}] (4132) {} ; & &
		\node[label = {[label distance =0cm] left:4213}] (4213) {} ; & &
		\node[label = {above:3412}] (3412) {} ; & &
		\node[label = {[label distance = 0.1cm]0:2431}] (2431) {} ; & &
		\node[label = {above right:3241}] (3241) {} ; & \\
		\node[label = {[label distance =0cm] left:1432}] (1432) {} ; & &
		\node[label = {[label distance =0cm] left:4123}] (4123) {} ; & &
		\node[label = {[label distance = 0.01cm]180:2413}] (2413) {} ; & &
		\node[label = {[label distance = 0.01cm]0:3142}] (3142) {} ; & &
		\node[label = {[label distance =0cm] right:2341}] (2341) {} ; & &
		\node[label = {[label distance =0cm] right:3214}] (3214) {} ; \\
		& \node[label = {below left:1423}] (1423) {} ; & &
		\node[label = {[label distance = 0.1cm]182:1342}] (1342) {} ; & &
		\node[label = {below:2143}] (2143) {} ; & &
		\node[label = {-30:3124}] (3124) {} ; & &
		\node[label = {below right:2314}] (2314) {} ; & \\
		& & & \node[label = {below left:1243}] (1243) {} ; & &
		\node[label = {[label distance =0cm] 180:1324}] (1324) {} ; & &
		\node[label = {below right:2134}] (2134) {} ; & & & \\
		& & & & & \node[label = {below:1234}] (1234) {} ; & & & & & \\
	};
	
	\draw[line width= 0.4ex, red,  nearly opaque ]
	(1234)--(2134)
	(1243)--(2143)
	(4123)--(4213)
	(3124)--(3214)
	(3412)--(3421)
	(4312)--(4321)
	
	(4132)--(4231)
	(3142)--(3241);
	
	\draw[line width= 0.4ex, orange,  nearly opaque ]
	(2134)--(2314)
	(2413)--(2431)
	(4213)--(4231)
	(4132)--(4312)
	(1342)--(3142)
	(1324)--(3124)
	
	(4123)  to [bend left, looseness=0.3]  (4321)
	(2143)--(2341);
	
	\draw[line width= 0.4ex,  green!50!black, nearly opaque ]
	(1432)--(4132)
	(1423)--(4123)
	(2143)--(2413)
	(2314)--(2341)
	(3214)--(3241)
	(3142)--(3412)
	
	(2134) to [bend left, looseness=0.3] (2431)
	(3124) to [bend right, looseness=0.3] (3421);
	
	\draw[line width= 0.4ex,  blue, nearly opaque ]
	(1234)--(1324)
	(1423)--(1432)
	(4123)--(4132)
	(4231)--(4321)
	(2341)--(3241)
	(2314)--(3214)
	
	(1243)--(1342)
	(4213)--(4312);

	\draw[line width=0.4ex, purple!50!black,nearly opaque ]
	(1243)--(1423)
	(2413)--(4213)
	(2431)--(4231)
	(3241)--(3421)
	(3142)--(3124)
	(1324)--(1342)	
	
	(1234) to [bend right, looseness=0.2] (1432)
	(3412)--(3214);
	
	\draw[line width=0.4ex, black,nearly opaque ]
	(1234)--(1243)
	(2134)--(2143)
	(2341)--(2431)
	(3421)--(4321)
	(3412)--(4312)
	(1342)--(1432)
	
	(1423)--(1324)
	(2413)--(2314);
	
	\matrix [matrix of math nodes,column sep={0.6cm,between origins},
	row sep={1cm,between origins},
	nodes={circle, draw, inner sep = 0pt , minimum size=1.2mm}]
	{
		& & & & & \node[label = {[label distance=-0.2cm]90:4321}] (4321) {} ; & & & & & \\
		& & & \node[label = {above left:4312}] (4312) {} ; & &
		\node[label = {[label distance = -0.2cm]160:4231}] (4231) {} ; & &
		\node[label = {above right:3421}] (3421) {} ; & & & \\
		& \node[label = {above left:4132}] (4132) {} ; & &
		\node[label = {[label distance =0cm] left:4213}] (4213) {} ; & &
		\node[label = {above:3412}] (3412) {} ; & &
		\node[label = {[label distance = 0.1cm]0:2431}] (2431) {} ; & &
		\node[label = {above right:3241}] (3241) {} ; & \\
		\node[label = {[label distance =0cm] left:1432}] (1432) {} ; & &
		\node[label = {[label distance =0cm] left:4123}] (4123) {} ; & &
		\node[label = {[label distance = 0.01cm]180:2413}] (2413) {} ; & &
		\node[label = {[label distance = 0.01cm]0:3142}] (3142) {} ; & &
		\node[label = {[label distance =0cm] right:2341}] (2341) {} ; & &
		\node[label = {[label distance =0cm] right:3214}] (3214) {} ; \\
		& \node[label = {below left:1423}] (1423) {} ; & &
		\node[label = {[label distance = 0.1cm]182:1342}] (1342) {} ; & &
		\node[label = {below:2143}] (2143) {} ; & &
		\node[label = {-30:3124}] (3124) {} ; & &
		\node[label = {below right:2314}] (2314) {} ; & \\
		& & & \node[label = {below left:1243}] (1243) {} ; & &
		\node[label = {[label distance =0cm] 180:1324}] (1324) {} ; & &
		\node[label = {below right:2134}] (2134) {} ; & & & \\
		& & & & & \node[label = {below:1234}] (1234) {} ; & & & & & \\
	};
\end{tikzpicture}%
&		\begin{tikzpicture}[scale=0.8]
\tikzstyle{every node}=[font=\normalsize, label distance=-1mm]
\matrix [matrix of math nodes,column sep={0.6cm,between origins},
row sep={1cm,between origins},
nodes={circle, draw, inner sep = 0pt , minimum size=1.2mm}]
{
	& & & & & \node[label = {[label distance=-0.2cm]90:4321}] (4321) {} ; & & & & & \\
	& & & \node[label = {above left:4312}] (4312) {} ; & &
	\node[label = {[label distance = -0.2cm]160:4231}] (4231) {} ; & &
	\node[label = {above right:3421}] (3421) {} ; & & & \\
	& \node[label = {above left:4132}] (4132) {} ; & &
	\node[label = {[label distance =0cm] left:4213}] (4213) {} ; & &
	\node[label = {above:3412}] (3412) {} ; & &
	\node[label = {[label distance = 0.1cm]0:2431}] (2431) {} ; & &
	\node[label = {above right:3241}] (3241) {} ; & \\
	\node[label = {[label distance =0cm] left:1432}] (1432) {} ; & &
	\node[label = {[label distance =0cm] left:4123}] (4123) {} ; & &
	\node[label = {[label distance = 0.01cm]180:2413}] (2413) {} ; & &
	\node[label = {[label distance = 0.01cm]0:3142}] (3142) {} ; & &
	\node[label = {[label distance =0cm] right:2341}] (2341) {} ; & &
	\node[label = {[label distance =0cm] right:3214}] (3214) {} ; \\
	& \node[label = {below left:1423}] (1423) {} ; & &
	\node[label = {[label distance = 0.1cm]182:1342}] (1342) {} ; & &
	\node[label = {below:2143}] (2143) {} ; & &
	\node[label = {-30:3124}] (3124) {} ; & &
	\node[label = {below right:2314}] (2314) {} ; & \\
	& & & \node[label = {below left:1243}] (1243) {} ; & &
	\node[label = {[label distance =0cm] 180:1324}] (1324) {} ; & &
	\node[label = {below right:2134}] (2134) {} ; & & & \\
	& & & & & \node[label = {below:1234}] (1234) {} ; & & & & & \\
};

\draw[line width= 0.4ex, red,  nearly opaque ]
(1234)--(2134)
(1243)--(2143)
(4123)--(4213)
(3124)--(3214)
(3412)--(3421)
(4312)--(4321);

\draw[line width= 0.4ex, orange,  nearly opaque ]
(2134)--(2314)
(2413)--(2431)
(4213)--(4231)
(4132)--(4312)
(1342)--(3142)
(1324)--(3124);

\draw[line width= 0.4ex,  green!50!black, nearly opaque ]
(1432)--(4132)
(1423)--(4123)
(2143)--(2413)
(2314)--(2341)
(3214)--(3241)
(3142)--(3412);

\draw[line width= 0.4ex,  blue, nearly opaque ]
(1234)--(1324)
(1423)--(1432)
(4123)--(4132)
(4231)--(4321)
(2341)--(3241)
(2314)--(3214);

\draw[line width=0.4ex, purple!50!black,nearly opaque ]
(1243)--(1423)
(2413)--(4213)
(2431)--(4231)
(3241)--(3421)
(3142)--(3124)
(1324)--(1342)	;

\draw[line width=0.4ex, black,nearly opaque ]
(1234)--(1243)
(2134)--(2143)
(2341)--(2431)
(3421)--(4321)
(3412)--(4312)
(1342)--(1432);

\matrix [matrix of math nodes,column sep={0.6cm,between origins},
row sep={1cm,between origins},
nodes={circle, draw, inner sep = 0pt , minimum size=1.2mm}]
{
	& & & & & \node[label = {[label distance=-0.2cm]90:4321}] (4321) {} ; & & & & & \\
	& & & \node[label = {above left:4312}] (4312) {} ; & &
	\node[label = {[label distance = -0.2cm]160:4231}] (4231) {} ; & &
	\node[label = {above right:3421}] (3421) {} ; & & & \\
	& \node[label = {above left:4132}] (4132) {} ; & &
	\node[label = {[label distance =0cm] left:4213}] (4213) {} ; & &
	\node[label = {above:3412}] (3412) {} ; & &
	\node[label = {[label distance = 0.1cm]0:2431}] (2431) {} ; & &
	\node[label = {above right:3241}] (3241) {} ; & \\
	\node[label = {[label distance =0cm] left:1432}] (1432) {} ; & &
	\node[label = {[label distance =0cm] left:4123}] (4123) {} ; & &
	\node[label = {[label distance = 0.01cm]180:2413}] (2413) {} ; & &
	\node[label = {[label distance = 0.01cm]0:3142}] (3142) {} ; & &
	\node[label = {[label distance =0cm] right:2341}] (2341) {} ; & &
	\node[label = {[label distance =0cm] right:3214}] (3214) {} ; \\
	& \node[label = {below left:1423}] (1423) {} ; & &
	\node[label = {[label distance = 0.1cm]182:1342}] (1342) {} ; & &
	\node[label = {below:2143}] (2143) {} ; & &
	\node[label = {-30:3124}] (3124) {} ; & &
	\node[label = {below right:2314}] (2314) {} ; & \\
	& & & \node[label = {below left:1243}] (1243) {} ; & &
	\node[label = {[label distance =0cm] 180:1324}] (1324) {} ; & &
	\node[label = {below right:2134}] (2134) {} ; & & & \\
	& & & & & \node[label = {below:1234}] (1234) {} ; & & & & & \\
};

\end{tikzpicture}\end{array}$ \\
\begin{tikzpicture}
\draw[line width= 0.4ex, red,  nearly opaque ]
(7,5)--(7.6,5) node[right, color = black, opaque] {$t_1 - t_2$};

\draw[line width= 0.4ex, orange,  nearly opaque ]

(9.5,5)--(10.1,5) node[right, color = black, opaque] {$t_1 - t_3$};

\draw[line width= 0.4ex,  green!50!black, nearly opaque ]

(12,5)--(12.6,5) node[right, color=black, opaque] {$t_1-t_4$};

\draw[line width= 0.4ex,  blue, nearly opaque ]

(14.5,5)--(15.1,5) node[right, color=black, opaque] {$t_2-t_3$};

\draw[line width=0.4ex, purple!50!black,nearly opaque ]
(17,5)--(17.6,5) node[right, color=black, opaque] {$t_2-t_4$};

\draw[line width=0.4ex, black,nearly opaque ]
(19.5,5)--(20.1,5) node[right, color=black, opaque] {$t_3-t_4$};	
\end{tikzpicture}
\caption{GKM graphs of $\Hess(S,h)$ for $h=(2,4,4,4)$ and $h = (2,3,4,4)$.}
\label{fig_GKM_2444}
\end{figure}

\begin{example} \label{example_si_sigma_wh}
We illustrate the $s_i$-action on $ \sigma_{w,h}$ when $w \rightarrow s_iw$. (See Figure~\ref{fig_GKM_2444} for GKM graphs for $h = (2,4,4,4)$ and $h = (2,3,4,4)$.)

\begin{enumerate}
\item Let $h = (2,4,4,4)$ and $w = 1243$ and $i = 3$. In this case, we obtain $s_i w = 1234$ and $\Owo{u} \cap \Owh{s_iw} = \Owho{u}$ for any $u \in \Owh{s_iw}^T= \mathfrak S_4$. Therefore,
the set $\mathcal A_{s_i,w}$ is given as follows.
\[
\begin{split}
\mathcal A_{s_3,w} &=
\{ u \in \Owh{e}^T \cap \Ow{w}^T \mid \dim(\Owo{u} \cap \Owh{e}) = \dim \Owh{w} \text{ and } u \dashrightarrow s_3 u \} \\
&= \{ u \in \frak{S}_4 \mid u \geq w, \dim \Owh{u} = \dim \Owh{w} \text{, and } u \dashrightarrow s_3 u\}\\
&= \{ u \in \mathfrak{S}_4 \mid \ell_h(u) = 1\} \cap \{ u \in \mathfrak{S}_4 \mid u \geq w \text{, and } u \dashrightarrow s_3 u\}\\
&= \{ 1243, 1324, 2134, 2314, 3124, 4123 \}
\cap \{ u \in \mathfrak{S}_4 \mid u \geq w, u \dashrightarrow s_3 u \} \\
&= \{1243, 4123\} \cap \{ u \in \mathfrak{S}_4 \mid u \dashrightarrow s_3 u \} \\
&= \{4123\}.
\end{split}
\]
Here, as mentioned in Example~\ref{example_reachability_full_flag}, we have $\Ow{w}^T = \{u \in \mathfrak{S}_n \mid u \geq w \text{~(in the Bruhat order)}\}$.
Moreover, since $\Owh{s_iw} = \Hess(S,h)$, we obtain $\mathcal{T}_{u} = \overline{\Owo{u} \cap \Owh{s_iw}} = \overline{\Owho{u}} = \Owh{u}$.
By Proposition~\ref{prop: edge remaining 2}(2), we get
\begin{equation} \label{equation_1243_s3}
(t_4 - t_3) \swh{1234} =
s_3 \cdot (\swh{1243} + \swh{4123}) - (\swh{1243} + \swh{4123}).
\end{equation}
For $u= 1234$ or $4123$,
we know the support of the class $\swh{u}$ in the GKM description by Theorem~\ref{thm_support_of_Owh}. Moreover, the subgraph obtained by restricting the GKM graph to this support is regular. Accordingly, using~\eqref{equation_1243_s3}, we obtain the values $\swh{1243}(v)$ for $v \in \Owh{1243}^T$ (cf. Example~\ref{example_2444_2143_2413_2341}).  See Table~\ref{table_1243}.

Here, we notice that the subgraph obtained by restricting the GKM graph to the support of $\swh{1243}$ is not regular, and we cannot apply the same method to find the values $\swh{1243}(v)$ for $v \in \Omega_{1243}^T$.

\item Let $h=(2,4,4,4)$ and  $w=2143$ and $i=1$.  Then the following holds.
\begin{equation}\label{equation_1243_s1}
(t_2 -t_1) \sigma_{1243,h}
= s_1 \cdot(\sigma_{2143,h} + \sigma_{2413,h}+ \sigma_{2341,h}) - (\sigma_{2143,h} + \sigma_{2413,h} + \sigma_{2341,h}).
\end{equation}
Indeed, we obtain the description of classes $\swh{2143}, \swh{2413}, \swh{2341}$ in Example~\ref{example_2444_2143_2413_2341} and of the class $\swh{1243}$ in (1).
By considering the classes $\swh{1243}$, $\swh{2143}+\swh{2413}+\swh{2341}$, and $s_1 \cdot(\swh{2143}+\swh{2413}+\swh{2341})$, we deduce that the equation~\eqref{equation_1243_s1} holds. See Table~\ref{table_1243}.
\begin{table}
\begin{small}
\begin{tabular}{c|cccccc}
\toprule
$v$ & $1234$ & $1243$ & $1324$ & $1342$ & $1423$ & $1432$ \\
\midrule
$\swh{1243}(v)$ & $0$ & $t_{3,4}$ & $0$ & $t_{2,4}$ & $t_{3,4}$ & $t_{2,4}$ \\
\midrule
$(\swh{2143}+\swh{2413} + \swh{2341})(v)$  & 0 & 0 & 0 & 0 & 0 & 0 \\
$(s_1 \cdot (\swh{2143}+\swh{2413} + \swh{2341}))(v)$
& 0 & $-t_{1,2}t_{3,4}$ & 0 & $-t_{2,4}t_{1,2}$ & $-t_{3,4}t_{1,2}$
& $-t_{1,2}t_{2,4}$\\
\midrule
\midrule
$v$ & $2134$ & $2143$ & $2314$ & $2341$ & $2413$ & $2431$ \\
\midrule
$\swh{1243}(v)$ & $0$ & $t_{3,4}$ & $0$ & $t_{1,4}$ & $t_{3,4}$ & $t_{1,4}$\\
\midrule
$(\swh{2143}+\swh{2413} + \swh{2341})(v)$
& 0 & $t_{1,2}t_{3,4}$ & 0 & $t_{1,4}t_{1,2}$ & $t_{3,4}t_{1,2}$ & $t_{1,2}t_{1,4}$ \\
$(s_1 \cdot (\swh{2143}+\swh{2413} + \swh{2341}))(v)$
& 0 & 0 & 0 & 0 & 0 & 0 \\
\midrule
\midrule
$v$ & $3124$ & $3142$ & $3214$ & $3241$ & $3412$ & $3421$ \\
\midrule
$\swh{1243}(v)$ & $0$ & $t_{2,4}$ & $0$ & $t_{1,4}$ & $t_{2,4}$ & $t_{1,4}$\\
\midrule
$(\swh{2143}+\swh{2413} + \swh{2341})(v)$
& 0 & 0 & 0 & $t_{1,2}t_{1,4}$ & 0 & $t_{1,2}t_{1,4}$\\
$(s_1 \cdot (\swh{2143}+\swh{2413} + \swh{2341}))(v)$
& 0 & $-t_{1,2}t_{2,4}$ & 0 & 0 & $-t_{1,2}t_{2,4}$ & 0 \\
\midrule
\midrule
$v$ & $4123$ & $4132$ & $4213$ & $4231$ & $4312$ & $4321$ \\
\midrule
$\swh{1243}(v)$ & $-t_{1,3}$ & $-t_{1,2}$ & $-t_{2,3}$ & $t_{1,2}$ & $t_{2,3}$ & $t_{1,3}$\\
\midrule
$(\swh{2143}+\swh{2413} + \swh{2341})(v)$
& 0 & 0 & $-t_{1,2}t_{2,3}$ & $t_{1,2} t_{1,2}$ & 0 & $t_{1,2}t_{1,3}$\\
$(s_1 \cdot (\swh{2143}+\swh{2413} + \swh{2341}))(v)$
& $t_{1,2}t_{1,3}$ & $t_{1,2}t_{1,2}$ & 0 & 0 & $-t_{1,2}t_{2,3}$ & 0 \\
\bottomrule
\end{tabular}
\end{small}
\caption{Description of classes $\swh{1243}$, $\swh{2143}+\swh{2413}+\swh{2341}$, $s_1\cdot(\swh{2143}+\swh{2413}+\swh{2341})$ for $h = (2,4,4,4)$.
Here, we set $t_{i,j} \colonequals t_i - t_j$.
}\label{table_1243}
\end{table}
\item Let $h=(2,4,4,4)$ and  $w=1423$ and  $i=3$. Then the following holds.
\begin{eqnarray*}
(t_4 -t_3) \sigma_{1324, h} = s_3\cdot(\sigma_{1423, h} + s_1 \cdot\sigma_{4213, h})- (\sigma_{1423, h} + s_1 \cdot\sigma_{4213, h})
\end{eqnarray*}
and $s_1 \cdot\sigma_{4213, h} = \sigma_{4213, h} + (t_2 -t_1)\sigma_{4123, h}$.

\item Let $h=(2,3,4,4)$ and  $w=2143$ and  $i=1$.  Then  the following holds.
\begin{eqnarray*}
(t_2 -t_1) \sigma_{1243, h} = s_1 \cdot(\sigma_{2143, h} +s_{1,3}\cdot \sigma_{2431, h}  ) -(\sigma_{2143, h} +s_{1,3}\cdot \sigma_{2431, h}   )
\end{eqnarray*}
and $s_{1,3}\cdot \sigma_{2431, h} = \sigma_{2431, h} + (t_3-t_1)\sigma_{2413, h}$.

\item Let $h=(2,3,4,4)$ and $w=1324$ and $i=2$. Then the following holds.
\begin{eqnarray*}
(t_3-t_2)\sigma_{1234, h} = s_2 \cdot (\sigma_{1324, h} + \sigma_{1342, h} + \sigma_{3124, h} + \sigma_{3412, h}) - (\sigma_{1324, h} + \sigma_{1342, h} + \sigma_{3124, h} + \sigma_{3412, h}).
\end{eqnarray*}
\end{enumerate}

\end{example}

\subsection{The associated fiber bundle $Z=SL_i \times _{B_i^-} \Omega_w$}
To describe the action of $s_i$ on $\sigma_{w,h}$ in the case when $s_iw \rightarrow w$ or $w \rightarrow s_iw$,   we  will apply Theorem~\ref{thm:generators and relations}  to a morphism    naturally associated with the pair $(s_i,w)$.

\subsubsection{ }
The minimal parabolic subgroup $P^-_{i}$ containing $B^-$  associated to the simple reflection $s_i$  is defined by $P^-_{i} =B^- \sqcup B^- \dot{s}_{i} B^-$. Then $P_{i}^-/B^-=SL_i/B_i^-$ is isomorphic to $\mathbb  C P ^1$, where $SL_i\simeq SL_2(\mathbb C)$ is the semisimple part of $P^-_{i}$ and $B_i^-$ is its Borel subgroup $SL_i \cap B^-$. Let  $U_i$ be the root group of root $\alpha_i$ and $U^{- }_i$ be the root group of root $-\alpha_i$. Then  $SL_i/B_i^-$  is decomposed as
$\{B_i^-/B_i^-\}  \sqcup U^{- }_i \dot{s}_{i}B_i^-/B_i^-  \simeq \{a \,\,  point \} \sqcup \mathbb C \simeq  \{\dot{s}_iB_i^-/B_i^-\} \sqcup U_i B_i^-/B_i^- $.

Put
$$\dot{s}_i =\left(
\begin{array}{cc}
0&-1 \\
1 &0
\end{array} \right),
z = \left(
\begin{array}{cc}
1&0 \\
{\bf z} &1
\end{array} \right) \in U_i^-, \text{ and }
y = \left(
\begin{array}{cc}
1&{\bf y}\\
0 &1
\end{array} \right) \in U_i,$$
where ${\bf z}, {\bf y} \in \mathbb C$.
Also, we set
\[
0 \colonequals \dot{s}_iB_i^-/B_i^-, \quad
\infty \colonequals B_i^-/B_i^-.
\]
Then the coordinate chart on $U_i^-\dot{s}_iB_i^-/B^-_i$ with center at $0$  (on $U_iB_i^-/B_i^-$ with center at $\infty$, respectively) is given by
\begin{eqnarray*}
{\bf z} \in \mathbb C \mapsto   z\dot{s}_iB_i^-  \qquad  ({\bf y} \in \mathbb C \mapsto  y B_i^-  , \text{ respectively}),
\end{eqnarray*}
and the transition map is given by
$${\bf z} \in \mathbb C^* \mapsto {\bf y}=\frac{1}{{\bf z}} \in \mathbb C^* . $$
 Here and after, for the sake of  simplicity,  we  only  write down the $2 \times 2$ submatrix $(g_{a,b})_{i \leq a,b \leq i+1}$ of~$g \in G$.

\subsubsection{ }
\label{sect all about Z}

Consider $Z\colonequals P^-_i \times_{B^-} \Omega_w =SL_{i}  \times_{B_i^-}\Omega_w $ and two maps $\varphi$ and $\pi$:
\begin{eqnarray*}
\xymatrix{
Z  =SL_{i}  \times_{B_i^-}\Omega_w \ar[d]^{\pi} \ar[r]^{\quad \quad   \varphi}  & SL_{i} \Omega_w \\
SL_{i} /B_i^- &  }
\end{eqnarray*}
where $\varphi([g,\xi])=g\xi$  and $\pi([g,\xi]) =gB_i^-/B_i^- \in SL_{i} /B_i^-$ for $g \in SL_{i} $ and $\xi \in \Omega_w$.
In this subsection, we list the properties of $Z$,  $\varphi$, and $\pi$,   that will be used in the proof of Proposition~\ref{prop: edge remaining 2}.

$\bullet$ $\pi:Z \rightarrow SL_i/B^-_i$ is a fiber bundle over $\mathbb CP^1$ with fibers isomorphic to $\Omega_w$. The restriction of $\pi$ to $U^{- }_i \dot{s}_{i}B_i^-/B_i^-  $  (to $U_i B_i^-/B_i^- $, respectively) is isomorphic to $\mathbb C \times \Omega_w$, whose trivialization is given by
$$[z\dot{s}_i,\xi] \mapsto ({\bf z},\xi) \qquad ([y,\zeta] \mapsto \rm{(}{\bf y},\zeta), \text{ respectively)},$$
where
$$\dot{s}_i =\left(
\begin{array}{cc}
0&-1 \\
1 &0
\end{array} \right),
z = \left(
\begin{array}{cc}
1&0 \\
{\bf z} &1
\end{array} \right) \in U_i^-,
y = \left(
\begin{array}{cc}
1&{\bf y}\\
0 &1
\end{array} \right) \in U_i, \text{ and  } \xi,\zeta \in \Omega_w,$$
and the transition map
  $({\bf z}, \xi) \mapsto ({\bf y}, \zeta)$
is given by  $${\bf y} = \frac{1}{\bf z} \text{ and }
\zeta = y^{-1}z \dot{s}_i \xi. $$
Note that
\begin{eqnarray*}
 y^{-1}z \dot{s}_i &=&
 \left(
\begin{array}{cc}
1 &-\frac{1}{\bf z}\\
0&1
\end{array}
\right)
\left(
\begin{array}{cc}
1 &0 \\
{\bf z} & 1
\end{array}
\right)
 \left(
\begin{array}{cc}
0 &-1\\
1&0
\end{array}
\right) \\
 &=&
\left(
\begin{array}{cc}
-\frac{1}{\bf z} &0 \\
1 &-{\bf z}
\end{array}
\right)
 \\
 &=& \left(
\begin{array}{cc}
1 &0 \\
-{\bf z}&1
\end{array}
\right)
\left(
\begin{array}{cc}
-\frac{1}{\bf z} &0 \\
0 & -{\bf z}
\end{array}
\right).
\end{eqnarray*}

\begin{lemma} [{\cite[Proposition~3.2.1]{BK}}] \label{Z is trivial and varphi is a projection} If $s_iw >w$, then $Z$ is isomorphic to the product $(SL_i/B_i^-) \times \Omega_w$ and $\varphi$ is the projection to the second factor.
\end{lemma}

$\bullet$    $T$  acts on $Z $  as follows: For $t \in T$, $g \in P_i^- $ and $\xi \in \Omega_w$,  $t \cdot [g,\xi] = [tg,\xi]=[tgt^{-1}, t\xi] $.
Thus the map $\varphi$ is $T$-equivariant.
The fixed point set  $Z^T$ is
$$\{[\dot{s}_{i},  u ] \mid   u  \in \Omega_w^T\} \cup \{[\dot{e},   u ] \mid  u  \in \Omega_w^T\} .$$
For $u \in \Omega_w^T$, put
\begin{eqnarray*}
\Omega_{[\dot{s}_i, u ]}^{\circ} &\coloneq& \{ [g,\xi] \in Z \mid \lim_{t \rightarrow \infty} \lambda(t) [g,\xi] = [\dot{s}_i, u ]\}  \text{ and }\\
  \Omega_{[\dot{e}, u ]}^{\circ}&\coloneq& \{ [g,\xi] \in Z \mid \lim_{t \rightarrow \infty} \lambda(t) [g,\xi] = [\dot{e} , u ]\}
  \end{eqnarray*}
  for the choice of a one-parameter subgroup $\lambda:\mathbb C^* \rightarrow \mathrm{diag}(t^{\lambda_1}, t^{\lambda_2}, \dots, t^{\lambda_n})$ given in~Subsection~\ref{sec_preliminaries}.
 Then the Bia{\l}ynicki-Birula decomposition of $Z$ is given by
 $$Z = \left(\bigsqcup_{u \in \Omega_w^T} \Omega^{\circ}_{[\dot{s}_i, u]} \right) \sqcup \left(\bigsqcup_{u \in \Omega_w^T} \Omega^{\circ}_{[ \dot{e} , u]} \right).$$

\medskip

 When $u >s_iu $ and $s_iu  \in \Omega_w^T$, let $\mathbb CP^1_{s_iu,u}$ denote the projective line in $\Omega_w$  connecting $s_iu$ and $u$, which is given by $U_i^-s_iu \sqcup \{u\} =\{s_i u\} \sqcup U_i u$.
Then $\varphi^{-1}(u)$ and $\varphi^{-1}(s_iu)$ are $T$-invariant curves  contained in $SL_i \times_{B_i^-}\mathbb CP^1_{s_iu,u}$.

Denote by $\pi_1$ the restriction of $\pi$ to $SL_i \times_{B_i^-}\mathbb CP^1_{s_iu,u}$.
Then $\varphi^{-1}(u)$    is given by
\begin{eqnarray*}
 \left\{[z\dot{s}_i, s_i u] \mid z \in U_i^-\right\}  & \text{ on } \pi_1^{-1}(U_i^-\dot{s}_iB^-_i/B^-_i),  \\
 \left\{[y, y^{-1} u] \mid y \in U_i \right\} &\text{ on } \pi_1^{-1}(U_i B^-_i/B^-_i),
\end{eqnarray*}
 and $\varphi^{-1}(s_iu)$  is given by
 \begin{eqnarray*}
  \left\{[z\dot{s}_i, (zs_i)^{-1} s_iu] \mid z \in U_i^-\right\} & \text{ on } \pi_1^{-1}(U_i^-\dot{s}_iB^-_i/B^-_i),  \\
  \left\{[y, s_i u] \mid y \in U_i \right\} & \text{ on } \pi_1^{-1}(U_i B^-_i/B^-_i).
\end{eqnarray*}
(See Figure~\ref{fig_exceptional curves}. The   curve colored by blue is $\varphi^{-1}(u)$ and the   curve colored by green is $\varphi^{-1}(s_iu)$.)

\medskip
\vskip 3.5 cm
 \begin{figure}  [!h]

\raggedright
 \setlength{\unitlength}{1cm}
 \begin{picture}  (1,1)

 \linethickness{0.15mm}
 \put(1,4){\line(1,0){5}}   \put(14,4){\line(-1,0){5}}
 \linethickness{0.15mm}
 \put(1,4){\line(0,-1){4}}  \put(14,4){\line(0,-1){4}}
 \linethickness{0.7mm}
 \textcolor{blue}{\put(1,0){\line(1,0){5}}} \textcolor{yellow!20!green}{  \put ( 13.9,0 ){\line(-1,0){5}}}

\textcolor{yellow!20!green}{ \qbezier(0.75,4)(3,2) (5.5,1)} \textcolor{blue}{ \qbezier (9,1)(12,2)(13.6,4)}



\linethickness{0.15mm}
\put(3.5, -0.8){\vector(0,-1){1 }}   \put(11.5, -0.8){\vector(0,-1){1 }}
\put(2.8, -1.3){$\pi$}                  \put(10.8, -1.3){$\pi$}

 \put(1,-2){\line(1,0){5}}  \put ( 13.8,-2 ){\line(-1,0){5}}
 \put(0.8,-2.1){$\bullet$} \put(6,-2.1){$\circ$} \put(8.6,-2.1){$\circ$} \put(13.8,-2.1){$\bullet$}

\put(-0.7 , 4){$[\dot{s}_i, u]$}   \put(14.1, 4){$[\dot{e},u]$}
\put(-0.7 ,0){$[\dot{s}_i,s_iu]$} \put(14.1,0){$[\dot{e},s_iu]$}

\put(0.7, -2.5){$\dot{s}_iB^-_i/B^-_i $} \put(6, -2.5){$B^-_i/B^-_i$} \put(8.6, -2.5){$ \dot{s}_iB^-_i/B^-_i$} \put (13.8, -2.5){$B^-_i/B^-_i$}

\put(3, -3 ){$U_i^-\dot{s}_iB_i^-/B_i^- $}                  \put(11, -3 ){$U_i B_i^-/B_i^-$}
\end{picture}

\vskip 3 cm

	 \caption{$\varphi^{-1}(u)$ and $\varphi^{-1}(s_iu)$ in $SL_i \times_{B_i^-} \mathbb CP^1_{s_iu, u}$}\label{fig_exceptional curves}
\end{figure}
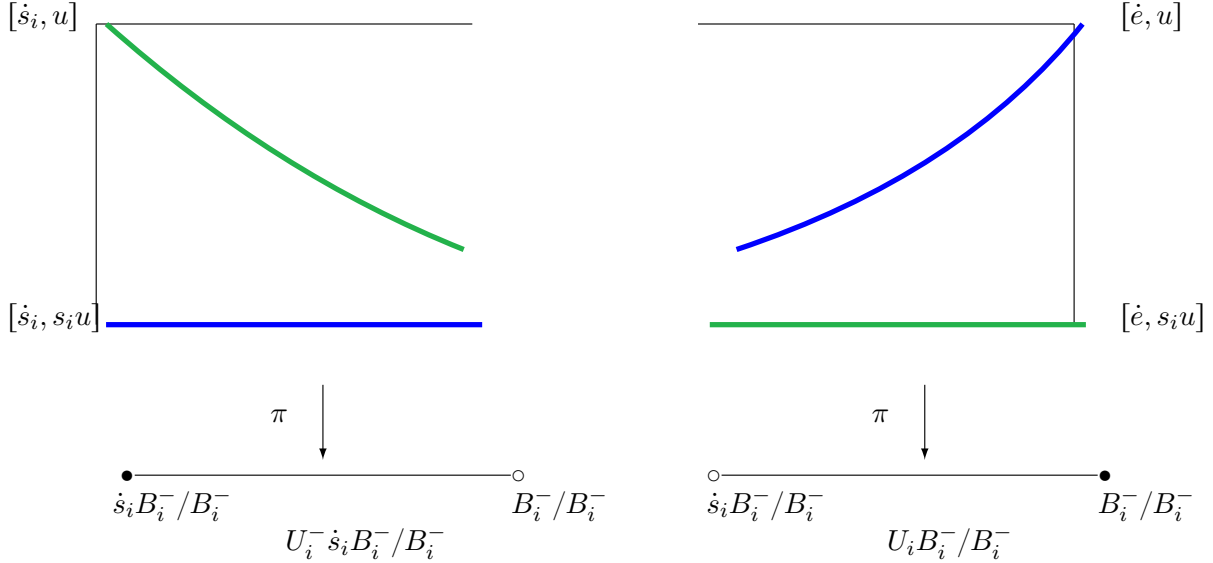

 \vskip 1 cm

Note that we have
$$\lambda(t)z \lambda(t)^{-1}=\left(
\begin{array}{cc}
1&0 \\
t^{-(\lambda_i - \lambda_{i+1})}{\bf z} &1
\end{array}\right), \quad
\lambda(t)y \lambda(t)^{-1} =\left(
\begin{array}{cc}
1&t^{ (\lambda_i - \lambda_{i+1})}{\bf y} \\
0 &1
\end{array}\right)
$$
and
$$
\dot{s}_i \lambda(t) \dot{s}_i =\left(
\begin{array}{cc}
t^{\lambda_{i+1}}&0 \\
0 &t^{\lambda_i}
\end{array}\right).
$$
From these computations, we get the following property.

\begin{lemma} \label{lem:T action}
 Assume $u >s_iu $ and $s_iu  \in \Omega_w^T$ and let $\mathbb CP^1_{s_iu,u}$ denote the projective line in $\Omega_w$  connecting $s_iu$ and $u$. Then the following statements hold.

\begin{enumerate}
\item For   a general point $[zs_i, yu]$ of $SL_i \times _{B_i^-} \mathbb CP^1_{s_iu,u}$,  we get
     $$\lim_{t \rightarrow \infty}\lambda(t)[z\dot{s}_i, yu]=[\dot{s}_i,u].$$
\item For a general point $[z \dot{s}_i, s_iu] $ of $\varphi^{-1}(u)$, we get  $$\lim_{t \rightarrow \infty}\lambda(t) [z \dot{s}_i, s_iu] =[\dot{s}_i, s_iu].$$
\end{enumerate}
\end{lemma}

\begin{proof}
 For $t \in \mathbb C^*$,  $\lambda(t)$ acts on $SL_i \times_{B_i^-} \mathbb CP^1_{s_iu,u}$ as follows. For   a general point $[zs_i, yu]$ of $SL_i \times _{B_i^-} \mathbb CP^1_{s_iu,u}$,
$$\lambda(t)[z\dot{s}_i, yu] = [(\lambda(t)z\lambda(t)^{-1})\dot{s}_i, (\dot{s}_1\lambda(t)\dot{s}_i)y  u].$$
Thus, as $t$ goes to the infinity, $\lambda(t)[z\dot{s}_i, yu]$ converges to $[\dot{s}_i,u]$. Similarly, for a general point $[z \dot{s}_i, s_iu] \in \varphi^{-1}(u)$, we get
\[
\lim_{t \rightarrow \infty}\lambda(t) [z \dot{s}_i, s_iu] =[\dot{s}_i, s_iu]. \qedhere
\]
\end{proof}

\subsubsection{ } \label{sect: Proof of Proposition 4.7}
Now let $h:[n] \rightarrow [n]$ be a Hessenberg function. In this subsection we assume that  $w \rightarrow s_iw$, that is, the edge $(w \rightarrow s_iw)$ is contained in the edge set of the  GKM graph of $\Hess(S,h)$.  Define  $Z'$ by  the irreducible component of $\varphi^{-1}(\Omega_{s_iw,h} )$ containing $\varphi^{-1}(\Omega_{s_iw,h}^{\circ})$.     Denote by $\varphi'$ and $\pi'$   the restrictions of $\varphi$  and $\pi$ to $Z'$:
\begin{eqnarray*}
\xymatrix{
Z'  \ar[d]^{\pi'} \ar[r]^{ \varphi'}  &  \Omega_{s_i w,h}  \\
SL_{i} /B_i^- &  }.
\end{eqnarray*}

Define $\mathcal A'_{0}$ and $\mathcal A'_{\infty}$ by
\begin{eqnarray*}
\mathcal A'_0 &\coloneq& \{u \in \Omega_{w}^T \mid [\dot{s}_i,u]  \in Z' \},  \\
\mathcal A'_{\infty} &\coloneq & \{u \in \Omega_{w}^T \mid [\dot{e},u]  \in Z' \}.
\end{eqnarray*}
    Then $\{[\dot{s}_i,u] \mid u \in \mathcal A_0'\} $ ($\{[\dot{e},u] \mid u \in \mathcal A_{\infty}'\}$, $\{[\dot{s}_i,u] \mid u \in \mathcal A_0'\}  \cup \{[\dot{e},u] \mid u \in \mathcal A_{\infty}'\}$,  respectively) is the $T$-fixed point set of ${\pi'}^{-1}(0)$ (${\pi'}^{-1}(\infty)$, $Z'$ respectively).

Define
\begin{eqnarray*}
{\Omega'}^{\circ}_{[\dot{s}_i, u ]} &\coloneq& \{ [g,\xi] \in Z' \mid \lim_{t \rightarrow \infty} \lambda(t) [g,\xi] = [\dot{s}_i, u ]\} \text { for } u \in \mathcal A_0'  \text{ and }\\
  {\Omega'}^{\circ}_{[\dot{e}, u ]}&\coloneq& \{ [g,\xi] \in Z' \mid \lim_{t \rightarrow \infty} \lambda(t) [g,\xi] = [\dot{e} , u ]\} \text{ for } u \in \mathcal A_{\infty}'.
  \end{eqnarray*}
  Then  the Bia{\l}ynicki-Birula decomposition of $Z'$ is given by
 $$Z' = \left(\bigsqcup_{u \in \mathcal A_0'} {\Omega'}^{\circ}_{[\dot{s}_i, u]} \right) \sqcup \left(\bigsqcup_{u \in \mathcal A_{\infty}'} {\Omega'}^{\circ}_{[\dot{e} , u]} \right).$$
 Since $Z'$ is irreducible,   there is a unique cell of  maximal dimension, $ {\Omega'}^{\circ}_{[\dot{s}_i, w]}$,   and other cells have dimension less than the dimension of  $ {\Omega'}^{\circ}_{[\dot{s}_i, w]}$.
  Denote by ${\Omega'} _{[\dot{s}_i, u ]}$ (${\Omega'} _{[\dot{e}, u ]}$, respectively) the closure of ${\Omega'}^{\circ}_{[\dot{s}_i, u ]}$ (${\Omega'}^{\circ}_{[\dot{e}, u ]}$, respectively) in $Z'$.

Let $D$ be an irreducible component of ${\pi'}^{-1}(0)$. Then $D$ is the closure of ${\Omega'}^{\circ}_{[\dot{s}_i,u]} \cap {\pi '}^{-1}(0)$ for some $ u \in \mathcal A'_{0}$. In this case, we say that the irreducible component $D$ is {\it centered at} $[\dot{s}_i,u]$.
Similarly, an irreducible component of ${\pi'}^{-1}(\infty)$ is said to be {\it centered at} $[\dot{e},u]$ if it is the closure of ${\Omega'}^{\circ}_{[\dot{e}, u]} \cap {\pi'}^{-1}(\infty)$ for some $u \in \mathcal A_{\infty}'$.

\begin{lemma} \label{lem:symmetric}
The sets $\mathcal A_0'$ and $\mathcal A_{\infty}'$ are identical.
Furthermore,  there is a bijection   between the set of irreducible components  of ${\pi'}^{-1}(0)$ and  the set of irreducible components of ${\pi'}^{-1}(\infty)$: If one is centered at $[\dot{s}_i,u]$ of dimension $d$, then the corresponding one is centered at $[\dot{e},u]$ of  dimension $d$, and vice  versa.
\end{lemma}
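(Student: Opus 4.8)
The plan is to derive both assertions from a single automorphism of the bundle $Z$, the one induced by the nontrivial Weyl element of $SL_i$ acting on the base $\mathbb{C}P^1 = SL_i/B_i^-$. Define $\Phi\colon Z\to Z$ by $\Phi([g,\xi]) = [\dot s_ig,\xi]$, left multiplication by $\dot s_i\in SL_i$ in the $SL_i$-factor. Then $\Phi$ is a well-defined automorphism satisfying $\varphi\circ\Phi = L_{\dot s_i}\circ\varphi$, where $L_{\dot s_i}$ is left multiplication by $\dot s_i$ on $SL_i\Omega_w$; it intertwines the $T$-action on $Z$ with its $s_i$-twist; it covers the involution of $\mathbb{C}P^1$ interchanging the two $B_i^-$-fixed points, so $\Phi(\pi^{-1}(0)) = \pi^{-1}(\infty)$; and, as $\dot s_i^2$ lies in the torus and hence fixes every $T$-fixed flag, $\Phi$ sends $[\dot s_i,u]$ to $[\dot e,u]$ and $[\dot e,u]$ to $[\dot s_i,u]$ for every $u\in\Omega_w^T$. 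In the coordinates of Subsection~\ref{sect all about Z}, $\Phi$ is exactly the map interchanging the charts $U_i^-\dot s_iB_i^-/B_i^-$ and $U_iB_i^-/B_i^-$ along the transition $\mathbf z\mapsto 1/\mathbf z$, up to signs and the $\dot s_i^2$-twist in the fibre.

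Granting that $\Phi$ carries $Z'$ isomorphically onto the corresponding irreducible component $Z''$ built from $\Hess(\dot s_iS\dot s_i^{-1},h)$, compatibly with the two projections to $\mathbb{C}P^1$ and the base-involution, the Lemma follows at once: $\Phi$ restricts to an isomorphism ${\pi'}^{-1}(0)\xrightarrow{\ \sim\ }{\pi''}^{-1}(\infty)$ carrying each center $[\dot s_i,u]$ to $[\dot e,u]$, giving a dimension-preserving bijection between the irreducible components of ${\pi'}^{-1}(0)$ and those of ${\pi''}^{-1}(\infty)$, together with the equality between the $S$-version of $\mathcal A_0'$ and the $\dot s_iS\dot s_i^{-1}$-version of $\mathcal A_\infty'$. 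It then suffices to note, from the arguments in the proof of Proposition~\ref{prop: the induced action on chow ring}, that the $T$-fixed locus of $\overline{\Omega_v^\circ\cap\Hess(S,h)}$, the tangent weights at its fixed points, and hence the equivariant multiplicities controlling the sets $\mathcal A_\infty'$ and the component dimensions, do not depend on the choice of regular semisimple $S$; so the $S$- and $\dot s_iS\dot s_i^{-1}$-versions of these data agree, and combining the two yields $\mathcal A_0' = \mathcal A_\infty'$ and the asserted bijection.

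It remains to prove $\Phi(Z') = Z''$. Recall $Z'$ is the closure in $Z$ of $\varphi^{-1}(\Omega_{s_iw}^\circ\cap\Hess(S,h))$; in the case $s_iw\to w$ one replaces $\Omega_{s_iw,h}$ by $\Omega_{w,h}$ and argues identically. Since $w\to s_iw$ forces $\ell(s_iw) = \ell(w)-1$, the case ``$(-)$'' of Subsection~\ref{sect all about Z}, combined with Proposition~\ref{prop_minus_cell_description}, shows that $\varphi$ restricts to a bijective morphism $(B_i^-\dot s_iB_i^-)\times_{B_i^-}\Omega_w^\circ\to\Omega_{s_iw}^\circ$ between smooth varieties, hence an isomorphism; in particular $\varphi^{-1}(\Omega_{s_iw}^\circ\cap\Hess(S,h))$ is irreducible and is supported over the chart $U_i^-\dot s_iB_i^-/B_i^-$. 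Applying $\Phi$ and using $\varphi\circ\Phi = L_{\dot s_i}\circ\varphi$ identifies its image with $\varphi^{-1}(\dot s_i\Omega_{s_iw}^\circ\cap\Hess(\dot s_iS\dot s_i^{-1},h))$; and because $\Omega_{s_iw}$ is $P_i^-$-stable (as $\ell(s_i\cdot s_iw)>\ell(s_iw)$), $\dot s_i\Omega_{s_iw}^\circ$ is dense in $\Omega_{s_iw}$, which is what lets one recognize the closure of this image as $Z''$. The one genuinely delicate point — the main obstacle — is that the Bia{\l}ynicki--Birula paving of $\Hess(S,h)$ is not cellular (Remark~\ref{rmk_not_cellular}), so one cannot merely read off the boundary of $\varphi^{-1}(\Omega_{s_iw}^\circ\cap\Hess(S,h))$ over $\infty$; one has to argue directly, in the fibre coordinates of Subsection~\ref{sect all about Z} and through the defining equations of Lemma~\ref{free variables and defining equations}, that passing to the chart $U_iB_i^-/B_i^-$ and letting $\mathbf z\to\infty$ sweeps out exactly the boundary pieces indexed by the same permutations $u$ and no spurious extra components, after which the dimension bookkeeping for the centers $[\dot e,u]$ follows from the isomorphism in case ``$(-)$'' and from Lemma~\ref{lemma_Brion_equivariant_mutliplicities}.
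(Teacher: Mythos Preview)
Your approach is essentially the same as the paper's: both rest on the left $s_i$-action on $Z$ together with the independence of the $T$-fixed-point and component data from the choice of regular semisimple element $S$ (Proposition~\ref{prop: the induced action on chow ring}). The paper phrases this chart-wise, comparing the two open pieces ${Z'}^{\circ}_{\infty}$ and ${Z'}^{\circ}_{0}$ directly and observing that $s_i\cdot{Z'}^{\circ}_{\infty}$ and ${Z'}^{\circ}_{0}$ differ only by replacing $\Hess(S,h)$ with $\Hess(s_iSs_i,h)$; you package the same comparison as a global automorphism $\Phi$ of $Z$ carrying $Z'$ to $Z''$. Either presentation is fine.

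Your last paragraph, however, overstates the difficulty of proving $\Phi(Z')=Z''$. No boundary analysis in coordinates is needed, and the non-cellularity of the Bia\l{}ynicki--Birula paving is not an obstacle here. Once you know (a)~$\varphi^{-1}(\Omega_{s_iw,h}^\circ)$ lies entirely in the open part $(B_i^-\dot s_iB_i^-)\times_{B_i^-}\Omega_w^\circ$ and is therefore isomorphic to the irreducible affine cell $\Omega_{s_iw,h}^\circ$, and (b)~$\dot s_i\Omega_{s_iw}^\circ\cap\Omega_{s_iw}^\circ$ is dense open in $\Omega_{s_iw}$ (from $P_i^-$-stability), the equality $\Phi(Z')=Z''$ follows by pure irreducibility and dimension: the preimage of $\dot s_i\Omega_{s_iw}^\circ\cap\Omega_{s_iw}^\circ\cap\Hess(S',h)$ under $\varphi$ is dense in both $\Phi(Z')$ and $Z''$, which are irreducible of the same dimension. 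So the ``genuinely delicate point'' you flag does not in fact arise; you can drop the coordinate discussion and the appeal to Lemma~\ref{free variables and defining equations} entirely.
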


\begin{proof}
  Recall that the variables ${\bf z}$
   and ${\bf y}$ are defined by
 $z = \left(
 \begin{array}{cc}
 1&0 \\
 {\bf z} &1
 \end{array} \right) \in U_i^-
  \text{ and }
  y = \left(
  \begin{array}{cc}
  1&{\bf y}\\
  0 &1
 \end{array} \right) \in U_i.$
  The first gives a coordinate on $U_{i}^-\dot{s}_iB^-_i/B^-_i\simeq \mathbb C$ and the second gives a coordinate on $U_iB^-_i/B^-_i \simeq \mathbb C$. Identifying ${\bf z}$ with $1/{\bf y}$, we get $SL_i/B_i^- \simeq \mathbb CP^1$.

  From $w > s_iw$,  it follows that we have $j\colonequals w^{-1}(i+1) < k\colonequals w^{-1}(i)$.
Note that under the isomorphism $(B_i^-s_iB_i^-) \times _{B_i^-} \Omega^{\circ}_w \simeq \Omega_{s_iw}^{\circ}$, the projection $\pi:(B_i^-s_iB_i^-) \times _{B_i^-} \Omega^{\circ}_w  \rightarrow B_i^-s_iB_i^-/B_i^- \simeq \mathbb C$ induces a projection $$\Omega_{s_iw}^{\circ} \rightarrow \mathbb C ,$$ which we denote by the same notation $\pi$.
Then $\pi(z\dot{s}_i\xi) ={\bf z}$ for $z= \left(
\begin{array}{cc}
1&0 \\
{\bf z} &1
\end{array} \right)  \in U_i^-$ and $\xi \in \Omega_w^{\circ}$, and  $\pi(s_iw x) = x_{k,j}$ for $x=(x_{a,b}) \in (s_iw)^{-1}\Omega_{s_iw}^{\circ} \subset U^-$. The variable $x_{k,j}$ will be identified with the variable ${\bf z} \in \mathbb C$.

Let
 $${Z'}^{\circ}_{\infty} \coloneq \{[z \dot {s_i}, \xi] \in Z\mid z \in U_i^-,\quad  \xi \in \Omega_w^{\circ},\quad \text{ and } z\dot{s_i}\xi \in \Hess(S,h) \}$$   and
 $${Z'}^{\circ}_0 \coloneq \{[y, \zeta] \in Z \mid y \in U_i,\quad  \zeta \in \Omega_w^{\circ},\quad  \text{ and }  y\zeta \in \Hess(S,h) \}.  $$
 Then ${\pi'}^{-1}(\infty)$ is the intersection of the closure of ${Z'}^{\circ}_{\infty}$ in $Z$  with $\pi^{-1}(\infty)$, and
 ${\pi'}^{-1}(0)$ is the intersection of the closure of ${Z'}^{\circ}_{ 0}$ in $Z$  with $\pi^{-1}(0)$.
In particular, for each fixed point $[\dot{e} ,u]$ in ${\pi'}^{-1}(\infty)$, there is a curve $\gamma({\bf z})$ in ${Z'}^{\circ}_{\infty}$ with $\pi(\gamma({\bf z}))={\bf z}$, where ${\bf z} \in \mathbb C$, such that $\lim_{{\bf z} \rightarrow \infty} \gamma({\bf z}) =[\dot{e} ,u]$,  and for each fixed point $[\dot{s}_i ,u]$ in ${\pi'}^{-1}(0)$, there is a curve $\gamma({\bf y})$ in ${Z'}^{\circ}_{0}$ with $\pi(\gamma({\bf y}))={\bf y}$, where ${\bf y} \in \mathbb C$, such that $\lim_{{\bf y} \rightarrow \infty} \gamma({\bf y}) =[\dot{s}_i ,u]$.

Furthermore, ${Z'}^{\circ}_{ 0}$  is equal to
 $$   \{[\dot{s_i}z \dot {s_i}, \zeta] \in Z \mid z \in U_i^- \text { and } \zeta \in \Omega_w^{\circ} \text{ and }  z\dot{s_i}\zeta \in  s_i \Hess (S,h)\} $$
 because of the equality $\dot{s}_iU_i^-\dot{s}_i =U_i$.
 Thus $s_i {Z'}^{\circ}_{\infty}$ and ${Z'}^{\circ}_0 $ differ only by the last condition defining these sets, one is $z\dot{s_i}\xi \in \Hess(S,h)$ and the other is $z\dot{s_i}\zeta \in s_i\Hess(S,h)$.

By the same argument as in the proof of Proposition~\ref{prop: the induced action on chow ring},
 the $T$-fixed point set of  ${\pi'}^{-1}(0)$ is just the translate of the $T$-fixed point set of ${\pi'}^{-1}(\infty)$ by $s_i$, and there is a bijection between the set of irreducible components  of ${\pi'}^{-1}(0)$ and  the set of irreducible components of ${\pi'}^{-1}(\infty)$ that preserves the desired properties.
\end{proof}

Obviously, $D_0\colonequals \{[\dot{s}_i, \xi] \mid \xi \in \overline{\Omega_{w}^{\circ} \cap s_i^{-1}\Hess(S,h)} \}$ is an irreducible component of ${\pi'}^{-1}(0)$ of maximum  dimension, and $D_{\infty}\colonequals \{[\dot{e} , \xi] \mid \xi \in \overline{\Omega_{w}^{\circ} \cap  \Hess(S,h)} \}$ is an irreducible component of ${\pi'}^{-1}(\infty)$ of maximum dimension. We will describe other irreducible components ${\pi'}^{-1}(0)$ or   ${\pi'}^{-1}(\infty)$ of maximum  dimension in Lemma~\ref{lem: irreducible component of maximal dimension II}.

\begin{lemma} \label{lem: irreducible component of maximal dimension}
Any irreducible component of ${\pi'}^{-1}(0)$ of maximum dimension other than $D_0$ is $\Omega'_{[\dot{s}_i, u]}$ for some $u  \not = w \in \mathcal A_0'$.
\end{lemma}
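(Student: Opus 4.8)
The plan is to combine the Białynicki--Birula decomposition of $Z'$ used in defining the cells ${\Omega'}^{\circ}_{[\dot s_i,u]}$ with an explicit identification of the open cell of $Z'$. I carry out the argument assuming $w\rightarrow s_iw$; the case $s_iw\rightarrow w$ is handled in the same way, with the roles of $[\dot s_i,\cdot]$ and $[\dot e,\cdot]$ interchanged. Recall from the discussion preceding the lemma that every irreducible component $D$ of ${\pi'}^{-1}(0)$ is the closure of ${\Omega'}^{\circ}_{[\dot s_i,u]}\cap{\pi'}^{-1}(0)$ for a unique $u\in\mathcal A'_0$, whence $D\subseteq\Omega'_{[\dot s_i,u]}$; also $\dim D_0=\dim Z'-1$, because $\dim Z'=\dim\Omega_{s_iw,h}^{\circ}=\dim\Omega_{w,h}^{\circ}+1=\dim D_0+1$ by Lemma~\ref{lem:si and Gwh}.

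The first step is to identify the open dense cell of $Z'$ as ${\Omega'}^{\circ}_{[\dot s_i,w]}=\varphi^{-1}(\Omega_{s_iw,h}^{\circ})$. Since $Z$ is projective, $\varphi$ is proper and $\varphi|_{Z'}\colon Z'\to\Omega_{s_iw,h}$ is surjective; in the case at hand $\varphi$ restricts to an isomorphism of $(B_i^-\dot s_iB_i^-)\times_{B_i^-}\Omega_w^{\circ}$ onto $\Omega_{s_iw}^{\circ}$, and $\Omega_{s_iw,h}^{\circ}$ is closed in $\Omega_{s_iw}^{\circ}$, so $\varphi^{-1}(\Omega_{s_iw,h}^{\circ})$ is open and dense in $Z'$ and isomorphic to the affine cell $\Omega_{s_iw,h}^{\circ}$. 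If $p\in\varphi^{-1}(\Omega_{s_iw,h}^{\circ})$, then $\varphi(p)$ lies in the minus cell $\Omega_{s_iw,h}^{\circ}$, so $\lim_{t\to\infty}\lambda(t)\varphi(p)=\dot s_i\dot wB$; writing $\lim_{t\to\infty}\lambda(t)p=[\dot s_i,v]$ and applying the $T$-equivariant map $\varphi$ forces $s_iv=s_iw$, i.e. $v=w$, so $p\in{\Omega'}^{\circ}_{[\dot s_i,w]}$. Conversely, if $p\in Z'$ flows to $[\dot s_i,w]$, then $\varphi(p)\in\varphi(Z')=\Omega_{s_iw,h}$ flows to $\dot s_i\dot wB$, hence $\varphi(p)\in\Omega_{s_iw,h}\cap\Omega_{s_iw}^{\circ}=\Omega_{s_iw,h}^{\circ}$, i.e. $p\in\varphi^{-1}(\Omega_{s_iw,h}^{\circ})$. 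In particular ${\Omega'}^{\circ}_{[\dot s_i,w]}$ is the unique cell of dimension $\dim Z'$, so $\dim{\Omega'}^{\circ}_{[\dot s_i,u]}<\dim Z'$ for all $u\neq w$ in $\mathcal A'_0$.

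The second step shows that the component of ${\pi'}^{-1}(0)$ centered at $[\dot s_i,w]$ is $D_0$. Using $\Omega_{s_iw}^{\circ}=U_i^-\dot s_i\Omega_w^{\circ}$ we obtain $\dot s_i^{-1}\Omega_{s_iw}^{\circ}=U_i\Omega_w^{\circ}=\Omega_w^{\circ}\sqcup\Omega_{s_iw}^{\circ}$, which, since $s_iw<w$, meets $\Omega_w$ exactly in $\Omega_w^{\circ}$; combining this with $\varphi([\dot s_i,\xi])=\dot s_i\xi$ and $\pi([\dot s_i,\xi])=0$ on $\pi^{-1}(0)\cong\Omega_w$, and with $\dot s_i^{-1}\Hess(S,h)=s_i^{-1}\Hess(S,h)$, gives
$$
{\Omega'}^{\circ}_{[\dot s_i,w]}\cap{\pi'}^{-1}(0)=\varphi^{-1}(\Omega_{s_iw,h}^{\circ})\cap\pi^{-1}(0)=\{[\dot s_i,\xi]\mid\xi\in\Omega_w^{\circ}\cap s_i^{-1}\Hess(S,h)\},
$$
whose closure in $Z'$ is precisely $D_0$. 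Hence any irreducible component $D\neq D_0$ of ${\pi'}^{-1}(0)$ is centered at some $[\dot s_i,u]$ with $u\neq w$.

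Finally, suppose $D\neq D_0$ is an irreducible component of ${\pi'}^{-1}(0)$ of maximum dimension, centered at $[\dot s_i,u]$ with $u\neq w$. Since ${\Omega'}^{\circ}_{[\dot s_i,u]}\cap{\pi'}^{-1}(0)$ is dense in $D$ and $\pi'$ has at most one-dimensional image, $\dim D\geq\dim{\Omega'}^{\circ}_{[\dot s_i,u]}-1$, while $D\subseteq\Omega'_{[\dot s_i,u]}$ gives $\dim D\leq\dim\Omega'_{[\dot s_i,u]}=\dim{\Omega'}^{\circ}_{[\dot s_i,u]}$. If $\dim D=\dim{\Omega'}^{\circ}_{[\dot s_i,u]}-1$, then $\dim{\Omega'}^{\circ}_{[\dot s_i,u]}=\dim D+1=\dim D_0+1=\dim Z'$, contradicting $u\neq w$; so $\dim D=\dim\Omega'_{[\dot s_i,u]}$, and since $D\subseteq\Omega'_{[\dot s_i,u]}$ with both irreducible, $D=\Omega'_{[\dot s_i,u]}$. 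The step I expect to be most delicate is the first one — verifying that the open Białynicki--Birula cell of $Z'$ is exactly $\varphi^{-1}(\Omega_{s_iw,h}^{\circ})$ — since this is what separates the distinguished component $D_0$ from the extra components $\Omega'_{[\dot s_i,u]}$; it rests on the properness of $\varphi$ together with the identity $\Omega_w\cap\dot s_i^{-1}\Omega_{s_iw}^{\circ}=\Omega_w^{\circ}$, both of which are available here.
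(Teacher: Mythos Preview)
Your argument is correct and follows the same route as the paper's proof, just with considerably more detail filled in. The paper compresses your Steps 1--3 into the single phrase ``by the irreducibility of $Z'$'': that $Z'$ is irreducible forces a unique top-dimensional Bia\l{}ynicki--Birula cell (your Step~1), which is the one producing $D_0$ (your Step~2), so any other maximal component must coincide with the closure of its centering cell (your Step~3). Your expanded version makes each of these implications explicit and is a genuine improvement in readability.

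One small inaccuracy: in Step~2 you assert $U_i\Omega_w^{\circ}=\Omega_w^{\circ}\sqcup\Omega_{s_iw}^{\circ}$, but in general only the inclusion $U_i\Omega_w^{\circ}\subset\Omega_w^{\circ}\cup\Omega_{s_iw}^{\circ}$ holds (already in $SL_2$ with $w=s_1$ the left side misses the point $eB$). This does not affect your conclusion, since the inclusion is all you need: intersecting with $\Omega_w$ still yields exactly $\Omega_w^{\circ}$ because $\Omega_{s_iw}^{\circ}\cap\Omega_w=\emptyset$ when $s_iw<w$. Alternatively, you can bypass this computation entirely by noting directly that $\varphi^{-1}(\Omega_{s_iw}^{\circ})=(B_i^-\dot s_iB_i^-)\times_{B_i^-}\Omega_w^{\circ}$ (the paper's isomorphism in the $(-)$ case), so its intersection with the fiber $\pi^{-1}(0)$ is automatically $\{[\dot s_i,\xi]\mid\xi\in\Omega_w^{\circ}\}$.
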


\begin{proof}
Let $D$ be an irreducible component of ${\pi'}^{-1}(0)$ of maximum dimension and assume that it is  centered at $[\dot{s}_i,u]$. Then $D$ is the closure of ${\Omega'}^{\circ}_{[\dot{s}_i,u]} \cap {\pi'}^{-1}(0)$. If $D$ is not equal to $D_0$, then by the irreducibility of $Z'$, ${\Omega'}^{\circ}_{[\dot{s}_i,u]} \cap {\pi'}^{-1}(0)$ and ${\Omega'}^{\circ}_{[\dot{s}_i,u]} $ have the same dimension and thus they have the same closure because ${\Omega'}^{\circ}_{[\dot{s}_i,u]} $ is an affine cell.
\end{proof}

\begin{lemma} \label{lem: exceptional curve is contained in Z'}
Assume that $u >s_iu >w$.
\begin{enumerate}
\item
If $[\dot{s}_i, s_iu]   \in Z'$, then $\varphi^{-1}(u)$ is contained in $Z'$.
\item There is no irreducible component of ${\pi'}^{-1}(0)$ of maximum dimension centered at $[\dot{s}_i,s_i u]$.
    \end{enumerate}
\end{lemma}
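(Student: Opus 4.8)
The plan is to derive part (2) from part (1) together with Lemma~\ref{lem: irreducible component of maximal dimension}, and to prove part (1) by a local analysis of $Z'$ around the fixed point $[\dot{s}_i,s_iu]$. Throughout I work in the case $w\rightarrow s_iw$, so that $Z'$ is the irreducible component of $\varphi^{-1}(\Omega_{s_iw,h})$ containing the dense stratum $\varphi^{-1}(\Omega^{\circ}_{s_iw,h})$; the case $s_iw\rightarrow w$ is handled identically, with $\Omega_{s_iw,h}$ replaced by $\Omega_{w,h}$.

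For part (1), recall the explicit description of the curve given above: $\varphi^{-1}(u)$ is an irreducible $T$-stable rational curve whose affine part $\varphi^{-1}(u)\setminus\{[\dot{e},u]\}=\{\,[z\dot{s}_i,s_iu]\mid z\in U_i^-\,\}$ is a single $U_i^-$-orbit, every point of which flows to $[\dot{s}_i,s_iu]$ under $\lambda(t)$ as $t\rightarrow\infty$. Since $[\dot{s}_i,s_iu]\in Z'$ and $\varphi([\dot{s}_i,s_iu])=u$, we get $u\in\Omega_{s_iw,h}$, hence $\varphi^{-1}(u)\subseteq\varphi^{-1}(\Omega_{s_iw,h})$; being irreducible, this curve lies in a single irreducible component of $\varphi^{-1}(\Omega_{s_iw,h})$, and the crux is to show that this component is $Z'$. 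I would do this by an explicit coordinate computation: writing down defining equations for $\varphi^{-1}(\Omega_{s_iw,h})$ in a coordinate chart of $Z$ around $[\dot{s}_i,s_iu]$, obtained by pulling back through $\varphi$ the Hessenberg equations $f^{s_iw}_{\alpha,\beta}$ of~\eqref{eq defining ideal} (with $w$ replaced by $s_iw$); identifying $Z'$ as the component of this zero locus that meets --- hence contains --- the dense graph $\varphi^{-1}(\Omega^{\circ}_{s_iw,h})$; and checking that the curve $\{\,[z\dot{s}_i,s_iu]\mid z\in U_i^-\,\}$ satisfies these equations and lies in the closure of $\varphi^{-1}(\Omega^{\circ}_{s_iw,h})$, for instance by exhibiting, for each $z\in U_i^-$, a one-parameter family $\xi\to s_iu$ inside $\Omega^{\circ}_w$ with $z\dot{s}_i\xi\in\Omega^{\circ}_{s_iw,h}$ and lifting it. Passing to closures then gives $\varphi^{-1}(u)\subseteq Z'$. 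I expect this to be the main obstacle of the lemma: its content is precisely that $Z'\cap\varphi^{-1}(u)$ is all of $\varphi^{-1}(u)$ and not merely the fixed point $[\dot{s}_i,s_iu]$, and producing enough approaching fibres --- equivalently, controlling the local equations well enough --- to fill the whole curve is the delicate point.

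For part (2), suppose for contradiction that some irreducible component $D$ of ${\pi'}^{-1}(0)$ of maximum dimension is centered at $[\dot{s}_i,s_iu]$. Since $D_0$ is centered at $[\dot{s}_i,w]$ and $s_iu\neq w$ (because $s_iu>w$), we have $D\neq D_0$, so Lemma~\ref{lem: irreducible component of maximal dimension} gives $D=\overline{{\Omega'}^{\circ}_{[\dot{s}_i,s_iu]}}=\overline{{\Omega'}^{\circ}_{[\dot{s}_i,s_iu]}\cap{\pi'}^{-1}(0)}$, and in particular $\dim\bigl({\Omega'}^{\circ}_{[\dot{s}_i,s_iu]}\cap{\pi'}^{-1}(0)\bigr)=\dim{\Omega'}^{\circ}_{[\dot{s}_i,s_iu]}$. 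On the other hand $[\dot{s}_i,s_iu]\in D\subseteq Z'$, so part (1) applies and $\varphi^{-1}(u)\subseteq Z'$. Then the affine curve $\varphi^{-1}(u)\setminus\{[\dot{e},u]\}$ lies in $Z'$ and flows to $[\dot{s}_i,s_iu]$ under $\lambda$, hence is contained in the cell ${\Omega'}^{\circ}_{[\dot{s}_i,s_iu]}$; and since $\pi$ is non-constant on it (only the point with $z=e$ maps to $0\in SL_i/B_i^-$), the irreducible affine cell ${\Omega'}^{\circ}_{[\dot{s}_i,s_iu]}$ is not contained in the closed set ${\pi'}^{-1}(0)$. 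Therefore ${\Omega'}^{\circ}_{[\dot{s}_i,s_iu]}\cap{\pi'}^{-1}(0)$ is a proper closed subset of the irreducible ${\Omega'}^{\circ}_{[\dot{s}_i,s_iu]}$, so has strictly smaller dimension, contradicting the displayed equality. Hence no such $D$ exists, which establishes part (2).
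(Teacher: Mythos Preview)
Your argument for part (2) is correct and matches the paper's: Lemma~\ref{lem: irreducible component of maximal dimension} forces $D=\Omega'_{[\dot{s}_i,s_iu]}$, part (1) feeds the curve $\varphi^{-1}(u)$ into this cell, and since $\pi'$ is non-constant on the curve the cell cannot lie in ${\pi'}^{-1}(0)$.

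For part (1), however, you have only a plan, not a proof. You correctly isolate the obstacle --- producing, for each $z_1\in U_i^-$, a one-parameter family in $\varphi^{-1}(\Omega^{\circ}_{s_iw,h})$ converging to $[z_1\dot{s}_i,s_iu]$ --- but you do not construct these families, and you do not explain how the hypothesis $[\dot{s}_i,s_iu]\in Z'$ enters beyond the trivial consequence $u\in\Omega_{s_iw,h}$. The paper supplies exactly this construction. The hypothesis yields \emph{one} approaching curve $\beta({\bf z})=[z\dot{s}_i,\gamma({\bf z})]\in\varphi^{-1}(\Omega^{\circ}_{s_iw,h})$ with $\gamma({\bf z})\to s_iu$ and $z\dot{s}_i\gamma({\bf z})\to u$; the task is to transport it to every $z_1\in U_i^-$. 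Left-multiplying by $z_1$ fails directly because $z_1z\dot{s}_i\gamma({\bf z})$ need not lie in $\Hess(S,h)$. Two observations fix this. First, since $u>s_iu$, the root subgroup $U_i^-$ fixes the point $u$, so $z_1z\dot{s}_i\gamma({\bf z})$ also converges to $u$ and hence its distance from $z\dot{s}_i\gamma({\bf z})$ tends to $0$. Second, $\Omega^{\circ}_{s_iw,h}$ is the graph of a function over the free-variable plane (equation~(\ref{eq graph expression})), so there is a retraction $F$ onto $\Omega^{\circ}_{s_iw,h}$ that keeps the free coordinates $x_{c,d}$ and replaces the constrained ones by $g_{a,b}(x_{c,d})$; since the fibre coordinate $x_{k,j}=\pi$ is among the free variables, $F(z_1z\dot{s}_i\gamma({\bf z}))$ stays in $z_1z\dot{s}_i\Omega^{\circ}_w$. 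By continuity of $F$ and the first observation, the corrected curve $F(z_1z\dot{s}_i\gamma({\bf z}))$ still converges to $u$, so $[z_1z\dot{s}_i,\gamma^{z_1}({\bf z})]\to[z_1\dot{s}_i,s_iu]$ inside $Z'$. Thus $\varphi^{-1}(u)\setminus\{[\dot{e},u]\}\subset Z'$, and closedness picks up the remaining point. This transport-and-correct step is the actual content of the lemma; your outline stops just short of it.
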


\begin{proof} Assume that $u >s_iu >w$. The proof of  (2) will come first, followed by the proof of  (1).

(2) Let $D$ be an irreducible component of ${\pi'}^{-1}(0)$ of maximum dimension centered at $[\dot{s}_i,s_iu]$. Then by Lemma~\ref{lem: irreducible component of maximal dimension},     $D$ is equal to ${\Omega'} _{[\dot{s}_i,s_iu]} $,    %
and
by (1), $\varphi^{-1}(u)$ is contained in $Z'$.  By Lemma~\ref{lem:T action}(2), for any generic point $[g,\xi]$    of $\varphi^{-1}(u)$, $\lambda(t)[g,\xi]$ converges to $[\dot{s}_i,s_iu]$ as $t$ goes to $\infty$. By  the definition of ${\Omega'}^{\circ}_{[\dot{s}_i, s_iu]}$,  a generic point of $\varphi^{-1}(u)$ is contained in  ${\Omega'}^{\circ}_{[\dot{s}_i, s_iu]}$. Since $\varphi^{-1}(u)$ is not contained in ${\pi'}^{-1}(0)$,  $\Omega'_{[\dot{s}_i, s_iu]}$ is not contained in ${\pi'}^{-1}(0)$, contradicting to the fact that $D=\Omega'_{[\dot{s}_i, s_iu]}$ is  contained in ${\pi'}^{-1}(0)$. Therefore, there is no irreducible component of ${\pi'}^{-1}(0)$ of maximum dimension centered at $[\dot{s}_i,s_iu]$.

(1) If $[\dot{s}_i, s_iu]   \in Z'$, then there is a curve
$$ \beta: {\bf z} \in V  \mapsto  [z\dot{s}_i, \gamma({\bf z})]$$
in $\varphi^{-1}(\Omega_{s_iw,h}^{\circ})$ such that $\lim_{{\bf z} \rightarrow 0}\gamma({\bf z}) =s_iu$, where $V$ is an open neighborhood of $0 \in \mathbb C$. Here, we use the coordinate ${\bf z}$ for $z =\left( \begin{array}{cc}
1 & 0 \\
{\bf z} &1
\end{array}
\right)$ in $U_i^-$.  Then $\gamma({\bf z}) \in \Omega_{w}^{\circ}$ and $z\dot{s}_i\gamma({\bf z}) \in \Omega_{s_iw,h}^{\circ}$ for any $ {\bf z} \in V$, and $\lim_{{\bf z} \rightarrow 0} z\dot{s}_i\gamma({\bf z}) =u$.

For $z_1 \in U_i^-$,   the curve $\beta_1$   obtained by multiplying $z_1$ to $\beta$,
$$\beta_1:   {\bf z}  \in V \mapsto [z_1z\dot{s}_i, \gamma({\bf z} )],  $$
   is contained in $\varphi^{-1}(\Omega_{s_iw}^{\circ})$ but is not necessarily contained in $\varphi^{-1}(\Omega_{s_iw, h}^{\circ})$, in other words,
   $z\dot{s}_i\gamma({\bf z})$ is contained in $\Omega_{s_iw}^{\circ} \cap \Hess(S,h)$ but $z_1z\dot{s}_i \gamma({\bf z})$ is not necessarily contained in $\Omega_{s_iw}^{\circ} \cap \Hess(S,h)$. %
 We will modify $\beta_1$ to get a curve contained in $\varphi^{-1}(\Omega_{s_iw, h}^{\circ})$.

Fix $z_1 \in U_i^-$.
Since $z\dot{s}_i\gamma({\bf z})$ converges to $u$ as ${\bf z}$ goes to zero, so does $z_1z\dot{s}_i\gamma({\bf z})$ because $U^-_i$ fixes $u$.
Thus the distance ${\rm dist}\left(z\dot{s}_i\gamma({\bf z}),z_1z\dot{s}_i\gamma({\bf z}) \right)$  between $z\dot{s}_i\gamma({\bf z})$ and $z_1z\dot{s}_i\gamma({\bf z})$ converges to zero as ${\bf z}$ goes to zero.
 Here, ${\rm dist}(\eta_1, \eta_2)$ is the distance between two points $\eta_1$ and $\eta_2$ in $\Omega_{s_iw}^{\circ} \subset G/B$.

Recall that $(x_{{\alpha},{\beta}}) \in U^-$ where ${\alpha}>{\beta}$ and $s_iw({\alpha}) > s_iw({\beta})$ gives a coordinate system on $\Omega_{s_iw}^{\circ}$, and the intersection
$\Omega_{s_iw,h}^{\circ} = \Omega_{s_iw}^{\circ} \cap \Hess(S,h)$ can be defined as the graph
$$\{(x_{c,d}, x_{a,b})\mid x_{a,b}=g_{a,b}(x_{c,d}) \}$$
of some function $g_{a,b}(x_{c,d})$. Here, the index $(a,b)$ of the variable $x_{a,b}$ is an element of $\{(a,b) \mid a >h(b)\text{ and } s_iw(a) >s_w(b) \}$ and the index $(c,d)$ of the variable $x_{c,d}$ belongs to $\{(c,d) \mid h(d) \geq c >d \text{ and } s_iw(a) >s_iw(b) \}$. In this proof, we assume that the  indices $(a,b)$ and $(c,d)$  satisfy these conditions.

The $x_{c,d}$-coordinate plane in  $U^-$ having the constant  $x_{a,b}$-coordinate, say $x_{a,b}^{(0)}$, defines a submanifold of  $\Omega_{s_iw}^{\circ}$, which we denote by $\mathcal U_{  x_{a,b}^{(0)}}$ and    call the $x_{c,d}$-coordinate plane  in $\Omega_{s_iw}^{\circ}$ with the $x_{a,b}^{(0)}$-coordinate.
Then we may express $\Omega_{s_iw,h}^{\circ} $ as the image of
  a (unique) injective map $F_{  x_{a,b}^{(0)}}:\mathcal U_{  x_{a,b}^{(0)}} \rightarrow  \Omega_{s_iw }^{\circ}$ such that $\eta$ and $F_{x_{a,b}^{(0)}}(\eta)$ have the same $x_{c,d}$-coordinate for any $\eta \in \mathcal U_{  x_{a,b}^{(0)}}$. In particular, if $\eta$ is contained in $z\dot{s}_i\Omega_{w}^{\circ}$, then so is  $F_{x_{a,b}^{(0)}}(\eta)$.

Let $\eta ^{(0)}= z^{(0)}\dot{s}_i\xi^{(0)}$ be an element of $\Omega_{s_iw,h}$, where $z^{(0)} \in U_i^{- }$ and $\xi^{(0)} \in \Omega_{w}^{\circ}$.   Let $ x_{a,b}^{(0)} $ denote the $x_{a,b}$-coordinate of $\eta$.
 Then for any $\varepsilon >0$, there is $\delta^{(0)}>0$ such that for any $\eta $ in the  $x_{c,d}$-coordinate plane $\mathcal U_{x_{a,b}^{(0)}}$,  if ${\rm dist}\left(\eta, \eta^{(0)} \right)<\delta^{(0)}$, then ${\rm dist} \left( F^{(0)}(\eta) , \eta^{(0)}  \right)< \varepsilon  $, where $F^{(0)}$ is the map $F_{  x_{a,b}^{(0)}}:\mathcal U_{  x_{a,b}^{(0)} } \rightarrow  \Omega_{s_iw }^{\circ}$.

 Let $\overline{\beta}$ denote the image $\varphi \circ \beta$  of $\beta$ by $\varphi$.
By taking a smaller neighborhood $V_1$ of $0 \in \mathbb C$ whose closure is compact and is contained in $V$ if necessary, for any $\varepsilon >0$, we get $\delta>0$    such that for any ${\bf z} \in V_1$ the following holds: For any $\eta $ in the  same $x_{c,d}$-coordinate plane as $\overline{\beta}({\bf z})$, if ${\rm dist}\left(\eta, \overline{\beta}({\bf z}) \right)<\delta $, then ${\rm dist}\left( F^{\bf z}(\eta) , \overline{\beta}({\bf z}) \right)<\varepsilon  $, where $x_{a,b}^{\bf z}$ is the $x_{a,b}$-coordinate of $\overline{\beta}({\bf z})$ and  $F^{\bf z}$ is the map $F_{  x_{a,b}^{\bf z}  }:\mathcal U_{  x_{a,b}^{\bf z}  } \rightarrow  \Omega_{s_iw }^{\circ}$.

 Note that $z_1\overline{\beta}({\bf z}) = z_1z\dot{s}_i\gamma({\bf z})$ belongs to  the same  $x_{c,d}$-coordinate plane $\mathcal U_{x_{a,b}^{\bf z}}$  as $\overline{\beta}({\bf z}) =  z\dot{s}_i\gamma({\bf z})$.
 Applying  $F^{\bf z}$  to $z_1\overline{\beta}({\bf z})$ we get an element $F^{\bf z}(z_1\overline{\beta}({\bf z}))$ in $\Omega_{s_iw,h}^{\circ}$.
 By construction, $F^{\bf z}(z_1\overline{\beta}({\bf z}))$ is an element of $z_1z \dot{s}_i \Omega_{w}^{\circ}$, and thus there is $\gamma^{z_1}({\bf z}) \in \Omega_{w}^{\circ}$ such that $F^{\bf z}(z_1\overline{\beta}({\bf z}))=z_1z \dot{s}_i \gamma^{z_1}({\bf z})$.
  Take a neighborhood $V_2$ of $0 \in \mathbb C$ contained in $V_1$ such that
 for any ${\bf z} \in V_2$, ${\rm dist}\left( z_1 \overline{\beta}({\bf z}) ,\overline{\beta}({\bf z}) \right) <\delta$. Then we obtain
 ${\rm dist}\left( F^{\bf z}( z_1 \overline{\beta}({\bf z})) , \overline{\beta}({\bf z}) \right)<\varepsilon  $.

 Therefore,
$$\beta^{z_1}:  {\bf z}   \in V_1 \mapsto [z_1z\dot{s}_1, \gamma^{z_1}({\bf z})]$$ is a curve in $\varphi^{-1}(\Omega_{s_iw,h}^{\circ})$ with $\lim_{{\bf z}\rightarrow 0}\beta^{z_1}({\bf z}) =[z_1\dot{s}_i, s_i u]$, which implies that  $\varphi^{-1}(u) \backslash \{[\dot{e}, u]\}$ is contained in the closure $Z'$ of $\varphi^{-1}(\Omega_{s_iw,h}^{\circ})$. Since $\varphi^{-1}(u)$ is a $T$-invariant curve and $Z'$ is $T$-invariant and closed, it follows that $\varphi^{-1}(u)$ is contained in $Z'$.
\end{proof}

\begin{lemma} \label{lem: irreducible component of maximal dimension II}
The following statements hold.
\begin{enumerate}
\item Any irreducible component of maximum  dimension of ${\pi'}^{-1}(0)$ other than $D_0$ is of the form $ \{[\dot{s}_i,\xi] \mid \xi \in \overline{\Omega_{u}^{\circ} \cap s_i^{-1} \Omega_{s_iw,h}} \}$ for some $u \in \mathcal A_0'$ with $u \dasharrow s_iu$.
\item Any irreducible component of maximum  dimension of ${\pi'}^{-1}(\infty)$ other than $D_{\infty}$ is of the form $ \{[\dot{e} ,\xi] \mid \xi \in \overline{\Omega_{u}^{\circ} \cap   \Omega_{s_iw,h}} \}$ for some $u \in \mathcal A_{\infty}'$ with $u \dasharrow s_iu$.

\end{enumerate}

  \end{lemma}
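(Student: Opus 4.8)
The plan is to prove (1) and obtain (2) by the identical argument with the fixed points $[\dot e,u]$ over $\infty$ and the cells $\Omega_{[\dot e,u]}^{\circ}=B_i^-\times_{B_i^-}\Omega_u^{\circ}$ in place of $[\dot s_i,u]$ and $\Omega_{[\dot s_i,u]}^{\circ}$, or alternatively to deduce (2) from (1) via the symmetry of Lemma~\ref{lem:symmetric}. First I would record the numerology: since $w\rightarrow s_iw$, Lemma~\ref{lem:si and Gwh}(2) gives $\ell_h(s_iw)=\ell_h(w)-1$, so $\dim Z'=\dim\Omega_{s_iw,h}=\dim\Omega_{w,h}+1$; moreover the free entry $x_{k,j}$ of $\Omega_{s_iw,h}^{\circ}$ with $j=w^{-1}(i+1)$, $k=w^{-1}(i)$ (free since $k\le h(j)$) is identified with $\pi$ under the isomorphism $\varphi\colon(B_i^-\dot s_iB_i^-)\times_{B_i^-}\Omega_w^{\circ}\to\Omega_{s_iw}^{\circ}$, so $\pi'\colon Z'\to SL_i/B_i^-\cong\mathbb{C}P^1$ is a dominant morphism from an irreducible variety to a smooth curve, hence flat, and every irreducible component of ${\pi'}^{-1}(0)$ has dimension exactly $\dim\Omega_{w,h}$.

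Let $D$ be an irreducible component of ${\pi'}^{-1}(0)$ with $D\ne D_0$. By Lemma~\ref{lem: irreducible component of maximal dimension}, $D=\Omega'_{[\dot s_i,u]}$ for some $u\ne w$ with $[\dot s_i,u]\in Z'$, i.e. $u\in\mathcal A_0'$. As in the proof of that lemma, because $D\ne D_0$ the irreducible affine cell ${\Omega'}^{\circ}_{[\dot s_i,u]}=\Omega^{\circ}_{[\dot s_i,u]}\cap Z'$ has the same closure as ${\Omega'}^{\circ}_{[\dot s_i,u]}\cap{\pi'}^{-1}(0)$; since ${\pi'}^{-1}(0)=\pi^{-1}(0)\cap Z'$ is closed in $Z'$, this forces ${\Omega'}^{\circ}_{[\dot s_i,u]}\subset\pi^{-1}(0)$, i.e. the cell lies entirely over $0$, and $\dim{\Omega'}^{\circ}_{[\dot s_i,u]}=\dim\Omega_{w,h}$. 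The whole point is now to read off $u$ and $D$ from these two facts.

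Next I would show $u\dasharrow s_iu$. If $\ell(u)<\ell(s_iu)$, or if $u\rightarrow s_iu$, then I claim $\Omega^{\circ}_{[\dot s_i,u]}\cap Z'$ cannot lie over $0$. When $s_iu\notin\Omega_w^T$ this follows from the product description $\Omega^{\circ}_{[\dot s_i,u]}=(B_i^-\dot s_iB_i^-)\times_{B_i^-}\Omega_u^{\circ}$ of Subsection~\ref{sect all about Z}, the $\lambda(t)$-conjugation formulas given there, and a free-variable count via Lemma~\ref{free variables and defining equations}, which together exhibit points of $\varphi^{-1}(\Omega_{s_iw,h}^{\circ})$ accumulating at $[\dot s_i,u]$ with $\pi\ne 0$. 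When $s_iu\in\Omega_w^T$, so that $u>s_iu>w$, one invokes Lemma~\ref{lem: exceptional curve is contained in Z'}: $[\dot s_i,u]\in Z'$ together with part (1) forces the exceptional curve $\varphi^{-1}(u)$ into $Z'$, and along that curve $\pi$ is nonconstant, vanishing only at the point flowing to $[\dot s_i,s_iu]$ rather than at $[\dot s_i,u]$; either way ${\Omega'}^{\circ}_{[\dot s_i,u]}\subset\pi^{-1}(0)$ is contradicted. Hence $u\dasharrow s_iu$, and Lemma~\ref{lem:si and Gwh}(1) gives $\ell_h(s_iu)=\ell_h(u)$. One then checks---directly when $s_iu\notin\Omega_w^T$ using the product description, and with the help of Lemma~\ref{lem: exceptional curve is contained in Z'} when $s_iu\in\Omega_w^T$---that $\Omega^{\circ}_{[\dot s_i,u]}\cap\pi^{-1}(0)=\{[\dot s_i,\xi]\mid\xi\in\Omega_u^{\circ}\}$; intersecting with $Z'\subset\varphi^{-1}(\Omega_{s_iw,h})$ and using $\varphi([\dot s_i,\xi])=\dot s_i\xi$ identifies a dense subset of ${\Omega'}^{\circ}_{[\dot s_i,u]}$ with $\{[\dot s_i,\xi]\mid\xi\in\Omega_u^{\circ}\cap s_i^{-1}\Omega_{s_iw,h}^{\circ}\}$, and taking the closure in $Z'$ (legitimate since this cell is irreducible of the top dimension $\dim\Omega_{w,h}$) yields $D=\{[\dot s_i,\xi]\mid\xi\in\overline{\Omega_u^{\circ}\cap s_i^{-1}\Omega_{s_iw,h}}\}$ with $u\in\mathcal A_0'$, as claimed.

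The main obstacle is the borderline case $u>s_iu>w$ that keeps arising in the previous paragraph: there $\Omega_{[\dot s_i,u]}^{\circ}$ is not a simple product, so one must genuinely trace the exceptional curves $\varphi^{-1}(u)$ and $\varphi^{-1}(s_iu)$ through the chart changes of Subsection~\ref{sect all about Z} and pin down exactly when they enter $Z'$---this is the content of Lemma~\ref{lem: exceptional curve is contained in Z'}, but combining it with the requirement that $D$ be an \emph{entire} irreducible component lying over $0$ takes some care. A second, pervasive technicality is that $Z'$ is a closure, so one must repeatedly pass between $\Omega^{\circ}_{[\dot s_i,u]}\cap Z'$, $\Omega^{\circ}_{[\dot s_i,u]}\cap\varphi^{-1}(\Omega_{s_iw,h}^{\circ})$ and their various closures, using the irreducibility of $Z'$ and of the affine cells to see that the relevant closures agree.
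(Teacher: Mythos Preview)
Your overall architecture matches the paper---start from Lemma~\ref{lem: irreducible component of maximal dimension}, establish $u\dasharrow s_iu$, read off the explicit form, and get (2) from (1) via Lemma~\ref{lem:symmetric}---but your route to $u\dasharrow s_iu$ has a genuine gap and is more convoluted than necessary.

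The paper does \emph{not} split on whether $s_iu\in\Omega_w^T$. It simply invokes Lemma~\ref{lem: exceptional curve is contained in Z'}\,(2) to rule out $\ell(u)<\ell(s_iu)$ in one stroke: if $s_iu>u$, then (since $u\in\mathcal A_0'\subset\Omega_w^T$ and $u\ne w$) we have $s_iu>u>w$, and Lemma~\ref{lem: exceptional curve is contained in Z'}\,(2) (with the lemma's ``$u$'' equal to our $s_iu$) says precisely that no maximal component of ${\pi'}^{-1}(0)$ is centered at $[\dot s_i,u]$, contradiction. Only \emph{after} $u>s_iu$ is secured does the paper exclude $u\rightarrow s_iu$ by a short argument. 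You never use part (2) of that lemma, and instead try to reprove it through part (1); this is where things go wrong.

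Concretely, in your subcase ``$s_iu\in\Omega_w^T$, so that $u>s_iu>w$'': this phrase already conflates the two situations you are handling. When $\ell(u)<\ell(s_iu)$ one has $s_iu>u>w$, not $u>s_iu>w$, and then Lemma~\ref{lem: exceptional curve is contained in Z'}\,(1) (after renaming) yields $\varphi^{-1}(s_iu)\subset Z'$, whose generic points flow to $[\dot s_i,u]$---this part is fine once the names are fixed. But in the other situation $u\rightarrow s_iu$ (so $u>s_iu$), Lemma~\ref{lem: exceptional curve is contained in Z'}\,(1) requires $[\dot s_i,s_iu]\in Z'$, which you have not established; you only know $[\dot s_i,u]\in Z'$. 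Moreover the curve $\varphi^{-1}(u)$ you name passes through $[\dot s_i,s_iu]$, and its generic points flow to $[\dot s_i,s_iu]$, not to $[\dot s_i,u]$ (see the flow computation in Subsection~\ref{sect all about Z}). So even if that curve were in $Z'$ it would not exhibit points of ${\Omega'}^{\circ}_{[\dot s_i,u]}$ with $\pi\ne 0$. This is the gap: your argument does not rule out $u\rightarrow s_iu$ when $s_iu\in\Omega_w^T$. The clean fix is to use part (2) of Lemma~\ref{lem: exceptional curve is contained in Z'} for the length comparison, and then treat $u\rightarrow s_iu$ separately, as the paper does.
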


  \begin{proof}
 (1)  Let $D$ be an irreducible component of ${\pi'}^{-1}(0)$  of maximum  dimension other than $D_0$. Then by   Lemma~\ref{lem: irreducible component of maximal dimension}, $D$  is     ${\Omega'}^{\circ}_{[\dot{s}_i, u]}$ for some $u \not=w \in \mathcal A_0'$.
 Furthermore, by Lemma~\ref{lem: exceptional curve is contained in Z'}(2),    we have $u > s_iu$. Indeed, if $s_iu >u = s_i(s_iu)$, then   Lemma~\ref{lem: exceptional curve is contained in Z'}(2) implies that there is no irreducible component of ${\pi'}^{-1}(0)$ of maximal dimension centered at $[\dot{s}_i, u]$, contradicting to the fact that $D$ is such one. If $u \rightarrow s_iu$, then ${\Omega'}^{\circ}_{[\dot{s}_i, u]}$ is not contained in ${\pi'}^{-1}(0)$, contradicting to the fact that $D$ is an irreducible component of ${\pi'}^{-1}(0)$.    Thus we have    $u \dasharrow s_iu$ and  $\Omega'_{[\dot{s}_i,u]} $ is of the form $\{[\dot{s}_i,\xi] \mid \xi \in \overline{\Omega_{u}^{\circ} \cap s_i^{-1} \Omega_{s_iw,h}} \}$.

\smallskip

(2)
Any irreducible component of ${\pi'}^{-1}(\infty)$ is of the form   ${\Omega'} _{[\dot{e},u]}$ for some $u \in \mathcal A'_{\infty}$. Let ${\Omega'} _{[\dot{e},u]}$ be an irreducible component of maximum  dimension of ${\pi'}^{-1}(\infty)$ other than $D_{\infty}$.  By Lemma~\ref{lem:symmetric},   there is an irreducible component of ${\pi'}^{-1}(0)$ of maximum dimension centered at $[\dot{s}_i,u]$   with $u \not=w$, and thus, we get $u \dasharrow s_iu$ by (1).
  \end{proof}

\subsection{Proof of Proposition~\ref{prop: edge remaining 2}} \label{sect: proof of si action}
In this subsection, we provide a proof of Proposition~\ref{prop: edge remaining 2}.
\begin{proof}[Proof of Proposition~\ref{prop: edge remaining 2}]
For the first statement, assume   that $s_{i}w \rightarrow  w$, that is, the edge $(s_{i}w \rightarrow w)$ is contained in the edge set of the GKM graph of $\Hess(S,h)$. Then
$\ell_h(w) = \ell_h(s_{i}w)-1$ by Lemma~\ref{lem:si and Gwh}(2), and thus $ \dim \Omega_{s_{i}w,h} = \dim \Omega_{w,h} -1$. We will use the same notations as in Subsection~\ref{sect all about Z}.
By Lemma~\ref{Z is trivial and varphi is a projection},
$Z$ is isomorphic to the product $(SL_i/B_i^-) \times \Omega_w$ and $\varphi$ is the projection to the second factor.

As in Subsection~\ref{sect: Proof of Proposition 4.7}, let $Z'$ be the irreducible component of $\varphi^{-1}( \Omega_{  w,h} )$ containing $\varphi^{-1}(\Omega_{w,h}^{\circ})$  and let $\varphi'$ and $\pi'$ denote the restriction of $\varphi$  and $\pi$ to $Z'$:
\begin{eqnarray*}
\xymatrix{
Z'  \ar[d]^{\pi'} \ar[r]^{ \varphi'}  & \Omega_{ w,h} \\
SL_{i} /B_i^- &  } 
\end{eqnarray*}
Then $Z'$ is isomorphic to the product $(SL_i/B_i^-) \times \Omega_{w,h}$ and $\varphi'$ is the projection to the second factor.
 Applying Theorem~\ref{thm:generators and relations} to $Z'$ and $\varphi'$, we have
$$ \varphi'_*[\pi'^{-1}(0)] -\varphi'_*[\pi'^{-1}(\infty)] = 0.$$
The right hand side is zero because of  $\dim Z' = \dim \Omega_{w,h}+1$.

Now $D_0=\{[\dot{s_i},x]\mid x \in \overline{\Omega_{w}^{\circ} \cap s_i^{-1}\Hess(S,h)} \}$ is a unique irreducible component of ${\pi'}^{-1}(0)$ of maximum  dimension, and    $D_{\infty}=\{[ \dot{e} ,x]\mid x \in \overline{\Omega_{w}^{\circ} \cap \Hess(S,h)} \}$ is a unique irreducible component of ${\pi'}^{-1}(\infty)$ of maximum dimension. Thus
 the  divisor class   $[\mathrm{div}_{Z'}(\pi')] =[\pi'^{-1}(0)]-[\pi'^{-1}(\infty)]$  is given by $[D_{0}] - [D_{\infty}]$.
By Proposition~\ref{prop: the induced action on chow ring},
    $ \varphi'_*[D_0]  = [\overline{(s _{i}\Omega_{w }^{\circ} )\cap \Hess(S,h)}]  =s_i \cdot [\overline{\Omega_w ^{\circ} \cap \Hess(S,h)}]$, and we have
$$s_{i}\cdot \sigma_{w,h} - \sigma_{w,h} =0. $$
 This completes the proof of Proposition~\ref{prop: edge remaining 2}(1).

\smallskip

For the second statement, assume   that $w \rightarrow s_{i}w$, that is, the edge $(w \rightarrow s_{i}w)$ is contained in the edge set of the GKM graph of $\Hess(S,h)$. Then
$\ell_h(w) = \ell_h(s_{i}w)+1$ by Lemma~\ref{lem:si and Gwh}(2), and thus we have $ \dim \Omega_{s_{i}w,h} = \dim \Omega_{w,h} +1$.
We will use the same notations as in Subsection~\ref{sect: Proof of Proposition 4.7}.

Recall that  $Z'$ is the irreducible component of $\varphi^{-1}(\Omega_{s_iw,h})$ containing $\varphi^{-1}(\Omega_{s_iw,h}^{\circ})$, %
and  $\varphi'$  ($\pi'$, respectively) is the restriction of $\varphi$    ($\pi$, respectively) to $Z'$:
\begin{eqnarray*}
\xymatrix{
Z'   \ar[d]^{\pi'} \ar[r]^{ \varphi'}  & \Omega_{s_{i}w,h} \\
SL_{i} /B_i^- &  }. 
\end{eqnarray*}
Note that $\dim Z' = \dim \Omega_{s_iw,h}$. Applying Theorem~\ref{thm:generators and relations} to $Z'$ and $\varphi'$, we have
$$ \varphi'_*[\pi'^{-1}(0)] -\varphi'_*[\pi'^{-1}(\infty)] = (t_{i+1} -t_i)\sigma_{s_{i} w, h}.$$ It remains to describe the  divisors $ \varphi'_*[\pi'^{-1}(0)] $ and $\varphi'_*[\pi'^{-1}(\infty)] $ in $\Omega_{s_iw,h}$, that is, irreducible components of $\pi'^{-1}(0)$ of maximum  dimension, and irreducible components of $\pi'^{-1}(\infty)$ of maximum dimension, and their images under $\varphi'$.

 The fixed point set of $Z'$ is the union $\{[\dot{s}_i,u]\mid  u\in \mathcal A_{0}'\} \cup \{[\dot{e}, u]\mid u \in \mathcal A_{\infty}'\}$,
where
$$\mathcal A'_0 =\{ u \in \mathcal A_0 \mid [\dot{s}_i, u] \in Z'\} \text{ and }\mathcal A'_{\infty} =\{ u \in \mathcal A_{\infty} \mid [\dot{e},u] \in Z'\} .$$
By Lemma~\ref{lem:symmetric}, $\mathcal A_0'$ is equal to $\mathcal A_{\infty}'$. %
By Lemma~\ref{lem: irreducible component of maximal dimension II},
any irreducible component of maximum dimension of ${\pi'}^{-1}(0)$ other than $D_0 $ is of the form $\{[\dot{s}_i,\xi] \mid \xi \in \overline{\Omega_{u}^{\circ} \cap s_i^{-1} \Omega_{s_iw,h}} \}$ for some $u \in \mathcal A_0'$ with $u \dasharrow s_iu$.
    The image of $\{[\dot{s}_i,\xi] \mid \xi \in \overline{\Omega_{u}^{\circ} \cap s_i^{-1} \Omega_{s_iw,h}} \}$ by $\varphi'$ is $\overline{(s_i\Omega_u^{\circ}) \cap \Omega_{s_iw,h}} = \overline{ \Omega_{s_iu}^{\circ} \cap \Omega_{s_iw,h}}$, and  $\Omega_{s_iu}^{\circ} \cap \Omega_{s_iw,h} $ has the same dimension as $\Omega_{ u}^{\circ} \cap \Omega_{s_iw,h} $.
 By Lemma~\ref{lem: irreducible component of maximal dimension II},
  any irreducible component of maximum dimension of ${\pi'}^{-1}(\infty)$ other than $D_{\infty} $ is of the form $\{[\dot{e} ,\xi] \mid \xi \in \overline{\Omega_{u}^{\circ} \cap  \Omega_{s_iw,h}} \}$ for some $u \in \mathcal A_{\infty}'$ with $u \dasharrow s_iu$.
Thus
 the  divisor class   $[\mathrm{div}_{Z'}(\pi')] =[\pi'^{-1}(0)]-[\pi'^{-1}(\infty)]$  is given by
 $$\left( [D_{0}] + \sum_u \left[\{[\dot{s}_i,\xi] \mid \xi \in \overline{\Omega_{u}^{\circ} \cap s_i^{-1} \Omega_{s_iw,h}} \} \right]\right)  - \left( [D_{\infty}] + \sum_u \left[ \{[\dot{e} ,\xi] \mid \xi \in \overline{\Omega_{u}^{\circ} \cap  \Omega_{s_iw,h}\}} \right] \right) $$
 where the sum is over $u \in \mathcal A_0' =\mathcal A_{\infty}'$ such that $u \dasharrow s_iu$ and $\dim (\Omega_u^{\circ} \cap \Omega_{s_iw,h}) = \dim \Omega_{w,h}$.

Put
$$ \mathcal A_{s_i,w} \coloneq \{u \in \Omega_{s_{i}w,h}^T \cap \Omega_w^T  \mid  u\dasharrow s_iu   \text{  and }  \dim (\Omega_{ u}^{\circ} \cap \Omega_{s_iw,h} )   =\dim \Omega_{w,h}   \}. $$
Then the set  $\{u \in \mathcal A'_{\infty} \mid  u \dasharrow s_iu, \dim (\Omega_u^{\circ} \cap \Omega_{s_iw,h}) = \dim \Omega_{w,h}\}$ is contained in $\mathcal A_{s_i,w}$. We claim that  $\mathcal A_{s_i,w}$ is contained in $\mathcal A'_{\infty}$.
By definition, $\Omega_{s_iw,h}^T $ is the union $\mathcal A_{\infty}' \cup s_i\mathcal A_{0}'$.
Thus  we have $$\Omega_{s_iw,h}^T \cap \Omega_w^T = \mathcal A_{\infty}' \cup (s_i\mathcal A_{0}' \cap \Omega_{w}^T) .$$
If  $u\dasharrow s_iu$,       $  \dim (\Omega_{ u}^{\circ} \cap \Omega_{s_iw,h} )   =\dim \Omega_{w,h} $, and   $u \in s_i\mathcal A_{0}' \cap \Omega_{w}^T$, then we get $u >s_iu  $ and $s_iu \in \mathcal A_0'$. By   Lemma~\ref{lem: exceptional curve is contained in Z'} (1), $\varphi^{-1}(u)$ is contained in $Z'$. In particular, $u$ is contained in $\mathcal A_{\infty}'$.
As a result,  $\mathcal A_{s_i,w}$ is contained in $\mathcal A'_{\infty}$.

Consequently, $\mathcal A_{s_i,w}$ equals $\{u \in \mathcal A_{\infty}'\mid u\dasharrow s_iu   \text{  and }  \dim (\Omega_{ u}^{\circ} \cap \Omega_{s_iw,h} )   =\dim \Omega_{w,h}   \}  $, which again equals   $\{u \in \mathcal A_{0}'\mid u\dasharrow s_iu   \text{  and }  \dim (\Omega_{ u}^{\circ} \cap \Omega_{s_iw,h} )   =\dim \Omega_{w,h}   \}$.

For $u \in  \mathcal A_{s_i,w}$, define $\mathcal T_u$  and $\mathcal T_{s_iu}$ by the closure of $ \Omega_u^{\circ} \cap \Omega_{s_{i}w,h}$  and $\Omega_{s_iu}^{\circ} \cap \Omega_{s_iw,h}$,  and let $\tau_u$ and $\tau_{s_iu}$ denote the equivariant class  in $H^*_T(\Hess(S,h))$ induced by them.
Then
 we get
$$\left( s_i \cdot \sigma_{w,h} + \sum_{u \in \mathcal A_{s_i,w}  } \tau_{s_iu} \right) - \left(\sigma_{w,h} + \sum_{u   \in \mathcal A_{s_i,w} } \tau_u \right)= (t_{i+1 }- t_{i})\sigma_{s_i w, h} \text{ in } H^{\ast}_T(\Hess(S,h)). $$
The intersection $\mathcal A_{s_i,w} \cap s_i \mathcal A_{s_i,w}$ is empty by the definition.
This completes the proof of Proposition~\ref{prop: edge remaining 2}(2).
\end{proof}

\begin{remark}
From the proof of Proposition~\ref{prop: edge remaining 2}(2), we see that when $h=(n,n,\dots, n)$, $\mathcal A_{s_i,w}$ is empty for any $w$ and $s_i$,  and we get
$$ s_{i} \cdot \sigma_{w,h}  -  \sigma_{w,h}  =(t_{i+1}-t_i)\sigma_{s_iw,h},$$
which is the result of Proposition~\ref{prop:si action in full flag}(2).
\end{remark}

\section{Permutohedral varieties}\label{sec:cho}

In this section, we consider the Hessenberg variety associated with the function $h$ defined as $h(i)=i+1$ for $i=1,\dots, n-1$ and $h(n)=n$. This variety, denoted by $\mathcal H_n$, is called the \emph{permutohedral variety} and is known to be the toric variety corresponding to the fans determined by the chambers of the root system of type $A_{n-1}$. We provide  explicit descriptions of the classes $\sigma_{w, h}$ and the symmetric group actions for each  class. It is known from  \cite{SW} that $H_T^{2k}(\mathcal H_n)$ is a direct sum of permutation modules of $\frak S_n$ for each $k$. The explicit descriptions of basis classes and the symmetric group action enable us to construct permutation modules that constitute the $\frak S_n$-module $H_T^{2k}(\mathcal H_n)$ for all $k$.

In this section, we only consider the permutohedral varieties, and we drop $h$ in $A_{w, h}$, $\sigma_{w,h}$, $J_{w,h,j}$, $\Ghw$ and others for convenience.

\subsection{Bases of $H_T^{2k}(\mathcal H_n)$}
For a permutation $w\in \frak S_n$, we say that $i\in[n-1]$ is a \emph{descent} of $w$ if $w(i)>w(i+1)$. Let $D(w)$ be the set of descents of $w$, and let $\mathrm{des}(w)=|D(w)|$ be the number of descents of $w$. Then the following lemma follows from the definition of the GKM graphs.

\begin{lemma}\label{lem:edges}
	Let $w\in \frak S_n$ be a vertex in the GKM graph of $\mathcal H_n$.
	\begin{enumerate}
		\item The number of all edges adjacent to $w$ is $n-1$.
		\item The number of outgoing(downward) edges from $w$ is the number of descent of $w$; that is, $\ell_h(w)=\mathrm{des}(w)$.
	\end{enumerate}
\end{lemma}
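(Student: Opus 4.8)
The plan is to read both statements directly off the combinatorial description of the GKM graph $\Gh$ of $\mathcal H_n$ given in Example~\ref{example_Hess_GKM}, using the fact that for the permutohedral variety the Hessenberg function is $h(i)=i+1$ for $i<n$ and $h(n)=n$.

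For part (1), recall that the edges of $\Gh$ issuing from $w$ are precisely the pairs $(w\to ws_{j,i})$ with $j<i\le h(j)$. Since $h(j)=j+1$ for $j\le n-1$ and $h(n)=n$, the only pairs $(j,i)$ with $1\le j<i\le n$ satisfying $i\le h(j)$ are $(j,j+1)$ for $j=1,\dots,n-1$; that is, the transposition must be a simple reflection $s_j$. Hence the edges adjacent to $w$ are exactly $(w\to ws_1),\dots,(w\to ws_{n-1})$, all distinct, so there are $n-1$ of them. I would phrase this as: the degree of every vertex in the GKM graph of $\mathcal H_n$ is $n-1$, which also matches $2\dim_{\mathbb C}\mathcal H_n=2(n-1)$ if one counts each unoriented edge twice, consistent with Proposition~\ref{prop_Betti_numbers} since $\sum_{i=1}^n(h(i)-i)=n-1$.

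For part (2), I would use the recipe $\ell_h(w)=\#\{(j,i)\mid 1\le j<i\le h(j),\ w(j)>w(i)\}$ from~\eqref{eq_def_of_lhw}. As just observed, the only admissible pairs $(j,i)$ are $(j,j+1)$ with $j\in[n-1]$, so
\[
\ell_h(w)=\#\{j\in[n-1]\mid w(j)>w(j+1)\}=|D(w)|=\mathrm{des}(w).
\]
Since the downward (outgoing, with respect to the convention $w\to v$ meaning $\ell(w)>\ell(v)$) edges from $w$ are counted by $\dim_{\mathbb C}X_{w,h}^{\circ}=\ell_h(w)$ via~\eqref{eq_dim_codim_X_Omega}, equivalently by $\ell_h(w)$ directly, this is exactly the number of descents of $w$. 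Concretely, the outgoing edge along $s_j$ is present iff $w(j)>w(j+1)$, i.e.\ iff $j\in D(w)$, which simultaneously proves the count and identifies which edges are the outgoing ones.

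There is no real obstacle here: the lemma is a direct specialization of Example~\ref{example_Hess_GKM} and Proposition~\ref{prop_Betti_numbers} to $h=(2,3,\dots,n,n)$. The only point requiring a line of care is making the bookkeeping between ``outgoing edge along $s_j$'' and ``$j$ is a descent of $w$'' unambiguous, since one must fix whether ``outgoing'' refers to the orientation in $\Gh^o$ (length-decreasing) and then check that $\ell(ws_j)<\ell(w)$ iff $w(j)>w(j+1)$, which is the standard length formula for simple reflections in $\mathfrak S_n$. With that convention pinned down, both assertions follow immediately.
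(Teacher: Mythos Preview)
Your proof is correct and matches the paper's approach: the paper simply asserts that the lemma ``follows from the definition of the GKM graphs,'' and you have written out exactly that verification by specializing Example~\ref{example_Hess_GKM} and~\eqref{eq_def_of_lhw} to $h=(2,3,\dots,n,n)$.
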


For a set partition $I_1, \dots, I_l$ of $[n]$, the \textit{Young subgroup} $\mathfrak{S}_{I_1} \times \cdots \times \mathfrak{S}_{I_l}$ is defined to be the stabilizer of $I_1,\dots,I_l$ in the symmetric group $\mathfrak{S}_n$. For a nonempty subset $I$ of $[n]$, we denote the Young subgroup $\mathfrak{S}_I \times \mathfrak{S}_{[n]\setminus I}$ by $\frak S_I \subseteq \frak S_n$ for notational simplicity.

For two integers $a < b$, we denote by $[a,b]$ the set $\{ m \in \Z \mid a \leq m \leq b \}$. Also, for notational simplicity, for $w \in \mathfrak{S}_n$ and $1 \leq a < b \leq n$, we set
\[
w[a,b] \colonequals \{ w(m) \mid a \leq m \leq b\}.
\]
We will provide explicit descriptions of the support $A_{w}$ of $\sigma_w$ and the value $\sigma_w(v)$ for each $v\in A_{w}$.

\begin{proposition}\label{prop:Perm_A} For a permutation $w\in \frak{S}_n$, let $D(w)=\{d_1<\cdots < d_k\}\subseteq [n-1]$ be the set of descents of $w$, and let $d_0=0$, $d_{k+1}=n$. If we let $ J_s=w[d_{s-1}+1,d_s]$ for $s=1, 2, \dots, k+1$, then
\[
A_{w}=\left(\frak S_{J_1}\times \cdots \times \frak S_{J_{k+1}} \right) w=\{uw\mid u\in\frak S_{J_1}\times \cdots \times \frak S_{J_{k+1}}  \}.
\]
\end{proposition}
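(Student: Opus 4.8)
The strategy is to apply Theorem~\ref{thm_support_of_Owh}, which identifies $A_{w} = A_{w,h}$ with the set of $u \in \mathfrak{S}_n$ satisfying $u^{(j)} \in \{ w \cdot \underline{i} \mid \underline{i} \in J_{w,h,j}\}$ for all $j$, and to unwind what the reachability condition defining $J_{w,h,j}$ becomes for the specific Hessenberg function $h(i) = i+1$. First I would describe the graph $G_{w,h}$ in this case: an edge $j \to i$ exists precisely when $i = j+1$ and $w(j) < w(j+1)$, i.e.\ when $j$ is \emph{not} a descent of $w$. Hence $G_{w,h}$ is a disjoint union of directed paths, whose vertex sets are exactly the ``descent blocks'' $[d_{s-1}+1, d_s]$ for $s = 1,\dots,k+1$; within each block the path runs $d_{s-1}+1 \to d_{s-1}+2 \to \cdots \to d_s$, and there are no edges between distinct blocks. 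In particular, a vertex $i$ is reachable from $j$ if and only if $j \le i$ and $j, i$ lie in the same block.

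The key combinatorial step is then to determine, for each $j$, the set $\{ \{i_1,\dots,i_j\} \text{ reachable from } [j] \}$. I would argue that $[j]$ decomposes across the blocks as $[j] = \bigsqcup_s ([j] \cap [d_{s-1}+1, d_s])$, and since reachability respects blocks, a $j$-subset $A$ is reachable from $[j]$ if and only if, for each block $B_s = [d_{s-1}+1, d_s]$, the number of elements of $A$ in $B_s$ equals the number of elements of $[j]$ in $B_s$, and moreover those elements of $A$ in $B_s$ are all $\ge$ the corresponding elements of $[j]$ in the path order (which, because a block is a single path with consecutive vertices, just says: if $[j]$ contains the first $r$ vertices of $B_s$, then $A$ contains $r$ vertices of $B_s$ lying in the ``tail'' $[d_{s-1}+1+?, d_s]$ — but since the first $r$ vertices are $d_{s-1}+1, \dots, d_{s-1}+r$, reachability of $r$ vertices from these $r$ vertices in a path is automatic for \emph{any} $r$-subset of $B_s$ that... ). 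Here one must be slightly careful: reachability of a set requires a matching $\sigma$, and in a single path the $r$ smallest available targets always work, so in fact \emph{any} $r$-subset of $B_s$ is reachable from the bottom $r$ vertices of $B_s$. Thus $J_{w,h,j}$ consists exactly of those $\underline{i}$ whose intersection with each block $B_s$ has the same cardinality as $[j] \cap B_s$. Translating through $u^{(j)} \in \{ w \cdot \underline{i}\}$: since $w$ maps block $B_s$ bijectively onto $J_s = w[d_{s-1}+1,d_s]$, the condition on $u$ becomes $|u[1,j] \cap J_s| = |B_s \cap [j]|$ for every $s$ and every $j$.

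Finally I would show this family of cardinality conditions (over all $j$) is equivalent to $u \in (\mathfrak{S}_{J_1} \times \cdots \times \mathfrak{S}_{J_{k+1}}) w$. The inclusion $\supseteq$ is immediate: if $u = vw$ with $v$ permuting each $J_s$ internally, then $u[1,j]$ and $w[1,j]$ have the same intersection with each $J_s$, and $w[1,j] \cap J_s = w([1,j] \cap B_s)$ has cardinality $|B_s \cap [j]|$. For $\subseteq$, given $u$ satisfying all the cardinality conditions, one shows $w^{-1}$ maps each $J_s$-portion of $u$'s one-line notation into $B_s$ in a way that differs from $w^{-1} u$... more cleanly: the conditions say that for each $s$, the positions $j$ at which $u(j) \in J_s$ are exactly the positions at which $w(j) \in J_s$, namely $B_s$; hence $u(B_s) = J_s$ as sets for each $s$, so $u w^{-1}$ stabilizes each $J_s$ setwise, i.e.\ $u w^{-1} \in \mathfrak{S}_{J_1} \times \cdots \times \mathfrak{S}_{J_{k+1}}$, giving $u \in (\mathfrak{S}_{J_1}\times\cdots\times\mathfrak{S}_{J_{k+1}})w$. (To extract ``$u(B_s) = J_s$ as sets'' from the cardinality conditions, note $|u[1,j]\cap J_s| = |w[1,j]\cap J_s|$ for all $j$ forces the increasing enumeration of $\{ j : u(j) \in J_s\}$ to agree with that of $\{ j : w(j) \in J_s\} = B_s$.)

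\textbf{Main obstacle.} The delicate point is the precise handling of the set-reachability condition with its matching permutation $\sigma \in \mathfrak{S}_k$ inside a disjoint union of paths: one must verify that reachability of a $j$-subset from $[j]$ genuinely localizes to ``same count in each block'' with no residual order constraint, and in particular that within a single block every $r$-subset is reachable from the bottom $r$ vertices. This requires observing that in a directed path $v_1 \to v_2 \to \cdots \to v_m$, the vertex $v_b$ is reachable from $v_a$ iff $a \le b$, and then checking that a greedy/Hall-type matching argument produces the required $\sigma$; once that is nailed down, the rest is bookkeeping translating between positions, blocks $B_s$, and their images $J_s$ under $w$.
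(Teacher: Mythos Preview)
Your proposal is correct and takes essentially the same approach as the paper: both describe $G_{w,h}$ (for $h(i)=i+1$) as the disjoint union of directed paths on the descent blocks $[d_{s-1}+1,d_s]$, then unwind the reachability condition defining $J_{w,h,j}$ and $A_{w,h}$ to obtain the Young-subgroup orbit description. In fact you supply more detail than the paper's rather compressed proof—your verification that set-reachability localizes to ``same count in each block'' and your explicit translation $u(B_s)=J_s \Leftrightarrow uw^{-1}\in\mathfrak S_{J_1}\times\cdots\times\mathfrak S_{J_{k+1}}$ fill in exactly the steps the paper leaves to the reader.
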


\begin{proof}
	Because we are considering the Hessenberg function $h(i) = i+1$ for all $i$, the graph $G_w$ is a disjoint union of path graphs and it is enough to consider pairs $(j,j+1)$ when we find the graph $G_w$. In addition, an edge $j \to (j+1)$ is in the graph $G_w$ if and only if $w(j) < w(j+1)$. Therefore, we have that $(j \to j+1)$ is an edge in $G_w$ if and only if  $j \notin D(w) = \{d_1 < \dots < d_k\}$
	and we obtain	
	\[
	G_w = \bigsqcup_{s \in \{0,1,\dots,k\}} \mathcal{P}(d_{s}+1,\dots, d_{s+1})\,,
	\]
	where $\mathcal{P}(v_1,\dots,v_m)$ denote the path graph with vertices $v_1,\dots,v_m$ and edges $(v_p,v_{p+1})$ for $p \in [m-1]$.
	For an index $j$ satisfying $d_{s} < j \leq d_{s+1}$, the set of reachable vertices from $j$ is $[d_{s}+1,d_{s+1}\}$. For any index $j$ satisfying $d_s < j \leq d_{s+1}$, we obtain
	\[
 \{w(i) \mid i \in [d_s+1,d_{s+1}] \} \cup w(j) =
w[d_{s}+1,d_{s+1}] = J_{s+1}.
	\]
	By the definition of $J_{w,j}$ and $A_{w}$, we obtain
	$A_{w} = \left(\mathfrak{S}_{J_1} \times \cdots \times \mathfrak{S}_{J_{k+1}}\right) w$.
\end{proof}

\begin{remark}
	It is well known that the GKM graph of $\mathcal H_n$ is the one skeleton of the \emph{permutohedron} of type $A_{n-1}$, that is the convex hull of the permutation vectors $\{(w(1), \dots, w(n)) \mid w\in \frak{S}_n \}$ in $\mathbb R^n$. Proposition~\ref{prop:Perm_A} can be proved using the arguments on the face structure of the permutahedron; as shown in \cite{PabiniakSabatini}.
\end{remark}

We now provide an explicit description of each class $ \sigma_w$ for $w\in  \frak{S}_n$.

\begin{theorem}\label{thm:Perm_class} For a permutation $w\in \frak{S}_n$, let $D(w)=\{d_1<\cdots < d_k\}\subseteq [n-1]$ be the set of descents of $w$ and $v\in A_{w}$. Then
\[
\sigma_w (v)=\prod_{s=1}^k (t_{v(d_s+1)}-t_{v(d_s)}).
\]
\end{theorem}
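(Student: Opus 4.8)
The plan is to realize $\sigma_w$ as the equivariant class of the \emph{smooth} $T$-invariant subvariety $\Omega_{w,h}=\overline{\Omega_{w,h}^{\circ}}$ and then to read off its restrictions directly from the GKM graph of $\mathcal H_n$ using Proposition~\ref{cor:regular class}.

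The first and main step is to prove that $\Omega_{w,h}$ is smooth. By Corollary~\ref{cor_xij_description_using_the_graph}, the minus cell $\Omega_{w,h}^{\circ}$ is, $T$-equivariantly, the affine space with coordinates the free variables $x_{i,j}$ indexed by the edges $(j\to i)$ of $G_w$, and $T$ acts on $x_{i,j}$ through the character $t_{w(i)}-t_{w(j)}$. For the permutohedral variety the edges of $G_w$ are exactly the pairs $(j\to j+1)$ with $j\notin D(w)$ (as noted in the proof of Proposition~\ref{prop:Perm_A}), so the coordinate characters are a subcollection of $\{\,t_{w(j+1)}-t_{w(j)} : 1\le j\le n-1\,\}$, which is a $\Z$-basis of the character lattice of the effective quotient torus acting on $\mathcal H_n$. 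Being a subset of a basis, these characters are linearly independent, so $T$ acts on the affine space $\Omega_{w,h}^{\circ}$ with a dense orbit. Hence $\Omega_{w,h}$ is the closure of a single $T$-orbit in the smooth projective toric variety $\mathcal H_n$, i.e.\ a torus-invariant toric subvariety, and is therefore smooth.

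With this in place, $\Omega_{w,h}$ is a smooth $T$-invariant subvariety of the GKM space $\mathcal H_n$ and, by Theorem~\ref{thm_support_of_Owh}, $\supp([\Omega_{w,h}])=\supp(\sigma_w)=A_w$. Proposition~\ref{cor:regular class} then gives, for every $v\in A_w$,
\[
\sigma_w(v)=[\Omega_{w,h}](v)=\prod_{\substack{(v\to u)\in E \\ u\notin A_w}}\alpha(v\to u),
\]
so it remains to identify the edges leaving $v$ that end outside $A_w$. The neighbours of $v$ in the GKM graph of $\mathcal H_n$ are the permutations $v s_j$ for $j\in[n-1]$, with $\alpha(v\to v s_j)=t_{v(j+1)}-t_{v(j)}$. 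Using the description $A_w=\{\,u : u[d_{s-1}+1,d_s]=J_s \text{ for all } s\,\}$ from Proposition~\ref{prop:Perm_A}, one checks that for $v\in A_w$ one has $v s_j\in A_w$ if and only if $j\notin D(w)$: when $d_{s-1}<j<d_s$ the transposition of positions $j$ and $j+1$ preserves each block, whereas when $j=d_s$ the $s$-th block of $v s_j$ becomes $(J_s\setminus\{v(d_s)\})\cup\{v(d_s+1)\}$, which differs from $J_s$ since $v(d_s+1)\in J_{s+1}$ is disjoint from $J_s$. Therefore the edges from $v$ to the complement of $A_w$ are precisely $v\to v s_{d_s}$ for $s=1,\dots,k$, and
\[
\sigma_w(v)=\prod_{s=1}^{k}\alpha(v\to v s_{d_s})=\prod_{s=1}^{k}\bigl(t_{v(d_s+1)}-t_{v(d_s)}\bigr),
\]
as claimed.

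The step I expect to require the most care is the smoothness of $\Omega_{w,h}$; everything after it is elementary bookkeeping with the permutohedral GKM graph. As a sanity check, at $v=w$ the desired formula also follows at once from Proposition~\ref{prop_property_of_swh}(3), since the downward edges of $\Gh^o$ at $w$ are exactly $w\to w s_{d_s}$, $s=1,\dots,k$.
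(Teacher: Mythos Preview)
Your argument is correct and follows essentially the same route as the paper: establish that $\Omega_{w,h}$ is smooth, invoke Proposition~\ref{cor:regular class}, and then identify the edges of the GKM graph at $v$ that leave $A_w$. The paper's proof is terser --- it simply observes that the induced GKM subgraph on $A_w$ is $(n-1-k)$-regular (with $n-1-k=\dim\Omega_{w,h}$) and applies Proposition~\ref{cor:regular class} directly, leaving the smoothness implicit in this regularity. Your explicit justification, that the weights on the minus cell form part of a $\Z$-basis of the character lattice of the effective torus so that $\Omega_{w,h}$ is a torus-orbit closure in the smooth toric variety $\mathcal H_n$, is a welcome clarification of exactly this point.
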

\begin{proof} The induced subgraph $\Gamma_{w}$ of the GKM graph $\Gamma$ of $\mathcal H_n$, with the vertex set $A_{w}$ is $(n-1-k)$-regular. Hence, by Proposition~\ref{cor:regular class}, $\sigma_w$ is uniquely determined by the condition that the value $\sigma_w(v)$ is the product of $\alpha(v \rightarrow u)$ for $v \rightarrow u$ is an edge in $\Gamma$ but not in $\Gamma_{w}$.
\end{proof}

\begin{example} Let $w=25\,347\,168$ be a permutation in $\frak{S}_8$. Then $D(w)=\{2, 5\}$ and
\[
A_{w}=\{v\in \frak S_8 \mid \{v(1), v(2)\}={\{2, 5\}}, \{v(3), v(4), v(5)\}={\{3, 4, 7\}}, \{v(6), v(7), v(8)\}=\{1, 6, 8\}\}
\]
has $2\times 6 \times 6=72$ elements. For example, $v=52\,437\,681$ is an element of  $A_{w}$.
	Moreover, because of Theorem~\ref{thm:Perm_class} we obtain  $\sigma_w (w)=(t_3-t_5)(t_1-t_7)$ and $\sigma_w (v)=(t_4-t_2)(t_6-t_7)$.
\end{example}

From Lemma~\ref{lem:edges} and Theorem~\ref{thm:Perm_class} we obtain a nice basis for the $(2k)${th} cohomology space of the permutohedral variety $\mathcal H_n$.

\begin{proposition}\label{prop:basis} For each $k=0, 1, \dots, n-1$, $B_k\colonequals \{\sigma_w \mid w\in \mathfrak{S}_n, \, \mathrm{des}(w)=k \}$ forms a basis of the $(2k)$th cohomology space $H^{2k}(\mathcal H_n)$; hence the $(2k)$th Betti number of $\mathcal H_n$ is the number of permutations in $\mathfrak S_n$ with $k$ descents.
\end{proposition}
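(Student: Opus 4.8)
The plan is to show that $B_k$ is a basis of $H^{2k}(\mathcal H_n)$ by counting and linear independence. First I would observe that $\dim_{\mathbb C} H^{2k}(\mathcal H_n)$ equals the number of permutations $w \in \mathfrak S_n$ with $\ell_h(w) = k$: this is exactly the Poincaré polynomial computation in Proposition~\ref{prop_Betti_numbers}, and by Lemma~\ref{lem:edges}(2) we have $\ell_h(w) = \mathrm{des}(w)$ for the permutohedral Hessenberg function. Hence $|B_k|$ already matches the dimension of $H^{2k}(\mathcal H_n)$, so it suffices to prove that the classes $\{\sigma_w : \mathrm{des}(w) = k\}$ are linearly independent (equivalently, spanning) in $H^{2k}(\mathcal H_n)$.

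For linear independence, I would pass to the equivariant setting and use the triangularity built into the $\sigma_{w,h}$ basis. By Proposition~\ref{prop_swh_form_a_basis}, $\{\sigma_{w,h} : w \in \mathfrak S_n\}$ is a basis of $H^*_T(\mathcal H_n)$ over $H^*(BT) = \mathbb C[t_1,\dots,t_n]$, with $\sigma_{w,h} \in H^{2\ell_h(w)}_T(\mathcal H_n) = H^{2\,\mathrm{des}(w)}_T(\mathcal H_n)$. Taking the quotient map $H^*_T(\mathcal H_n) \to H^*(\mathcal H_n)$ (which is surjective since $\mathcal H_n$ is equivariantly formal) and recalling that $\sigma_{w,h}$ has polynomial entries of degree exactly $\ell_h(w)$ by Proposition~\ref{prop_property_of_swh}(2), the degree count forces the images of $\{\sigma_{w,h} : \mathrm{des}(w) = k\}$ to lie in $H^{2k}(\mathcal H_n)$, and the images of the whole collection $\{\sigma_{w,h} : w \in \mathfrak S_n\}$ to span $H^*(\mathcal H_n)$. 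Since the degrees are separated and the number of $w$ with $\mathrm{des}(w) = k$ equals $\dim H^{2k}(\mathcal H_n)$ for every $k$, a standard counting argument — the total number of classes equals $\sum_k \dim H^{2k}(\mathcal H_n) = \dim H^*(\mathcal H_n)$ and each degree piece has the right count — shows the images must be linearly independent in each degree. Therefore $B_k$ is a basis of $H^{2k}(\mathcal H_n)$.

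The last sentence, that the $2k$-th Betti number is the number of permutations in $\mathfrak S_n$ with $k$ descents, is then immediate: it is $|B_k| = \#\{w \in \mathfrak S_n : \mathrm{des}(w) = k\}$, which is the Eulerian number $A(n,k)$. Alternatively one reads it directly off Proposition~\ref{prop_Betti_numbers} together with Lemma~\ref{lem:edges}(2).

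The main obstacle is making the spanning/independence step fully rigorous rather than merely dimension-counting: one must be careful that the quotient map $H^*_T(\mathcal H_n) \to H^*(\mathcal H_n)$ sends the $H^*(BT)$-basis $\{\sigma_{w,h}\}$ to a $\mathbb C$-spanning set. This follows because $\mathcal H_n$ is equivariantly formal (so $H^*_T(\mathcal H_n) \cong H^*(\mathcal H_n) \otimes H^*(BT)$ as $H^*(BT)$-modules) and the non-equivariant classes are the images of any module basis lying in the correct cohomological degrees; then the Betti-number equality from Proposition~\ref{prop_Betti_numbers} upgrades "spanning set of the right total size, graded compatibly" to "basis in each degree." Everything else — the descent count, the value formula from Theorem~\ref{thm:Perm_class} — is bookkeeping already established above.
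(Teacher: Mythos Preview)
Your proposal is correct and follows essentially the same line as the paper, which simply states that the proposition follows from Lemma~\ref{lem:edges} and Theorem~\ref{thm:Perm_class} without further elaboration. The only minor difference is that the paper points to the explicit value formula (Theorem~\ref{thm:Perm_class}) as the ingredient making linear independence transparent, whereas you route through Proposition~\ref{prop_swh_form_a_basis} and equivariant formality; both arguments are standard and amount to the same triangularity-plus-dimension-count reasoning.
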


\subsection{Symmetric group action on $H^*(\mathcal H_n)$}
In this subsection, we will consider the symmetric group action on the equivariant cohomology ring $H^{\ast}_T(\mathcal H_n)$, and then study that on the singular cohomology ring $H^{\ast}(\mathcal H_n)$.

We denote by
\[
B_k\colonequals\{\sigma_w \mid w\in \mathfrak{S}_n, \, \mathrm{des}(w)=k \},
\]
a basis of the cohomology space $H^{2k}_T(\mathcal H_n)$.
An $l$-\emph{composition} of $n$ is a sequence of positive integers $\mathbf a=(a_1, \dots, a_{l})$  satisfying $\sum_{i} a_i = n$. For a composition $\mathbf a=(a_1, \dots, a_{l})$ of $n$, we let $S(\mathbf a)=\{ a_1, a_1+a_2, \dots, a_1+\cdots +a_{l-1}\}$ be a subset of $[n-1]$ and let
\[
\mathfrak{S}_n(\mathbf a) \colonequals \{ w \in \mathfrak S_n \mid D(w)=S(\mathbf a)\}, \quad
B_k(\mathbf a)\colonequals\{ \sigma_w\mid  w\in\mathfrak{S}_n(\mathbf a)\}.
\]
Then we obtain
\[
B_k= \bigsqcup_{\mathbf a}\,\, B_k(\mathbf a),
\]
where the union is over the $(k+1)$-compositions of $n$.

\begin{lemma}\label{lem:disconnectedness of S_n(a)} For a $(k+1)$-composition $\mathbf a=(a_1, \dots, a_{k+1})$ of $n$, any pair of two distinct permutations in $\mathfrak{S}_n(\mathbf a)$ is not connected in the GKM graph of $\mathcal H_n$.
\end{lemma}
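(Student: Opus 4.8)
The plan is to read off the edges of the GKM graph of $\mathcal{H}_n$ and then track how right multiplication by a simple reflection affects the descent set. Recall from Example~\ref{example_Hess_GKM} that for a Hessenberg function $h$ the GKM graph $\Gamma_h$ has vertex set $\mathfrak{S}_n$ and an edge $(w \to w s_{j,i})$ precisely when $j < i \leq h(j)$. Since $\mathcal{H}_n$ corresponds to the Hessenberg function with $h(i) = i+1$ for $i < n$ and $h(n) = n$, the inequality $j < i \leq h(j)$ forces $i = j+1$. Hence every edge of the GKM graph of $\mathcal{H}_n$ has the form $(w \to w s_j)$ with $s_j = s_{j,j+1}$ a simple reflection, and in one-line notation $w s_j$ is obtained from $w$ by interchanging the entries in positions $j$ and $j+1$.

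First I would record the effect of this operation on descents. Writing $w' = w s_j$, we have $w'(j) = w(j+1)$, $w'(j+1) = w(j)$, and $w'(m) = w(m)$ for $m \notin \{j, j+1\}$. Consequently $j \in D(w')$ if and only if $w(j+1) > w(j)$, that is, if and only if $j \notin D(w)$. So the position $j$ always toggles its descent status, and in particular $D(w') \neq D(w)$. (The descent statuses at positions $j-1$ and $j+1$ may also change, but this is irrelevant for the argument.)

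Then the lemma follows at once: if two distinct permutations $w, w' \in \mathfrak{S}_n(\mathbf{a})$ were joined by an edge of the GKM graph of $\mathcal{H}_n$, then $w' = w s_j$ for some $j \in [n-1]$, and the previous step gives $D(w') \neq D(w) = S(\mathbf{a})$, contradicting $w' \in \mathfrak{S}_n(\mathbf{a})$. I do not expect any genuine obstacle here; the only step requiring a moment's care is the identification of the edge set of the GKM graph of $\mathcal{H}_n$ with the simple-reflection edges, which is immediate from the special form of $h$, and the rest is the elementary observation that transposing two adjacent positions flips the descent between them.
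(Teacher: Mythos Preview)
Your proof is correct and follows essentially the same approach as the paper: identify that the GKM edges of $\mathcal{H}_n$ are exactly $w \to ws_j$ for simple reflections $s_j$, and observe that right multiplication by $s_j$ toggles the descent at position $j$, so $D(ws_j)\neq D(w)$. Your version is in fact slightly more careful in noting that the descent status at positions $j\pm 1$ may also change (the paper writes $D(ws_i)=D(w)\cup\{i\}$ or $D(w)\setminus\{i\}$, which overlooks this), but both arguments rely only on the toggle at position $j$.
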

\begin{proof} Let $w \rightarrow w s_{i,j}$, $i<j$, be an edge in the GKM graph of $\mathcal H_n$ for $w \in \mathfrak{S}_n(\mathbf a)$, then $j$ must be $i+1$. However, $w s_i$ is not in $\mathfrak{S}_n(\mathbf a)$ since exchanging the $i$th and the $(i+1)$th element of $w$ will change the descent set; $D(w s_i)= D(w) \cup \{ i\}$ if $i \not \in  D(w)$, and $D(w s_i)= D(w) \setminus \{ i\}$ if $i \in  D(w)$.
\end{proof}

Lemma~\ref{lem:disconnectedness of S_n(a)} combined with Proposition~\ref{prop: edge deleted} proves the following proposition.

\begin{proposition}\label{prop:s_i action on S_n(a)} For a permutation $w\in \mathfrak{S}_n$ and a simple transposition $s_i\in \mathfrak{S}_n$, if $D(s_i w)=D(w)$, then $s_i \cdot \sigma_w=\sigma_{s_i w}$ as elements in $H^{\ast}_T(\mathcal H_n)$.
\end{proposition}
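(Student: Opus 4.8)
The plan is to reduce the statement directly to Proposition~\ref{prop: edge deleted}, which already handles the case when an edge of the Bruhat graph has been deleted in the GKM graph of $\Hess(S,h)$. The key point is that for the permutohedral variety $\mathcal{H}_n$, the hypothesis $D(s_iw) = D(w)$ forces the edge $w \to s_iw$ (equivalently $s_iw \to w$) to be \emph{absent} from the GKM graph, i.e. $w \dashrightarrow s_iw$ or $s_iw \dashrightarrow w$. First I would recall from Example~\ref{example_Hess_GKM} that for $h = (2,3,\dots,n,n)$ the edge between $w$ and $ws_i = s_{w(i),w(i+1)}w$ (after identifying via $s_{j,k}w = ws_{w^{-1}(j),w^{-1}(k)}$, so that this is the pair connected by the simple reflection $s_i$ acting on the left) lies in the GKM graph if and only if the corresponding positions $j<k$ with $\{j,k\}=\{w^{-1}(i),w^{-1}(i+1)\}$ satisfy $k \leq h(j) = j+1$, that is, $k = j+1$. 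Equivalently, $s_iw$ and $w$ are GKM-adjacent precisely when $w^{-1}(i)$ and $w^{-1}(i+1)$ are consecutive integers, which is exactly the statement of Lemma~\ref{lem:edges}(2) viewed through its proof, or can be extracted from Lemma~\ref{lem:disconnectedness of S_n(a)}.

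Next I would observe that the condition $D(s_iw) = D(w)$ is equivalent to the assertion that $w^{-1}(i)$ and $w^{-1}(i+1)$ are \emph{not} consecutive integers. Indeed, $s_iw$ is obtained from $w$ by swapping the values $i$ and $i+1$ wherever they occur; if these values sit in positions $p = w^{-1}(i)$ and $q = w^{-1}(i+1)$ with, say, $p < q$, then the descent structure of $w$ and $s_iw$ agrees at every position except possibly at $p-1, p, q-1, q$, and a short case check shows the descent set is unchanged if and only if $q > p+1$ (when $q = p+1$ the descent at position $p$ toggles, changing $D$). This is precisely the content already packaged in the proof of Lemma~\ref{lem:disconnectedness of S_n(a)}: exchanging the $i$th and $(i+1)$th entries of a permutation changes its descent set, so two permutations with the same descent set can never be GKM-adjacent in $\mathcal{H}_n$. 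Combining the two equivalences: $D(s_iw) = D(w)$ $\iff$ $w^{-1}(i), w^{-1}(i+1)$ are non-consecutive $\iff$ $w$ and $s_iw$ are not joined by an edge in the GKM graph of $\mathcal{H}_n$, i.e. $w \dashrightarrow s_iw$ or $s_iw \dashrightarrow w$ (whichever orientation has the larger length on the left).

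Finally, with the hypothesis of Proposition~\ref{prop: edge deleted} verified, that proposition immediately gives $s_i \cdot \sigma_{w,h} = \sigma_{s_iw,h}$ in $H^{\ast}_T(\Hess(S,h)) = H^{\ast}_T(\mathcal{H}_n)$, which is the desired conclusion $s_i \cdot \sigma_w = \sigma_{s_iw}$ once we drop $h$ from the notation as agreed in this section. I do not anticipate a genuine obstacle here; the only mildly delicate point is bookkeeping the left-versus-right convention for $s_i$ acting on $w$ and making sure the "non-consecutive positions" characterization of $D(s_iw)=D(w)$ is stated cleanly — but this is exactly the computation already done in Lemma~\ref{lem:disconnectedness of S_n(a)}, so I would simply cite it. The proof is therefore a two-line deduction: verify $D(s_iw) = D(w)$ implies the GKM-edge is absent (by Lemma~\ref{lem:disconnectedness of S_n(a)} or its proof), then invoke Proposition~\ref{prop: edge deleted}.
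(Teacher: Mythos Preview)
Your proposal is correct and follows exactly the paper's route: the paper's proof is the single sentence ``Lemma~\ref{lem:disconnectedness of S_n(a)} combined with Proposition~\ref{prop: edge deleted} proves the following proposition.'' Your extra case analysis showing that $D(s_iw)=D(w)$ forces $|w^{-1}(i)-w^{-1}(i+1)|\geq 2$ is fine but unnecessary, since the \emph{statement} of Lemma~\ref{lem:disconnectedness of S_n(a)} already gives that $w$ and $s_iw$ (having the same descent set) are not GKM-adjacent, which is all Proposition~\ref{prop: edge deleted} needs.
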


\begin{example} When $n=4$ and $k=1$, there are  three $(k+1)$-compositions of $n$; $\mathbf a=(1, 3), \mathbf b= (2, 2)$ and $\mathbf c=(3, 1)$.  The corresponding subsets of $\mathfrak{S}_4$ are as follows:
	\begin{align*}
	\mathfrak{S}_4(\mathbf a)&=\{4\textcolor{red}{|}123, 3\textcolor{red}{|}124, 2\textcolor{red}{|}134\},\\
	\mathfrak{S}_4(\mathbf b)&=\{ 34\textcolor{red}{|}12,  24\textcolor{red}{|}13,  23\textcolor{red}{|}14,  14\textcolor{red}{|}23, 13\textcolor{red}{|}24\},\\
	\mathfrak{S}_4(\mathbf c)&=\{234\textcolor{red}{|}1, 134\textcolor{red}{|}2, 124\textcolor{red}{|}3\}.
	\end{align*}
	We consider the permutations in $\mathfrak{S}_4(\mathbf b)$ in more detail. We set
\[
w_{34}\colonequals34\textcolor{red}{|}12,\quad w_{24}\colonequals24\textcolor{red}{|}13,\quad w_{23}\colonequals23\textcolor{red}{|}14,\quad w_{14}\colonequals14\textcolor{red}{|}23,\quad w_{13}\colonequals13\textcolor{red}{|}24.
\]
	Since  $s_2 w_{34}=w_{24}$, $s_1 w_{24}=w_{14}$,  $s_3 w_{24}=w_{23}$,  $s_3 w_{14}=w_{13}$, and $s_1 w_{23}=w_{13}$, by Proposition~\ref{prop:s_i action on S_n(a)}, we can obtain all $\sigma_w$ for $w\in \mathfrak{S}_4(\mathbf b)$ by applying sequences of $s_i$'s to $\sigma_{w_{34}}$. However, we note that $s_2 w_{13}\not\in \mathfrak{S}_4(\mathbf b)$, that is  $w_{13}$ and  $s_2 w_{13}$ have different descent sets, and we cannot apply Proposition~\ref{prop:s_i action on S_n(a)} in this case.
\end{example}

\begin{proposition}\label{prop:stabilizer of sigma_w_in_HT} For a permutation $w\in \frak{S}_n$, let $D(w)=\{d_1<\cdots < d_k\}\subseteq [n-1]$ be the set of descents of $w$, and let $d_0=0$, $d_{k+1}=n$.  The stabilizer subgroup of $\frak{S}_n$ for the class $\sigma_w \in H^{\ast}_T(\mathcal H_n)$ is  $\frak S_{J_1}\times \cdots \times \frak S_{J_{k+1}}$, where $J_s=w[d_{s-1}+1,d_s]$ for $s=1, 2, \dots, k+1$.
\end{proposition}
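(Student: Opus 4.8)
The plan is to prove the two inclusions separately, relying on the explicit description of the support $A_w$ in Proposition~\ref{prop:Perm_A} and of the values $\sigma_w(v)$ in Theorem~\ref{thm:Perm_class}. Write $W \colonequals \mathfrak S_{J_1}\times\cdots\times\mathfrak S_{J_{k+1}}$. Since $J_1,\dots,J_{k+1}$ is a set partition of $[n]$, $W$ is a Young subgroup of $\mathfrak S_n$, and Proposition~\ref{prop:Perm_A} states precisely that $A_w = Ww = \{\,uw \mid u\in W\,\}$, a right coset of $W$. Recall that the dot action is given by $(u\cdot p)(v) = \bigl(p(u^{-1}v)\bigr)(t_{u(1)},\dots,t_{u(n)})$: one evaluates $p$ at $u^{-1}v$ and then performs the substitution $t_i\mapsto t_{u(i)}$.

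First I would check that the stabilizer of $\sigma_w$ is contained in $W$. Because the substitution $t_i\mapsto t_{u(i)}$ is an invertible linear change of variables, it sends a nonzero polynomial to a nonzero polynomial; hence $\supp(u\cdot p) = \{\,uv \mid v\in\supp(p)\,\}$ for any class $p$ and any $u\in\mathfrak S_n$. In particular, if $u\cdot\sigma_w = \sigma_w$, then $uA_w = A_w$, that is, $uWw = Ww$; cancelling $w$ on the right gives $uW = W$, hence $u\in W$.

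For the reverse inclusion, fix $u\in W$ and $v\in\mathfrak S_n$. If $v\notin A_w$, then $u^{-1}v\notin A_w$ as well (since $u^{-1}W = W$), so $(u\cdot\sigma_w)(v) = \sigma_w(u^{-1}v) = 0 = \sigma_w(v)$. If $v\in A_w$, then $u^{-1}v\in A_w$ too, so Theorem~\ref{thm:Perm_class} applies to $u^{-1}v$ and gives
\[
\sigma_w(u^{-1}v) = \prod_{s=1}^{k}\bigl(t_{u^{-1}(v(d_s+1))} - t_{u^{-1}(v(d_s))}\bigr).
\]
Performing the substitution $t_i\mapsto t_{u(i)}$ replaces $t_{u^{-1}(v(d_s+1))}$ by $t_{v(d_s+1)}$ and $t_{u^{-1}(v(d_s))}$ by $t_{v(d_s)}$, so
\[
(u\cdot\sigma_w)(v) = \prod_{s=1}^{k}\bigl(t_{v(d_s+1)} - t_{v(d_s)}\bigr) = \sigma_w(v)
\]
by Theorem~\ref{thm:Perm_class} again. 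Hence $u\cdot\sigma_w = \sigma_w$, so $W$ lies in the stabilizer.

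Combining the two inclusions yields that the stabilizer of $\sigma_w$ in $\mathfrak S_n$ is exactly $W = \mathfrak S_{J_1}\times\cdots\times\mathfrak S_{J_{k+1}}$. There is no genuinely hard step in this argument: the only point worth a moment's attention is the observation that $u\cdot(-)$ permutes the support of a class by $u$, which is what legitimizes the support comparison used in the first inclusion, and this is immediate once one notes that the variable change $t_i\mapsto t_{u(i)}$ is invertible.
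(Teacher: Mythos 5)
Your proof is correct and follows essentially the same route as the paper's: both directions hinge on Proposition~\ref{prop:Perm_A} identifying $\supp(\sigma_w)$ with the right coset $(\mathfrak S_{J_1}\times\cdots\times\mathfrak S_{J_{k+1}})w$ (forcing the stabilizer into the Young subgroup), and on the explicit product formula from Theorem~\ref{thm:Perm_class} to verify invariance on points of the support. The paper's version simply folds the case $v\notin A_w$ into the support observation implicitly, whereas you spell it out; otherwise the arguments coincide.
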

\begin{proof}
	By Proposition~\ref{prop:Perm_A}, the support $A_w$ of $\sigma_w$ is given by
	\[
	A_w = (\frak S_{J_1}\times \cdots \times \frak S_{J_{k+1}}) w = \{ uw \mid u \in \frak S_{J_1}\times \cdots \times \frak S_{J_{k+1}}\}.
	\]
	This implies that for an element $v \notin \frak S_{J_1}\times \cdots \times \frak S_{J_{k+1}}$, the support is not invariant, that is, $v \cdot A_w \neq A_w$. Accordingly, the stabilizer subgroup for the class $\sigma_w$ is contained in $\frak S_{J_1}\times \cdots \times \frak S_{J_{k+1}}$. By contrast, for elements $u$ and $v$ in $\frak S_{J_1}\times \cdots \times \frak S_{J_{k+1}}$, we obtain the following
	\[
	\begin{split}
	(u \cdot \sigma_w)(vw)
	= u (\sigma_w (u^{-1}vw))
	&= u \left(\prod_{s=1}^k (t_{u^{-1}vw(d_s+1)} - t_{u^{-1}vw(d_s)})\right)\\
	&= \prod_{s=1}^k (t_{vw(d_s+1)} - t_{vw(d_s)}) = \sigma_w(vw).
	\end{split}
	\]
	Here, the second and the last equalities derive from Theorem~\ref{thm:Perm_class}. This proves the proposition.
\end{proof}

We now consider the $s_i$-action on the element~$\sigma_{w}$ when $w^{-1}(i+1)+1 = w^{-1}(i)$; that is, there is a descent in between $i+1$ and $i$ in the one-line notation of $w$.
Let $D(w) = \{ d_1,\dots, d_k\}$. We denote the location by $d_{\ell}$,  where $i+1$ appears in $w$; that is $w(d_{\ell}) = i+1$.
In addition, we set $d_0 = 0$ and $d_{k+1}=n$.
Consider numbers between descents $d_{\ell-1}$ and $d_{\ell+1}$ as follows:
\[
\widetilde{P} \colonequals \{ w(j) \mid d_{\ell-1} < j < d_{\ell}\}, \quad
\widetilde{Q} \colonequals \{ w(j) \mid d_{\ell}+1 < j \leq d_{\ell+1}\}.
\]
We choose subsets
$P = \{  p_1 < p_2 < \dots < p_x  \} \subset \widetilde{P}$, and
$Q = \{ q_1 < q_2 < \dots < q_y  \} \subset \widetilde{Q}$.
We define $\tvPQi \in \mathfrak{S}_n$ to be
\begin{equation*}
\tvPQi = w(1) \ \dots  \ w(d_{\ell-1}) \ p_1 \ p_2 \ \dots \ p_x \ i+1 \ q_1 \ q_2 \ \dots q_y \
((\widetilde{P} \cup \widetilde{Q} \cup \{i\}) \setminus (P \cup Q)) \!\!\uparrow
\ w(d_{\ell+1}+1) \ \dots \ w(n).
\end{equation*}

Now we compare the descents sets $D(w)$ and $D(\tvPQi)$. First of all, because we are taking the same numbers as $w$ before $d_{\ell-1}$ and after $d_{\ell+1}+1$, we have
\[
\{d_{a} \mid a \in [k] \setminus \{ \ell-1, \ell, \ell+1 \} \} \subset D(\tvPQi).
\]
Because $P \subset [i-1]$ and $Q \subset [i+2, n]$, we obtain
\[
p_1 < p_2 < \dots < p_x < i+1 < q_1 < q_2 < \dots < q_y.
\]
Moreover, $\min ((\widetilde{P} \cup \widetilde{Q} \cup \{i\}) \setminus (P \cup Q)) \leq i$. Therefore,
there is a descent $x+y+1+d_{\ell-1}$ in~$\tvPQi$.  Depending on whether  $d_{\ell-1}$, $d_{\ell+1}$ are descents in $\tvPQi$ or not, we modify $\tvPQi$ as follows:
\begin{equation*}
\vPQi \colonequals \begin{cases}
\tvPQi & \text{ if } d_{\ell-1}, d_{\ell+1} \in D(\tvPQi) \cup \{0,n\},\\
 \tvPQi s_{d_{\ell-1}} &
\begin{array}{l}
\text{if } d_{\ell-1} \notin D(\tvPQi) \cup \{0\} \text{ and } \\
\quad d_{\ell+1} \in D(\tvPQi) \cup \{n\},
\end{array}
\\
\tvPQi s_{d_{\ell+1}} &
\begin{array}{l}
\text{if }d_{\ell-1} \in D(\tvPQi) \cup\{0\} \text{ and } \\
\quad d_{\ell+1} \notin D(\tvPQi) \cup \{n\},
\end{array} \\
 \tvPQi s_{d_{\ell-1}}  s_{d_{\ell+1}}  & \text{ if } d_{\ell-1}, d_{\ell+1} \notin D(\tvPQi) \cup \{0,n\}.
\end{cases}
\end{equation*}
Define
\begin{equation}\label{eq_def_of_a}
u_{P,Q} = \begin{cases}
e & \text{ if } d_{\ell-1}, d_{\ell+1} \in D(\tvPQi) \cup \{0,n\},\\
s_{\tvPQi(d_{\ell-1}), \tvPQi(d_{\ell-1}+1)} &
\begin{array}{l}
\text{if } d_{\ell-1} \notin D(\tvPQi) \cup \{0\} \text{ and } \\
\quad d_{\ell+1} \in D(\tvPQi) \cup \{n\},
\end{array}
\\
s_{\tvPQi(d_{\ell+1}), \tvPQi(d_{\ell+1}+1)}  &
\begin{array}{l}
\text{if }d_{\ell-1} \in D(\tvPQi) \cup\{0\} \text{ and } \\
\quad d_{\ell+1} \notin D(\tvPQi) \cup \{n\},
\end{array} \\
s_{\tvPQi(d_{\ell+1}), \tvPQi(d_{\ell+1}+1)} s_{\tvPQi(d_{\ell-1}), \tvPQi(d_{\ell-1}+1)} & \text{ if } d_{\ell-1}, d_{\ell+1} \notin D(\tvPQi) \cup \{0,n\}.
\end{cases}
\end{equation}
Then, we have
\[
\vPQi = u_{P,Q} \tvPQi
\]
Because of the construction, we obtain
\begin{equation}\label{eq_descents_of_vPQi}
D(\vPQi) = \{ d_a \mid a \in [k] \setminus \{ \ell\}\} \cup \{ x+y+1+d_{\ell-1}\}.
\end{equation}

Moreover, we define the class $\tsvPQi$ by
\[
\tsvPQi \colonequals u_{P,Q} \sigma_{\vPQi}.
\]
We note that when $P = \widetilde{P}$ and $Q = \emptyset$, we obtain  $\tvPQi= \widetilde{w}^{(i)}_{\widetilde{P},\emptyset} = w$ and  $\widetilde{\sigma}_{\widetilde{P},\emptyset}^{(i)} = \sigma_w$.
With these terminologies, we describe the $s_i$-action as follows:
\begin{proposition}\label{prop_si_action_on_sigma_i}
Let $w$ be a permutation satisfying $w^{-1}(i+1)+1 = w^{-1}(i)$.
	Let $\sigma_w^{(i)}$ be an element defined by
	\[
	\sigma_w^{(i)} \colonequals \sum_{\substack{P \subset \widetilde{P}, \\ Q \subset \widetilde{Q}}} \tsvPQi \in H^{\ast}_T(\mathcal H_n).
	\]
	Then we have
	\[
	(t_{i+1} - t_i)\sigma_{s_i  w} = s_i \cdot \sigma_w^{(i)} - \sigma_w^{(i)}.
	\]
\end{proposition}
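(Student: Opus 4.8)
The plan is to deduce the identity from Proposition~\ref{prop: edge remaining 2}(2). Write $D(w)=\{d_1<\cdots<d_k\}$, set $d_0=0$, $d_{k+1}=n$, and $J_s=w[d_{s-1}+1,d_s]$; let $\ell$ be the index with $w(d_\ell)=i+1$, so that $w(d_\ell+1)=i$ by hypothesis. First I would record the elementary facts: for the permutohedral $h$ (where $h(j)=j+1$), the hypothesis $w^{-1}(i+1)+1=w^{-1}(i)$ is precisely the relation $w\to s_iw$ in the GKM graph of $\mathcal H_n$ (Example~\ref{example_Hess_GKM}); moreover $D(s_iw)=D(w)\setminus\{d_\ell\}$, so $\ell_h(s_iw)=\ell_h(w)-1$ and the blocks of $s_iw$ are $J_1,\dots,J_{\ell-1},J_\ell\cup J_{\ell+1},J_{\ell+2},\dots,J_{k+1}$. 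Proposition~\ref{prop: edge remaining 2}(2) then applies and yields
\[
\Bigl(s_i\cdot\sigma_w+\sum_{v\in\mathcal A_{s_i,w}}\tau_{s_iv}\Bigr)-\Bigl(\sigma_w+\sum_{v\in\mathcal A_{s_i,w}}\tau_v\Bigr)=(t_{i+1}-t_i)\sigma_{s_iw},
\]
with $\mathcal A_{s_i,w}\cap s_i\mathcal A_{s_i,w}=\emptyset$. Hence it suffices to prove the two identities $\sigma_w^{(i)}=\sigma_w+\sum_{v\in\mathcal A_{s_i,w}}\tau_v$ and $s_i\cdot\sigma_w^{(i)}=s_i\cdot\sigma_w+\sum_{v\in\mathcal A_{s_i,w}}\tau_{s_iv}$; subtracting them and invoking the display gives the claim. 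Since $s_i$ acts linearly, both follow from one statement: the assignment $(P,Q)\mapsto v(P,Q)$ is a bijection from $\{(P,Q):P\subseteq\widetilde P,\ Q\subseteq\widetilde Q\}$ onto $\{w\}\cup\mathcal A_{s_i,w}$ (with $(\widetilde P,\emptyset)\mapsto w$) for which $\tsvPQi=\sigma_w$ or $\tau_{v(P,Q)}$, and $s_i\cdot\tsvPQi=s_i\cdot\sigma_w$ or $\tau_{s_iv(P,Q)}$, accordingly.

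The combinatorial half is to identify $\mathcal A_{s_i,w}$ and this bijection. By Proposition~\ref{prop:Perm_A} one has $\Omega_{s_iw,h}^T=A_{s_iw}$, while $\Omega_w^T=\{v:v\ge w\}$; the condition $\dim(\Omega_v^\circ\cap\Omega_{s_iw,h})=\dim\Omega_{w,h}$ says the Białynicki--Birula cell of $\Omega_{s_iw,h}$ at $v$ has codimension one, and $v\dashrightarrow s_iv$ says, with the length convention, that $i+1$ precedes $i$ in $v$ with at least one entry between them. Because for the permutohedral variety $\Omega_{s_iw,h}$ is a product of permutohedral varieties indexed by the blocks of $s_iw$ (and $\Omega_v^\circ\cap\Omega_{s_iw,h}$ is the corresponding product of cells), these conditions force $v$ to agree with $w$ outside the positions $[d_{\ell-1}+1,d_{\ell+1}]$ and to arrange the values of $J_\ell\cup J_{\ell+1}$ there with exactly one descent, $i+1$ strictly before and not adjacent to $i$; a short case analysis on the location of that descent matches such $v$ bijectively with the pairs $(P,Q)\ne(\widetilde P,\emptyset)$ exactly as in the recipe for $\tvPQi$, the transpositions $u_{P,Q}$ of \eqref{eq_def_of_a} recording the boundary swaps needed to keep $d_{\ell-1},d_{\ell+1}$ descents. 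One then checks injectivity and that $v(P,Q)=u_{P,Q}\tvPQi=\vPQi$ lies in $\mathcal A_{s_i,w}$: its length-$k$ descent set \eqref{eq_descents_of_vPQi} gives the codimension-one condition, and the non-adjacency is exactly what restricting to $(P,Q)\ne(\widetilde P,\emptyset)$ guarantees.

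The geometric half is the identification $\tau_{v(P,Q)}=\tsvPQi$ (hence, applying $s_i$, $\tau_{s_iv(P,Q)}=s_i\cdot\tsvPQi$), and this is where I expect the real work to lie. When $u_{P,Q}=e$ it is immediate: then $v(P,Q)=\tvPQi$ has $k$ descents, so $\dim\Omega_{\tvPQi}^\circ=\dim\Omega_{w,h}=\dim(\Omega_{\tvPQi}^\circ\cap\Omega_{s_iw,h})$, forcing $\Omega_{\tvPQi}^\circ\cap\Omega_{s_iw,h}$ to be dense in the irreducible cell $\Omega_{\tvPQi}^\circ$, whence $\tau_{\tvPQi}=\sigma_{\tvPQi}=\tsvPQi$; the description of $\overline{\Omega_{s_iv}^\circ\cap\Omega_{s_iw,h}}$ in the proof of Proposition~\ref{prop: edge remaining 2}(2) together with Proposition~\ref{prop: the induced action on chow ring} gives the twin statement for $\tau_{s_iv}$. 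When $u_{P,Q}\ne e$ the intersection $\Omega_v^\circ\cap\Omega_{s_iw,h}$ sits inside $\Omega_v^\circ$ with codimension one or two, and the point is that translating it by $u_{P,Q}$ yields (the closure of) a minus cell, equivalently that $\tau_v$ and $u_{P,Q}\sigma_{\vPQi}$ agree at every $T$-fixed point. For this I would use Proposition~\ref{cor:regular class}: all of $\mathcal H_n$, $\Omega_{w,h}$, $\Omega_{s_iw,h}$ and the closures $\overline{\Omega_v^\circ\cap\Omega_{s_iw,h}}$ in sight are smooth toric varieties (faces of permutohedra, hence products of permutohedral varieties), so each equivariant class is the product of the weights of the GKM edges leaving its support, and comparing that product for $\tau_v$ against the value of $u_{P,Q}\sigma_{\vPQi}$ read off from Theorem~\ref{thm:Perm_class} after relabelling the $t$-variables by $u_{P,Q}$ closes the gap. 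Combining the two halves gives $\sigma_w^{(i)}=\sigma_w+\sum_v\tau_v$ and $s_i\cdot\sigma_w^{(i)}=s_i\cdot\sigma_w+\sum_v\tau_{s_iv}$, and subtracting these and applying Proposition~\ref{prop: edge remaining 2}(2) yields $(t_{i+1}-t_i)\sigma_{s_iw}=s_i\cdot\sigma_w^{(i)}-\sigma_w^{(i)}$, as desired.
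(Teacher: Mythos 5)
Your proposal takes a genuinely different route from the paper's. The paper proves Proposition~\ref{prop_si_action_on_sigma_i} by a self-contained GKM computation: Lemma~\ref{lemma_union_of_supports} identifies the union of the supports of the $\tsvPQi$, Lemma~\ref{lemma_eq_s(i)_u} computes $\sigma_w^{(i)}(u)$ at every point of that union via a telescoping sum coming from Theorem~\ref{thm:Perm_class}, and then the identity $(t_{i+1}-t_i)\sigma_{s_iw}=s_i\cdot\sigma_w^{(i)}-\sigma_w^{(i)}$ is checked one fixed point at a time. In contrast, you propose to specialize the general Hessenberg formula in Proposition~\ref{prop: edge remaining 2}(2) to the permutohedral case by exhibiting a bijection between $\{(P,Q):P\subseteq\widetilde P,\ Q\subseteq\widetilde Q\}\setminus\{(\widetilde P,\emptyset)\}$ and $\mathcal A_{s_i,w}$, together with the class identities $\tsvPQi=\tau_{\vPQi}$ and $s_i\cdot\tsvPQi=\tau_{s_i\vPQi}$. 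Conceptually this is the right reading of the Remark following Proposition~\ref{prop: edge remaining 2}, and it would give a more structural explanation of why the correction terms in the permutohedral case have the particular form $\tsvPQi=u_{P,Q}\sigma_{\vPQi}$. The paper instead bypasses the geometry of $\mathcal A_{s_i,w}$ entirely and verifies the arithmetic identity directly, which is shorter and avoids having to describe the cells $\Omega_v^\circ\cap\Omega_{s_iw,h}$.

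The gaps in your sketch are real, however, and filling them would likely be more work than the paper's direct route. The key identification $\tau_{\vPQi}=\tsvPQi=u_{P,Q}\sigma_{\vPQi}$, especially when $u_{P,Q}\neq e$ so that the class is a twist rather than a genuine $\sigma$, is stated but not computed; you invoke Proposition~\ref{cor:regular class} and smoothness of faces of the permutohedron, which is the right tool, but the actual comparison of weight products at every fixed point of $\overline{\Omega_v^\circ\cap\Omega_{s_iw,h}}$ against the GKM values of $u_{P,Q}\sigma_{\vPQi}$ is nontrivial and is precisely where the paper expends its effort (in a different guise). Second, the "twin" identity $s_i\cdot\tau_v=\tau_{s_iv}$ does not follow from linearity alone: $\tau_v$ and $\tau_{s_iv}$ are defined as classes of $\overline{\Omega_v^\circ\cap\Omega_{s_iw,h}}$ and $\overline{\Omega_{s_iv}^\circ\cap\Omega_{s_iw,h}}$ inside the fixed subvariety $\Omega_{s_iw,h}$, whereas $s_i\cdot\tau_v$ is represented by $\overline{\Omega_{s_iv}^\circ\cap s_i\Omega_{s_iw,h}}$, and $s_i\Omega_{s_iw,h}\neq\Omega_{s_iw,h}$ in general; Proposition~\ref{prop: the induced action on chow ring} as stated concerns intersections with $\Hess(S,h)$ itself, not with a proper subvariety that moves under $s_i$, so this step needs its own argument (it does appear to be true in the permutohedral examples, but you would need to prove it). Finally, the combinatorial half — that the $\vPQi$ with $(P,Q)\neq(\widetilde P,\emptyset)$ exhaust $\mathcal A_{s_i,w}$ exactly once — is asserted from the dimension and non-adjacency conditions but not actually carried out; this should be checked against Theorem~\ref{thm_support_of_Owh} (equivalently Proposition~\ref{prop:Perm_A}) rather than just against the informal reading of the defining conditions.
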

The proof of Proposition~\ref{prop_si_action_on_sigma_i} will be given at the end of this subsection.
\begin{example}\label{example_1324}
Let $n = 4$ and $w = 1324$. Then, for subsets $P \subset \{1\}$ and $Q \subset \{4\}$, permutations~$w_{P,Q}^{(2)}$ are given as follows.
	\begin{center}
		\begin{tabular}{|c|*{2}{c|}}
			\hline
			\diagbox{$P$}{$Q$}
			&\makebox[3em]{$\{4\}$}&\makebox[3em]{$\emptyset$} \\ \hline \hline
			$\{1\}$ &$134\textcolor{red}{|}2$ & $13\textcolor{red}{|}24$  \\\hline
			$\emptyset$ & $34\textcolor{red}{|}12$ & $3\textcolor{red}{|}124$ \\\hline
		\end{tabular}
	\end{center}
Here, we put $\textcolor{red}{|}$ on descents.
By Proposition~\ref{prop_si_action_on_sigma_i}, we have
\[
\sigma_{1324}^{(2)} = \sigma_{1342}+ \sigma_{1324} + \sigma_{3412} + \sigma_{3124};
\]
\[
s_2 \sigma_{1324}
= (t_3-t_2) \sigma_{1234}
+ \sigma_{1324}
+ (\sigma_{1342} + \sigma_{3412}+ \sigma_{3124})
-(\sigma_{1243} + \sigma_{2413} + \sigma_{2134}).
\]
This computation agrees with that in Example~\ref{example_si_sigma_wh}.
\end{example}

\begin{example}\label{example_13245}
	Let $n = 5$ and consider $s_2 = 13245$. Then, for $P \subset [1] = \{1\}$ and $Q  \subset [4,5] = \{4,5\}$, permutations~$w_{P,Q}^{(2)}$ are given as follows.
	\begin{center}
		\begin{tabular}{|c|*{4}{c|}}
			\hline
			\diagbox{$P$}{$Q$}
			&\makebox[3em]{$\{4,5\}$}&\makebox[3em]{$\{4\}$}&\makebox[3em]{$\{5\}$}
			&\makebox[3em]{$\emptyset$} \\ \hline \hline
			$\{1\}$ &$1345\textcolor{red}{|}2$ & $134\textcolor{red}{|}25$& $135\textcolor{red}{|}24$ &$13\textcolor{red}{|}245$ \\\hline
			$\emptyset$ & $345\textcolor{red}{|}12$ & $34\textcolor{red}{|}125$ & $35\textcolor{red}{|}124$ & $3\textcolor{red}{|}1245$ \\\hline
		\end{tabular}
	\end{center}
	We put $\textcolor{red}{|}$ on descents. By Proposition~\ref{prop_si_action_on_sigma_i}, we have
	\[
	\sigma_{13245}^{(2)} = \sigma_{13452} + \sigma_{13425} + \sigma_{13524} + \sigma_{13245} + \sigma_{34512} + \sigma_{34125} + \sigma_{35124} + \sigma_{31245}.
	\]
	Accordingly, we obtain
	\[
	\begin{split}
	s_2 \sigma_{13245}
	& = (t_3-t_2) \sigma_{12345} + \sigma_{13245}  \\
	& \qquad + (\sigma_{13452} + \sigma_{13425} + \sigma_{13524} + \sigma_{34512} + \sigma_{34125} + \sigma_{35124} + \sigma_{31245}) \\
	& \qquad \quad - (
	\sigma_{12453} + \sigma_{12435} + \sigma_{12534} + \sigma_{24513} + \sigma_{24135}
	+ \sigma_{25134} + \sigma_{21345}
	).
	\end{split}
	\]
\end{example}
\begin{remark}
In Examples~\ref{example_1324} and~\ref{example_13245}, the permutations $w$ are simple reflections. Accordingly, $D(w) = 1$ and we obtain  $d_{\ell-1} = 0, d_{\ell+1} = n$. Therefore, we obtain $\vPQi = \tvPQi$ and this holds in all such cases.
\end{remark}
Before providing a proof of Proposition~\ref{prop_si_action_on_sigma_i}, we prepare lemmas.
\begin{lemma}\label{lemma_union_of_supports}
The union $\bigcup_{\substack{P \subset \widetilde{P}, \\ Q \subset \widetilde{Q}}} \supp(\tsvPQi)$ of the supports of $\tsvPQi$ is given as follows.
	\[
	\begin{split}
	&u \in \bigcup_{\substack{P \subset \widetilde{P}, \\ Q \subset \widetilde{Q}}} \supp(\tsvPQi) \\
	&\quad \iff
	\begin{array}{l}
	u^{-1}(i+1) < u^{-1}(i),  \\
	u[d_{a-1}+1,d_a] = w[d_{a-1}+1,d_a]\quad \text{ for all } a \in [k] \setminus \{\ell,\ell+1\}.
	\end{array}
	\end{split}
	\]
\end{lemma}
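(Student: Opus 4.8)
I would first determine the support of each individual summand $\tsvPQi$ and then form the union. Since $\tsvPQi = u_{P,Q}\sigma_{\vPQi}$ and the dot action merely permutes the $T$-fixed points while acting on each coordinate ring $\mathbb C[t_1,\dots,t_n]$ by a ring automorphism, we have $\supp(u\cdot p)=u\cdot\supp(p)$ for every $u\in\mathfrak S_n$; hence $\supp(\tsvPQi)=u_{P,Q}\cdot\supp(\sigma_{\vPQi})=u_{P,Q}\cdot A_{\vPQi}$ by Theorem~\ref{thm_support_of_Owh} and Proposition~\ref{prop:Perm_A}. Writing $g[a,b]$ for the set of values of $g$ on the interval $[a,b]$, Proposition~\ref{prop:Perm_A} says that $A_{\vPQi}$ is exactly the set of $g$ whose value-set on each block-interval cut out by $D(\vPQi)$ agrees with that of $\vPQi$. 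Using the elementary equivalence $g[a,b]=\vPQi[a,b]\iff (u_{P,Q}g)[a,b]=(u_{P,Q}\vPQi)[a,b]=\tvPQi[a,b]$, this translates into
\[
\supp(\tsvPQi)=\{u\in\mathfrak S_n \mid u[a,b]=\tvPQi[a,b]\ \text{ for every block-interval }[a,b]\text{ of }D(\vPQi)\}.
\]

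Next I would make this explicit. By \eqref{eq_descents_of_vPQi}, $D(\vPQi)$ is obtained from $D(w)$ by deleting $d_\ell$ and inserting $m\colonequals d_{\ell-1}+|P|+|Q|+1$, and $d_{\ell-1}<m<d_{\ell+1}$ (the extremes are excluded because $|P|\le|\widetilde P|$ and $|Q|\le|\widetilde Q|$); the correction factor $u_{P,Q}$ exists precisely to keep $d_{\ell-1}$ and $d_{\ell+1}$ descents of $\vPQi$. So the block-intervals of $D(\vPQi)$ are those of $D(w)$, except that $[d_{\ell-1}+1,d_{\ell+1}]$ is split at $m$ instead of at $d_\ell$ (the cases $\ell=1$, $\ell=k$ being covered by $d_0=0$, $d_{k+1}=n$). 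Since $\tvPQi$ coincides with $w$ on positions $\le d_{\ell-1}$ and $>d_{\ell+1}$ by its definition, we get $\tvPQi[d_{a-1}+1,d_a]=w[d_{a-1}+1,d_a]$ for every block index $a\notin\{\ell,\ell+1\}$, while reading off the one-line form of $\tvPQi$ on the middle region gives $\tvPQi[d_{\ell-1}+1,m]=P\cup\{i+1\}\cup Q$ and $\tvPQi[m+1,d_{\ell+1}]=(\widetilde P\cup\widetilde Q\cup\{i\})\setminus(P\cup Q)$. Hence $u\in\supp(\tsvPQi)$ if and only if
\[
u[d_{a-1}+1,d_a]=w[d_{a-1}+1,d_a]\ (a\notin\{\ell,\ell+1\}),\quad u[d_{\ell-1}+1,m]=P\cup\{i+1\}\cup Q,\quad u[m+1,d_{\ell+1}]=(\widetilde P\cup\widetilde Q\cup\{i\})\setminus(P\cup Q).
\]

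Finally I would take the union over $P\subseteq\widetilde P$, $Q\subseteq\widetilde Q$. The first family of conditions is independent of $(P,Q)$; it forces $u^{-1}(i)$ and $u^{-1}(i+1)$ into $[d_{\ell-1}+1,d_{\ell+1}]$, since in $w$ neither $i$ nor $i+1$ lies in a block other than $\ell$ or $\ell+1$, so by complementation $u[d_{\ell-1}+1,d_{\ell+1}]=\widetilde P\cup\{i,i+1\}\cup\widetilde Q$. For the forward inclusion, the last two conditions give $u^{-1}(i+1)\le m<u^{-1}(i)$, so $u^{-1}(i+1)<u^{-1}(i)$. For the reverse inclusion, assuming the first family of conditions and $u^{-1}(i+1)<u^{-1}(i)$, I would set $m\colonequals u^{-1}(i)-1$, $P\colonequals u[d_{\ell-1}+1,m]\cap\widetilde P$, $Q\colonequals u[d_{\ell-1}+1,m]\cap\widetilde Q$; then $i\notin u[d_{\ell-1}+1,m]$ while $i+1\in u[d_{\ell-1}+1,m]$, so $u[d_{\ell-1}+1,m]=P\cup\{i+1\}\cup Q$, comparing positions with values yields $m=d_{\ell-1}+|P|+|Q|+1$ (matching the $m$ attached to $\tvPQi$), and the complementary interval carries $(\widetilde P\cup\widetilde Q\cup\{i\})\setminus(P\cup Q)$. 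Thus $u\in\supp(\tsvPQi)$ for this $(P,Q)$, giving the claimed characterisation of the union.

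The step requiring the most care will be the middle one: confirming that the block-interval decomposition induced by $D(\vPQi)$ is exactly the one claimed — this is where \eqref{eq_descents_of_vPQi} and the role of $u_{P,Q}$ must be used correctly — and that passing from $\supp(\sigma_{\vPQi})=A_{\vPQi}$ through $u_{P,Q}$ replaces the value-sets of $\vPQi$ by those of $\tvPQi$ rather than something more intricate. Once the block picture for $\tvPQi$ is pinned down, what remains is routine counting and set manipulation.
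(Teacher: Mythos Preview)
Your proposal is correct and follows essentially the same route as the paper: both compute $\supp(\tsvPQi)=u_{P,Q}\cdot A_{\vPQi}$, translate this via Proposition~\ref{prop:Perm_A} into value-set conditions on the block-intervals of $D(\vPQi)$ (the paper phrases this as $\{\tvPQi v'\mid v'\in G_{P,Q}\}$, which is the same thing), and then take the union by exhibiting a suitable $(P,Q)$ in the reverse direction. Your reverse step is in fact stated more precisely than the paper's—your choice $m=u^{-1}(i)-1$ with $P=u[d_{\ell-1}+1,m]\cap\widetilde P$ and $Q=u[d_{\ell-1}+1,m]\cap\widetilde Q$ makes explicit the range of positions that the paper's definition of $P,Q$ leaves implicit.
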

\begin{proof}
Let $P = \{p_1 < p_2< \dots < p_x \} \subset \widetilde{P}$ and $Q = \{q_1 < q_2 < \dots < q_y \} \subset \widetilde{Q}$.
We first consider the support $A_{\vPQi}$. Using the description of $D(\vPQi)$ and Proposition~\ref{prop:Perm_A}, we have
\[
\begin{split}
A_{\vPQi} &= \{ v \vPQi \mid
v \in \mathfrak{S}_{J_1} \times \cdots \times \mathfrak{S}_{J_{k+1}}
\}\\
&= \{ \vPQi v' \mid
v' \in G_{P,Q}\}.
\end{split}
\]
Here,
\[
J_s = \begin{cases}
\vPQi[d_{s-1}+1,d_s]  & \text{ if }s \neq \ell, \ell+1, \\
\vPQi[d_{\ell-1}+1,x+y+1+d_{\ell-1}] & \text{ if } s = \ell, \\
\vPQi[x+y+2+d_{\ell-1},d_{\ell+1}] & \text{ if } s = \ell+1
\end{cases}
\]
and we set
\[
G_{P,Q} \colonequals
\mathfrak{S}_{[d_1]}
\times \mathfrak{S}_{[d_1+1,d_2]} \times
\cdots \times \mathfrak{S}_{[d_{\ell-2}+1,d_{\ell-1}]}
\times \mathfrak{S}_{[d_{\ell-1}+1,x+y+1+d_{\ell-1}]}
\times \mathfrak{S}_{[x+y+2+d_{\ell-1},d_{\ell+1}]}
\times \cdots \times \mathfrak{S}_{[d_k+1,n]}.
\]
Because we use the same permutation $a$ to obtain the class $\tsvPQi$ and the permutation~$\vPQi$, we get
\begin{equation}\label{eq_supp_vPQi}
\begin{split}
\supp(\tsvPQi)
&= u_{P,Q}  A_{\vPQi}
= \{ u_{P,Q} \vPQi v' \mid v' \in G_{P,Q} \} \\
&= \{ u_{P,Q} u_{P,Q} \tvPQi v' \mid v' \in G_{P,Q} \} \\
&= \{ \tvPQi v' \mid v' \in G_{P,Q} \},
\end{split}
\end{equation}
where $u_{P,Q}$ is the permutation defined in~\eqref{eq_def_of_a}.
Because of~\eqref{eq_supp_vPQi} and the definition of $\tvPQi$, for $u \in \supp(\tsvPQi)$, we have $u^{-1}(i+1)<u^{-1}(i)$ and
$\{ u(d_{a-1}+1),\dots,u(d_a)\} = \{ w(d_{a-1} +1),\dots,w(d_a)\}$  for all $a \in [k] \setminus \{\ell,\ell+1\}$.

Now we assume that $u \in \mathfrak{S}_n$ satisfies $u^{-1}(i+1)<u^{-1}(i)$ and $u[d_{a-1}+1,d_a] = w[d_{a-1}+1,d_a]$  for all $a \in [k] \setminus \{\ell,\ell+1\}$.
Then we have
\[
\begin{split}
u[d_{\ell-1}+1,d_{\ell}] \cup u[d_{\ell}+1,d_{\ell+1}]
&=
w[d_{\ell-1}+1,d_{\ell}] \cup w[d_{\ell}+1,d_{\ell+1}]\\
&= \widetilde{P} \cup \widetilde{Q} \cup \{i,i+1\}.
\end{split}
\]
We note that any element in $\widetilde{P}$ is less than $i$;
any element in $\widetilde{Q}$ is greater than $i+1$.
By setting $P = \{ u(z) \in \widetilde{P}\cup \widetilde{Q} \mid u(z) < i \}$
and $Q = \{u(z) \in \widetilde{P}\cup \widetilde{Q} \mid u(z) > i+1 \}$, we obtain $u \in \supp(\tsvPQi)$ by~\eqref{eq_supp_vPQi}. This proves the lemma.
\end{proof}
\begin{lemma}\label{lemma_eq_s(i)_u}
For $u \in \bigcup_{\substack{P \subset \widetilde{P}, \\ Q \subset \widetilde{Q}}} \supp(\tsvPQi)$, we have
\[
	\sigma_w^{(i)}(u) = (t_i - t_{i+1}) \prod_{a \in [k] \setminus  \{ \ell\}} (t_{u(d_{a}+1)} - t_{u(d_{a})}).
\]
\end{lemma}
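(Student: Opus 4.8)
The plan is to compute $\sigma_w^{(i)}(u)$ directly by adding up the contributions $\tsvPQi(u)$ over all pairs $(P,Q)$ with $P\subset\widetilde P$, $Q\subset\widetilde Q$, using Theorem~\ref{thm:Perm_class} to evaluate each term. First I would fix $u$ in the union of supports described in Lemma~\ref{lemma_union_of_supports}, so that $u^{-1}(i+1)<u^{-1}(i)$ and $u[d_{a-1}+1,d_a]=w[d_{a-1}+1,d_a]$ for all $a\in[k]\setminus\{\ell,\ell+1\}$. By the displayed description of $\supp(\tsvPQi)$ in \eqref{eq_supp_vPQi}, we have $u\in\supp(\tsvPQi)$ precisely for the single pair $(P,Q)=(P_u,Q_u)$ with $P_u=\{u(z)\in\widetilde P\cup\widetilde Q\mid u(z)<i\}$ and $Q_u=\{u(z)\in\widetilde P\cup\widetilde Q\mid u(z)>i+1\}$ — wait, I should be careful here: $u$ lies in $\supp(\tsvPQi)$ for exactly those $(P,Q)$ for which $u\in\{\tvPQi v'\mid v'\in G_{P,Q}\}$, and the precise set of such $(P,Q)$ depends on how the descent positions $d_{\ell-1},d_{\ell+1}$ interact. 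So the first genuine step is to pin down, for fixed $u$, exactly which $(P,Q)$ contribute.

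The key observation is that $\tsvPQi = u_{P,Q}\sigma_{\vPQi}$, and by Proposition~\ref{prop:Perm_A} combined with the shape of $\vPQi$ recorded in \eqref{eq_descents_of_vPQi}, the support $\supp(\tsvPQi)$ consists of those $u$ whose one-line notation, read blockwise according to the descent set $\{d_a\mid a\neq\ell\}\cup\{x+y+1+d_{\ell-1}\}$, matches the corresponding blocks of $\tvPQi$ as \emph{sets}. Since the blocks outside $[d_{\ell-1}+1,d_{\ell+1}]$ are forced (they equal those of $w$), everything is governed by how $u$ distributes the values $\widetilde P\cup\widetilde Q\cup\{i,i+1\}$ across the two sub-blocks $[d_{\ell-1}+1,d_{\ell-1}+x+y+1]$ and $[d_{\ell-1}+x+y+2,d_{\ell+1}]$. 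I expect to find that, for each fixed $u$ in the union, there is a \emph{unique} pair $(P,Q)$ — namely $P=\{v<i: v$ sits in the first sub-block$\}$, $Q=\{v>i+1: v$ sits in the first sub-block$\}$, exploiting that $u^{-1}(i+1)<u^{-1}(i)$ forces $i+1$ into the first sub-block and $i$ into the second — so that $u\in\supp(\tsvPQi)$, and for every other pair $\tsvPQi(u)=0$. Then $\sigma_w^{(i)}(u) = \tsvPQi(u)$ for that single pair.

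Having reduced to a single term, I would evaluate $\tsvPQi(u) = (u_{P,Q}\sigma_{\vPQi})(u) = u_{P,Q}\bigl(\sigma_{\vPQi}(u_{P,Q}^{-1}u)\bigr)$ using Theorem~\ref{thm:Perm_class} applied to $\vPQi$, whose descent set is $\{d_a\mid a\neq\ell\}\cup\{x+y+1+d_{\ell-1}\}$. The product over descents of $\vPQi$ has a factor $(t_{?}-t_{?})$ at the new descent $x+y+1+d_{\ell-1}$: by construction the value just after this descent is $\le i$ and the value $i+1$ appears just before the block ends, so I expect this factor, after applying $u_{P,Q}$ and matching with $u$, to become exactly $(t_i-t_{i+1})$ — the roles of $i$ and $i+1$ are swapped relative to what one naively writes because $u^{-1}(i+1)<u^{-1}(i)$ means on the $u$ side the descent between $i+1$'s block and $i$'s block contributes $t_i-t_{i+1}$. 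The remaining factors, over descents $d_a$ with $a\neq\ell$, should collate into $\prod_{a\in[k]\setminus\{\ell\}}(t_{u(d_a+1)}-t_{u(d_a)})$, using that $u_{P,Q}$ only permutes values within blocks adjacent to $d_{\ell-1}$ or $d_{\ell+1}$ and hence, once composed with the block-set-matching condition on $u$, leaves each such difference expressed directly in terms of $u$.

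The main obstacle I anticipate is the bookkeeping around the auxiliary permutation $u_{P,Q}$ and the four-case definition \eqref{eq_def_of_a}: one must check that in each of the four cases (according to whether $d_{\ell-1}$, $d_{\ell+1}$ are descents of $\tvPQi$), the transposition(s) $u_{P,Q}$ act so that the descent-difference factors at positions $d_{\ell-1}$ and $d_{\ell+1}$ come out the same as if one had simply used $\tvPQi$ — i.e., the $s_{d_{\ell-1}}$ and $s_{d_{\ell+1}}$ corrections are precisely designed to "undo" the spurious descents and are neutralized when one evaluates on $u$ and reindexes. This is a finite case check but it is where sign/index errors are most likely; I would handle it by treating the generic case $d_{\ell-1},d_{\ell+1}\in D(\tvPQi)\cup\{0,n\}$ first (where $u_{P,Q}=e$ and the computation is clean) and then verifying the other three cases reduce to it because the value $\sigma_{\vPQi}$ at the relevant $T$-fixed point, multiplied by $u_{P,Q}$, is invariant under exactly these transpositions by the same mechanism as in the proof of Proposition~\ref{prop:stabilizer of sigma_w_in_HT}.
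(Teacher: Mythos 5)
Your central claim — that for each fixed $u$ in the union there is a \emph{unique} pair $(P,Q)$ with $u\in\supp(\tsvPQi)$ — is false, and this sinks the argument. The boundary between the two sub-blocks $[d_{\ell-1}+1, d_{\ell-1}+x+y+1]$ and $[d_{\ell-1}+x+y+2, d_{\ell+1}]$ is not fixed: it depends on $x+y = |P|+|Q|$, so as $(P,Q)$ varies the boundary slides. Writing $b=u^{-1}(i+1)$ and $b'=u^{-1}(i)$, the paper observes (and this is exactly the content of \eqref{eq_supp_vPQi}) that $u\in\supp(\tsvPQi)$ for every choice of boundary $\beta$ with $b\le\beta<b'$, i.e., for the $b'-b$ distinct pairs $(P_\beta,Q_\beta)$ determined by $P_\beta\cup Q_\beta\cup\{i+1\}=\{u(d_{\ell-1}+1),\dots,u(\beta)\}$. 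When $b'>b+1$ this is more than one pair. For a concrete failure: with $n=4$, $w=1324$, $i=2$, $u=3142$, one has $b=1$, $b'=4$, and $u$ lies in the support of \emph{three} of the four classes $\sigma_{3124}$, $\sigma_{1324}$, $\sigma_{1342}$, $\sigma_{3412}$.

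The second, related error is your expectation that a single term's factor at the new descent equals $(t_i - t_{i+1})$. By Theorem~\ref{thm:Perm_class} the factor coming from the descent at $\beta=x+y+1+d_{\ell-1}$, evaluated at $u$, is $(t_{u(\beta+1)}-t_{u(\beta)})$, which is $(t_i-t_{i+1})$ only in the special case $b'=b+1$ (when $i$ immediately follows $i+1$ in $u$). In general the factor $(t_i-t_{i+1})$ only emerges after summing and telescoping:
\[
\sum_{b\le\beta<b'}\bigl(t_{u(\beta+1)}-t_{u(\beta)}\bigr) = t_{u(b')}-t_{u(b)} = t_i-t_{i+1}.
\]
In the example above the three contributions are $(t_1-t_3)+(t_4-t_1)+(t_2-t_4)=t_2-t_3$, exactly the telescoped value. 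So the sum over all pairs $(P,Q)$, with the telescoping of the moving-boundary descent factor, is the essential mechanism, not a uniqueness reduction. The bookkeeping around the four cases of $u_{P,Q}$ that you flag as the main obstacle is in fact the easy part (and your instinct there — reduce to the case $u_{P,Q}=e$ — is sound); the missing idea is the telescoping sum over the sliding block boundary.
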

\begin{proof}
	Let $P = \{  p_1 < p_2 < \dots < p_x \} \subset \widetilde{P}$ and $Q = \{  q_1 < q_2 < \dots < q_y \} \subset \widetilde{Q}$. Then at any point $u \in A_{\vPQi}$, we have
	\begin{equation}\label{eq_vPQi_u}
	\tsvPQi (u) = (t_{u(x+y+2+d_{\ell-1})} - t_{u(x+y+1+d_{\ell-1})})\prod_{a \in [k] \setminus  \{ \ell\}} (t_{u(d_{a}+1)} - t_{u(d_{a})})
	\end{equation}
	since  $D(\vPQi) = \{ d_a \mid a \in [k] \setminus \{ \ell\}\} \cup \{ x+y+d_{\ell-1}\}$ (see~\eqref{eq_descents_of_vPQi}) and by Theorem~\ref{thm:Perm_class}.

Suppose that $u \in \bigcup_{\substack{P \subset \widetilde{P}, \\ Q \subset \widetilde{Q}}} \supp(\tsvPQi)$.
By Lemma~\ref{lemma_union_of_supports}, we have $u^{-1}(i+1) < u^{-1}(i)$.
	We denote by $b = u^{-1}(i+1)$ and $b' = u^{-1}(i)$. By the assumption, we have $d_{\ell-1} < b < b'$.
	For given $P \subset \widetilde{P}$ and $Q \subset \widetilde{Q}$, the permutation $u$ is in the support $\supp(\tsvPQi)$ if and only if $\{u(d_{\ell-1}+1),\dots,u(\beta)\} = P \cup Q \cup \{i+1\}$ for some $b \leq \beta < b'$.
	Therefore, we obtain
	\begin{equation}\label{eq_s(i)_u_1}
	\sigma_w^{(i)}(u) =
	\sum_{\substack{P \subset \widetilde{P}, \\ Q \subset \widetilde{Q}}} \tsvPQi(u)
	= \sum_{b \leq \beta < b'} \widetilde{\sigma}_{P_{\beta},Q_{\beta}}^{(i)}(u)
	\end{equation}
	where $P_{\beta}$ and $Q_{\beta}$ are subsets satisfying $P_{\beta} \cup Q_{\beta} \cup \{i+1\} = \{u(d_{\ell-1}+1),\dots,u(\beta)\}$ for $b \leq \beta < b'$. Moreover, we have
	\[
	\bigcup_{b \leq \beta < b'} D(w_{P_{\beta},Q_{\beta}}^{(i)} )  = \{ \beta \mid b \leq \beta < b'\} \cup \{d_a \mid a \in [k] \setminus \{\ell\}\}.
	\]
	By applying~\eqref{eq_vPQi_u}, we obtain
	\begin{equation}\label{eq_s(i)_u_2}
	\begin{split}
	\sum_{b \leq \beta < b'} \widetilde{\sigma}_{P_{ \beta},Q_{\beta}}^{(i)}(u) &= \left( \sum_{b \leq \beta < b '} t_{u(\beta+1)} - t_{u(\beta)}\right) \prod_{a \in [k] \setminus  \{ \ell\}} (t_{u(d_{a}+1)} - t_{u(d_{a})}) \\
	& = (t_{u(b')} -t_{u(b)})\prod_{a \in [k] \setminus  \{ \ell\}} (t_{u(d_{a}+1)} - t_{u(d_{a})}) \\
	& = (t_{i} -t_{i+1})\prod_{a \in [k] \setminus  \{ \ell\}} (t_{u(d_{a}+1)} - t_{u(d_{a})}).
	\end{split}
	\end{equation}
	Combining equations~\eqref{eq_s(i)_u_1} and~\eqref{eq_s(i)_u_2}, we prove the lemma.
\end{proof}

\begin{proof}[Proof of Proposition~\ref{prop_si_action_on_sigma_i}]
For $u \in \mathfrak{S}_n$, suppose that $u \notin  \bigcup_{\substack{P \subset \widetilde{P}, \\ Q \subset \widetilde{Q}}} \supp(\tsvPQi)$ but in $\supp(\sigma_{s_i  w})$. By Lemma~\ref{lemma_union_of_supports} and  Proposition~\ref{prop:Perm_A}, we have $u^{-1}(i+1) > u^{-1}(i)$ and $u[d_{a-1}+1,d_a] = w[d_{a-1}+1,d_a]$  for all $a \in [k] \setminus \{\ell, \ell+1\}$.
	For this case, $(s_i  u)^{-1}(i+1) < (s_i  u)^{-1}(i)$. Accordingly, $s_i  u \in \bigcup_{\substack{P \subset \widetilde{P}, \\ Q \subset \widetilde{Q}}} \supp(\tsvPQi)$ and we have
	\[
	\begin{split}
	(s_i \cdot \sigma_w^{(i)} - \sigma_w^{(i)})(u)
	&= s_i (\sigma_w^{(i)}(s_i  u)) - \sigma_w^{(i)}(u) \\
	&= s_i \left[ (t_{i} -t_{i+1})\prod_{a \in [k] \setminus  \{ \ell\}} (t_{s_i  u(d_{a}+1)} - t_{s_i  u(d_{a})})\right] - 0 \\
	&= (t_{i+1} -t_i) s_i\left[\prod_{a \in [k] \setminus  \{ \ell\}} (t_{s_i  u(d_{a}+1)} - t_{s_i  u(d_{a})})\right] \\
	&= (t_{i+1} -t_i)  \prod_{a \in [k] \setminus  \{ \ell\}} (t_{u(d_{a}+1)} - t_{u(d_{a})}).
	\end{split}
	\]
	Here, the second equality derives from Lemma~\ref{lemma_eq_s(i)_u}.
	On the other hand, we have $D(s_i w) = \{ d_a \mid a\in [k] \setminus \{\ell\}\}$ and
	\[
	\sigma_{s_i w}(u) =  \prod_{a \in [k] \setminus  \{ \ell\}} (t_{u(d_{a}+1)} - t_{u(d_{a})})
	\]
	by Theorem~\ref{thm:Perm_class}. This proves the proposition for $u \notin \bigcup_{\substack{P \subset \widetilde{P}, \\ Q \subset \widetilde{Q}}} \supp(\tsvPQi)$ but in $\supp(\sigma_{s_i  w})$.
Suppose that $u \notin \bigcup_{\substack{P \subset \widetilde{P}, \\ Q \subset \widetilde{Q}}} \supp(\tsvPQi)$ and $u \notin \supp(\sigma_{s_i  w})$. Then we obtain
\[
(s_i \cdot \sigma_w^{(i)} - \sigma_w^{(i)})(u)
	= s_i (\sigma_w^{(i)}(s_i  u)) - \sigma_w^{(i)}(u) = 0.
\]
	Therefore, this proves the proposition for $u \notin \bigcup_{\substack{P \subset \widetilde{P}, \\ Q \subset \widetilde{Q}}} \supp(\tsvPQi)$.

	Suppose that $u \in \bigcup_{\substack{P \subset \widetilde{P}, \\ Q \subset \widetilde{Q}}} \supp(\tsvPQi)$. Then we have  $s_i u  \notin \bigcup_{\substack{P \subset \widetilde{P}, \\ Q \subset \widetilde{Q}}} \supp(\tsvPQi)$.
	Moreover, we obtain
	\[
	\begin{split}
	(s_i \cdot \sigma_w^{(i)} - \sigma_w^{(i)})(u)
	&= s_i \cdot\sigma_w^{(i)}(s_i u) -  \sigma_w^{(i)}(u) \\
	&= 0 - (t_i - t_{i+1}) \prod_{a \in [k] \setminus  \{ \ell\}} (t_{u(d_{a}+1)} - t_{u(d_{a})})\\
	&= (t_{i+1} -t_i)\prod_{a \in [k] \setminus  \{ \ell\}} (t_{u(d_{a}+1)} - t_{u(d_{a})}).
	\end{split}
	\]
	This completes the proof.
\end{proof}


We provide a corollary of Proposition~\ref{prop_si_action_on_sigma_i} which will be subsequently used.
\begin{corollary}\label{cor_sigma_invariant_si_singular}
Suppose $w \in \mathfrak{S}_n$ with descent set $D(w)$ and such that $ w^{-1}(i+1) = w^{-1}(i)-1$. As above, let $d_{\ell} \in D(w)$ such that $w(d_{\ell}) = i+1$. If $d_{\ell}- 1, d_{\ell}, d_{\ell}+1 \in D(w) \cup \{0,n\}$, then
\[
s_i \cdot \sigma_w = \sigma_w + (t_{i+1} - t_i) \sigma_{s_i w}.
\]
In particular, $\sigma_w \in H^{\ast}(\mathcal H_n)$ is invariant under $s_i$.
\end{corollary}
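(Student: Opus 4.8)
The plan is to specialize Proposition~\ref{prop_si_action_on_sigma_i} to the situation at hand; essentially all the work lies in checking that the correction sum $\sigma_w^{(i)} = \sum_{P \subset \widetilde{P},\, Q \subset \widetilde{Q}} \tsvPQi$ collapses to its single term $\sigma_w$.

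First I would translate the descent hypothesis. Write $D(w) = \{d_1 < \cdots < d_k\}$ with $d_0 = 0$ and $d_{k+1} = n$ as in Section~\ref{sec:cho}, and let $d_\ell \in D(w)$ with $w(d_\ell) = i+1$; the hypothesis $w^{-1}(i+1) = w^{-1}(i) - 1$ says exactly that $w(d_\ell + 1) = i$. Since $0 \le d_\ell - 1 < d_\ell$ and $d_\ell - 1 \in D(w) \cup \{0\}$, the only possibility is $d_{\ell-1} = d_\ell - 1$; symmetrically, $d_\ell < d_\ell + 1$ together with $d_\ell + 1 \in D(w) \cup \{n\}$ forces $d_{\ell+1} = d_\ell + 1$. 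Hence the two index sets $\widetilde{P} = \{w(j) \mid d_{\ell-1} < j < d_\ell\}$ and $\widetilde{Q} = \{w(j) \mid d_\ell + 1 < j \le d_{\ell+1}\}$ of Proposition~\ref{prop_si_action_on_sigma_i} are both empty.

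Consequently the only admissible pair $(P,Q)$ with $P \subset \widetilde{P}$ and $Q \subset \widetilde{Q}$ is $(\emptyset, \emptyset) = (\widetilde{P}, \emptyset)$, and by the remark immediately preceding Proposition~\ref{prop_si_action_on_sigma_i} we have $\widetilde{w}_{\widetilde{P},\emptyset}^{(i)} = w$ and $\widetilde{\sigma}_{\widetilde{P},\emptyset}^{(i)} = \sigma_w$; in other words $u_{\emptyset,\emptyset} = e$ in~\eqref{eq_def_of_a}, which one can also verify directly from the fact that $d_{\ell-1}$ and $d_{\ell+1}$ remain descents of $w$. Thus $\sigma_w^{(i)} = \sigma_w$, and the conclusion of Proposition~\ref{prop_si_action_on_sigma_i} reads
\[
(t_{i+1} - t_i)\sigma_{s_i w} = s_i \cdot \sigma_w^{(i)} - \sigma_w^{(i)} = s_i \cdot \sigma_w - \sigma_w,
\]
which is the first displayed identity.

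For the final assertion I would apply the quotient map $H_T^*(\mathcal H_n) \to H^*(\mathcal H_n)$, that is, reduction modulo the ideal generated by $t_1, \dots, t_n$. This map is equivariant for the dot action, since the dot action only permutes the variables $t_j$, and it sends $(t_{i+1} - t_i)\sigma_{s_i w}$ to $0$ because this class lies in that ideal. Hence the identity above descends to $s_i \cdot \sigma_w = \sigma_w$ in $H^*(\mathcal H_n)$, so $\sigma_w$ is $s_i$-invariant. The one point meriting a little care is the identification $\widetilde{\sigma}_{\emptyset,\emptyset}^{(i)} = \sigma_w$ (equivalently $u_{\emptyset,\emptyset} = e$), but this is exactly what the remark before Proposition~\ref{prop_si_action_on_sigma_i} records, so no genuine obstacle arises.
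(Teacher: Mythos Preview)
Your proof is correct and follows essentially the same approach as the paper: both deduce from the hypothesis that $\widetilde{P}$ and $\widetilde{Q}$ are empty, so that $\sigma_w^{(i)}$ collapses to $\sigma_w$ and Proposition~\ref{prop_si_action_on_sigma_i} gives the identity directly. Your version is somewhat more explicit in verifying $d_{\ell-1}=d_\ell-1$, $d_{\ell+1}=d_\ell+1$ and in justifying the passage to $H^*(\mathcal H_n)$, but the argument is the same.
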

\begin{proof}
	The conditions $d_{\ell}- 1, d_{\ell}, d_{\ell}+1 \in D(w) \cup \{0,n\}$ and $d_{\ell}=w^{-1}(i+1) = w^{-1}(i) - 1$ implies that we have the following one-line notation of $w$:
	\[
	w = \dots \textcolor{red}{|} i+1 \textcolor{red}{|} i \textcolor{red}{|} \dots.
	\]
	In this case, the sets $\widetilde{P}$ and $\widetilde{Q}$ are both empty. Therefore, the class $\sigma_w^{(i)}$ is given by
	$\sigma_w^{(i)} = \sigma_w$,	so  we have
	\[
	(t_{i+1} - t_i) \sigma_{s_i w} = s_i \cdot \sigma_w - \sigma_w.
	\]
	Accordingly, as an element in $H^{\ast}(\mathcal H_n)$, the class $\sigma_w$ is invariant under the action of $s_i$.
\end{proof}

\begin{example}
	Let $n = 5$ and consider $w = 21435$. Then $\widetilde{P} = \{ 1\}	$ and $\widetilde{Q} = \{ 5\}$. For $P \subset \widetilde{P}$ and $Q \subset \widetilde{Q}$, permutations $\tvPQi$ and $\vPQi$ are given as follows.
	\begin{center}
		$\tvPQi$:	\begin{tabular}{|c|*{2}{c|}}
			\hline
			\diagbox{$P$}{$Q$}
			&\makebox[3em]{$\{5\}$}
			&\makebox[3em]{$\emptyset$} \\ \hline \hline
			$\{1\}$ & $2 \textcolor{red}{|} 145 \textcolor{red}{|} 3$ & $2\textcolor{red}{|} 14 \textcolor{red}{|} 35$  \\\hline
			$\emptyset$ & $245\textcolor{red}{|}13$  & $24\textcolor{red}{|}135$ \\\hline
		\end{tabular}
		$\qquad$
		$\vPQi$:	\begin{tabular}{|c|*{2}{c|}}
			\hline
			\diagbox{$P$}{$Q$}
			&\makebox[3em]{$\{5\}$}
			&\makebox[3em]{$\emptyset$} \\ \hline \hline
			$\{1\}$ & $2 \textcolor{red}{|} 145 \textcolor{red}{|} 3$ & $2\textcolor{red}{|} 14 \textcolor{red}{|} 35$  \\\hline
			$\emptyset$ & $4\textcolor{red}{|}2 5\textcolor{red}{|}13$  & $4\textcolor{red}{|}2 \textcolor{red}{|}135$ \\\hline
		\end{tabular}
	\end{center}
	Therefore, we have
	\[
	\sigma_{21435}^{(3)} = \sigma_{21453} + \sigma_{21435} + s_{2,4} \sigma_{42513} + s_{2,4} \sigma_{42135}.
	\]
Here, $s_{2,4}$ is a transposition exchanging $2$ and $4$.
	Because $s_{2,4} = s_2s_3s_2$, we have
	\[
	\begin{split}
	s_{2,4} \cdot \sigma_{42513} &= (s_2s_3s_2)\cdot \sigma_{42513} = (s_2s_3)\cdot \sigma_{43512} \\
	&= s_2 \cdot ((t_4-t_3) \sigma_{34512} + \sigma_{43512} + \sigma_{45312} - \sigma_{35412}) \quad \text{(by Proposition~\ref{prop_si_action_on_sigma_i})}\\
	&= (t_4-t_2) \sigma_{24513} + \sigma_{42513} + \sigma_{45213} - \sigma_{25413}, \\
	s_{2,4} \cdot \sigma_{42135} &= (s_2s_3s_2) \cdot \sigma_{42135}
	= (s_2s_3) \cdot \sigma_{43125} \\
	&= s_2 \cdot ((t_4-t_3) \sigma_{34125} + \sigma_{43125}) \quad \text{(by Corollary~\ref{cor_sigma_invariant_si_singular})}\\
	&= (t_4-t_2) \sigma_{24135} + \sigma_{42135}.
	\end{split}
	\]
	Accordingly, we obtain
	\[
	\begin{split}
	s_3 \cdot \sigma_{21435}
	& = (t_4-t_3) \sigma_{21345} + \sigma_{21435} \\
	&\qquad + \sigma_{21453}+ (t_4-t_2) \sigma_{24513} + \sigma_{42513} + \sigma_{45213} - \sigma_{25413}
	+ (t_4-t_2) \sigma_{24135} + \sigma_{42135} \\
	& \quad \qquad - (\sigma_{21354} + (t_3-t_2) \sigma_{23514} + \sigma_{32514} + \sigma_{35214} - \sigma_{25314}
	+ (t_3-t_2) \sigma_{23145} + \sigma_{32145}).
	\end{split}
	\]
\end{example}

\subsection{Permutation module decompositions}\label{sec:hong_permutohedral}
The $e$-positivity conjecture on the chromatic quasisymmetric functions of Shareshian and Wachs \cite{SW} is shown independently by Brosnan--Chow and Guay-Paquet, to be  equivalent to the conjecture that $H^{2k}(\Hess(X, h))$ is isomorphic to a direct sum of permutation modules of $\frak S_n$ for each $k$.
\emph{Permutation modules} of the symmetric group $\mathfrak{S}_n$ are
$M^{\la}=1 \uparrow_{ \mathfrak{S}_{J_1}\times\cdots \times \mathfrak{S}_{J_{k+1}}}^{\mathfrak{S}_n}= \mathbb{C}[\mathfrak{S}_n(\mathbf t^\la)]$ for partitions $\la=(\la_1, \dots, \la_{k+1})$ of $n$, where $J_s\colonequals
[\la_1+\cdots+\la_{s-1}+1,\la_1+\cdots+\la_s]$ for $s=1, \dots, k+1$ and $\mathbf t^\la=(J_1, \dots, J_{k+1})$. A natural basis of $M^\la$ is the set $\{(J_1, \dots, J_{k+1}) \mid |J_s|=\la_s,\,\, \bigcup_s J_s=[n]\}$. See \cite{Sag} for the representation theory of the symmetric groups.
We remark that $M^{\bf a}$, for a composition ${\bf a}$ of $n$ can be defined in the same way as $M^\la$ is defined, whereas $M^{\bf a}$ is isomorphic to $M^{\la({\bf a})}$ for the partition $\la({\bf a})$ obtained from ${\bf a}$ by arranging the parts of ${\bf a}$ in nonincreasing order.

\begin{proposition} [\cite{BrosnanChow18},  \cite{G-P2}]
	For a Hessenberg function $h$,
	\[ \sum_k \mathrm{ch}  H^{2k}(\Hess(S,h))t^k=\omega X_{G(h)}(\mathbf x, t)\,,\]
	where  $\mathrm {ch}$ is the Frobenius characteristic map and $\omega$ is the involution on the ring $\Lambda$ of symmetric functions   sending elementary symmetric functions to complete  homogeneous symmetric functions.
\end{proposition}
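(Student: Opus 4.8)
Since this is the theorem of Brosnan--Chow and of Guay-Paquet that the paper is quoting, the natural plan is to reproduce one of their arguments; I would present the geometric one (Brosnan--Chow) and use the combinatorial recursion of Guay-Paquet as a consistency check. The right-hand side is already pinned down combinatorially: by Shareshian--Wachs's expansion of $X_{G(h)}(\mathbf x,t)$ in Gessel's fundamental quasisymmetric functions, applying $\omega$ and extracting the coefficient of $x_1\cdots x_n$ recovers $\sum_{w\in\mathfrak S_n} t^{\ell_h(w)}$, which by Proposition~\ref{prop_Betti_numbers} is the graded dimension of $H^{*}(\Hess(S,h))$. So the task is to upgrade this numerical match to an isomorphism of graded $\mathfrak S_n$-modules.

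First I would form the family $\pi\colon\mathcal X\to\mathfrak{gl}_n$ with fiber $\Hess(A,h)$ over $A$. Over the regular semisimple locus $U$ the morphism $\pi$ is a fiber bundle, so each $R^{2k}\pi_{*}\C$ restricts to a local system on $U$ and $\pi_1(U)$ acts on the stalk; the crucial input (Tymoczko's identification of the dot action, sharpened by Brosnan--Chow) is that this monodromy action factors through the Weyl group $\mathfrak S_n$ and, under the GKM presentation $H^{*}_T(\Hess(S,h))\cong\{(p(v))_{v}\}$, agrees with the dot action studied in Section~\ref{sec:hong}. Thus $H^{2k}(\Hess(S,h))$ with the dot action is exactly the monodromy representation at a regular semisimple point.

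Next I would degenerate toward a regular nilpotent $N$ along a generic line through $N$ in $\mathfrak{gl}_n$. Because $\pi$ is proper with $\mathcal X$ smooth, the decomposition theorem and the local invariant cycle theorem let one compare the nearby-cycle cohomology of the family with $H^{2k}(\Hess(N,h))$ and, matching weights, identify $H^{2k}(\Hess(S,h))$ as an $\mathfrak S_n$-module with $H^{2k}(\Hess(N,h))$ carrying its Springer-type $\mathfrak S_n$-action; this step reduces everything to the regular nilpotent Hessenberg variety, for which the identity $\sum_{k}\mathrm{ch}\,H^{2k}(\Hess(N,h))\,t^{k}=\omega X_{G(h)}(\mathbf x,t)$ follows from the Shareshian--Wachs combinatorics. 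As an independent confirmation I would check that both sides of the claimed identity satisfy Guay-Paquet's ``modular law'' — a three-term linear relation among the symmetric functions attached to Hessenberg functions differing in a single value — together with the base cases (the path and the complete graph), where both sides are elementary; the modular law then forces equality for all $h$.

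The hard part is the middle step: proving that the geometric monodromy is the \emph{entire} dot action, and that the perverse/weight filtration is split enough that the special fiber detects the whole generic cohomology as an $\mathfrak S_n$-module. The GKM presentation and the explicit $s_i$-action formulas of Section~\ref{sec:hong} suffice to describe the module combinatorially \emph{once one knows which permutation modules occur}, but by themselves they do not produce $X_{G(h)}$ — some transcendental input (the monodromy/Hodge-theoretic argument of Brosnan--Chow) or the purely combinatorial recursion of Guay-Paquet is needed to bridge the $\mathfrak S_n$-module structure to the chromatic quasisymmetric function.
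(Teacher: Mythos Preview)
The paper does not give its own proof of this proposition: it is stated twice (as Theorem~\ref{prop:BC_G-P} in the introduction and again here) purely as a citation to Brosnan--Chow and Guay-Paquet, and is used as an input rather than something established in the paper. So there is no ``paper's proof'' to compare your sketch against.

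Your outline is a reasonable high-level summary of the Brosnan--Chow strategy, but one step is misstated. You write that the degeneration identifies $H^{2k}(\Hess(S,h))$ as an $\mathfrak S_n$-module with $H^{2k}(\Hess(N,h))$ ``carrying its Springer-type $\mathfrak S_n$-action''; however, the regular nilpotent Hessenberg variety $\Hess(N,h)$ does not in general carry an $\mathfrak S_n$-action on its cohomology, and this is not how Brosnan--Chow argue. Their argument instead combines the local invariant cycle theorem with a palindromicity result for the perverse filtration to identify the graded pieces of the monodromy representation with the cohomology groups of regular (not just nilpotent) Hessenberg varieties for parabolic-type data, and then matches this with the Shareshian--Wachs expansion of $\omega X_{G(h)}$ in the power-sum basis. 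Your ``modular law'' alternative is closer to Guay-Paquet's independent proof, and that sketch is essentially correct in spirit; the nontrivial content there is verifying that the left-hand side (the Frobenius characters of the dot representations) satisfies the same three-term relation, which Guay-Paquet does via a Hopf-algebraic argument rather than geometrically.
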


The $e$-positivity conjecture is proved to be true for permutohedral varieties by Shareshian and Wachs in their seminal paper \cite{SW}, where they provided a closed form formula for the expansion of the corresponding chromatic quasisymmetric functions as sums of elementary symmetric functions (Theorem C.4 or Table 1 of \cite{SW}).
Transforming this formula in $\Lambda[t]$ into a formula in $R[t]$ via the isomorphism of the ring $\Lambda$  of symmetric functions with the ring $R$ of $\mathfrak S_n$-modules, we obtain the following formula in $R[t]$.

\begin{proposition} [Theorem~C.4 or Table 1~of \cite{SW}] \label{prop:expansion}
	\begin{equation}\label{eq:permutation module decomposition} \sum_{k=0}^{n-1}H^{2k}(\mathcal H_n)t^k = \sum_{m=1}^{\lfloor\frac{n+1}{2}\rfloor }
	\sum_{\substack{ k_1, \dots, k_m \geq 2,\\ \sum k_i = n+1 }}
	M^{(k_1-1, k_2, \dots, k_m)} t^{m-1}\prod_{i=1}^m [k_i -1]_t.
\end{equation}
	Here, $[k]_t$ is the polynomial
\[
1+t + \dots + t^{k-1} = \frac{1-t^k}{1-t}
\]
	in $t$.
\end{proposition}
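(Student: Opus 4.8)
The plan is to read the statement off from the theorem of Brosnan--Chow and Guay--Paquet (Theorem~\ref{prop:BC_G-P}) together with the explicit elementary-symmetric-function expansion of the chromatic quasisymmetric function of a path, which is exactly the content of Theorem~C.4 (equivalently, Table~1) of~\cite{SW}, and then to transport the resulting identity from the ring $\Lambda[t]$ of symmetric functions to the representation ring $R[t]$ of $\mathfrak S_n$-modules along the inverse Frobenius characteristic map. No genuinely new geometric or combinatorial input is needed; the proof is a change of language.

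First I would record that $\mathcal H_n=\Hess(S,h)$ for $h=(2,3,\dots,n,n)$ and that the graph $G(h)$ in Theorem~\ref{prop:BC_G-P} is the path $P_n$ on vertex set $[n]$ with edges $\{i,i+1\}$, since this is the incomparability graph of the natural unit interval order attached to $h$ (as noted after Definition~\ref{def_graph_Ghw}). Thus Theorem~\ref{prop:BC_G-P} gives
\[
\sum_{k\ge 0}\mathrm{ch}\,H^{2k}(\mathcal H_n)\,t^k=\omega X_{P_n}(\mathbf x,t).
\]
Next I would invoke the Shareshian--Wachs result: Theorem~C.4 of~\cite{SW} expands $X_{P_n}(\mathbf x,t)$ as an explicit nonnegative combination of elementary symmetric functions, and after applying $\omega$ (which sends $e_i\mapsto h_i$, hence $e_\lambda\mapsto h_\lambda$) a short reindexing puts it in the shape
\[
\omega X_{P_n}(\mathbf x,t)=\sum_{m=1}^{\lfloor\frac{n+1}{2}\rfloor}\ \sum_{\substack{k_1,\dots,k_m\ge 2\\ k_1+\cdots+k_m=n+1}} h_{(k_1-1,\,k_2,\dots,k_m)}\,t^{m-1}\prod_{i=1}^m[k_i-1]_t,
\]
where one notes $(k_1-1)+k_2+\cdots+k_m=n$, so each index is indeed a composition of $n$.

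Finally I would apply $\mathrm{ch}^{-1}$ coefficientwise in $t$. Since $\mathrm{ch}$ is a ring isomorphism with $\mathrm{ch}(M^\lambda)=h_\lambda$, and since $h_{\mathbf a}=h_{\lambda(\mathbf a)}$ for any composition $\mathbf a$ (matching the identification $M^{\mathbf a}\cong M^{\lambda(\mathbf a)}$ recorded above), each term $h_{(k_1-1,k_2,\dots,k_m)}$ pulls back to the permutation module $M^{(k_1-1,k_2,\dots,k_m)}$; as every $H^{2k}(\mathcal H_n)$ is an honest $\mathfrak S_n$-module and $\mathbb C\mathfrak S_n$ is semisimple, equality in $R[t]$ is equality of genuine graded modules, yielding the asserted decomposition. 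The only real work is bookkeeping: matching the indexing conventions of~\cite{SW} (where the $e$-expansion is stated via acyclic orientations of $P_n$, or via compositions and the statistic counting ascents) with the triple $(m;k_1,\dots,k_m)$ used here, and checking that the resulting list of partitions together with the multiplicities $t^{m-1}\prod_i[k_i-1]_t$ agrees term by term. I expect this reindexing step, rather than any conceptual obstacle, to be the one point requiring care; once the dictionary $e_\lambda\leftrightarrow h_\lambda\leftrightarrow M^\lambda$ is in place, the identity is immediate.
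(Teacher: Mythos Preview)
Your proposal is correct and follows exactly the approach the paper takes: the paper does not give a standalone proof of this proposition but simply cites Theorem~C.4 (or Table~1) of~\cite{SW} for the $e$-expansion of the chromatic quasisymmetric function of the path, and the sentence preceding the proposition explains that one transports this identity from $\Lambda[t]$ to $R[t]$ via the Frobenius characteristic isomorphism (using Theorem~\ref{prop:BC_G-P} along the way). Your write-up is a faithful and slightly more detailed unpacking of that same one-line argument.
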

Let $\mathcal G_k$ be the set of $w \in \mathfrak S_n$ with $\mathrm{des}(w)=k$  such that  $A_w$ contains $w_0$, the longest element in~$\mathfrak S_n$, and let $\mathcal G$ be the union  $\bigcup_{k=0}^{n-1}\mathcal G_k$.

We will construct a basis $\bigcup_{k=0}^{n-1}\{ \widehat{\sigma}_w \mid w \in \La \}$ of $\sum_{k=0}^{n-1}H^{2k}(\mathcal H_n)$ generating permutation modules in the right hand side of~\eqref{eq:permutation module decomposition}.

Recall that for a permutation $w \in \mathfrak S_n$ with $D(w) =\{d_1 < \cdots < d_k\}$, the support $A_w$ of $\sigma_w$ is $(\mathfrak S_{J_1} \times \cdots \times \mathfrak S_{J_{k+1}})w$, where $J_s=w[d_{s-1}+1,d_s]$ for $s=1,2,\dots, k+1$. Here, we set $d_0=0$ and $d_{k+1}=n$ (Proposition~\ref{prop:Perm_A}).

\begin{lemma}\label{lemma_bijection_between_Lk_and_compositions}
	There is a bijective correspondence between the set of compositions and $\mathcal G\colonequals\bigcup_{k = 0}^{n-1} \La$. Indeed, a composition $\mathbf a = (a_1,\dots,a_{k+1})$ corresponds to an element $w(\mathbf a)$ in $\La$ defined by
\[
w(\mathbf a) \colonequals n-d_1+1 \cdots n\textcolor{red}{|}n-d_2+1 \cdots n-d_1 \textcolor{red}{|} \cdots \textcolor{red}{|} 1 \cdots n-d_k,
\]
where $d_s=\sum_{i=1}^{s} a_i$ for $s=1, \dots, k$.
\end{lemma}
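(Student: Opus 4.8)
\textbf{Proof plan for Lemma~\ref{lemma_bijection_between_Lk_and_compositions}.}
The plan is to check three things: (i) the formula for $w(\mathbf a)$ really produces a well-defined permutation of $[n]$ with descent set exactly $\{d_1 < \cdots < d_k\}$; (ii) each such $w(\mathbf a)$ lies in $\La$, i.e.\ $w_0 \in A_{w(\mathbf a)}$; and (iii) conversely every $w \in \mathcal G$ arises as $w(\mathbf a)$ for a unique composition $\mathbf a$. Since compositions of $n$ into $k+1$ parts are in obvious bijection with subsets $\{d_1 < \cdots < d_k\} \subseteq [n-1]$ via $d_s = a_1 + \cdots + a_s$, it suffices to show that $w \mapsto D(w)$ restricts to a bijection from $\La$ onto the set of $k$-element subsets of $[n-1]$, and that the inverse is given precisely by the displayed formula.

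First I would verify (i): reading the one-line notation of $w(\mathbf a)$, the $s$-th block (for $s = 1, \dots, k+1$) consists of the consecutive integers from $n - d_s + 1$ to $n - d_{s-1}$ listed in increasing order, where $d_0 = 0$ and $d_{k+1} = n$. Within each block the values increase, so no descent occurs inside a block; between block $s$ and block $s+1$ the last entry of block $s$ is $n - d_{s-1}$ while the first entry of block $s+1$ is $n - d_{s+1} + 1$, and since $d_{s-1} < d_{s+1} - 1$ (as $d_{s} \ge d_{s-1}+1$ and $d_{s+1}\ge d_s+1$, with strictness because parts are positive) we get $n - d_{s-1} > n - d_{s+1} + 1$, hence a descent exactly at position $d_s$. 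Thus $D(w(\mathbf a)) = \{d_1, \dots, d_k\}$ and the blocks of $w(\mathbf a)$ under the descent decomposition of Proposition~\ref{prop:Perm_A} are exactly the value sets $J_s = w(\mathbf a)[d_{s-1}+1, d_s] = \{n - d_s + 1, \dots, n - d_{s-1}\}$.

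Next, (ii): by Proposition~\ref{prop:Perm_A}, $A_{w(\mathbf a)} = (\mathfrak S_{J_1} \times \cdots \times \mathfrak S_{J_{k+1}})\, w(\mathbf a)$, which as a set of permutations is $\{ v \in \mathfrak S_n \mid v[d_{s-1}+1, d_s] = J_s \text{ for all } s\}$. Now $w_0 = n\, (n-1) \cdots 2\, 1$ has $w_0[d_{s-1}+1, d_s] = \{n - d_s + 1, \dots, n - d_{s-1}\} = J_s$ for every $s$, precisely because the $J_s$ partition $[n]$ into the \emph{intervals} of integers read off from the top down. Hence $w_0 \in A_{w(\mathbf a)}$ and $w(\mathbf a) \in \La$. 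For (iii), suppose $w \in \La$ with $D(w) = \{d_1 < \cdots < d_k\}$; then $w_0 \in A_w = \{v \mid v[d_{s-1}+1,d_s] = w[d_{s-1}+1,d_s]\}$ forces $w[d_{s-1}+1, d_s] = w_0[d_{s-1}+1, d_s] = \{n - d_s + 1, \dots, n - d_{s-1}\}$ for each $s$; since $w$ has no descent inside the block $[d_{s-1}+1, d_s]$, the values in that block must appear in increasing order, which pins $w$ down to be exactly $w(\mathbf a)$ for $\mathbf a = (d_1, d_2 - d_1, \dots, n - d_k)$. This shows $w \mapsto D(w)$ is injective on $\La$ with image all $k$-subsets and inverse the stated formula, completing the bijection $\mathbf a \leftrightarrow w(\mathbf a)$.

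The only mildly delicate point — and the place I would be most careful — is the interplay between the two notions of ``block'': the descent-block of a permutation (a set of \emph{positions}) versus the interval of \emph{values} occupying it. The statement ``$w_0 \in A_w$'' is a statement about value sets, and the reason $w_0$ works for $w(\mathbf a)$ but not for a generic permutation with the same descent set is exactly that the value sets $J_s$ of $w(\mathbf a)$ are forced to be consecutive-integer intervals arranged in decreasing order of magnitude from the first block to the last. I would make sure to state this equivalence cleanly (perhaps as a one-line observation that $w_0 \in A_w$ iff each descent-block of $w$ carries an interval of consecutive integers and these intervals decrease) before invoking it, since everything else is a routine unwinding of Proposition~\ref{prop:Perm_A}.
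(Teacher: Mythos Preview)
Your proposal is correct and follows essentially the same approach as the paper: show $w_0\in A_{w(\mathbf a)}$ via Proposition~\ref{prop:Perm_A}, and conversely deduce from $w_0\in A_w$ that each descent block of $w$ must carry the interval $\{n-d_s+1,\dots,n-d_{s-1}\}$ in increasing order, forcing $w=w(\mathbf a)$. The paper's proof is considerably terser (it writes ``clearly'' for the forward direction and asserts the converse form without justification), so your version simply spells out the details the paper leaves implicit.
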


\begin{proof} Let ${\bf a}=(a_1, \dots, a_{k+1})$ be a composition of $n$.
	Then, clearly, $A_{w({\bf a})}$ contains $w_0$, which shows that $w({\bf a})\in\La$.
	
	Conversely, suppose that $A_w$ contains $w_0$. Let ${\bf a}$ be the composition with $S({\bf a} )=D(w)=\{d_1<\cdots<d_k \}$. Then $w$ is of the form
	\[
n-d_1+1 \cdots n\textcolor{red}{|} n-d_2+1 \cdots n-d_1 \textcolor{red}{|} \cdots \textcolor{red}{|}1 \cdots n-d_k\,,
\]
	and thus we have $w({\bf a})=w$.
\end{proof}
We note that $w(\bf a)$ is the Bruhat-maximal element in the set of minimal coset representatives for $\mathfrak{S}_n/\mathfrak{S}_{\bf a}$, where $\mathfrak{S}_{\bf a}$ denotes the Young subgroup generated by $s_i$ for $i \in [n-1] \setminus S(\bf a)$.

\begin{remark} In general, when $h\colon[n] \rightarrow [n]$ is a Hessenberg function,
	the intersection of $X_w^{\circ}$ with the nilpotent Hessenberg variety $\Hess(N,h)$ is nonempty if and only if
\[
w^{-1}(w(j)-1) \leq h(j) \text{ for all } j \in [n]
\]
(see Lemma 2.3 of \cite{AHHM19}). When $h=(2,3,\dots,n,n)$, our $\mathcal G = \bigcup_k \mathcal G_k$ is the image of the set of these $w$'s  by the involution $\iota\colon\mathfrak S_n \rightarrow \mathfrak S_n$ given by $(\iota(w))(i) = n-w(i) +1$ for $i \in [n]$. We expect that  for general $h$ the latter set  will play the same role as that of $\mathcal G$.
\end{remark}

\begin{definition}[cf. Chow~\cite{Chow_erasing}]\label{def:erasure}
	\begin{enumerate} \item For a  set $D=\{d_1 <\cdots <d_k\}\subset [n]$ of \emph{marks}, define the {\it erasure}  $e(D)$ of $D$ by
	\[
e(D)=\{ d \in D \mid d \not=1 \text{ and } d-1 \not \in D \}\,.
\]
	
		\item For a composition ${\bf a}=(a_1, \dots, a_{k+1})$ of $n$,
		we define the {\it erasure} $\widehat{\bf a}$ of ${\bf a}$  by the composition with $S(\widehat{\bf a}) = e(S({\bf a})) $.
	\end{enumerate}
\end{definition}

\begin{definition}\label{def:generator}
	For $w \in \mathcal G$ with $D(w)= D=\{d_1 <\cdots <d_k\}$, let $e(D)=\{\epsilon_1<\cdots < \epsilon_l\}\subset D$ be the erasure of $D$. For $s=1, 2, \dots, k+1$, we
	let $J_s=w[d_{s-1}+1,d_s]$ and for $t=1, 2, \dots, l+1$, we let $\widehat{J}_t=w[\epsilon_{t-1}+1, \epsilon_t]$, where $d_{0}=\epsilon_{0}=0$ and $d_{k+1}=\epsilon_{l+1}=n$.
	\begin{enumerate}
		\item Define a subgroup $\mathfrak S_w$ of $\mathfrak S_n $ as
\[
\mathfrak S_w\colonequals\mathfrak S_{\widehat{J}_1}\times \cdots \times \mathfrak S_{\widehat{J}_{l+1}}\,,
\]
		that contains $\mathfrak S_{J_1}\times \cdots \times \mathfrak S_{J_{k+1}}$ as a subgroup.
		\item Define
\[
\mathfrak S_w^{\circ}\colonequals\mathfrak S_w / {(\mathfrak S_{J_1}\times \cdots \times \mathfrak S_{J_{k+1}})}\,.
\]
		\item\label{sigmahat} Define an element in $H^{2k}_T(\mathcal H_n)$
\[
\widehat{\sigma}_{w}\colonequals\sum_{\bar{v} \in \mathfrak S_w^{\circ}} v \cdot \sigma_{w},
\]
where $\bar{v}$ is the coset that contains $v \in \frak{S}_w$.
		
		\item Define ${\bf a}(w)$ by the composition such that  $S({\bf a}(w)) =D(w)$. Put $\widehat{\bf a}(w)\colonequals\widehat{{\bf a}(w) }$.
		
	\end{enumerate}
\end{definition}

We note that each $\mathfrak S_{J_s}$, $s=1, \dots, k+1$, stabilizes $\sigma_w$ in Definition~\ref{def:generator}, thus the element $\widehat{\sigma}_{w}$ in~\eqref{sigmahat} is well defined.

Before we continue on the construction of permutation module decomposition of $H^*(\mathcal H_n)$, we state a conjecture on bases of the equivariant cohomology space of $\mathcal H_n$ due to Chow, whose proof follows from our main theorem in the current section; see Corollary~\ref{cor:Chow_conjecture}. The construction of the suggested basis elements by Chow is different from that obtained in the following lemma; however, they can be shown to coincide using the explicit formula for the classes $\sigma_w$:
\begin{lemma}\label{lem:generator} For $w \in \mathcal G$ with $D(w)=\{d_1 <\cdots <d_k\}$, let $\widehat{\mathbf a}=\widehat{\mathbf a}(w)$.
	Then
	\begin{enumerate}
		\item $\supp (\hats_w) =  A_{w(\widehat{\mathbf a})}$;
		\item for $u\in \supp (\hats_w)$, $\hats_w(u)=\prod_{s=1}^{k}(t_{u(d_s+1)}-t_{u(d_s)})$;
\item the stabilizer subgroup of $\hats_w \in
H^{\ast}_T(\mathcal H_n)$ is the same as $\mathfrak{S}_w$.
	\end{enumerate}
\end{lemma}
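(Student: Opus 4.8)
\textbf{Proof proposal for Lemma~\ref{lem:generator}.}
The plan is to analyze $\widehat{\sigma}_w = \sum_{\bar v \in \mathfrak{S}_w^{\circ}} v\cdot \sigma_w$ directly via the GKM description, using Proposition~\ref{prop:Perm_A} and Theorem~\ref{thm:Perm_class} to control the support and the values of each summand $v\cdot\sigma_w$, and then summing. The key observation to establish first is that the summands $v\cdot\sigma_w$ for distinct cosets $\bar v \in \mathfrak{S}_w^{\circ}$ have \emph{disjoint} supports. Indeed, $v\cdot\sigma_w$ has support $v\cdot A_w = (\mathfrak{S}_{vJ_1}\times\cdots\times\mathfrak{S}_{vJ_{k+1}})vw$, and since $v\in\mathfrak{S}_w=\mathfrak{S}_{\widehat J_1}\times\cdots\times\mathfrak{S}_{\widehat J_{l+1}}$ permutes within each block $\widehat J_t$, two elements of $\mathfrak{S}_w$ in different cosets of $\mathfrak{S}_{J_1}\times\cdots\times\mathfrak{S}_{J_{k+1}}$ must differ on the ordered partition $(J_1,\dots,J_{k+1})$ in a way that forces the supports apart; this is where I expect the bulk of the combinatorial bookkeeping. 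Granting disjointness, for each $u\in\mathrm{supp}(\widehat\sigma_w)$ there is exactly one coset $\bar v$ with $u\in v\cdot A_w$, so $\widehat\sigma_w(u) = (v\cdot\sigma_w)(u)$.

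For part (1), I would show $\bigsqcup_{\bar v\in\mathfrak{S}_w^{\circ}} v\cdot A_w = A_{w(\widehat{\mathbf a})}$. The set on the left consists of all $u\in\mathfrak{S}_n$ such that, for some $v\in\mathfrak{S}_w$, $u$ agrees with $vw$ up to the Young subgroup $\mathfrak{S}_{J_1}\times\cdots\times\mathfrak{S}_{J_{k+1}}$; unwinding, this says precisely that $u[\epsilon_{t-1}+1,\epsilon_t] = w[\epsilon_{t-1}+1,\epsilon_t] = \widehat J_t$ as \emph{sets} for all $t=1,\dots,l+1$. Since $w\in\mathcal G$, one checks that $w(\widehat{\mathbf a})$ has descent set exactly $e(D(w)) = \{\epsilon_1<\dots<\epsilon_l\}$ and that $w(\widehat{\mathbf a})[\epsilon_{t-1}+1,\epsilon_t] = \widehat J_t$; applying Proposition~\ref{prop:Perm_A} to $w(\widehat{\mathbf a})$ then gives exactly the same set of $u$'s. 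This step requires verifying that the blocks $\widehat J_t$ of $w$ are the reverse-consecutive intervals produced by the recipe in Lemma~\ref{lemma_bijection_between_Lk_and_compositions}, which follows from the explicit one-line form of elements of $\mathcal G$.

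For part (2), take $u\in\mathrm{supp}(\widehat\sigma_w)$ and the unique $v$ with $u = vw'$ for $w'\in A_w$, so $\widehat\sigma_w(u) = (v\cdot\sigma_w)(u) = v\bigl(\sigma_w(v^{-1}u)\bigr)$. By Theorem~\ref{thm:Perm_class}, $\sigma_w(v^{-1}u) = \prod_{s=1}^k (t_{(v^{-1}u)(d_s+1)} - t_{(v^{-1}u)(d_s)})$, and applying $v$ to the $t$-variables replaces the indices by $u(d_s+1), u(d_s)$, yielding $\prod_{s=1}^k (t_{u(d_s+1)} - t_{u(d_s)})$ as claimed. Here the only subtlety is that the product is genuinely over the full descent set $D(w) = \{d_1,\dots,d_k\}$ of $w$, not over $e(D(w))$, which is consistent because each summand $v\cdot\sigma_w$ still ``remembers'' all $k$ descents even though the support has grown.

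For part (3): $\mathfrak{S}_w$ stabilizes $\widehat\sigma_w$ because left multiplication by $\mathfrak{S}_w$ permutes the cosets $\mathfrak{S}_w^{\circ}$, hence permutes the summands $v\cdot\sigma_w$; more carefully, for $g\in\mathfrak{S}_w$ we have $g\cdot\widehat\sigma_w = \sum_{\bar v} (gv)\cdot\sigma_w = \widehat\sigma_w$ since $\{\overline{gv}\} = \mathfrak{S}_w^{\circ}$ and each $\sigma_w$ is $(\mathfrak{S}_{J_1}\times\cdots\times\mathfrak{S}_{J_{k+1}})$-invariant. Conversely, if $g\notin\mathfrak{S}_w$ then $g$ does not preserve the set partition $\{\widehat J_1,\dots,\widehat J_{l+1}\}$, so by part (1) the support $A_{w(\widehat{\mathbf a})}$ is not $g$-invariant, whence $g\cdot\widehat\sigma_w\neq\widehat\sigma_w$; this mirrors the argument in the proof of Proposition~\ref{prop:stabilizer of sigma_w_in_HT}. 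I expect the main obstacle to be the disjointness-of-supports claim and the precise identification in part (1): both hinge on carefully tracking how elements of $\mathfrak{S}_w$ act on the ordered block structure coming from $D(w)$ versus $e(D(w))$, so I would set up notation for the blocks once and reuse it throughout.
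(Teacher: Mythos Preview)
Your approach is correct and matches the paper's: both compute the support as $\mathfrak{S}_w\cdot w$ via Proposition~\ref{prop:Perm_A}, evaluate $\hats_w(u)$ via Theorem~\ref{thm:Perm_class} and the definition of the dot action, and bound the stabilizer by the support. One remark: the disjointness of the $v\cdot A_w$ that you flag as the ``bulk of the combinatorial bookkeeping'' is in fact immediate---since $A_w=(\mathfrak{S}_{J_1}\times\cdots\times\mathfrak{S}_{J_{k+1}})w$, the sets $v\cdot A_w$ for distinct $\bar v\in\mathfrak{S}_w^{\circ}$ are distinct left cosets of $\mathfrak{S}_{J_1}\times\cdots\times\mathfrak{S}_{J_{k+1}}$ translated by $w$, hence automatically disjoint; the paper does not isolate this step at all. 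For part~(3) you show $\mathfrak{S}_w\subset\Stab(\hats_w)$ by noting that $g\in\mathfrak{S}_w$ permutes the summands $v\cdot\sigma_w$, whereas the paper instead plugs the formula from part~(2) into $(v\cdot\hats_w)(u)$ and checks equality with $\hats_w(u)$ directly---both arguments work and are of comparable length.
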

\begin{proof}
By Proposition~\ref{prop:Perm_A}, the support of $\sigma_w = A_w$ is given by
	\[
	A_w = (\mathfrak{S}_{J_1} \times \cdots \times \mathfrak{S}_{J_{k+1}})w.
	\]
Because of the definition of $\mathfrak{S}_w^{\circ}$, the support of the class $\widehat{\sigma}_{w}\colonequals\sum_{\bar{v} \in \mathfrak S_w^{\circ}} v \cdot \sigma_{w}$ is
\begin{equation}\label{eq_supp_hats_w}
\supp(\widehat{\sigma}_w) = (\mathfrak{S}_w/(\mathfrak{S}_{J_1} \times \cdots \times \mathfrak{S}_{J_{k+1}})) \times A_w   = \mathfrak{S}_w \cdot w.
\end{equation}
Moreover, we have $D(w(\widehat{\mathbf a})) = e(D(w))$.
This proves the first statement. The second statement follows from Theorem~\ref{thm:Perm_class} and the definition of the action (see Subsection~\ref{subsec_4.1}). For the third statement, because of~\eqref{eq_supp_hats_w}, the stabilizer subgroup of the class $\hats_w$ is contained in~$\mathfrak{S}_w$. Moreover, for any $v \in \mathfrak{S}_w$ and $u \in \supp \hats_w$, we have $v^{-1}u \in \supp(\hats_w)$ and
\[
(v \cdot \hats_w)(u) = v(\hats_w(v^{-1}u))
=  v \left[\prod_{s=1}^k(t_{(v^{-1}u)(d_s+1)} - t_{(v^{-1} u) (d_s)})\right]
= \prod_{s=1}^k (t_{u(d_s+1)} - t_{u(d_s)}) = \hats_w(u),
\]
and hence $v$ stabilizes the class $\hats_w$. This proves the third statement.
\end{proof}

\begin{conjecture}[Erasing marks conjecture (Chow~\cite{Chow_erasing})]
	\[
\bigcup_{k=0}^{n-1}\bigcup_{w \in \mathcal G_k}\{ v \cdot \widehat{\sigma}_w \mid \bar{v} \in \mathfrak S_n/\mathfrak S_w\}
\]
	forms a basis of the equivariant cohomology space $H^*_T(\mathcal H_n)$.
Here, $\bar{v}$ is the coset that contains $v \in \mathfrak{S}_n$.
\end{conjecture}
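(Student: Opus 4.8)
The plan is to reduce the equivariant statement to its image in ordinary cohomology, to pin down all the dimensions using the known permutation‑module expansion, and then to prove the resulting non‑equivariant decomposition; this last point is the real content.

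\emph{Step 1 (reduction to ordinary cohomology).} Since $\mathcal H_n$ is a GKM space with vanishing odd cohomology, $H^{\ast}_T(\mathcal H_n)$ is free over $R\colonequals H^{\ast}(BT)=\mathbb C[t_1,\dots,t_n]$ and the restriction induces $H^{\ast}_T(\mathcal H_n)\otimes_R\mathbb C\cong H^{\ast}(\mathcal H_n)$. By the graded Nakayama lemma, a family of homogeneous classes in $H^{\ast}_T(\mathcal H_n)$ is an $R$‑module basis if and only if its image in $H^{\ast}(\mathcal H_n)$ is a $\mathbb C$‑basis. Each $v\cdot\widehat{\sigma}_w$ is homogeneous of degree $2\,\mathrm{des}(w)$ by Lemma~\ref{lem:generator}(2), so it suffices to prove that, for every $k$,
\[
\mathcal B_k\colonequals\bigl\{\, v\cdot\overline{\widehat{\sigma}_w}\ \bigm|\ w\in\mathcal G_k,\ \bar v\in\mathfrak S_n/\mathfrak S_w \,\bigr\}
\]
is a $\mathbb C$‑basis of $H^{2k}(\mathcal H_n)$, the bar denoting the image in $H^{\ast}(\mathcal H_n)$. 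Since $\mathcal B_k$ spans $\sum_{w\in\mathcal G_k}\mathbb C\mathfrak S_n(\overline{\widehat{\sigma}_w})$, this is exactly the decomposition $H^{2k}(\mathcal H_n)=\bigoplus_{w\in\mathcal G_k}\mathbb C\mathfrak S_n(\overline{\widehat{\sigma}_w})$ of Theorem~\ref{thm:erasing conjecture} together with the statement that each summand is the \emph{full} permutation module $\mathbb C[\mathfrak S_n/\mathfrak S_w]$, not a proper quotient.

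\emph{Step 2 (the count).} By Proposition~\ref{prop:basis}, $\dim_{\mathbb C}H^{2k}(\mathcal H_n)=\#\{w\in\mathfrak S_n:\mathrm{des}(w)=k\}$. By Lemma~\ref{lemma_bijection_between_Lk_and_compositions}, $w\mapsto\mathbf a(w)$ identifies $\mathcal G_k$ with the set of $(k+1)$‑compositions of $n$, and by Lemma~\ref{lem:generator}(3) and Definition~\ref{def:generator} the stabilizer $\mathfrak S_w$ is a Young subgroup whose block sizes are the parts of $\widehat{\mathbf a}(w)$, so $[\mathfrak S_n:\mathfrak S_w]=\dim M^{\widehat{\mathbf a}(w)}$. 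Splitting a $(k+1)$‑composition $\mathbf a$ of $n$ along the maximal runs of consecutive integers in $S(\mathbf a)$ gives a bijection between $\mathcal G_k$ and the pairs $\bigl((k_1,\dots,k_m),(j_1,\dots,j_m)\bigr)$ with $k_i\geq 2$, $\sum k_i=n+1$, $0\leq j_i\leq k_i-2$ and $m-1+\sum j_i=k$, under which $\widehat{\mathbf a}(w)$ is a rearrangement of $(k_1-1,k_2,\dots,k_m)$; comparing with the coefficient of $t^k$ in \eqref{eq:permutation module decomposition} (Proposition~\ref{prop:expansion}) gives
\[
\sum_{w\in\mathcal G_k}[\mathfrak S_n:\mathfrak S_w]\ =\ \sum_{\substack{k_1,\dots,k_m\geq 2,\ \sum k_i=n+1\\ 0\leq j_i\leq k_i-2,\ m-1+\sum j_i=k}}\dim M^{(k_1-1,k_2,\dots,k_m)}\ =\ \dim_{\mathbb C}H^{2k}(\mathcal H_n).
\]
In particular $|\mathcal B_k|\leq\dim H^{2k}(\mathcal H_n)$, so once $\mathcal B_k$ is shown to \emph{span} $H^{2k}(\mathcal H_n)$ it is automatically a basis; this forces the elements of $\mathcal B_k$ to be pairwise distinct (hence $\mathrm{Stab}(\overline{\widehat{\sigma}_w})=\mathfrak S_w$ and each summand is the full permutation module) and the sum to be direct.

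\emph{Step 3 (spanning, the main step).} It remains to show $H^{2k}(\mathcal H_n)=\sum_{w\in\mathcal G_k}\mathbb C\mathfrak S_n(\overline{\widehat{\sigma}_w})$. Equivalently, by the count of Step~2, the $\mathbb C\mathfrak S_n$‑module map $\bigoplus_{w\in\mathcal G_k}M^{\widehat{\mathbf a}(w)}\to H^{2k}(\mathcal H_n)$ carrying the generator of each summand to $\overline{\widehat{\sigma}_w}$ is an isomorphism; as the two sides have equal dimension it is enough to prove surjectivity, i.e. that $\overline{\sigma_v}\in\sum_{w\in\mathcal G_k}\mathbb C\mathfrak S_n(\overline{\widehat{\sigma}_w})$ for every $v$ with $\mathrm{des}(v)=k$ (recall $\{\sigma_v:\mathrm{des}(v)=k\}$ is a basis of $H^{2k}(\mathcal H_n)$ by Proposition~\ref{prop:basis}). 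I would carry this out by induction on $k$ using the explicit dot action: for a fixed descent composition $\mathbf a$, the descent‑preserving simple reflections of Proposition~\ref{prop:s_i action on S_n(a)} connect all the classes $\sigma_v$ with $v\in\mathfrak S_n(\mathbf a)$ into one $\mathfrak S_n$‑orbit, while Proposition~\ref{prop_si_action_on_sigma_i} rewrites the remaining simple reflections on the symmetrized class $\sigma_w^{(i)}$ modulo classes $(t_{i+1}-t_i)\sigma_{s_iw}$ with $\mathrm{des}(s_iw)=k-1$, so that in $H^{2k}(\mathcal H_n)$ the symmetrizations produced by erasing marks are built from the degree‑$2k$ generators $\overline{\widehat{\sigma}_w}$ together with images of lower degree; matching with the count of Step~2 then forces the inclusion to be an equality. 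As an alternative one can work equivariantly and localize: evaluating a relation $\sum_{w\in\mathcal G_k}\sum_{\bar v}c_{w,\bar v}\,(v\cdot\widehat{\sigma}_w)=0$ at a fixed point $\dot uB$ reduces, by Lemma~\ref{lem:generator} and Proposition~\ref{prop:Perm_A}, to $\sum_{w\in\mathcal G_k}c_{w,\overline{uw^{-1}}}\prod_{d\in D(w)}(t_{u(d+1)}-t_{u(d)})=0$ for all $u$, and one extracts the $c_{w,\bar v}$ by a triangularity argument ordered by the refinement order on the descent sets $D(w)$.

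\emph{Conclusion and the obstacle.} Steps~1--3 together show $\mathcal B_k$ is a $\mathbb C$‑basis of $H^{2k}(\mathcal H_n)$ for all $k$, and Step~1 then upgrades $\bigcup_k\bigcup_{w\in\mathcal G_k}\{v\cdot\widehat{\sigma}_w\}$ to an $R$‑module basis of $H^{\ast}_T(\mathcal H_n)$, which is the Erasing marks conjecture. The hard part is Step~3: Steps~1 and~2 are a formal reduction plus identities for Eulerian numbers and permutation‑module dimensions, whereas the spanning statement requires bridging from the individual basis classes $\sigma_v$, on which the dot action is explicit, to the erasure‑symmetrized classes $\widehat{\sigma}_w$ — that is, showing that symmetrizing over the newly created blocks does not shrink the span — and keeping careful track of how the supports of the $v\cdot\widehat{\sigma}_w$ interact across different descent sets.
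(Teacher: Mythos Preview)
Your Steps~1 and~2 are correct and match the paper exactly: the reduction to ordinary cohomology via freeness is the paper's proof of Corollary~\ref{cor:Chow_conjecture}, and your count is Proposition~\ref{prop:young subgroup type}. You are also right that Step~3 (spanning) is the entire content, and right to flag it as the obstacle---but neither of your two sketches closes the gap.

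Your first sketch, ``induction on $k$'', does not work as stated: there is no natural passage from $H^{2(k-1)}(\mathcal H_n)$ to $H^{2k}(\mathcal H_n)$ that would let the spanning statement in degree $k-1$ feed into degree $k$. The paper's induction is \emph{within a fixed $k$}, over the lexicographic order on $(k{+}1)$-compositions. Concretely, for each composition $\mathbf a$ it builds a labeled graph $G(\mathbf a)$ whose vertices $z$ biject with $\mathfrak S_n(\mathbf a)$ via $z\mapsto w_z$ (Proposition~\ref{lemma_wz}), proves that $\widehat{\sigma}_{w(\mathbf a)}=\sum_{z} c_z\,\sigma_{w_z}+(\text{terms with }\mathbf a(v)>\mathbf a)$ with all $c_z>0$ (Proposition~\ref{prop_hat_w_description_using_G}), and then uses Proposition~\ref{prop_si_action_on_sigma_i} along edges of $G(\mathbf a)$ to produce, for each $z\neq 0$, a relation $\sigma_{w_{z'}}-\sigma_{w_z}\in M=\sum_{u}M(u)$ with $z'$ closer to the origin (Lemma~\ref{lemma_main_thm}). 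The strict positivity of the $c_z$ is what makes the linear algebra go through: it guarantees the matrix relating $\{\sigma_{w_z}\}$ to $\{\widehat{\sigma}_{w(\mathbf a)}\}\cup\{\sigma_{w_{z'}}-\sigma_{w_z}\}$ is nonsingular. None of this structure---the lex induction, the graph $G(\mathbf a)$, or the positivity---appears in your sketch.

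Your second sketch, localization, yields the correct pointwise formula and does prove $\mathbb C$-linear independence of the $v\cdot\widehat{\sigma}_w$ \emph{inside $H^*_T$} (for fixed $u$ the products $\prod_{d\in D(w)}(t_{u(d+1)}-t_{u(d)})$ are square-free monomials in the independent linear forms $t_{u(j+1)}-t_{u(j)}$). But this is not what you need: linear independence in $H^*_T$ does not descend to $H^*$ (every positive-degree element of $H^*_T$ restricts nontrivially to fixed points yet may die in $H^*$), and it does not give $R$-linear independence either, since the same products are certainly not linearly independent over $R$. So the triangularity you gesture at would have to be organized much more carefully, across varying $u$, and you have not indicated how.
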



We state the main theorem of this section, whose proof will be provided in Section~\ref{sec:lee_permutohedral}. We will use the same notation for the image of $\widehat{\sigma}_w$ in the singular cohomology space $H^*(\mathcal H_n)$.

\begin{theorem} \label{thm:erasing conjecture}   For  $w \in \Gk$, let $M(w)\colonequals\mathbb C \mathfrak S_n (\widehat{\sigma}_{w})$ be the $\mathfrak S_n$-module  generated by $\widehat{\sigma}_{w}$.
	\begin{enumerate}
		\item For $w \in \Gk$, the $\mathfrak S_n$-module $M(w)$  is isomorphic to the permutation module  $M^{\widehat{\bf a}(w)}$.
		\item $H^{2k}(\mathcal{H}_n) = \bigoplus _{w \in \Gk}M(w)$.
	\end{enumerate}
	
\end{theorem}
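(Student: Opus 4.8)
The plan is to establish (1) first, and then deduce (2) by a dimension count against the Shareshian--Wachs formula in Proposition~\ref{prop:expansion}. For (1), fix $w \in \mathcal G_k$ with $D(w) = \{d_1 < \cdots < d_k\}$ and let $e(D(w)) = \{\epsilon_1 < \cdots < \epsilon_l\}$ be its erasure. By Lemma~\ref{lem:generator}(3), the stabilizer of $\widehat\sigma_w$ in $H^*_T(\mathcal H_n)$ is exactly $\mathfrak S_w = \mathfrak S_{\widehat J_1} \times \cdots \times \mathfrak S_{\widehat J_{l+1}}$, where $\widehat J_t = w[\epsilon_{t-1}+1,\epsilon_t]$. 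The first thing I would check is that the stabilizer of the image $\widehat\sigma_w \in H^*(\mathcal H_n)$ under the quotient map is still $\mathfrak S_w$: the inclusion $\mathfrak S_w \subseteq \Stab(\widehat\sigma_w)$ is automatic, and for the reverse inclusion one uses Lemma~\ref{lem:generator}(1),(2) together with the fact that the leading (degree-$k$) part of $\widehat\sigma_w$ already detects the support $A_{w(\widehat{\mathbf a})}$, which is not $v$-invariant for $v \notin \mathfrak S_w$; the quotient map preserves enough of this data because $H^{2k}(\mathcal H_n) = H^{2k}_T(\mathcal H_n)/(\text{ideal generated by } t_i)$ in each degree and the $\mathfrak S_n$-action is compatible. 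Granting this, $M(w) = \mathbb C\mathfrak S_n \cdot \widehat\sigma_w$ is a cyclic $\mathfrak S_n$-module with a generator fixed precisely by the Young subgroup $\mathfrak S_w \cong \mathfrak S_{\widehat{\mathbf a}(w)}$, so there is a surjection $M^{\widehat{\mathbf a}(w)} = \mathbb C[\mathfrak S_n/\mathfrak S_w] \twoheadrightarrow M(w)$. To see it is an isomorphism, I would show the translates $\{v \cdot \widehat\sigma_w \mid \bar v \in \mathfrak S_n/\mathfrak S_w\}$ are linearly independent in $H^{2k}(\mathcal H_n)$: using Lemma~\ref{lem:generator}(1) the supports $v \cdot A_{w(\widehat{\mathbf a})}$ are distinct cosets' images and one can find, for each $v$, a $T$-fixed point lying in $v \cdot \supp(\widehat\sigma_w)$ but in no other translate's support (this is where the permutohedron's face combinatorics enters, exactly as in the proof of Proposition~\ref{prop:Perm_A}). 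Hence $\dim M(w) = |\mathfrak S_n/\mathfrak S_w| = \binom{n}{\widehat{\mathbf a}(w)}$ and $M(w) \cong M^{\widehat{\mathbf a}(w)}$.

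For (2), I would first check the two sides have the same $\mathfrak S_n$-module structure by comparing with Proposition~\ref{prop:expansion}, and then upgrade the isomorphism to an internal direct sum decomposition. By Lemma~\ref{lemma_bijection_between_Lk_and_compositions}, $\mathcal G_k$ is in bijection with $(k+1)$-compositions $\mathbf a$ of $n$, and under $w \mapsto w(\mathbf a)$ we have $\widehat{\mathbf a}(w(\mathbf a)) = \widehat{\mathbf a}$; one then needs the purely combinatorial identity that
\[
\sum_{k=0}^{n-1}\ \sum_{\substack{\mathbf a \text{ composition of } n \\ \mathrm{des}(w(\mathbf a)) = k}} [M^{\widehat{\mathbf a}}]\, t^k
= \sum_{m=1}^{\lfloor (n+1)/2 \rfloor}\ \sum_{\substack{k_1,\dots,k_m \ge 2 \\ \sum k_i = n+1}} [M^{(k_1-1,k_2,\dots,k_m)}]\, t^{m-1} \prod_{i=1}^m [k_i-1]_t
\]
in the representation ring $R[t]$. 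This is a bookkeeping statement: a composition $\mathbf a$ with $e(S(\mathbf a)) = \{\epsilon_1 < \cdots < \epsilon_l\}$ gives the partition-type data $(k_1-1,k_2,\dots,k_m)$ with $m = l+1$ recording the erased blocks, and the number of compositions $\mathbf a$ with a prescribed erasure and a prescribed number $k$ of descents is governed by how many extra ``consecutive'' marks can be inserted into each block — which is precisely what the factor $\prod [k_i-1]_t$ counts. I would prove this by grouping the left-hand sum according to $\widehat{\mathbf a}$ and showing the generating polynomial in $t$ for the number of $\mathbf a$ refining a fixed $\widehat{\mathbf a}$ factors as the stated product of $t$-integers.

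Finally, to conclude $H^{2k}(\mathcal H_n) = \bigoplus_{w \in \mathcal G_k} M(w)$ as an internal direct sum (not merely an abstract isomorphism), I would count dimensions: $\sum_{w \in \mathcal G_k} \dim M(w) = \sum_{w \in \mathcal G_k} |\mathfrak S_n/\mathfrak S_w|$, and by the combinatorial identity this equals the coefficient of $t^k$ in the right-hand side of Proposition~\ref{prop:expansion} evaluated at $\mathbf x$-dimension, i.e.\ $\dim H^{2k}(\mathcal H_n)$, which by Proposition~\ref{prop:basis} is the number of permutations in $\mathfrak S_n$ with $k$ descents. So it suffices to prove the sum $\sum_{w \in \mathcal G_k} M(w)$ is \emph{direct}, equivalently that the union $\bigcup_{w \in \mathcal G_k}\{v \cdot \widehat\sigma_w \mid \bar v \in \mathfrak S_n/\mathfrak S_w\}$ is linearly independent in $H^{2k}(\mathcal H_n)$ — this is the Erasing Marks conjecture (Corollary~\ref{cor:Chow_conjecture}), and it is the crux of the whole argument. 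The approach I would take is to exhibit, for each $w \in \mathcal G_k$ and each coset $\bar v$, a $T$-fixed point $p_{w,\bar v}$ that lies in the support of $v\cdot\widehat\sigma_w$ and such that the matrix of values $(u\cdot\widehat\sigma_{w'})(p_{w,\bar v})$ is (block-)triangular with nonzero diagonal with respect to a suitable ordering — one orders first by $k' = \mathrm{des}(w')$ and then by a refinement of the Bruhat order on the maximal elements $w(\widehat{\mathbf a})$, using that $\widehat\sigma_{w}(w_{\max}) \ne 0$ only at the ``top'' of its own support. \textbf{The main obstacle} is exactly this last linear-independence/triangularity step: controlling the values of \emph{all} the translated erased classes simultaneously at a well-chosen system of fixed points, since the supports of different $M(w)$'s overlap substantially and the erasure operation mixes blocks in a way that is not monotone under Bruhat order. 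Establishing the right partial order and the vanishing pattern that makes the evaluation matrix triangular is where the real work lies; everything else is either cited from Sections~\ref{sec_description_minus_cell}--\ref{sec:cho} or is elementary composition combinatorics.
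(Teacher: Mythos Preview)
Your combinatorial identity matching $\sum_w M^{\widehat{\mathbf a}(w)}$ to the Shareshian--Wachs formula is exactly Proposition~\ref{prop:young subgroup type}, and the dimension-squeeze skeleton is the same as the paper's. But there is a genuine structural difference, and your proposed route to (1) has a gap.

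\textbf{The gap in (1).} Lemma~\ref{lem:generator}(3) only gives $\Stab(\widehat\sigma_w)=\mathfrak S_w$ in $H^*_T(\mathcal H_n)$. Passing to $H^*(\mathcal H_n)$ can in principle enlarge the stabilizer, and your ``leading-part detects the support'' argument does not survive the quotient: elements of $H^*$ are no longer tuples of polynomials, so the support is not defined there. Likewise, disjointness of the supports $v\cdot A_{w(\widehat{\mathbf a})}$ in $H^*_T$ does not by itself give linear independence of the images in $H^*$. The paper does \emph{not} prove these facts independently; it extracts them from the final dimension squeeze. So your logical order (prove (1), then (2)) cannot be made to work without essentially proving (2) first.

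\textbf{The dual approach.} You attack the crux by trying to prove \emph{directness} (linear independence of all $v\cdot\widehat\sigma_w$ across $w\in\mathcal G_k$) via a triangularity-at-fixed-points argument, and you correctly flag this as the main obstacle. The paper instead proves \emph{spanning}: it shows $M:=\sum_{u\in\mathcal G_k}M(u)=H^{2k}(\mathcal H_n)$, after which the chain
\[
\dim H^{2k}(\mathcal H_n)\le \sum_{u}\dim M(u)\le \sum_{u}\frac{|\mathfrak S_n|}{|\mathfrak S_u|}=\dim H^{2k}(\mathcal H_n)
\]
forces both the directness in (2) and the isomorphism in (1) simultaneously. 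The spanning proof (Proposition~\ref{prop_hat_w_description_using_G} and Lemma~\ref{lemma_main_thm}) is inductive on the \emph{lexicographic order on compositions}: one introduces a graph $G(\mathbf a)$ whose vertices index all permutations with descent composition $\mathbf a$, shows that $\widehat\sigma_{w(\mathbf a)}$ is a positive combination of the $\sigma_{w_z}$ for $z\in V(G(\mathbf a))$ plus terms with strictly larger composition, and then uses the explicit $s_i$-action formulas (Propositions~\ref{prop:s_i action on S_n(a)} and~\ref{prop_si_action_on_sigma_i}, Corollary~\ref{cor_sigma_invariant_si_singular}) to manufacture relations $\sigma_{w_{z'}}-\sigma_{w_z}\in M$ along edges of $G(\mathbf a)$. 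This is exactly the ``triangularity'' you were looking for, but organised by composition rather than by Bruhat order, and used to prove spanning rather than independence. The spanning direction is tractable precisely because the $s_i$-action formulas let you walk along $G(\mathbf a)$; your proposed evaluation-matrix argument would require controlling all erased classes at once, which is the difficulty you yourself identify but do not resolve.
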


Theorem~\ref{thm:erasing conjecture} proves the erasing marks conjecture of Chow.

\begin{corollary}[Erasing marks conjecture is true]\label{cor:Chow_conjecture}
	For each $k=0, \dots, n-1$,
\[
\bigcup_{w \in \mathcal G_k}\{ v\cdot \widehat{\sigma}_w\in H_T^{2k}(\mathcal H_n) \mid \bar{v} \in \mathfrak S_n/\mathfrak S_w\}
\]
	forms a basis of the $(2k)$th equivariant cohomology  module $H^{2k}_T(\mathcal H_n)$ over $\mathbb C[t_1, \dots, t_n]$.
\end{corollary}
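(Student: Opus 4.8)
The plan is to deduce this statement directly from Theorem~\ref{thm:erasing conjecture} together with standard facts about equivariant cohomology of GKM spaces. First I would recall that $\mathcal H_n$ has vanishing odd cohomology and finitely many $T$-fixed points, so $H_T^*(\mathcal H_n)$ is a free $\C[t_1,\dots,t_n]$-module whose rank in each degree $2k$ equals $\dim_\C H^{2k}(\mathcal H_n)$; moreover, restriction to the fixed locus identifies $H_T^*(\mathcal H_n)$ with the Goresky--Kottwitz--MacPherson subring of $\bigoplus_{v\in\mathfrak S_n}\C[t_1,\dots,t_n]$ (Theorem~\ref{thm_GKM}). A set of classes in $H_T^{2k}(\mathcal H_n)$ forms a $\C[t_1,\dots,t_n]$-basis if and only if its reduction modulo the ideal $(t_1,\dots,t_n)$ forms a $\C$-basis of $H^{2k}(\mathcal H_n)$ — this is the standard Nakayama-type argument using freeness and equivariant formality.

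Next I would count. By Lemma~\ref{lem:generator}(3), the stabilizer of $\widehat\sigma_w$ is exactly $\mathfrak S_w=\mathfrak S_{\widehat J_1}\times\cdots\times\mathfrak S_{\widehat J_{l+1}}$, so the orbit $\{\,v\cdot\widehat\sigma_w\mid \bar v\in\mathfrak S_n/\mathfrak S_w\,\}$ has cardinality $|\mathfrak S_n/\mathfrak S_w|=\binom{n}{\widehat{\mathbf a}(w)}$, which is precisely $\dim_\C M^{\widehat{\mathbf a}(w)}$. Summing over $w\in\mathcal G_k$ and invoking Theorem~\ref{thm:erasing conjecture}(1)--(2), the total number of classes in $\bigcup_{w\in\mathcal G_k}\{v\cdot\widehat\sigma_w\}$ equals $\sum_{w\in\mathcal G_k}\dim M^{\widehat{\mathbf a}(w)}=\dim H^{2k}(\mathcal H_n)$, which matches the rank of $H_T^{2k}(\mathcal H_n)$ as a $\C[t_1,\dots,t_n]$-module. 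So it remains only to show the collection is $\C[t_1,\dots,t_n]$-linearly independent, equivalently that the reductions $\{\,v\cdot\widehat\sigma_w \bmod (t_1,\dots,t_n)\,\}$ span $H^{2k}(\mathcal H_n)$ over $\C$.

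The spanning statement is exactly what Theorem~\ref{thm:erasing conjecture} gives: since $H^{2k}(\mathcal H_n)=\bigoplus_{w\in\mathcal G_k}M(w)$ and each $M(w)=\C\mathfrak S_n(\widehat\sigma_w)$ is spanned (over $\C$) by the $\mathfrak S_n$-translates $v\cdot\widehat\sigma_w$ in $H^{2k}(\mathcal H_n)$, the union of all such translates spans the whole space; and there are exactly $\dim H^{2k}(\mathcal H_n)$ of them counted with the orbit-stabilizer multiplicities above, so they actually form a $\C$-basis. Lifting back, the corresponding equivariant classes form a $\C[t_1,\dots,t_n]$-basis of $H_T^{2k}(\mathcal H_n)$. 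The only subtlety I anticipate — and the step I would treat most carefully — is the bookkeeping that the reduction map on the orbit $\mathfrak S_n/\mathfrak S_w$ is injective, i.e.\ that distinct cosets $\bar v$ really give $\C$-linearly independent reductions rather than merely spanning; this follows because the $T$-supports $\supp(v\cdot\widehat\sigma_w)=v\cdot\supp(\widehat\sigma_w)$ are genuinely distinct subsets of $\mathfrak S_n$ for distinct $\bar v$ (by Lemma~\ref{lem:generator}(3) the support determines the coset), and a triangularity argument on supports gives independence after reduction. Once that is in place, the corollary follows formally, so the genuine content is entirely carried by Theorem~\ref{thm:erasing conjecture}.
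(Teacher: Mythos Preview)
Your approach is correct and essentially identical to the paper's: both deduce from Theorem~\ref{thm:erasing conjecture} that the images of the classes $v\cdot\widehat\sigma_w$ form a $\C$-basis of $H^{2k}(\mathcal H_n)$, and then lift this to a $\C[t_1,\dots,t_n]$-basis of $H_T^{2k}(\mathcal H_n)$ using equivariant formality (the paper cites \cite{MP06} for the isomorphism $H_T^*(\mathcal H_n)\cong H^*(\mathcal H_n)\otimes\C[t_1,\dots,t_n]$, which is exactly your Nakayama step). Your closing paragraph on supports and triangularity is unnecessary: once spanning holds and the total number of coset representatives equals $\dim H^{2k}(\mathcal H_n)$, the basis property is automatic, and indeed the paper does not invoke any support argument here.
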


\begin{proof} For a fixed $k$ and $w\in\mathcal G_k$, we have $\left|\mathfrak S_n/\mathfrak S_w\right|=\dim (M^{\widehat{\bf a}(w)})$. Hence, the second part of Theorem~\ref{thm:erasing conjecture} proves that $\bigcup_{w \in \mathcal G_k}\{ v\cdot \widehat{\sigma}_w\in H^{2k}(\mathcal H_n) \mid \bar{v} \in \mathfrak S_n/\mathfrak S_w\}$ is a $\mathbb C$-basis of $H^{2k}(\mathcal H_n)$ for each~$k$.
	
	According to~\cite[Lemma~2.1]{MP06}, for a manifold $M$ with a smooth action of a compact torus $T=(S_1)^n$ such that the fixed point set is finite and non-empty, then $H^{\ast}_T(M)$ is free as an $\C[t_1,\dots,t_n]$-module if and only if $H^{\text{odd}}(M) = 0$. In this case,
	\[
	H^{\ast}_T(M) \cong   H^{\ast}(M) \otimes \C[t_1,\dots,t_n]
	\]
	as $\C[t_1,\dots,t_n]$-modules. Because the odd degree cohomology  of the permutohedral variety vanishes, we obtain
	\begin{equation}\label{eq_equiv_and_singular_relation}
	H^{\ast}_T(\mathcal H_n) \cong H^{\ast}(\mathcal H_n) \otimes \C[t_1,\dots,t_n]
	\end{equation}
	as $\C[t_1,\dots,t_n]$-modules. This shows that
	$\bigcup_{w \in \Gk} \{ v \cdot \widehat{\sigma}_w \in H^{2k}_T(\mathcal H_n) \mid \bar{v} \in \mathfrak{S}_n / \mathfrak{S}_w\}$ is  a $\C[t_1,\dots,t_n]$-module basis of $H^{2k}_T(\mathcal H_n)$.
\end{proof}

\begin{example}\label{example:n5a2 cont cont} If $n=5$ and $k=2$, we obtain $\mathcal G_k =\{54123, 53412,52341,45312,45231,34521\}$.
\begin{table}
\begin{tabular}{c|ccccc}
\toprule
			$w$ & ${\bf a}(w)$  & $\widehat{\bf a}(w)$ & & $\mathfrak S_w$ & $\mathfrak S_w^{\circ}$ \\
\midrule
			$5  \textcolor{red}{|} 4 \textcolor{red}{|}123$  & $(1,1,3)$ & $(5)$  & $5\vdots 4 \vdots 123$ & $\mathfrak S_{\{5,4,1,2,3\}}$ & $\mathfrak S_{\{5,4,1,2,3\}}/ \mathfrak{S}_{\{1,2,3\}}$ \\
			$5\textcolor{red}{|}34\textcolor{red}{|}12$   & $(1,2,2)$   & $(3,2)$  & $5\vdots 34\textcolor{red}{|}12$ & $\mathfrak S_{\{5,3,4\}} \times \mathfrak S_{\{1,2\}}$ & $\mathfrak S_{\{5,3,4\}}/ \mathfrak{S}_{\{3,4\}}$ \\
			$5\textcolor{red}{|}234\textcolor{red}{|}1$  & $(1,3,1)$    & $(4,1)$  & $5 \vdots 234\textcolor{red}{|}1$ & $\mathfrak S_{\{5,2,3,4\}}$ & $\mathfrak S_{\{5,2,3,4\}}/ \mathfrak S_{\{2,3,4\}}$ \\
			$45\textcolor{red}{|}3\textcolor{red}{|}12$   & $(2,1,2)$   & $(2,3)$ & $45\textcolor{red}{|}3\vdots 12$ & $\mathfrak S_{\{4,5\}} \times \mathfrak  S_{\{3,1,2\}}$ & $\mathfrak  S_{\{3,1,2\}}/ \mathfrak S_{\{1,2\}}$ \\
			$45\textcolor{red}{|}23\textcolor{red}{|}1$   & $(2,2,1)$  & $(2,2,1)$ & $45\textcolor{red}{|}23\textcolor{red}{|}1$ & $\mathfrak S_{\{4,5\}} \times \mathfrak S_{\{2,3\}}$ &  $\{ e\}$\\
			$345\textcolor{red}{|}2\textcolor{red}{|}1$  & $(3,1,1)$   & $(3,2)$ & $345\textcolor{red}{|}2\vdots 1$ & $\mathfrak S_{\{3,4,5\}} \times \mathfrak S_{\{1,2\}}$ & $\mathfrak S_{\{2,1 \}}$ \\
\bottomrule
\end{tabular}
\caption{Let $n = 5$ and $k = 2$. For $w \in \Gk$, ${\bf a}(w)$, $\widehat{\bf a}(w)$, $\mathfrak S_w$, $\mathfrak S_w^{\circ}$.}\label{table_example_34555}
\end{table}
Using the computation of $\widehat{\bf a}(w)$ given in Table~\ref{table_example_34555} and Theorem~\ref{thm:erasing conjecture}, we have
\[
H^{2\cdot 2}(\mathcal{H}_5) = M^{(5)} \oplus (M^{(3,2)})^{\oplus 3} \oplus M^{(4,1)} \oplus M^{(2,2,1)}.
\]	
\end{example}

First, we show that the permutation modules $M(w)$ are exactly those appear in the known decomposition of $H^*(\mathcal H_n)$, as described  in Proposition~\ref{prop:expansion}.

\begin{proposition} \label{prop:young subgroup type} We have
\[
\sum_{k=0}^{n-1}\sum_{w \in \mathcal G_k}M^{\widehat{\bf a}(w)} \, t^k = \sum_{m=1}^{\lfloor\frac{n+1}{2}\rfloor}
	\sum_{\substack{k_1, \dots, k_m \geq 2,\\ \sum k_i = n+1 }}
	M^{(k_1-1, k_2, \dots, k_m)} \, t^{m-1}\prod_{i=1}^m [k_i -1]_t.
\]
	In other words, for each $(k_1-1, k_2, \dots, k_m)$ with $k_1, \dots, k_m \geq 2$ and $ \sum k_i = n+1$,
\[
\vert\{w \in \mathcal G_k \mid \widehat{\bf a}(w) =(k_1-1, k_2, \dots, k_m)  \}\vert
\]
 is equal to the coefficient of $t^k$ in the polynomial $t^{m-1}\prod_{i=1}^m [k_i -1]_t.$

	In particular, $\sum_{k=0}^{n-1} \sum_{w \in \mathcal G_k} \dim M^{\widehat{\bf a}(w)}=\dim H^*(\mathcal H_n) = n!$.
\end{proposition}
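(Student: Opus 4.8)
The plan is to establish the stated generating-function identity by a direct bijective and combinatorial count of the sets $\{w \in \mathcal G_k \mid \widehat{\mathbf a}(w) = (k_1-1, k_2, \dots, k_m)\}$, matching it term by term against the coefficients extracted from $t^{m-1}\prod_{i=1}^m[k_i-1]_t$. By Lemma~\ref{lemma_bijection_between_Lk_and_compositions}, elements $w \in \mathcal G$ are in bijection with compositions $\mathbf a$ of $n$ via $w = w(\mathbf a)$, and under this bijection $\mathrm{des}(w) = k$ corresponds to $\mathbf a$ having $k+1$ parts, i.e. $S(\mathbf a) = D(w)$ has size $k$. So first I would translate the whole statement into the language of compositions: for a composition $\mathbf a$ of $n$, the erasure $\widehat{\mathbf a}$ is the composition with $S(\widehat{\mathbf a}) = e(S(\mathbf a))$, and I must count, for each fixed target composition $\mathbf b = (k_1-1, k_2, \dots, k_m)$ of $n$ (with all $k_i \geq 2$), the number of compositions $\mathbf a$ of $n$ with $\widehat{\mathbf a} = \mathbf b$ and exactly $k+1$ parts, then show this equals $[t^k]\bigl(t^{m-1}\prod_{i=1}^m [k_i-1]_t\bigr)$.

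The key combinatorial step is to understand the fibers of the erasure map $e$ on subsets of $[n-1]$, or equivalently on compositions. Given $D = S(\mathbf a) = \{d_1 < \cdots < d_k\}$, the erasure keeps exactly those $d_j$ that are \emph{not} immediately preceded in $D$ by $d_j - 1$ and are not equal to $1$; in composition language, writing $\mathbf a$ as a concatenation of maximal runs, each maximal run of consecutive parts $(1, 1, \dots, 1, r)$ in $\mathbf a$ collapses to a single part, and the initial block behaves slightly differently because the mark $1$ is always erased. Thus recovering $\mathbf a$ from $\mathbf b = (b_1, \dots, b_m)$ amounts to choosing, independently for each part $b_i$ of $\mathbf b$, how to "refine" it: a part $b_i = k_i - 1$ (for $i \geq 2$, coming from a $k_i$ in the original sum) can be split into $1 + 1 + \cdots + 1 + (\text{remainder})$ in $k_i - 1$ ways (the number of ways to write $b_i$ as an ordered sum whose all-but-last parts equal $1$), contributing a factor $[k_i-1]_t$ where the exponent of $t$ records the number of extra descents created; the first part $b_1 = k_1 - 1$ contributes similarly but the bookkeeping of the always-erased mark $1$ is what produces the $k_1 - 1$ rather than $k_1$ and the extra global factor $t^{m-1}$ accounts for the $m-1$ "boundary" descents between consecutive refined blocks that survive erasure. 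Carrying out this refinement analysis carefully and checking that the generating function in $t$ for a single block $b_i$ is exactly $[k_i-1]_t$, with the overall product structure following from independence of the choices across blocks, is the heart of the argument.

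The main obstacle, and where I would spend the most care, is the precise treatment of the first block and the interaction between the "erase mark $1$ always" rule and the "erase $d$ if $d-1$ is also a mark" rule at the left boundary --- this is exactly why the target partition has $k_1 - 1$ as its first part rather than $k_1$, and getting the exponent of $t$ right (including verifying the $t^{m-1}$ prefactor) requires tracking exactly which of the original descents $d_1, \dots, d_k$ are \emph{removed} by erasure versus \emph{retained}. I would organize this by: (1) proving a clean description of $e^{-1}(\mathbf b)$ as a product over blocks; (2) for each block, identifying the generating polynomial $\sum_{\mathbf a \text{ refining } b_i} t^{(\text{descents added})}$ with $[k_i-1]_t$ by a trivial count (compositions of $b_i$ with a prescribed shape are counted by $b_i = k_i-1$, graded by length); (3) assembling these, with the $t^{m-1}$ emerging from the $m-1$ retained inter-block descents. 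Finally, summing over all targets $\mathbf b$ (equivalently all $m$ and all $(k_1, \dots, k_m)$ with $k_i \geq 2$, $\sum k_i = n+1$) gives the full identity, and setting $t = 1$ yields $\sum_{w \in \mathcal G} \dim M^{\widehat{\mathbf a}(w)} = \sum_{k} \dim H^{2k}(\mathcal H_n) = n!$ since $\prod_i [k_i-1]_1 = \prod_i (k_i - 1)$ and the left side is then the total dimension computed by Proposition~\ref{prop:expansion}.
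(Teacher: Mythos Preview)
Your approach is essentially the paper's --- pass to compositions via Lemma~\ref{lemma_bijection_between_Lk_and_compositions}, describe the fiber of the erasure map over a fixed target $\mathbf b$ as a product of independent block refinements, and read off a factor $[k_i-1]_t$ from each block --- but you have misplaced where the special block lies, and if carried out as written the count does not match.

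The erasure rule is in fact uniform: for $D=S(\mathbf a)=\{d_1<\cdots<d_k\}$ with $a_i=d_i-d_{i-1}$ (and $d_0=0$), the mark $d_i$ is erased if and only if $a_i=1$, for \emph{every} $i$ including $i=1$ (the clause ``erase $d$ if $d=1$'' is precisely the case $a_1=1$, not an extra boundary rule). Hence each block of $\mathbf a$ is $(\underbrace{1,\dots,1}_{l_j},\,l_j')$ with $l_j'\geq 2$ for $j<m$ (a surviving mark), while the \emph{last} block satisfies only $l_m'=a_{k+1}\geq 1$. Thus erasures $\widehat{\mathbf a}$ are exactly compositions $(b_1,\dots,b_m)$ with $b_1,\dots,b_{m-1}\geq 2$ and $b_m\geq 1$, naturally of the form $(k_1,\dots,k_{m-1},k_m-1)$, not $(k_1-1,k_2,\dots,k_m)$. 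Your fiber count with $\mathbf b=(k_1-1,k_2,\dots,k_m)$ fails: for $k_1=2$ and $m\geq 2$ the first part would be $1$, which never occurs as the first part of an erasure, so the fiber is empty while the polynomial coefficient is not.

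The paper's proof opens by replacing $(k_1-1,k_2,\dots,k_m)$ with $(k_1,\dots,k_m-1)$ (harmless in the displayed identity since $M^{\mathbf a}$ depends only on the multiset of parts). After this switch there is no first-block subtlety: block $j<m$ contributes $(1+t+\cdots+t^{k_j-2})=[k_j-1]_t$ from $0\leq l_j\leq k_j-2$, the last block contributes $[k_m-1]_t$ from $0\leq l_m\leq k_m-2$, and the $t^{m-1}$ records the $m-1$ surviving marks. With this one correction the rest of your outline, including the $t=1$ specialization, goes through.
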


\begin{proof} We will show that  for each $(k_1, k_2, \dots, k_m-1)$ with $k_1, \dots, k_m \geq 2$ and $ \sum k_i = n+1$,
\[
\vert\{w \in \mathcal G_k \mid \widehat{\bf a}(w) =(k_1 , k_2, \dots, k_m-1)  \}\vert
\]
is equal to the coefficient of $t^k$ in the polynomial $t^{m-1}\prod_{i=1}^m [k_i -1]_t $.
	By Lemma~\ref{lemma_bijection_between_Lk_and_compositions}, there is a bijective map from the set of compositions ${\bf a}=(a_1, \dots, a_{k+1})$ to $\mathcal G_k$. We consider the set $\mathcal C _{(k_1, \dots, k_{m}-1)}$ of compositions ${\bf a}=(a_1, \dots, a_{k+1})$  whose erasure $\widehat{\bf a}$ is $(k_1, \dots, k_m-1)$.  Regard $t^{m-1}\prod_{i=1}^m [k_i -1]_t$ as
	\[ (1+ t + \cdots + t^{k_1-2})t(1+t+ \cdots + t^{k_2-2})t\cdots t(1+t+ \cdots + t^{k_m-2}).\]
	A monomial $t^{l_1}t t^{l_2}t \cdots t t^{l_{m}}$ in this polynomial corresponds to a composition
	\[(\underbrace{1,1,1}_{l_1}, l'_1,  \underbrace{1,1,1}_{l_2},l'_2, \cdots   , \underbrace{1,1,1}_{l_m}, l'_m) \]
	where $l_j +l'_j = k_j -1$ for $j=1, \dots, m  $, whose erasure is $(k_1, k_1, \dots, k_m-1)$.
\end{proof}

To complete the proof of Theorem~\ref{thm:erasing conjecture}, we need to show that $H^{2k}(\mathcal H_n)$ is linearly generated by $M(w)$'s, where $w \in \mathcal G_k$. In other words,
$\sigma_w$ with $\ell_h(w)=\mathrm{des}(w)=k$ can be expressed as a linear combination of elements in $ \bigcup_{w \in \mathcal G_k}\{v \widehat{\sigma}_w \mid  v \in \mathfrak S_n/\mathfrak S_w \}$.
The proof will be provided in the next section.

\section{Proof of Theorem~\ref{thm:erasing conjecture}}\label{sec:lee_permutohedral}
In this section, we prove Theorem~\ref{thm:erasing conjecture}.
For a permutation $w \in \mathfrak{S}_n$, we say that $\mathbf{a}$ is the \textit{composition of~$w$} if $D(w) = S(\mathbf a)$
and we denote it by $\mathbf a (w)$.
We first consider the case when the erasing occurs consecutively, that is, the composition  $\mathbf a$ of $w$ is given by
\begin{equation}\label{eq_composition_a_erasing_conse}
(\underbrace{1,\dots,1}_{m}, a_{m+1},\dots,a_{k+1})
\end{equation}
where $m \geq 0$ and $a_j>1$ for $m+1 \leq j \leq k+1$.
For this case, by Lemma~\ref{lemma_bijection_between_Lk_and_compositions},  the corresponding element $w(\mathbf a)$ in~$\La$ is given by
\begin{equation}\label{eq_wa_in_sec6}
w(\mathbf a) =   n
 \textcolor{red}{|} n-1
 \textcolor{red}{|}  \dots
 \textcolor{red}{|}n-m+1 \textcolor{red}{|} y ~ y+1 ~ \dots~  y+{a_{m+1}}-1
\textcolor{red}{|} \dots .
\end{equation}
Here, $y = a_{m+2} + \cdots + a_{k+1} +1$ and we decorate the places where the descents appear.

Now we describe the element $\widehat{\sigma}_{w(\mathbf a)}$ using a certain (edge) labeled graph $G(\mathbf a) = (V,E)$ embedded in the Euclidean space $\R^m$ (see Proposition~\ref{prop_hat_w_description_using_G}).
\begin{itemize}
	\item The set $V = V(G(\mathbf a))$ of vertices is given by
	\[
	V = \{ (z_1,\dots,z_m) \in \Z^m \mid a_{m+1} -1 \ge z_1 \ge z_2 \ge \dots \ge z_m \ge 0\}.
	\]
If $a_{m+1}$ does not exist, that is, the composition $\mathbf a$ ends with $1$, then we set $V = \{(0,\dots,0)\} \subset \Z^m$.
If $m = 0$, then $V = \emptyset$.
	\item Two vertices $(z_1,\dots,z_m)$ and $(z_1',\dots,z_m')$ are connected by an edge if and only if
	$|z_j-z_j'| = 1$ for some $j \in [m]$ and all the other coordinates are the same. An edge with $z_j < z_j'$ is labeled by the value $n-m+(j-1) - z_j$.
\end{itemize}
For the case when the erasing does not occur consecutively, we may break the composition $\mathbf a$ into smaller pieces $\mathbf a_1,\dots, \mathbf a_{\beta}$ so that the erasing appears consecutively in each piece, and moreover, the erasure $\widehat{\mathbf a}$ is the concatenation of $\widehat{\mathbf a}_1,\dots,\widehat{\mathbf a}_{\beta}$.
\begin{definition}
Let $\mathbf a$ be a composition of $n$. Then $({\mathbf a}_1,\dots,{\mathbf a}_{\beta})$ is an \emph{admissible decomposition} of~$\mathbf a$ if
\begin{enumerate}
\item each $\mathbf a_i$ is of the form in~\eqref{eq_composition_a_erasing_conse};
\item the erasure $\widehat{\mathbf a}$ is the concatenation of $\widehat{\mathbf a}_1,\dots,\widehat{\mathbf a}_{\beta}$;
\item the number $\beta$ of compositions in the decomposition is the minimum among the decompositions satisfying (1) and (2).
\end{enumerate}
\end{definition}

   For instance, for a composition $\mathbf a = (2,1,1,3,4,1,1,1,5,1,2)$, if we break it into
\[
\mathbf a_1 = (2), \mathbf{a}_2= (1,1,3,4), \mathbf{a}_3 = (1,1,1,5), \mathbf{a}_4 = (1,2),
\]
then $(\mathbf a_1,\mathbf a_2, \mathbf a_3, \mathbf a_4)$ is an admissible decomposition of $\mathbf a$.
In this case, the erasure $\widehat{\mathbf a}$ is $(2,5,4,8,3)$, and we have $\widehat{\mathbf a}_1 = (2), \widehat{\mathbf a}_2 = (5,4)$, $\widehat{\mathbf a}_3 = (8)$, $\widehat{\mathbf a}_4 = (3)$.
See the following diagram:
\[
\begin{tikzpicture}[scale = 0.5]
\foreach \x in {1,...,22}{
	\draw (\x,0) circle (0.2cm) ;
}
\draw [thick,decorate,decoration={brace,amplitude=5pt,mirror},xshift=0.4pt,yshift=-10pt]
(0.7,0) -- (2.3,0)
node[black,midway,yshift=-0.5cm] {\footnotesize $\mathbf a_1$};
\draw [thick,decorate,decoration={brace,amplitude=5pt,mirror},xshift=0.4pt,yshift=-10pt]
(2.7,0) -- (11.3,0)
node[black,midway,yshift=-0.5cm] {\footnotesize $\mathbf a_2$};
\draw [thick,decorate,decoration={brace,amplitude=5pt,mirror},xshift=0.4pt,yshift=-10pt]
(11.7,0) -- (19.3,0)
node[black,midway,yshift=-0.5cm] {\footnotesize $\mathbf a_3$};
\draw [thick,decorate,decoration={brace,amplitude=5pt,mirror},xshift=0.4pt,yshift=-10pt]
(19.7,0) -- (22.3,0)
node[black,midway,yshift=-0.5cm] {\footnotesize $\mathbf a_4$};

\draw[red] (2.5,-0.4)--(2.5,0.4);
\draw[red] (7.5,-0.4)--(7.5,0.4);
\draw[red] (11.5,-0.4)--(11.5,0.4);
\draw[red] (19.5,-0.4)--(19.5,0.4);

\foreach \y in {3.5, 4.5, 12.5, 13.5, 14.5, 20.5}
{
\draw[thick, dotted] (\y,-0.4)--(\y,0.4);
}
\end{tikzpicture}
\]
Here, we draw dotted vertical line for the places in the erasure.

\label{page_explanationa_for_v}
We define the graph $G(\mathbf a)$ as the product $G(\mathbf a_1) \times \cdots \times G(\mathbf a_{\beta})$ of the graphs $G(\mathbf a_i)$ for $i=1,\dots,\beta$. We denote by $(v_1,\dots,v_{\beta})$ a vertex in the graph $G(\mathbf a)$. Here, $v_i$ is a vertex of the graph $G(\mathbf a_i)$ for $i= 1,\dots,\beta$.
For an edge connecting $(v_1,\dots,v_{i-1},v_{i},v_{i+1},\dots,v_{\beta})$ and $(v_1,\dots,v_{i-1},v_i',v_{i+1},\dots,v_{\beta})$ in the graph $G(\mathbf a)$, we label the value as that on the edge connecting $v_i$ and $v_{i}'$ plus the value $n_{i+1}+\dots+n_{\beta}$, where $\mathbf{a}_i$ is the composition of $n_i$ (see Example~\ref{example_Hn_product}).

\begin{example}\label{example_graph_G}
	\begin{enumerate}
		\item Suppose that $\mathbf a = (1,1,4)$. The corresponding element $w(\mathbf a)$ is $6 \textcolor{red}{|} 5 \textcolor{red}{|} 1234$. Then we need to erase two consecutive descents, so the graph $G(1,1,4)$ is drawn in $\R^2$. The set $V$ of vertices of the corresponding graph $G(\mathbf a)$ is given by
		\[
		V = \{ (z_1,z_2) \in \Z^2 \mid 3 \ge z_1 \ge z_2 \ge 0\}
		\]
		and the graph $G(1,1,4)$ is presented in Figure~\ref{figure_G114}.
		For example, the edge connecting $(1,1)$ and $(2,1)$ is decorated by $6 - 2 + (1-1) - 1 = 3$. Here, we have $m = 2$, $j = 1$, and $z_j = 1$.
		\item Suppose that $\mathbf a = (1,1,1,3)$. The corresponding element $w(\mathbf a)$ is $6 \textcolor{red}{|}  5 \textcolor{red}{|}  4 \textcolor{red}{|}  123$. Then we need to erase three consecutive descents, and the vertex set of the graph $G(1,1,1,3)$ is given by
		\[
		V = \{(z_1,z_2,z_3) \in \Z^3 \mid 2 \ge z_1 \ge z_2 \ge z_3 \ge 0\}.
		\]
		The graph $G(1,1,1,3)$ is given in Figure~\ref{figure_G1113}.
	\end{enumerate}
\end{example}
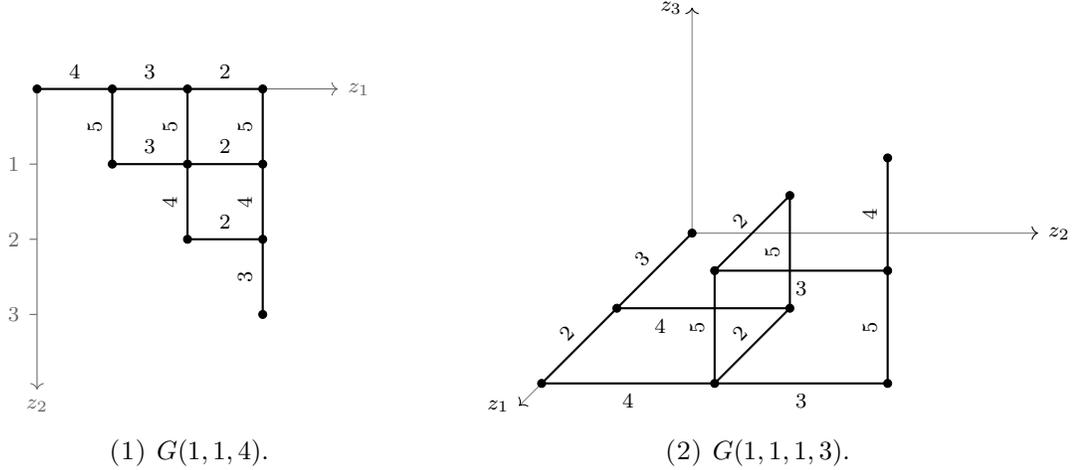
\begin{figure}
	\begin{subfigure}[b]{0.45\textwidth}
		\centering
		\begin{tikzpicture}[every node/.style={font=\scriptsize}]
		\draw[->, color=black!60!white] (0,0)--(4,0) node [right] {$z_1$};
		\draw[->, color=black!60!white] (0,0)--(0,-4) node [below] {$z_2$};
		\foreach \x in {1,2,3}{
			\draw[color = black!60!white] (0,-\x)--(-0.1,-\x) node[left] {$\x$};
		}
		
		\draw[fill=black] (0,0) circle (1.5pt)
		(1,0) circle (1.5pt)
		(2,0) circle (1.5pt)
		(3,0) circle (1.5pt)
		(1,-1) circle (1.5pt)
		(2,-1) circle (1.5pt)
		(3,-1) circle (1.5pt)
		(2,-2) circle (1.5pt)
		(3,-2) circle (1.5pt)
		(3,-3) circle (1.5pt);
		
		\draw[thick] (0,0)--(1,0) node[above, midway] {$4$};
		\draw[thick] (1,0)--(2,0) node[above, midway] {$3$};
		\draw[thick] (1,-1)--(2,-1) node[above, midway] {$3$};
		\draw[thick] (2,0)--(3,0) node[above, midway] {$2$};
		\draw[thick] (2,-1)--(3,-1) node[above, midway] {$2$};
		\draw[thick] (2,-2)--(3,-2) node[above, midway] {$2$};
		
		\draw[thick] (1,-1)--(1,0) node[midway,above, sloped] {$5$};
		\draw[thick] (2,-1)--(2,0) node[midway,above, sloped] {$5$};
		\draw[thick] (3,-1)--(3,0) node[midway,above, sloped] {$5$};
		\draw[thick] (2,-2)--(2,-1) node[midway,above, sloped] {$4$};
		\draw[thick] (3,-2)--(3,-1) node[midway,above, sloped] {$4$};
		\draw[thick] (3,-3)--(3,-2) node[midway,above, sloped] {$3$};
		
		\end{tikzpicture}
		\caption{$G(1,1,4)$.}
		\label{figure_G114}
	\end{subfigure}
	\begin{subfigure}[b]{0.45\textwidth}
		\centering
		\begin{tikzpicture}[x=2.3cm, y=1.5cm, z=-1cm,every node/.style={font=\scriptsize}]
		
		\draw [->, draw=gray] (0,0,0) -- (2,0,0) node [right] {$z_2$};
		\draw [->, draw=gray] (0,0,0) -- (0,2,0) node [left] {$z_3$};
		\draw [->, draw=gray] (0,0,0) -- (0,0,2.3) node [left] {$z_1$};	 	
		
		\draw[fill=black] (0,0,0) circle (1.5pt)
		(0,0,1) circle (1.5pt)
		(0,0,2) circle (1.5pt)
		(1,0,1) circle (1.5pt)
		(1,0,2) circle (1.5pt)
		(2,0,2) circle (1.5pt)
		(1,1,1)  circle (1.5pt)
		(1,1,2)  circle (1.5pt)
		(2,1,2) circle (1.5pt)
		(2,2,2)  circle (1.5pt);
		
		\draw[thick] (0,0,0)--(0,0,1) node[midway, above, sloped] {$3$};
		\draw[thick] (0,0,1)--(0,0,2) node[midway, above, sloped] {$2$};
		\draw[thick] (1,0,1)--(1,0,2) node[midway, above, sloped] {$2$};
		\draw[thick] (1,1,1)--(1,1,2) node[midway, above, sloped] {$2$};
		
		\draw[thick] (0,0,1)--(1,0,1) node[near start, below, sloped] {$4$};
		\draw[thick] (0,0,2)--(1,0,2) node[midway, below, sloped] {$4$};
		\draw[thick] (1,0,2)--(2,0,2) node[midway, below, sloped] {$3$};
		\draw[thick] (1,1,2)--(2,1,2) node[midway, below, sloped] {$3$};
		
		\draw[thick] (1,0,1)--(1,1,1) node[midway, above, sloped] {$5$};
		\draw[thick] (1,0,2)--(1,1,2) node[midway, above, sloped] {$5$};
		\draw[thick] (2,0,2)--(2,1,2) node[midway, above, sloped] {$5$};
		\draw[thick] (2,1,2)--(2,2,2) node[midway, above, sloped] {$4$};
		
		\end{tikzpicture}
		\caption{$G(1,1,1,3)$.}
		\label{figure_G1113}
	\end{subfigure}
	\caption{Graphs $G(\mathbf a)$ in Example~\ref{example_graph_G}.}
\end{figure}

For a decomposition $\mathbf a$ of $n$, the graph $G(\mathbf a)$ will be used to describe permutations $w$ having the decomposition $\mathbf a$, that is, $\mathbf a(w) = \mathbf a$ (see Proposition~\ref{lemma_wz}). Notice that the graph $G(\mathbf a)$ is defined by the product of the graphs $G(\mathbf a_{i})$ for $i=1,\dots,\beta$, where $(\mathbf a_1,\dots,\mathbf a_{\beta})$ is its admissible decomposition. Since each $\mathbf a_i$ is of the form in~~\eqref{eq_composition_a_erasing_conse}, we first consider the case when $\mathbf a$ is of the form in~\eqref{eq_composition_a_erasing_conse}.

For each vertex $z$ in the graph $G(\mathbf a)$, we associate an element $w_z \in \mathfrak{S}_n$ such that $\mathbf a(w_z) = \mathbf a$. (They will be used to describe $\widehat{\sigma}_w$ in Proposition~\ref{prop_hat_w_description_using_G}.)
For a vertex $z = (z_1,\dots,z_m)$, consider a shortest path from the origin to the vertex ~$z$. If the path is given by a sequence of edges labeled by $(i_1,\dots,i_{p})$, then we associate the permutation
\begin{equation}\label{eq_def_w_z}
w_z \colonequals s_{i_p} \cdots s_{i_1}  w(\mathbf a)
\end{equation}
to the vertex $z$. Here, we have
\begin{equation}\label{eq_sigma_wz}
\sigma_{w_z} = s_{i_p}\cdots s_{i_1} \sigma_{w(\mathbf a)}
\end{equation}
due to Proposition~\ref{prop:s_i action on S_n(a)} since $i_j$ and $i_{j+1}$ are not adjacent in $s_{i_{j-1}} \cdots s_{i_1} w(\mathbf a)$ for $j=1,\dots,p$.

We note that if there are two different paths connecting vertices
\[
(z_1,\dots,z_{j-1},z_j,z_{j+1},z_{j+2},\dots,z_m) \text{ and } (z_1,\dots,z_{j-1},z_j+1,z_{j+1}+1,z_{j+2},\dots,z_m),
\]
then permutations obtained from these two paths are the same. More precisely, suppose that there are four vertices
\[
\begin{split}
v_1 &= (z_1,\dots,z_{j-1}, z_{j}, z_{j+1},z_{j+2},\dots,z_m), \\
v_2 &= (z_1,\dots,z_{j-1},z_{j}+1,z_{j+1},z_{j+2},\dots,z_m),\\
v_3 &= (z_1,\dots,z_{j-1}, z_j, z_{j+1}+1,z_{j+2},\dots,z_m),\\
v_4 &= (z_1,\dots,z_{j-1}, z_{j}+1,z_{j+1}+1,z_{j+2},\dots,z_m)
\end{split}
\]
in the graph $G(\mathbf a)$. By the definition of the graph $G(\mathbf a)$, we have $z_j \gneq z_{j+1}$. Otherwise, one cannot have the vertex $v_3$. There are four edges whose labels are given as follows.
\[
\begin{split}
&(v_1,v_2) \text{ and } (v_3,v_4): n - m + (j-1) - z_j, \\
&(v_1,v_3) \text{ and } (v_2,v_4): n - m + j - z_{j+1}.
\end{split}
\]
Since $z_j > z_{j+1}$, we have
\[
|(n-m+j-z_{j+1}) -  ( n-m+(j-1) - z_j) | = | 1 + z_j - z_{j+1}|  > 1.
\]
This implies $s_{n-m+(j-1)-z_j} s_{n-m+j-z_{j+1}} = s_{n-m+j-z_{j+1}} s_{n-m+(j-1)-z_j}$ from the commutativity of two simple reflections $s_i$ and $s_j$ satisfying $|i-j|>1$. Therefore, the permutation~$w_z$ in~\eqref{eq_def_w_z} is well-defined.

For a given $z = (z_1,\dots,z_n)$, there are several paths connecting $z$ and the origin $(0,\dots,0)$. To reach the vertex $z$, we consider  a path passing vertices $(0,0,\dots,0), (1,0,\dots,0),\dots,(z_1,0,\dots,0)$.
Then we have
\[
\begin{split}
w_{(z_1,0,\dots,0)} &= s_{n-m-z_1+1} \cdots  s_{n-m-1} s_{n-m} w(\mathbf a) \\
&= n \textcolor{red}{|} n-1
\textcolor{red}{|} n-2
\textcolor{red}{|} \dots
\textcolor{red}{|} n-m+2
\textcolor{red}{|} n-m+1-z_1
\textcolor{red}{|} y ~ y+1 ~ \dots~  n-m ~ n-m+1
\textcolor{red}{|} \dots.
\end{split}
\]
Subsequently, we consider a path passing vertices $(z_1,0,\dots,0),(z_1,1,0,\dots,0),\dots,(z_1,z_2,0,\dots,0)$.
This path provides
\[
\begin{split}
w_{(z_1,z_2,0,\dots,0)}
&= s_{n-m-z_2+2} \cdots s_{n-m} s_{n-m+1} w_{(z_1,0,\dots,0)} \\
&= n \textcolor{red}{|} n-1
\textcolor{red}{|} n-2
\textcolor{red}{|} \dots
\textcolor{red}{|} n-m+2-z_2
\textcolor{red}{|} n-m+1-z_1
\textcolor{red}{|} y ~ y+1 ~ \dots~  n-m+1 ~ n-m+2
\textcolor{red}{|} \dots.
\end{split}
\]
Similarly, one can see that the values $w_z(1),\dots,w_z(m)$ are given as follows:
\[
w_z(1) \dots w_{z}(m) = n -z_m \textcolor{red}{|} n-1 -z_{m-1}
\textcolor{red}{|} \dots
\textcolor{red}{|} n-m+2-z_2
\textcolor{red}{|} n-m+1-z_1.
\]
We note that $w_z^{-1}(n-m+j-z_j) = d_{m-j+1}$ for $j=1,\dots,m$.
Now, by the construction of $w_z$, we have the following lemma:
\begin{lemma}\label{lemma_compare_wz_and_wz'}
	Let $\mathbf a $ be a composition of $n$ having $k+1$ parts of the form in~\eqref{eq_composition_a_erasing_conse}. We denote by $d_1<d_2<\dots<d_k$ the elements of $S(\mathbf a)$ and we set $d_{k+1}=n$. Then, for vertices $z = (z_1,\dots,z_m)$ and $z' = (z_1',\dots,z_m')$ connected by an edge in the graph $G(\mathbf a)$ satisfying $z_j = z_j'+1$, we have
\[
w_{z} = s_{i}w_{z'}, \quad w_{z'}^{-1}(i) = d_{m-j+1} ~~\text{ and }~~
w_{z'}^{-1}(i+1)
\in \{d_{m}+1,\dots,d_{m+1}\},
\]
where $i = n-m+(j-1)-z_j$. Indeed, $w_z(q) = w_{z'}(q)$ for $q \notin \{d_{m-j+1}\} \cup  \{d_{m}+1,\dots,d_{m+1}\}$.
\end{lemma}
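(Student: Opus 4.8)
The plan is to read off all four assertions from two facts about the permutations $w_z$ that are in hand by the time the lemma is stated. The first is the explicit description recorded just above the statement: $w_z(p)=n-(p-1)-z_{m-p+1}$ for $1\le p\le m$, and, as a consequence, $w_z^{-1}(n-m+j-z_j)=d_{m-j+1}$. The second I would isolate as a \emph{stability} statement: every edge of $G(\mathbf a)$ is labelled by an integer $\ell$ for which both $\ell$ and $\ell+1$ lie in $\{n-d_{m+1}+1,\dots,n\}$ (this is immediate from $0\le z_t\le a_{m+1}-1$, $1\le j\le m$ and $a_{m+1}=d_{m+1}-m$ in the defining formula $\ell=n-m+(j-1)-z_t$), and in $w(\mathbf a)$ these $d_{m+1}$ largest values are exactly the ones occupying positions $1,\dots,d_{m+1}$; hence, by an induction along paths in the connected graph $G(\mathbf a)$ with base case $w_{(0,\dots,0)}=w(\mathbf a)$, every $w_z$ carries $\{1,\dots,d_{m+1}\}$ bijectively onto $\{n-d_{m+1}+1,\dots,n\}$ and agrees with $w(\mathbf a)$ on $\{d_{m+1}+1,\dots,n\}$. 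This last statement is close to — and may already be subsumed by — Proposition~\ref{lemma_wz}, which I would cite if so.

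Granting these inputs, the four assertions fall out in order. The identity $w_z=s_iw_{z'}$ is immediate from the construction of the $w$'s: the edge joining $z$ and $z'$ carries the label $i$, and since $w_{z'}$ is independent of the shortest path used to define it, appending this edge to a shortest path reaching the vertex nearer the origin gives $w_z=s_iw_{z'}$. The equality $w_{z'}^{-1}(i)=d_{m-j+1}$ comes from applying the recorded identity $w_{z'}^{-1}(n-m+j-z_j')=d_{m-j+1}$ and matching indices, using that $z$ and $z'$ differ by one in the $j$th coordinate. For the value $i+1$: the formula for $w_z$ on $\{1,\dots,m\}$ is strictly decreasing with $w_z(m-j+1)=i+1$, and the hypothesis that $z$ and $z'$ are both vertices forces the strict inequality that makes the value $i$ be skipped by the sequence $w_z(1)>\dots>w_z(m)$; so $i$ does not occur among $w_z(1),\dots,w_z(m)$, and since $n-d_{m+1}+1\le i\le n-1$, the stability statement places $w_z^{-1}(i)$ in $\{d_m+1,\dots,d_{m+1}\}$; because $w_z=s_iw_{z'}$ merely interchanges the values $i$ and $i+1$, this says $w_{z'}^{-1}(i+1)=w_z^{-1}(i)\in\{d_m+1,\dots,d_{m+1}\}$. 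Finally, $w_z=s_iw_{z'}$ changes exactly the positions holding the values $i$ and $i+1$, which by the previous two steps are $d_{m-j+1}$ and a member of $\{d_m+1,\dots,d_{m+1}\}$; hence $w_z$ and $w_{z'}$ agree off $\{d_{m-j+1}\}\cup\{d_m+1,\dots,d_{m+1}\}$.

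The one step that takes real work is the stability statement, i.e.\ pinning down which values and positions are moved by the relevant simple transpositions; everything else is bookkeeping with the closed formula for $w_z$ on the singleton block. I would also treat separately the degenerate cases of the set-up: $j=m$, where there is no coordinate $z_{j+1}$ and the monotonicity argument is read at the endpoint; and the case in which $\mathbf a$ ends in a run of $1$'s, so that $a_{m+1}$ is absent and $G(\mathbf a)$ collapses to the single vertex $(0,\dots,0)$, making the lemma vacuous.
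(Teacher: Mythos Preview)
Your overall strategy is the paper's: the lemma is asserted there without a separate proof, as an immediate consequence of the explicit computation of $w_z(1),\dots,w_z(m)$ carried out just above it, together with the edge-label definition of $w_z$. Your ``stability'' step (that all edge labels and their successors lie among the values occupying positions $1,\dots,d_{m+1}$) is a clean way to fill in what the paper leaves implicit.

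There is, however, a genuine index inconsistency in your execution. In the first step you take $i$ to be the edge label; the label on the edge from $z'$ to $z$ (with $z_j=z_j'+1$) is $n-m+(j-1)-z_j'=n-m+j-z_j$. But in the third step you write $w_z(m-j+1)=i+1$, whereas the explicit formula gives $w_z(m-j+1)=n-m+j-z_j$, which equals the edge label itself, not the edge label minus one. Likewise, your ``matching indices'' in step two needs $i=n-m+j-z_j'$, which is the edge label plus one. These three identifications cannot all hold. (Part of the confusion may be that the displayed formula $i=n-m+(j-1)-z_j$ in the lemma statement is itself off by one from the edge label; test everything against $\mathbf a=(1,1,4)$, $z=(1,0)$, $z'=(0,0)$, where the edge label is $4$, $w_{z'}=651234$, $w_z=641235$.)

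This indexing error makes your monotonicity argument point the wrong way. The hypothesis that $z'$ is a vertex forces the strict inequality $z_j>z_{j+1}$ (from $z_j'=z_j-1\ge z_{j+1}'=z_{j+1}$), and this is exactly what guarantees that the value \emph{one more} than $w_z(m-j+1)$ is skipped by the decreasing sequence $w_z(1)>\dots>w_z(m)$ (since $w_z(m-j)-w_z(m-j+1)=1+z_j-z_{j+1}\ge 2$). You argue instead that the value one \emph{less} is skipped, which would require $z_{j-1}>z_j$, and that is not forced. Once you set $i$ equal to the edge label and track that $w_z(m-j+1)=i$, $w_{z'}(m-j+1)=i+1$, and it is $i+1$ that must be located among positions $\{d_m+1,\dots,d_{m+1}\}$ in $w_z$ (equivalently, $i$ in $w_{z'}$), your stability-plus-monotonicity argument goes through verbatim.
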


\begin{example}\label{example_graph_G2}
	\begin{enumerate}
		\item Continuing with Example~\ref{example_graph_G}(1), we have the elements $w_z$ as follows:
		\[
		\begin{tikzcd}
		6 \textcolor{red}{|}5\textcolor{red}{|}1234 \arrow[r, "s_4"]
		& 6 \textcolor{red}{|} 4 \textcolor{red}{|} 1235 \arrow[r, "s_3"] \arrow[d, "s_5"]
		& 6 \textcolor{red}{|} 3 \textcolor{red}{|} 1245 \arrow[r, "s_2"]\arrow[d, "s_5"]
		& 6 \textcolor{red}{|} 2 \textcolor{red}{|} 1345\arrow[d, "s_5"] \\
		& 5 \textcolor{red}{|} 4 \textcolor{red}{|} 1236 \arrow[r, "s_3"]
		& 5 \textcolor{red}{|} 3 \textcolor{red}{|} 1246\arrow[r, "s_2"] \arrow[d, "s_4"]
		& 5 \textcolor{red}{|} 2 \textcolor{red}{|} 1346 \arrow[d, "s_4"]\\
		&& 4 \textcolor{red}{|} 3 \textcolor{red}{|} 1256\arrow[r, "s_2"]
		& 4 \textcolor{red}{|} 2 \textcolor{red}{|} 1356 \arrow[d, "s_3"]\\
		&&& 3 \textcolor{red}{|} 2 \textcolor{red}{|} 1456
		\end{tikzcd}
		\]
		 In this example, $m=2$, and $d_1=1,d_2=2,d_3=6$. One may see that along the edge on the $z_1$-direction, the first value of the permutation does not change. Indeed, when $j=1$, $d_{m-j+1} = d_{2-1+1} = d_{2} = 2$, and $d_{m+1} = d_3 = 6$.
For example, for $z = (z_1,0)$ (which are on the first row in the diagram), we have $w_z(1) = 6$. On the other hand, for $z = (3,z_2)$ (which are on the fourth column in the diagram), we have $w_z(2) = 2$.  Indeed, when $j=2$, we obtain $d_{m-j+1} = d_{2-2+1} = d_1 = 1$.
		\item Continuing with Example~\ref{example_graph_G}(2), we have the elements $w_z$ as follows:
		\[
		\begin{tikzcd}
		6 \textcolor{red}{|} 5 \textcolor{red}{|} 4 \textcolor{red}{|} 123 \arrow[r, "s_3"]
		& 6 \textcolor{red}{|} 5 \textcolor{red}{|} 3 \textcolor{red}{|} 124 \arrow [r, "s_2"] \arrow[d, "s_4"]
		& 6 \textcolor{red}{|} 5 \textcolor{red}{|} 2 \textcolor{red}{|} 134 \arrow[d, "s_4"] \tikzmark{bracebegin1} \\
		& 6 \textcolor{red}{|} 4 \textcolor{red}{|} 3 \textcolor{red}{|} 125  \arrow [r, "s_2"] \arrow[dd, bend right = 60, "s_5", pos = 0.2, sloped]
		& 6 \textcolor{red}{|} 4 \textcolor{red}{|} 2 \textcolor{red}{|} 135  \arrow[d, "s_3"] \arrow[dd, bend right = 60, "s_5", pos = 0.2, sloped]\\
		& & 6 \textcolor{red}{|} 3 \textcolor{red}{|} 2 \textcolor{red}{|} 145 \arrow[dd, bend right = 60, "s_5", pos = 0.1, sloped] \tikzmark{braceend1} \\ [2ex]
		& 5 \textcolor{red}{|} 4 \textcolor{red}{|} 3 \textcolor{red}{|} 126 \arrow [r, "s_2"]
		&  5 \textcolor{red}{|} 4 \textcolor{red}{|} 2 \textcolor{red}{|} 136  \arrow[d, "s_3"]\tikzmark{bracebegin2} \\
		&  & 5 \textcolor{red}{|} 3 \textcolor{red}{|} 2 \textcolor{red}{|} 146 \arrow[d, bend right = 60, "s_4", pos = 0.2, sloped] \tikzmark{braceend2}\\ [2ex]
		&  & 4 \textcolor{red}{|} 3 \textcolor{red}{|} 2 \textcolor{red}{|} 156 \tikzmark{bracebegin3}
		\end{tikzcd}
		\]
		\begin{tikzpicture}[overlay,remember picture]
		\draw[decorate,decoration={brace}] ( $ (pic cs:bracebegin1) +(0.5, 9pt)  $ ) -- ( $ (pic cs:braceend1) -(-0.5, 4pt) $ ) node[midway, right] {\small $z_3 = 0$};
		\draw[decorate,decoration={brace}] ( $ (pic cs:bracebegin2) +(0.5, 9pt)  $ ) -- ( $ (pic cs:braceend2) -(-0.5, 4pt) $ ) node[midway, right] {\small $z_3 = 1$};
		\draw[decorate,decoration={brace}] ( $ (pic cs:bracebegin3) +(0.5, 9pt)  $ ) -- ( $ (pic cs:bracebegin3) -(-0.5, 4pt) $ ) node[midway, right] {\small $z_3 = 2$};
		\end{tikzpicture}
	\end{enumerate}
\end{example}

\begin{example}\label{example_GVa_n42}
Suppose that $n =4$ and $k=2$, that is, we consider the compositions with $3$ parts of $n = 4$; $(1,1,2)$, $(1,2,1)$, and $(2,1,1)$.
For each composition $\mathbf{a}$, we provide the erasure $\widehat{\mathbf{a}}$ and $w(\mathbf{a})$.
\[
		\begin{array} {cccc}
			{\bf a}  & \widehat{\bf a} & w(\mathbf{a}) & \mathfrak S_{w(\mathbf{a})}^{\circ} \\ \hline
			(1,1,2) & (4) & 4\textcolor{red}{|}3\textcolor{red}{|}12 &\mathfrak S_{\{4,3,1,2\}}/\mathfrak{S}_{\{1,2\}} \\
		(1,2,1) & (3,1) & 4\textcolor{red}{|}23\textcolor{red}{|}1 &\mathfrak S_{\{4,2,3\}}/\mathfrak S_{\{2,3\}} \\
			(2,1,1) & (2,2)  & 34\textcolor{red}{|}2\textcolor{red}{|}1 &
\mathfrak S_{\{2,1\}}
		\end{array}
\]

	The lexicographic order defines a total order:
	\[
	(1,1,2) < (1,2,1) < (2,1,1).
	\]
	The corresponding graphs $G(\mathbf a)$ are depicted in Figure~\ref{figure_G_n4}.
	For each composition $\mathbf a$, we compute the elements $w_z$ for $z \in V(G(\mathbf a))$. Moreover, we present $\hats_{w(\mathbf a)}$ for $w \in \La$.
	\begin{itemize}
		\item $\mathbf a = (1,1,2)$.
		\[
		\begin{tikzcd}
		4 \textcolor{red}{|}  3 \textcolor{red}{|}  12 \arrow[r, "s_2"] & 4 \textcolor{red}{|}  2 \textcolor{red}{|}  13 \arrow[d, "s_3"]\\
		& 3 \textcolor{red}{|}  2 \textcolor{red}{|}  14
		\end{tikzcd}
		\]
		To compute the class $\widehat{\sigma}_{4312}$, we consider the following diagram:
		\[
		\begin{tikzcd}
		\sigma_{4312} \arrow[r, "s_2"] \arrow[d, "s_3"] & \sigma_{4213}  \arrow[r, "s_1"] \arrow[d, "s_3"] & \sigma_{4213} + \sigma_{4231} - \sigma_{4132} \arrow[d, "s_3"]\\
		\sigma_{4312} \arrow[d,"s_2"] & \sigma_{3214} \arrow[r, "s_1"] \arrow[d, "s_2"] & \sigma_{3214} + \sigma_{3241} - \sigma_{3142} \arrow[d, "s_2"] \\
		\sigma_{4213} \arrow[r, "s_3"] \arrow[d, "s_1"] & \sigma_{3214} \arrow[d, "s_1"] & \sigma_{3214} + \sigma_{3241} + \sigma_{3421} - \sigma_{2431} - \sigma_{2143} \arrow[d, "s_1"] \\
		\sigma_{4213} + \sigma_{4231} - \sigma_{4132} \arrow[r, "s_3"] & \sigma_{3214} + \sigma_{3241} - \sigma_{3142} \arrow[r,"s_2"]  & \sigma_{3214} + \sigma_{3241} + \sigma_{3421} - \sigma_{2143} - \sigma_{2431}
		\end{tikzcd}
		\]
		Accordingly, we obtain  		
		\[
		\begin{split}
		\hats_{4312} &= 2\sigma_{4312} + 4\sigma_{4213} + 6\sigma_{3214}  \\
		&\quad  + \underbrace{2(\sigma_{4231} - \sigma_{4132}) + 4\sigma_{3241} - 2 \sigma_{3142} - 2\sigma_{2143}}_{\mathbf a(v) = (1,2,1) > (1,1,2)} + \underbrace{2(\sigma_{3421} - \sigma_{2431})}_{\mathbf a(v) = (2,1,1) > (1,1,2)}.
		\end{split}
		\]
		\item $\mathbf a = (1,2,1)$.
		\[
		\begin{tikzcd}
		4 \textcolor{red}{|} 23 \textcolor{red}{|} 1 \arrow[r, "s_3"] & 3 \textcolor{red}{|} 24 \textcolor{red}{|} 1
		\end{tikzcd}
		\]
		\[
		\hats_{4231} = \sigma_{4231} + 2 \sigma_{3241} + \underbrace{(\sigma_{3421} - \sigma_{2431})}_{\mathbf a(v) = (2,1,1) > (1,2,1)}
		\]
		\item $\mathbf a = (2,1,1)$.
		\[
		\hats_{3421} = 2 \sigma_{3421}.
		\]
	\end{itemize}
	\begin{figure}\centering
		\begin{subfigure}[b]{0.3\textwidth}
			\centering
			\begin{tikzpicture}[every node/.style={font=\scriptsize}]
			\draw[->, color=black!60!white] (0,0)--(2,0) node [right] {$z_1$};
			\draw[->, color=black!60!white] (0,0)--(0,-2) node [below] {$z_2$};
			\foreach \x in {1}{
				\draw[color = black!60!white] (0,-\x)--(-0.1,-\x) node[left] {$\x$};
			}
			
			\draw[fill=black] (0,0) circle (1.5pt)
			(1,0) circle (1.5pt)
			(1,-1) circle (1.5pt);
			
			\draw[thick] (0,0)--(1,0) node[above, midway] {$2$};
			
			\draw[thick] (1,-1)--(1,0) node[midway,above, sloped] {$3$};
			
			\end{tikzpicture}
			\caption{$G(1,1,2)$.}
		\end{subfigure}
		\begin{subfigure}[b]{0.3\textwidth}
			\centering
			\begin{tikzpicture}[every node/.style={font=\scriptsize}]
			\draw[->, color=black!60!white] (0,0)--(2,0) node [right] {$z_1$};
			\draw[fill=black](0,0) circle (1.5pt)
			(1,0) circle (1.5pt);
			\draw[thick] (0,0)--(1,0) node[midway, above] {$3$};
			\end{tikzpicture}		
			\caption{$G(1,2,1)$.}
		\end{subfigure}
		\begin{subfigure}[b]{0.23\textwidth}
			\centering
			\begin{tikzpicture}[every node/.style={font=\scriptsize}]
			\draw[->, color=black!60!white] (0,0)--(1,0) node [right] {$z_1$};
			\draw[->, color=black!60!white] (0,0)--(0,-1) node [below] {$z_2$};
			\draw[fill=black](0,0) circle (1.5pt);
			\end{tikzpicture}			
			\caption{$G(2,1,1)$.}
		\end{subfigure}
		\caption{Graphs $G(\mathbf a)$ for $n = 4$ and $k = 2$.}
		\label{figure_G_n4}
	\end{figure}
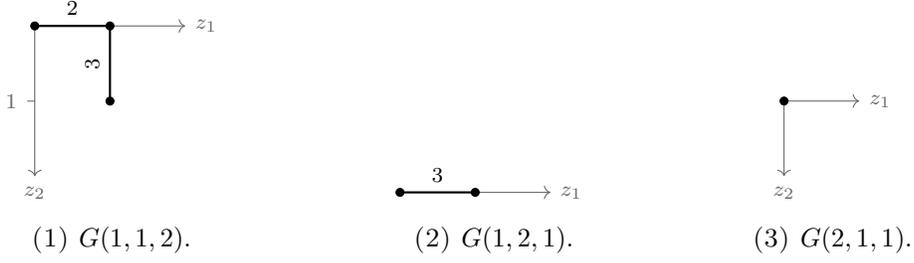
\end{example}

As one may see in Example~\ref{example_GVa_n42}, the permutations $w_z$ coming from the graph $G(\mathbf a)$ capture the terms in $\hats_w$ whose compositions are $\mathbf a$, and the other terms in the sum $\hats_w$ have the composition greater than~$\mathbf a$ (with respect to the lexicographic order).
We show that this observation also holds in general in Proposition~\ref{prop_hat_w_description_using_G}.

Let $\mathscr M_w$ be the complete set of minimal length coset representatives for the quotient $\mathfrak{S}_w^{\circ} = \mathfrak{S}_w/(\mathfrak{S}_{J_1} \times \cdots \times \mathfrak{S}_{J_{k+1}})$, where $J_s = \{ w(d_{s-1}+1),\dots,w(d_s)\}$,  $D(w) = \{d_1< \dots < d_k\}$, and $d_{k+1} = n$ as before.
Let $(\mathbf a_1,\dots, \mathbf a_{\beta})$ be an admissible decomposition of $\mathbf a$, where $\mathbf a_i$ is a composition of $n_i$ and $\sum_{i=1}^{\beta} n_i = n$.
Then the set $\mathscr M_w$ is given by
\begin{equation}\label{eq_Mw_product}
\mathscr M_w = \{w_1 \cdots w_{\beta-1} w_{\beta} \mid w_i \in M_{w(\mathbf a_i)} \subset \mathfrak{S}_{[\sum_{j=1}^{i-1} n_j +1, \sum_{j=1}^i n_j]} \}.
\end{equation}
Applying the result in~\cite{Stu} appropriately, we obtain the following lemma.
\begin{lemma}[{cf.~\cite{Stu}}]\label{lemma_Mw}
Let $w = w(\mathbf a)$. For $\mathbf a = (1,\dots,1,a_{m+1})$, we have
\[
\mathscr M_w 
= \{ v_m v_{m-1} \cdots v_1 
\mid
v_j = s_{i_j} s_{i_j+1} \cdots s_{n-m+j-1} \quad \text{ for }1 \leq i_j \leq n-m+j,~~1 \leq j \leq m\}.
\]
Here, we set $v_j = e$ if $i_j = n-m+j$.
\end{lemma}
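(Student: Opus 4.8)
The plan is to recognize $\mathscr{M}_w$ as the set of minimal length representatives of a quotient of $\mathfrak{S}_n$ by a \emph{standard} parabolic subgroup, and then to strip off the factors $v_m,v_{m-1},\dots,v_1$ one at a time using the classical length-additive factorization of minimal coset representatives. First I would carry out the reduction. Since $\mathbf{a} = (\underbrace{1,\dots,1}_{m}, a_{m+1})$ has descent set $\{1,\dots,m\}$, its erasure is empty, so Definition~\ref{def:generator} gives $\mathfrak{S}_w = \mathfrak{S}_n$. From~\eqref{eq_wa_in_sec6} the blocks of $w = w(\mathbf{a})$ are $J_s = \{n-s+1\}$ for $1 \le s \le m$ and $J_{m+1} = \{1,\dots,n-m\}$, so $\mathfrak{S}_{J_1}\times\cdots\times\mathfrak{S}_{J_{m+1}} = \mathfrak{S}_{\{1,\dots,n-m\}}$ is the standard parabolic subgroup $W_I\subseteq\mathfrak{S}_n$ with $I = \{s_1,\dots,s_{n-m-1}\}$. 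Hence $\mathscr{M}_w$ is exactly the set $W^I$ of minimal length representatives of $\mathfrak{S}_n/W_I$, namely the permutations $v$ with $v(1)<v(2)<\cdots<v(n-m)$; in particular $|\mathscr{M}_w| = n!/(n-m)! = \prod_{j=1}^{m}(n-m+j)$, which is the number of tuples $(i_1,\dots,i_m)$ allowed on the right-hand side, so it suffices to produce a length-additive bijective parametrization of $W^I$ of the stated shape.

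Next I would argue by induction on $m$. The case $m=0$ is trivial: $W_I = \mathfrak{S}_n$, $W^I = \{e\}$, and the right-hand side is the empty product. For $m \ge 1$ set $J = \{s_1,\dots,s_{n-2}\}$, so $W_J = \mathfrak{S}_{n-1}$ and $I\subseteq J$. The standard parabolic factorization (cf.~\cite{Stu}) writes each $v\in W^I$ uniquely as $v = u\,v'$ with $u\in W^J$, $v'\in W_J\cap W^I$, and $\ell(v) = \ell(u)+\ell(v')$. Here $W^J$ is the set of minimal representatives of $\mathfrak{S}_n/\mathfrak{S}_{n-1}$, which is precisely $\{\, s_i s_{i+1}\cdots s_{n-1} \mid 1 \le i \le n \,\}$ (with $i=n$ giving $e$), i.e.\ exactly the admissible factors $v_m$ (recall $n-m+m-1 = n-1$); and $W_J\cap W^I = (\mathfrak{S}_{n-1})^I$ is the set of minimal representatives of $\mathfrak{S}_{n-1}/\mathfrak{S}_{\{1,\dots,n-m\}}$, which is $\mathscr{M}_{w(\mathbf{a}')}$ for the composition $\mathbf{a}' = (\underbrace{1,\dots,1}_{m-1}, n-m)$ of $n-1$. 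Applying the induction hypothesis to $\mathbf{a}'$ (with $n-1$ in place of $n$ and $m-1$ in place of $m$, so that the staircase bound $i_j \le n-m+j$ is unchanged), one gets $W_J\cap W^I = \{\, v_{m-1}\cdots v_1 \mid v_j = s_{i_j}\cdots s_{n-m+j-1},\ 1 \le i_j \le n-m+j,\ 1\le j\le m-1 \,\}$. Combining this with $W^J = \{v_m\}$ and using uniqueness of the factorization together with additivity of length, the map $(i_1,\dots,i_m)\mapsto v_m v_{m-1}\cdots v_1$ is a bijection onto $W^I = \mathscr{M}_w$ and each such product is reduced, which is the assertion.

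The main obstacle is not conceptual but a matter of carefully matching conventions: confirming that ``minimal length'' in the definition of $\mathscr{M}_w$ refers to the length function on all of $\mathfrak{S}_w = \mathfrak{S}_n$, that the direction of the word $s_a\cdots s_b$ agrees with the one-line notation used throughout (so that the factors $v_j$ act as the expected ``cycle-like'' permutations), and that the truncation $I\subseteq J$ together with passing from $(n,m)$ to $(n-1,m-1)$ transforms the ranges of the indices $i_j$ exactly as claimed. The one genuinely non-trivial input, the length-additive parabolic factorization $W^I\cong W^J\times(W_J\cap W^I)$, is classical and is precisely the result of \cite{Stu} that the lemma refers to.
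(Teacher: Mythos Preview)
Your proposal is correct and is precisely the argument the paper has in mind: the lemma is stated with only the reference ``Applying the result in~\cite{Stu} appropriately,'' and your inductive use of the length-additive parabolic factorization $W^I = W^J \cdot (W_J \cap W^I)$ with $J = \{s_1,\dots,s_{n-2}\}$ is exactly that application. The identification of $\mathfrak{S}_w$ with $\mathfrak{S}_n$ via the empty erasure, and of $\mathfrak{S}_{J_1}\times\cdots\times\mathfrak{S}_{J_{m+1}}$ with the standard parabolic $\mathfrak{S}_{[n-m]}$, are the correct preliminary reductions.
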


\begin{proposition}\label{lemma_wz}
Let $\mathbf a$ be a composition and $w = w(\mathbf a)$. Then
the graph $G(\mathbf a)$  captures the permutations $v \in \mathscr M_w$ which preserve the composition, and moreover, this is a complete set of the permutations having the same composition, that is,
\[
\{ w_z \mid z \in V(G(\mathbf a)) \}
= \{ vw \mid v \in \mathscr M_w \text{ and } \mathbf{a}(vw) = \mathbf{a}(w) \}
= \{ u \in \mathfrak{S}_n \mid \mathbf{a}(u) = \mathbf{a}(w)\}.
\]
\end{proposition}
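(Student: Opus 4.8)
The inclusion of the second set in the third is immediate from its definition, so to prove the three sets coincide it suffices to show that $z\mapsto w_z$ surjects onto the third set and that each $w_z$ equals $vw$ for some $v\in\mathscr M_w$. The plan is to establish this first for $\mathbf a$ of the special form~\eqref{eq_composition_a_erasing_conse} and then to glue over an admissible decomposition $(\mathbf a_1,\dots,\mathbf a_\beta)$: the graph $G(\mathbf a)=G(\mathbf a_1)\times\cdots\times G(\mathbf a_\beta)$ and the transversal $\mathscr M_w$ both factor as products over the chunks (the latter by~\eqref{eq_Mw_product}), the edge-label shift by $n_{i+1}+\cdots+n_\beta$ built into $G(\mathbf a)$ matching the block offsets, and a permutation has composition $\mathbf a(w)$ iff its restriction to each of the intervals cut out by $S(\widehat{\mathbf a})$ carries the composition of the corresponding chunk; so the chunkwise statements multiply together.

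For a special-form $\mathbf a$, I would first confirm that $z\mapsto w_z$ from~\eqref{eq_def_w_z} is well defined — as already noted in the text, the two simple reflections labelling the edges of any square of $G(\mathbf a)$ commute — and that its image lies in the target set. Along one edge $z'\to z$, Lemma~\ref{lemma_compare_wz_and_wz'} gives $w_z=s_iw_{z'}$ with the values $i$ and $i+1$ occupying non-adjacent positions of $w_{z'}$, so left multiplication by $s_i$ neither creates nor destroys a descent; inducting on the distance from the origin, where $w_{(0,\dots,0)}=w(\mathbf a)$, yields $\mathbf a(w_z)=\mathbf a(w)$ for all $z$, and simultaneously $\sigma_{w_z}=s_{i_p}\cdots s_{i_1}\sigma_{w(\mathbf a)}$ by repeated use of Proposition~\ref{prop:s_i action on S_n(a)}. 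Injectivity is immediate from the formula $w_z(j)=n-j+1-z_{m-j+1}$ for the first $m$ entries, recorded just before Lemma~\ref{lemma_compare_wz_and_wz'}: distinct lattice points give distinct prefixes.

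The remaining step, which I expect to be the main obstacle, is surjectivity. I would obtain it by matching $V(G(\mathbf a))$ with the composition-preserving elements of $\mathscr M_w$: using the description of $\mathscr M_w$ in Lemma~\ref{lemma_Mw} as reduced products $v_m\cdots v_1$ with $v_j=s_{i_j}\cdots s_{n-m+j-1}$, one checks that $v_m\cdots v_1\,w(\mathbf a)$ keeps its descent set equal to $S(\mathbf a)$ exactly when the indices obey the chain inequalities $a_{m+1}-1\ge z_1\ge\cdots\ge z_m\ge 0$ under the substitution $i_j=n-m+j-z_{m-j+1}$, and that in this range $v_m\cdots v_1$ is precisely the word read along the monotone path from the origin to $z$, so $v_m\cdots v_1\,w(\mathbf a)=w_z$. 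This identifies $\{w_z:z\in V(G(\mathbf a))\}$ with $\{vw:v\in\mathscr M_w,\ \mathbf a(vw)=\mathbf a(w)\}$; conversely, given any $u$ of composition $\mathbf a(w)$ in $\mathscr M_w\cdot w$, writing $uw^{-1}$ as the reduced element of $\mathscr M_w$ it represents and reversing the substitution produces the lattice point $z$ with $w_z=u$. The delicate point is exactly this dictionary — showing that the chain inequalities defining the vertex set are equivalent to the condition that the reduced word $v_m\cdots v_1$ does not alter any descent of $w(\mathbf a)$, which requires tracking which simple reflections are adjacent in the intermediate permutations — together with checking, in the reassembly step, that the product decompositions are compatible with the edge-label shifts and with~\eqref{eq_Mw_product}.
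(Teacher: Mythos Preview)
Your overall architecture matches the paper's: reduce to a single chunk of the special form via the product decompositions of $G(\mathbf a)$ and $\mathscr M_w$, then set up a dictionary between lattice points $z$ and reduced coset representatives $v_m\cdots v_1$ from Lemma~\ref{lemma_Mw}. Two points deserve comment.

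First, the ``delicate point'' you flag---showing that the chain inequalities on $z$ are equivalent to $D(v\,w(\mathbf a))=D(w(\mathbf a))$---is handled in the paper by a cleaner device than tracking descents through intermediate permutations. The paper invokes the identity $D_R(vw)=D_R(w)\mathbin{\triangle}\bigl((w^{-1}T_R(v)w)\cap\mathscr S\bigr)$ (Bj\"orner--Brenti, Exercise~1.12), which for $w=w(\mathbf a)$ with $\mathbf a=(1,\dots,1,n-m)$ converts the condition $D(vw)=D(w)$ into the explicit constraints $v(1)<\cdots<v(n-m)$, $v(n-m+1)<\cdots<v(n)$, and $v(1)<v(n-m+1)$. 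These are then matched against the parametrization of $\mathscr M_w$ in Lemma~\ref{lemma_Mw}: the condition becomes $1<i_1<i_2<\cdots<i_m$, and under the substitution $z_j=n-m+j-i_j$ (note: not $z_{m-j+1}$ as you wrote---your indexing is off by a reversal) this is exactly the vertex condition $a_{m+1}-1\ge z_1\ge\cdots\ge z_m\ge 0$. Your proposed direct tracking would work but is messier; the $T_R$ formula turns a dynamic check into a static one.

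Second, and more seriously, your execution only establishes the first equality. You announce that it suffices to show $z\mapsto w_z$ surjects onto the \emph{third} set, but your surjectivity argument (``matching $V(G(\mathbf a))$ with the composition-preserving elements of $\mathscr M_w$'') only reaches the \emph{second} set: it shows every $v\in\mathscr M_w$ with $\mathbf a(vw)=\mathbf a(w)$ arises as $w_z$, not that every $u$ with $\mathbf a(u)=\mathbf a(w)$ does. The paper closes this gap by a cardinality count: it computes $|\{u:\mathbf a(u)=\mathbf a(w)\}|$ directly (varying the relevant entry of $u$) and observes it equals the quantity~\eqref{eq_counting} already obtained for the second set, forcing equality. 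Without some such argument your chain of inclusions does not close.
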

\begin{proof}
Let $(\mathbf a_1,\dots, \mathbf a_{\beta})$ be an admissible decomposition of $\mathbf a$.
In this case, the graph $G(\mathbf a)$ is defined by the product $G(\mathbf a_1) \times \cdots \times G(\mathbf a_{\beta})$ of graphs.
By~\eqref{eq_Mw_product}, without loss of generality, we may assume that  $\mathbf{a} = (\underbrace{1,\dots,1}_{m},a_{m+1}) = (\underbrace{1,\dots,1}_{m},n-m)$. Then we have $\widehat{\bf a} = (n)$ and
\[
w = w(\mathbf a) = n \textcolor{red}{|} n-1 \textcolor{red}{|} \cdots \textcolor{red}{|} n-m+1 \textcolor{red}{|} 1 \ 2 \ \cdots \ n-m.
\]

We first consider the first equality in the statement.
According to \cite[Exercise~1.12]{BB05Combinatorics}, we have
\begin{equation}\label{eq_DR_vw}
D_R(vw) = D_R(w) \triangle ((w^{-1} T_R(v) w) \cap \mathscr S).
\end{equation}
Here, $\mathscr S = \{ s_i \mid i \in [n-1]\}$, $D_R(w) = \{ s_i \in \mathscr S \mid w(i) > w(i+1)\}$, and $T_R(w) = \{ s_{i,j} \mid 1 \leq i < j \leq n,  w(i) > w(j)\}$. Moreover, $A \triangle B \colonequals (A \cup B) \setminus (A \cap B)$.
Since $D_R(w) = \{ s_i \mid i \in D(w)\}$, for $v \in \mathfrak{S}_n$, we have
\[
\begin{split}
\mathbf{a}(vw) = \mathbf{a}(w) &\iff D(vw) = D(w) \\
&\iff D_R(vw) = D_R(w) \\
&\iff D_R(w) \triangle ((w^{-1} T_R(v) w) \cap \mathscr S) = D_R(w) \\
&\iff  (w^{-1} T_R(v) w) \cap \mathscr S = \emptyset.
\end{split}
\]
We consider elements $v \in \mathfrak{S}_n$ satisfying $ (w^{-1} T_R(v) w) \cap \mathscr S = \emptyset$.

We note that
\begin{equation}\label{eq_w_inverse}
w^{-1} = m+1 \ m+2 \ \cdots \ n \ m \ m-1 \ \cdots \ 1.
\end{equation}
Since $w^{-1} s_{i,j} w = s_{w^{-1}(i), w^{-1}(j)}$, using the description~\eqref{eq_w_inverse}, we obtain
\[
\begin{split}
&w^{-1} s_{n-1,n} w = s_1, \quad w^{-1} s_{n-2,n-1} w = s_2, \quad \ldots, \quad
w^{-1} s_{n-m+1,n-m+2} w = s_{m-1}, \\
&w^{-1} s_{1,n-m+1} w = s_{m}, \quad
w^{-1} s_{1,2} w = s_{m+1}, \quad \ldots, \quad
w^{-1} s_{n-m-1,n-m} w = s_{n-1}.
\end{split}
\]
Accordingly, $v \in \mathfrak{S}_n$ satisfies $ (w^{-1} T_R(v) w) \cap \mathscr S = \emptyset$ if and only if
\begin{equation}\label{eq_v_satisfies}
\begin{split}
&v(n-m+1) < v(n-m+2) < \dots < v(n-1) < v(n); \\
&v(1) < v(2) < \dots < v(n-m); \\
&v(1) < v(n-m+1).
\end{split}
\end{equation}

On the other hand, by Lemma~\ref{lemma_Mw}, we have
\[
\mathscr M_w
= \{ v_m v_{m-1} \cdots v_1 
\mid
v_j = s_{i_j} s_{i_j+1} \cdots s_{n-m+j-1} \quad \text{ for }1 \leq i_j \leq n-m+j,~~1 \leq j \leq m\}.
\]
We claim that $v = v_m v_{m-1} \cdots v_1$ satisfies~\eqref{eq_v_satisfies} if and only if
\begin{equation}\label{eq_index_i_satisfies}
1<i_1 < i_2< \dots < i_m.
\end{equation}
Indeed, if $1 < i_1 < i_2< \dots < i_m$, then $v = ([n] \setminus \{i_1,i_2,\dots,i_m\})\!\!\uparrow \ i_1 \ i_2 \ \cdots \ i_m$, which satisfies~\eqref{eq_v_satisfies}.
Moreover,  by varying the value $i_m$ from $n$ to $1$, the number of indices satisfying~\eqref{eq_index_i_satisfies} is
\begin{equation}\label{eq_counting}
{n-2 \choose m-1} + {n-3 \choose m-1} + \dots + {m-1 \choose m-1}.
\end{equation}
One can see that the number~\eqref{eq_counting} is the same as the number of permutations $v$ satisfying~\eqref{eq_v_satisfies} by varying the value $v(n-m+1)$ from $2$ to $m-1$.
This proves that an element $v  = v_m v_{m-1} \cdots v_1$ in $\mathscr M_w$ that is determined by the indices $i_1,\dots,i_m$ satisfies the condition~\eqref{eq_index_i_satisfies} if and only if $\mathbf{a}(vw) = \mathbf{a}(w)$.

Now we consider the relation between the graph $G(\mathbf a)$ and minimal length coset representatives preserving the composition.
For each $v = v_m v_{m-1} \cdots v_1 \in \mathscr M_w$ with $v_j = s_{i_j} s_{i_j +1} \cdots s_{n-m+j-1}$, we associate $z = (z_1,\dots,z_m)$ by setting $z_j = n-m+j - i_j$.
Then,  we have $z_j - z_{j+1} = (n-m+j - i_j) - (n-m+j+1 - i_{j+1})
= i_{j+1} - i_j -1$, thus, we obtain
\begin{equation}\label{eq_z_j_and_i_j}
z_j \geq z_{j+1} \iff i_j < i_{j+1}.
\end{equation}
Moreover, we have
\begin{equation}\label{eq_z_j_and_i_j2}
n-m -1 \geq z_1 = n-m +1 - i_1 \iff 1 < i_1.
\end{equation}
Combining~\eqref{eq_z_j_and_i_j} and~\eqref{eq_z_j_and_i_j2}, the condition~\eqref{eq_index_i_satisfies} is equivalent to
\[
a_{m+1}-1 = n-m-1 \geq z_1 \geq z_2 \geq \cdots \geq z_m \geq 0,
\]
which defines the set $V(G(\mathbf a))$ of vertices of the graph $G(\mathbf a)$. Indeed, this association provides $w_z = v_m v_{m-1} \cdots v_1 w$, which proves the the first equality in the statement.

To obtain the second equality, we first notice that
\[
\{ vw \mid v \in \mathscr M_w \text{ and } \mathbf{a}(vw) = \mathbf{a}(w)\} \subset \{ u \in \mathfrak{S}_n \mid \mathbf{a}(u) = \mathbf{a}(w)\}.
\]
For a permutation $u \in \mathfrak{S}_n$ satisfying $\mathbf{a}(u) = \mathbf{a}(w)$, we know that $D(u) = \{1,2,\dots,m\}$. Accordingly, by varying $u(m)$ from $2$ to $m-1$, we obtain that the number of elements in the set on the right hand side is~\eqref{eq_counting}. Hence the result follows.
\end{proof}

In Example~\ref{example_GVa_n42}, the element $\hats_w$ is written as a linear combination of classes $\sigma_v$ satisfying $\mathbf{a}(v) \geq \mathbf {a}(w)$. This holds in general as follows:
\begin{proposition}\label{prop_hat_w_description_using_G}
	Let $w$ be an element in $\La$ such that $\mathbf a(w) = \mathbf a$.
	Then the element $\hats_{w}$ is written as
	\[
	\hats_{w} =
	\sum_{z \in V(G(\mathbf a))} c_z \sigma_{w_z} + \sum_{\mathbf{a}(v) > \mathbf a} k_v \sigma_{v} \quad \text{ for } c_z, k_v \in \R \text{ and }c_z > 0.
	\]
	Here, we consider the lexicographic order on the set of compositions.
\end{proposition}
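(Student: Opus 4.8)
The plan is to expand $\hats_w$ over minimal--length coset representatives and then to run the action formulas of Section~\ref{sec:cho} one simple reflection at a time, with the simple reflections dictated by the labels along paths in $G(\mathbf a)$. By the definition of $\hats_w$ and of $\mathfrak S_w^{\circ}$ we have $\hats_w=\sum_{v\in\mathscr M_w} v\cdot\sigma_w$, where $\mathscr M_w$ is the set of minimal--length coset representatives. Using the product decomposition~\eqref{eq_Mw_product} of $\mathscr M_w$, the matching decomposition $G(\mathbf a)=G(\mathbf a_1)\times\cdots\times G(\mathbf a_\beta)$, and the facts that simple reflections coming from distinct blocks $\mathbf a_i$ involve disjoint sets of values (hence commute) and each such reflection only alters the portion of the one--line notation lying inside its own block (the descents of the other blocks being untouched), together with Theorem~\ref{thm:Perm_class} to recombine edge labels, it suffices to prove the statement when $\mathbf a=(1^m,a_{m+1})$, i.e.\ the erasing is consecutive; a single--part block is the base of this reduction, where $\hats_w=\sigma_w$.

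For the consecutive case I would induct on $m$, using Lemma~\ref{lemma_Mw} to write each $v\in\mathscr M_w$ as $v_mv_{m-1}\cdots v_1$ with $v_j=s_{i_j}s_{i_j+1}\cdots s_{n-m+j-1}$, and peel the factors off from the right. Applied to $\sigma_{w}$ with $w=w(\mathbf a)$ as in~\eqref{eq_wa_in_sec6}, the operator $\sum_{i_1}v_1$ is evaluated by iterating Proposition~\ref{prop:s_i action on S_n(a)}: tracing the one--line notation shows that for $i_1\geq 2$ every simple reflection occurring in $v_1$ swaps two values that are never in adjacent positions, so each step is of the ``edge--deleted'' type and merely relabels, giving $v_1\cdot\sigma_w=\sigma_{v_1w}=\sigma_{w_z}$ for the vertex $z=(n-m+1-i_1,0,\dots,0)$ of $G(\mathbf a)$ (cf.~\eqref{eq_sigma_wz}); the single value $i_1=1$ is the one for which the last reflection $s_1$ is of the ``edge--remaining'' type $w'\to s_1w'$, and here Proposition~\ref{prop_si_action_on_sigma_i} contributes, besides terms already accounted for, a sum of classes $\tsvPQi$ whose descent sets are given by~\eqref{eq_descents_of_vPQi}. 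A short computation with~\eqref{eq_descents_of_vPQi} shows each such class has composition strictly larger than $\mathbf a$ in the lexicographic order, since erasing has created a block of size $\geq 2$ at a position where $\mathbf a$ has a $1$. Iterating the same analysis over $v_2,\dots,v_m$ (which move the later coordinates of $z$, with the valid--vertex inequalities $z_1\geq\cdots\geq z_m\geq 0$ marking exactly where the edge--remaining move is triggered) assembles $\sum_{z\in V(G(\mathbf a))}c_z\sigma_{w_z}$ from the edge--deleted steps, with $c_z$ receiving only nonnegative contributions (Proposition~\ref{prop:s_i action on S_n(a)} is an honest relabelling and the relevant term in Proposition~\ref{prop_si_action_on_sigma_i} never subtracts a $\sigma_{w_z}$), so $c_z>0$; the remaining terms have composition $>\mathbf a$ by the above and by Proposition~\ref{lemma_wz} (which identifies the permutations of composition $\mathbf a$ with the $w_z$).

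The main obstacle is this last bookkeeping: one must show that once a class of composition $>\mathbf a$ is produced it is never pushed back to composition $\leq\mathbf a$ by a subsequent reflection, and that the ``leftover'' terms $(t_{i+1}-t_i)\sigma_{s_iw'}$ appearing through Proposition~\ref{prop_si_action_on_sigma_i} (which carry a polynomial coefficient and a class of fewer descents) all cancel in the total sum $\sum_{v}v\cdot\sigma_w$, so that $\hats_w$ indeed lies in the constant span of $B_k$. I expect to handle this by a downward induction on the lexicographic order of $(k{+}1)$--part compositions: when a class $\tsvPQi$ of strictly larger composition is hit by further reflections one is, after absorbing the coset action, in the situation of an already--treated (lexicographically larger) composition, whose expansion into $\sigma_{w_{z'}}$'s plus still larger terms is known by induction; the cancellation of the polynomial--coefficient pieces then follows from this triangular shape together with the explicit formulas~\eqref{eq_descents_of_vPQi} and~\eqref{eq_wa_in_sec6}. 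I would finish by checking the resulting coefficients and the triangularity against Example~\ref{example_GVa_n42}.
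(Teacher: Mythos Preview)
Your outline is essentially correct and parallels the paper's argument, but you are making two points harder than necessary.

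First, the polynomial-coefficient leftovers $(t_{i+1}-t_i)\sigma_{s_iw'}$ from Proposition~\ref{prop_si_action_on_sigma_i} are not an obstacle: since $c_z,k_v\in\R$, the statement concerns the expansion in $H^{2k}(\mathcal H_n)$ (equivalently, the constant-coefficient part of the equivariant expansion), where these classes with one fewer descent vanish. No cancellation needs to be arranged.

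Second, and this is the simplification you are missing: after reducing to $\mathbf a=(1^m,n-m)$ the paper simply observes that this composition is the \emph{lexicographic minimum} among all $(m{+}1)$-part compositions of $n$. Every $\sigma_v$ occurring in the degree-$2k$ expansion has $\mathrm{des}(v)=m$, so $\mathbf a(v)$ has $m{+}1$ parts and therefore $\mathbf a(v)\ge\mathbf a$ automatically. The ``bouncing back'' you worry about is thus impossible---there is nowhere below $\mathbf a$ to go. Combined with Proposition~\ref{lemma_wz} (the permutations of composition exactly $\mathbf a$ are precisely the $w_z$), this gives the shape $\hats_w=\sum_z c_z\sigma_{w_z}+\sum_{\mathbf a(v)>\mathbf a}k_v\sigma_v$ at once, without any downward induction.

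The positivity $c_z>0$ is then handled as you sketch: each $z$ receives $+1$ from the unique $v\in\mathscr M_w$ with $vw=w_z$ (via~\eqref{eq_sigma_wz}), and whenever an edge-remaining $s_i$ is applied to some $\sigma_{w_{z'}}$, the extra terms $\tsvPQi$ in $\sigma_{w_{z'}}^{(i)}-\sigma_{w_{z'}}$ have composition strictly $>\mathbf a$ by~\eqref{eq_descents_of_vPQi}; since those permutations $\vPQi$ satisfy $\vPQi\dasharrow s_i\vPQi$, the companion term $-s_i\cdot\tsvPQi$ does too, so no $\sigma_{w_z}$ is ever subtracted.
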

\begin{proof}
Let $\mathbf a$ be a composition and $w = w(\mathbf a)$.
According to Definition~\ref{def:generator}, $\hats_w = \sum_{\bar{v}  \in \mathfrak{S}_w^{\circ}} v \cdot \sigma_w$.
Without loss of generality, we may assume $\mathbf a= (\underbrace{1,\dots,1}_{m},a_{m+1}) = (\underbrace{1,\dots,1}_m,n-m)$.
We note that $\mathbf a$ is the minimum among the compositions of $n$ having $m+1$ parts with respect to the lexicographic order. Accordingly, the element $\hats_w$ is written by a linear combination of classes $\sigma_v$ satisfying $\mathbf{a}(v) \geq \mathbf {a}$. Since we proved in Proposition~\ref{lemma_wz} that whenever $u \in \mathfrak{S}_n$ satisfies $\mathbf{a}(u)= \mathbf a$ then $u = w_z$ for some $z \in V(G(\mathbf a))$, we obtain
	\[
	\hats_{w} =
	\sum_{z \in V(G(\mathbf a))} c_z \sigma_{w_z} + \sum_{\mathbf{a}(v) > \mathbf a} k_v \sigma_{v}.
	\]
For each $z \in V(G(\mathbf a))$, there exists $\bar{v} \in \mathfrak{S}_w^{\circ}$ for $v \in \mathscr M_w$ such that $D(vw) = D(w)$ and $vw = w_z$ by Proposition~\ref{lemma_wz}. Moreover, by~\eqref{eq_sigma_wz}, we obtain $v \sigma_w = \sigma_{vw} = \sigma_{w_z}$. On the other hand, whenever we need to consider $s_i w_z$ with $w_z \to s_i w_z$, we have
\[
s_i \cdot \sigma_{w_z} = (t_{i+1}-t_i) \sigma_{s_iw_z} + \sigma_{w_z} +
(\sigma_{w_z}^{(i)} - \sigma_{w_z}) - s_i \cdot (\sigma_{w_z}^{(i)} - \sigma_{w_z})
\]
by Proposition~\ref{prop_si_action_on_sigma_i}.
Here,  recall that $\sigma_{w_z}^{(i)}$ is a sum of certain classes and the coefficient of the term~$\sigma_{w_z}$ is $1$.
Indeed, the coefficient of the term $\sigma_{w_z}$ in $\sigma_{w_z}^{(i)} - \sigma_{w_z}$ is zero. Moreover, the other terms $\sigma_v$ in $\sigma_{w_z}^{(i)}$ are given by permutations $v$ satisfying $\mathbf{a}(v) > \mathbf a$ because $\mathbf a = (1,\dots,1,n-m)$. Since such permutations~$v$ satisfy $v \dasharrow s_iv$, there does not exist a class $\sigma_u$ with $\mathbf{a}(u) = \mathbf a$ in $s_i \cdot (\sigma_{w_z}^{(i)} - \sigma_{w_z})$.
This proves the positivity of~$c_z$.
\end{proof}

Before providing a proof of Theorem~\ref{thm:erasing conjecture}, we prepare one lemma.
\begin{lemma}\label{lemma_main_thm}
Let $w \in \La$ and $\mathbf a = \mathbf a(w)$ be a composition of $n$ consisting of $k+1$ parts. Then, for each $z \in V(G(\mathbf a))$, the element $\sigma_{w_z}$ is contained in $\sum_{u \in \La} M(u)$.
\end{lemma}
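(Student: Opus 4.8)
The strategy is an induction on the lexicographic order of the composition $\mathbf a(w_z)$, using Proposition~\ref{prop_hat_w_description_using_G} as the engine that expresses $\hats_w$ in terms of the $\sigma_{w_z}$. First I would reduce, exactly as in the proof of Proposition~\ref{lemma_wz} and Proposition~\ref{prop_hat_w_description_using_G}, to the case where $\mathbf a = \mathbf a(w)$ is of the consecutive-erasing form $(\underbrace{1,\dots,1}_m, a_{m+1})$; the general admissible decomposition $(\mathbf a_1,\dots,\mathbf a_\beta)$ only assembles the graph $G(\mathbf a)$ as a product $G(\mathbf a_1)\times\cdots\times G(\mathbf a_\beta)$, and the coset representatives factor accordingly by~\eqref{eq_Mw_product}, so the product structure carries the argument over verbatim. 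With this reduction, the vertex $z = (0,\dots,0)$ of $G(\mathbf a)$ gives $w_z = w = w(\mathbf a)$, and every other vertex $z$ has $w_z = vw$ for some $v \in \mathscr M_w$ with $\mathbf a(vw) = \mathbf a(w)$, by Proposition~\ref{lemma_wz}. In particular $\sigma_{w_z}$ is a $T$-translate of $\sigma_w$ (see~\eqref{eq_sigma_wz}), so $\sigma_{w_z} \in M(w)$ directly.

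Wait — I need to be careful: this last observation already seems to finish the lemma, because $M(w) = \C\mathfrak S_n(\hats_w)$ contains $\mathfrak S_n \cdot \hats_w$, and I want $\sigma_{w_z} \in \sum_{u\in\mathcal G} M(u)$, not merely $\sigma_{w_z}\in M(w)$. Let me restate the real content: I claim $\sigma_v \in \sum_{u\in\mathcal G}M(u)$ for \emph{every} $v\in\mathfrak S_n$, and the lemma is the special case $v=w_z$. The induction is on $\mathbf a(v)$ in lexicographic order, with the base case $\mathbf a(v)$ maximal. For the inductive step, given $v$ with composition $\mathbf a = \mathbf a(v)$, consider $w(\mathbf a)\in\mathcal G$ (Lemma~\ref{lemma_bijection_between_Lk_and_compositions}) and its class $\hats_{w(\mathbf a)}$. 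By Proposition~\ref{prop_hat_w_description_using_G},
\[
\hats_{w(\mathbf a)} = \sum_{z\in V(G(\mathbf a))} c_z\,\sigma_{w_z} + \sum_{\mathbf a(v')>\mathbf a} k_{v'}\,\sigma_{v'},
\]
with all $c_z>0$. By Proposition~\ref{lemma_wz}, the set $\{w_z : z\in V(G(\mathbf a))\}$ is exactly the set of permutations with composition $\mathbf a$, so in particular $v = w_{z_0}$ for some $z_0$. Since each $v'$ appearing in the second sum has $\mathbf a(v')>\mathbf a$, the inductive hypothesis gives $\sigma_{v'}\in\sum_{u\in\mathcal G}M(u)$; and the classes $\{v_z := v\cdot\sigma_{w(\mathbf a)} \text{ for suitable } v\}$...

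\textbf{The main obstacle.} The delicate point is that the coefficients $c_z$ need not all be equal, so I cannot simply say "$\hats_{w(\mathbf a)}$ minus known terms is a multiple of $\sigma_v$." What I actually need is: the span of $\{\sigma_{w_z}: z\in V(G(\mathbf a))\}$ modulo $\sum_{\mathbf a(v')>\mathbf a}\C\sigma_{v'}$ is recovered from $\{u\cdot\hats_{w(\mathbf a)} : u\in\mathfrak S_n\}$. Here I would use that $\mathfrak S_n$ acts on the index set: applying $u\in\mathfrak S_n$ to $\hats_{w(\mathbf a)}$ permutes (with the obvious $t$-substitution) the supports, and Proposition~\ref{prop:s_i action on S_n(a)} shows that whenever a simple reflection $s_i$ fixes the descent set, $s_i\cdot\sigma_{w_z} = \sigma_{s_i w_z}$ with $s_iw_z$ again of composition $\mathbf a$ (these are precisely the edges of $G(\mathbf a)$). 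Thus the span of the $\mathfrak S_n$-orbit of $\hats_{w(\mathbf a)}$, taken modulo strictly-larger-composition classes, contains $\sigma_{w_z}$ for \emph{all} $z$: traverse $G(\mathbf a)$ edge by edge from $(0,\dots,0)$, at each step applying the appropriate $s_i$ and invoking Proposition~\ref{prop_si_action_on_sigma_i} together with the inductive hypothesis to absorb the correction terms (all of which have composition $>\mathbf a$ or are supported on permutations with composition $>\mathbf a$, since $\sigma^{(i)}_{w_z}-\sigma_{w_z}$ involves only such permutations when $\mathbf a$ begins with a maximal run of $1$'s as in the consecutive-erasing case). Since $G(\mathbf a)$ is connected, this produces $\sigma_{w_z}$ for every $z$, in particular $\sigma_v$, completing the induction. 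The only real bookkeeping is checking that the correction classes $\tau$ arising from Proposition~\ref{prop_si_action_on_sigma_i} indeed have composition strictly larger than $\mathbf a$; this follows because in the consecutive-erasing normal form $\widetilde P$ lies below $i$ and $\widetilde Q$ above $i+1$, forcing the modified permutations $w^{(i)}_{P,Q}$ to create a descent later, hence a lexicographically larger composition.
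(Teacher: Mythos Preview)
Your inductive framework (lexicographic induction on $\mathbf a$, with the base case the maximal composition and Proposition~\ref{prop_hat_w_description_using_G} feeding the step) is exactly the paper's. The gap is in how you extract each individual $\sigma_{w_z}$ from the single relation
\[
\sum_{z\in V(G(\mathbf a))} c_z\,\sigma_{w_z}\;\in\;M\pmod{\text{higher compositions}}.
\]
You claim the span of the $\mathfrak S_n$-orbit of $\hats_{w(\mathbf a)}$, reduced modulo higher compositions, already contains every $\sigma_{w_z}$. But in the basic consecutive-erasing case $\mathbf a=(1,\dots,1,a_{m+1})$ you reduced to, one has $\widehat{\mathbf a}=(n)$ and $\mathfrak S_{w(\mathbf a)}=\mathfrak S_n$, so $\hats_{w(\mathbf a)}$ is $\mathfrak S_n$-invariant and its orbit is a \emph{single} element; you get exactly one relation among the many $\sigma_{w_z}$, not enough to solve for them. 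Relatedly, your plan to ``traverse $G(\mathbf a)$ applying $s_i$ and invoking Proposition~\ref{prop_si_action_on_sigma_i}'' misfires: the edges of $G(\mathbf a)$ correspond to the descent-preserving case of Proposition~\ref{prop:s_i action on S_n(a)} (so $s_i\cdot\sigma_{w_{z'}}=\sigma_{w_z}$, which is useless until you already know one of them lies in $M$), not to Proposition~\ref{prop_si_action_on_sigma_i}, whose hypothesis $w^{-1}(i{+}1)+1=w^{-1}(i)$ fails for $w_{z'}$ along these edges by Lemma~\ref{lemma_compare_wz_and_wz'}.

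What the paper does, and what is missing from your plan, is to manufacture for each $z$ (with a suitable neighbour $z'$ closer to the origin) an \emph{auxiliary} permutation $u_z$ with $\mathbf a(u_z)>\mathbf a$ in which $i{+}1$ and $i$ \emph{are} adjacent: one slides the descent $d_m$ of $w_{z'}$ rightward to the position just before $i$. By the inductive hypothesis $\sigma_{u_z}\in M$, hence $s_i\cdot\sigma_{u_z}\in M$, and Proposition~\ref{prop_si_action_on_sigma_i} now legitimately applies to $u_z$, yielding (in singular cohomology) $\sigma_{w_{z}}-\sigma_{w_{z'}}\in M$ after discarding higher-composition terms. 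These $|V(G(\mathbf a))|-1$ difference relations, together with the single relation $\sum_z c_z\sigma_{w_z}\in M$ with $c_z>0$, form a nonsingular linear system, forcing each $\sigma_{w_z}\in M$. The construction of $u_z$ and the verification that $\mathbf a(u_z)>\mathbf a$ (and a small secondary argument when $z_1=z_2$, using Corollary~\ref{cor_sigma_invariant_si_singular}) are the missing ingredients.
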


\begin{proof}
	Recall that we define the total order on the set of compositions lexicographically.
	We prove the statement using the induction with respect to this total order.
	For simplicity, we denote by $M$ the  sum $\sum_{u \in \La} M(u)$.
	For a given $k$, the maximal element $\mathbf a$ among the compositions having $k+1$ parts is
	\[
	\mathbf a = (n-k,1,\dots,1).
	\]
	In this case, the graph $G(\mathbf a)$ has one vertex $z_0 = (0,\dots,0)$. Therefore, by Proposition~\ref{prop_hat_w_description_using_G}, we obtain
	\[
	\hats_{w}  = c \sigma_{w} = c \sigma_{w_{z_0}}
	\]
	for some positive integer $c$. This proves the claim for this case.
	
	Now we suppose that the claim is true for any composition which is greater than $\mathbf a$.
	Recall the expression of $\hats_w \in M(w) \subset M$ for $w = w(\mathbf a)$ in Proposition~\ref{prop_hat_w_description_using_G}:
	\[
	\hats_{w} =
	\sum_{z \in V(G(\mathbf a))} c_z \sigma_{w_z} + \sum_{\mathbf{a}(v) > \mathbf a} k_v \sigma_{v}, \quad c_z, k_v \in \R \text{ and }c_z > 0.
	\]
	By the induction argument, we have $\sigma_v$ satisfying $\mathbf a(v) > \mathbf a$ is in the sum $M$, and hence
\[
\sum_{z \in V(G(\mathbf a))} c_z \sigma_{w_z} \in M.
\]
	To prove the lemma, it is enough to show that  for all $z \in V(G(\mathbf a)) \setminus \{\mathbf 0\}$, there exists a linear combination
	\begin{equation}\label{eq_Lem6.2_linear_combination}
	\sigma_{w_{z'}} - \sigma_{w_z} \in M
	\end{equation}
	for some $z' \in V(G(\mathbf a))$ such that the distance between $z'$ and the origin is shorter than that of~$z$.
	Indeed, since the coefficients $c_z$ are all positive the existence of linear combinations of the form~\eqref{eq_Lem6.2_linear_combination} implies
	\[
	\text{span}_{\C} \left(\left\{ \sum_{z \in V(G(\mathbf a))} c_z \sigma_{w_z} \right\} \cup \left\{\sigma_{w_{z'}} - \sigma_{w_z} \mid z \in V(G(\mathbf a))\setminus \{\mathbf 0\} \right\}  \right)
	= \text{span}_{\C}\left\{ \sigma_{w_z} \mid z \in V(G(\mathbf a)) \right\} \subset M.
	\]

	We first provide the linear combination~\eqref{eq_Lem6.2_linear_combination} when the erasing occurs consecutively, i.e., $\mathbf a$ has the form in~\eqref{eq_composition_a_erasing_conse}.
	We consider elements $z \in V(G(\mathbf a))$ such that $z' \colonequals z - (1,0,\dots,0)$ is also contained in $V(G(\mathbf a))$, that is, $z \in \{ (z_1,\dots,z_m) \in V(G(\mathbf a)) \mid z_1 \gneq z_2\}$. Then they are connected by an edge, and we denote the label by $i$  on the edge. Because of the construction of $w_z$, we have
	\[
	w_{z} = s_{i} w_{z'}
	\]
	and moreover, $w_{z'}^{-1}(i+1) < w_{z'}^{-1}(i)$. Note that such permutations $w_z$ and $w_z'$ have different values only between descents $d_{m-1}$ and $d_{m+1}$ by Lemma~\ref{lemma_compare_wz_and_wz'}. Furthermore, $w_z(d_{m}) = i+1$. We denote $j = w_z^{-1}(i)$. Since the descents sets of $w_{z'}$ and $w_z$ are the same, $d_{m} + 1 < j$.   Now we define a permutation $u_z$ by
	\[
	u_z = w_{z'} s_{d_{m}} s_{d_{m}+1} \dots s_{j-2}.
	\]
	In other words, we move the descent $d_{m}$ to the right until its location is just before $i$, i.e., two numbers $i+1$ and $i$ are consecutive in the one-line notation of $u_z$. Then, by applying Proposition~\ref{prop_si_action_on_sigma_i}, we obtain
	\[
	s_i \sigma_{u_z} = \sigma_{w_z} - \sigma_{w_{z'}} + (t_{i+1} - t_i) \sigma_{s_i u_z} + \sum_{\mathbf a(v) > \mathbf a} q_v \sigma_v.
	\]
	We notice that  $\mathbf{a}(u_z) > \mathbf a$. Accordingly, by the induction argument, both $\sigma_{u_z}$ and $\sum_{\mathbf a(v) > \mathbf a} q_v \sigma_v$ are in the  sum $M$. Because $M$ is invariant under the action of $\mathfrak{S}_n$, we prove that the linear combination $\sigma_{w_{z'}} - \sigma_{w_z}$ is contained in $M$.
	
	Now we consider elements $z = (z_1,\dots,z_m) \in V(G(\mathbf a))$ such that $z_1 = z_2 \gneq z_3$.
	For this case, we note that two elements $z' \colonequals z - (1,1,0,\dots,0)$ and $\bar{z} \colonequals z - (0,1,0,\dots,0)$ are in $V(G(\mathbf a))$. By the construction of the graph $G(\mathbf a)$, elements $z$ and $\bar{z}$ are connected by an edge, and we denote by~$i$ the label on the edge, that is,
	$n-m+(2-1)-z_2 = i$.
	Since $z_1 = z_2$ by the assumption, we have $n - m + (1-1)-z_1 = i-1$, that is, the label on the edge connecting $z'$ and $\bar{z}$ is $i-1$. See the left hand side of Figure~\ref{figure_wz_proof_of_Lemma6.6}. On the other hand, $z_1 = z_2$ implies that $z' - (0,1,\dots,0)$ is not in $G(\mathbf a)$, i.e., $D(s_i z') \neq D(z')$. Moreover, we have $D(z') = D(s_{i-1}z') = D(s_i s_{i-1}z')$. Therefore, the permutations $w_z, w_{z'}, w_{\bar{z}}$ have the form in Figure~\ref{figure_wz_proof_of_Lemma6.6}.
	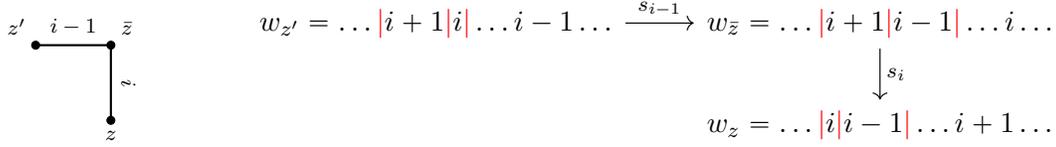
\begin{figure}
		\[
		\begin{array}{cc}
		\raisebox{-1cm}{
			\begin{tikzpicture}[every node/.style={font=\scriptsize}]
			\draw[fill=black] (0,0) circle (1.5pt) node[above left] {$z'$};
			\draw[fill=black] (1,0) circle (1.5pt) node[above right] {$\bar{z}$};
			\draw[fill=black] (1,-1) circle (1.5pt) node[below] {$z$};
			\draw[thick] (0,0)--(1,0) node[midway, above] {$i-1$};
			\draw[thick] (1,0)--(1,-1) node[midway, above, sloped] {$i$};	
			\end{tikzpicture}}
		& \hspace{1cm}
		\begin{tikzcd}
		w_{z'}=\dots \textcolor{red}{|}  i+1 \textcolor{red}{|}  i \textcolor{red}{|}  \dots i-1 \dots \arrow[r, "s_{i-1}"]&
		w_{\bar{z}} = \dots \textcolor{red}{|}  i+1 \textcolor{red}{|}  i-1 \textcolor{red}{|}  \dots i \dots \arrow[d, "s_i"] \\
		& w_{z} = \dots \textcolor{red}{|}  i \textcolor{red}{|}  i-1 \textcolor{red}{|}  \dots i+1 \dots
		\end{tikzcd}
		\end{array}
		\]
		\caption{Some permutations $w_z$ when $z_1 = z_2 \gneq z_3$.}
		\label{figure_wz_proof_of_Lemma6.6}
	\end{figure}

	By the previous observation, we have a linear combination
	\[
	\sigma_{w_{z'}} - \sigma_{w_{\bar z}}  \in M.
	\]
	Then, by applying $s_i$ on the relation, we obtain the desired linear combination
	\[
	\sigma_{w_{z'}} - \sigma_{w_z} \in M.
	\]
	Here, $s_i \cdot \sigma_{w_{z'}} = \sigma_{w_{z'}}$ since $\{w_{z'}^{-1}(i+1)-1, w_{z'}^{-1}(i+1), w_{z'}^{-1}(i)\} \subset D(w_{z'}) \cup \{0\}$ and by Corollary~\ref{cor_sigma_invariant_si_singular}.
	Continuing this process, we obtain the desired linear combinations when the erasing occurs consecutively.
	
	Now we suppose that the graph $G(\mathbf a)$ is the product $G(\mathbf a_1) \times \cdots \times G(\mathbf a_{\beta})$ of the graphs $G(\mathbf a_i)$, where $(\mathbf a_1,\dots,\mathbf a_{\beta})$ is an admissible decomposition of $\mathbf a$.
	We need to show that for any vertex $v = (v_1,\dots,v_{\beta}) \in V(G(\mathbf a))\setminus \{\mathbf 0\}$, there exists $v' = (v_1',\dots,v_{\beta}') \in V(G(\mathbf a))$ such that the distance between $v'$ and the origin is shorter than that of $v$ and
	\[
	\sigma_{w_{v'}} - \sigma_{w_{v}}   \in M.
	\]
	Among indices $1,\dots,\beta$, choose the minimal index $x$ and the maximal index $y$ such that $v_{x} \neq 0$ and $v_y \neq 0$.  Consider the vertex ${v_x}$ in the graph $G(\mathbf a_x)$. Then by the previous argument, we have a vertex $v_{x}'$ in the graph $G(\mathbf a_x)$ such that
	\[
	\sigma_{w_{(0,\dots,0,v_x',0,\dots,0)}}  - \sigma_{w_{(0,\dots,0,v_x,0,\dots,0)}} \in M.
	\]
	We apply the permutation $u$ on this relation obtained from the path connecting the vertex $(0,\dots,0,v_x,0,\dots,0)$ and $v = (0,\dots,0,v_x,v_{x+1},\dots,v_y,0,\dots,0)$:
	\[
	\begin{split}
	u(\sigma_{w_{(0,\dots,0,v_x',0,\dots,0)}} - \sigma_{w_{(0,\dots,0,v_x,0,\dots,0)}})
	&= \sigma_{u w_{(0,\dots,0,v_x',0,\dots,0)}} - \sigma_{u  w_{(0,\dots,0,v_x,0,\dots,0)}}\\
	&= \sigma_{w_{(0,\dots,0,v_x',v_{x+1},\dots,v_y,0,\dots,0)}} - \sigma_{w_{v}} \in M.
	\end{split}
	\]
	Since $(0,\dots,0,v_x',v_{x+1},\dots,v_y,0,\dots,0) \in G(\mathbf a)$, we obtain a desired linear relation. This completes the proof.
\end{proof}

\begin{example}\label{example_GVa_n42_2}
	We consider compositions $(1,1,2), (1,2,1)$, and $(2,1,1)$ in Example~\ref{example_GVa_n42}.
	We have observed that

	\begin{itemize}
		\item $\mathbf a = (1,1,2)$:
		\[
		\hats_{4312} =  2\sigma_{4312} + 4\sigma_{4213} + 6\sigma_{3214} + \underbrace{2(\sigma_{4231} - \sigma_{4132}) + 4\sigma_{3241} - 2 \sigma_{3142} - 2\sigma_{2143}}_{\mathbf a(v) = (1,2,1) > (1,1,2)} + \underbrace{2(\sigma_{3421} - \sigma_{2431})}_{\mathbf a(v) = (2,1,1) > (1,1,2)}.
		\]
		\item $\mathbf a = (1,2,1)$:  $\hats_{4231} = \sigma_{4231} + 2 \sigma_{3241} + \underbrace{(\sigma_{3421} - \sigma_{2431})}_{\mathbf a(v) = (2,1,1) > (1,2,1)}$.
		\item $\mathbf a = (2,1,1)$: $\hats_{3421} = 2 \sigma_{3421}$.
	\end{itemize}
	The maximal element is $\mathbf a = (2,1,1)$, and we obtain the element $\sigma_{3421}$ in $M \colonequals \sum_{u \in \La} M(u)$. For $\mathbf a = (1,2,1)$, consider
	\[
	s_3 \sigma_{2431} = \sigma_{2431} + \sigma_{4231} - \sigma_{3241} \in M.
	\]
	Here, since  $\sigma_{2431} = s_2 \cdot \sigma_{3421}$, both $\sigma_{2431} $ and  $s_3 \sigma_{2431}$ are contained in $M$.
	Therefore, we have $ \sigma_{4231} - \sigma_{3241} \in M$. Since the matrix $\begin{bmatrix} 1 & 2 \\ 1 & -1 \end{bmatrix}$ is nonsingular, both elements $\sigma_{4231}$ and $\sigma_{3241}$ are contained in $M$.
	Finally, consider $\mathbf a = (1,1,2)$. Since $s_1 \sigma_{4231} = \sigma_{4132}$ and $\sigma_{4231} \in M$, we have $\sigma_{4132} \in M$. Accordingly,
	\[
	s_2 \sigma_{4132} = \sigma_{4132} + \sigma_{4312} - \sigma_{4213} \in M
	\]
	and this implies $\sigma_{4312} - \sigma_{4213} \in M$. Moreover,
	\[
	s_3 (\sigma_{4312} - \sigma_{4213} ) = \sigma_{4312} - \sigma_{3214} \in M.
	\]
	Considering the following matrix
	\[
	\begin{array}{c}
	\begin{array}{ccc}
	\sigma_{4312} & \sigma_{4213} & \sigma_{3214} \\
	\end{array}\\
	\begin{bmatrix}
	1 & 2 & 3 \\
	1 & -1 & 0 \\
	1 & 0 & -1
	\end{bmatrix}
	\end{array}
	\]
	which is nonsingular, we obtain   $\sigma_{4312}, \sigma_{4213}, \sigma_{3214} \in M$.
\end{example}

\begin{example}\label{example_Hn_product}
	Suppose that $\mathbf a = (1,2,1,3)$. To draw the graph $G(\mathbf a)$, we need to consider $\mathbf a_1 = (1,2)$ and $\mathbf a_2 = (1,3)$. Accordingly, the graph $G(\mathbf a)$ and the elements $w_z$ are given as follows.
	\[
	\begin{array}{cc}
	\raisebox{-1.5cm}{
		\begin{tikzpicture}[every node/.style={font=\scriptsize}]
		\draw[->, color=black!60!white] (0,0)--(1.3,0) node [right] {$z_1$};
		\draw[->, color=black!60!white] (0,0)--(0,-2.3) node [below] {$z_2$};
		
		\draw[fill=black] (0,0) circle (1.5pt)
		(1,0) circle (1.5pt)
		(0,-1) circle (1.5pt)
		(0,-2) circle (1.5pt)
		(1,-2) circle (1.5pt)
		(1,-1) circle (1.5pt);
		
		\foreach \x in {0,-1,-2}{
			\draw[thick] (0,\x)--(1,\x) node[above, midway] {$6$};
		}
		\draw[thick] (0,-1)--(0,0) node[midway, above, sloped] {$3$};
		\draw[thick] (1,-1)--(1,0) node[midway,below, sloped] {$3$};
		
		\draw[thick] (0,-2)--(0,-1) node[midway, above, sloped] {$2$};
		\draw[thick] (1,-2)--(1,-1) node[midway, below, sloped] {$2$};
		
		\end{tikzpicture}}
	&
	\begin{tikzcd}
	7 \textcolor{red}{|} 56  \textcolor{red}{|} 4  \textcolor{red}{|} 123 \arrow[r, "s_6"] \arrow[d, "s_3"] &
	6  \textcolor{red}{|} 57  \textcolor{red}{|} 4  \textcolor{red}{|} 123 \arrow[d, "s_3"] \\
	7  \textcolor{red}{|} 56  \textcolor{red}{|} 3  \textcolor{red}{|} 124  \arrow[r, "s_6"] \arrow[d, "s_2"]& 6  \textcolor{red}{|} 57  \textcolor{red}{|} 3  \textcolor{red}{|} 124 \arrow[d, "s_2"] \\
	7  \textcolor{red}{|} 56  \textcolor{red}{|} 2  \textcolor{red}{|} 134  \arrow[r, "s_6"] &
	6  \textcolor{red}{|} 57  \textcolor{red}{|} 2  \textcolor{red}{|} 134
	\end{tikzcd}
	\end{array}
	\]
	We first find the following linear relations obtained from $G(\mathbf a_1) \times \{\mathbf 0\}$ and $\{\mathbf 0\} \times G(\mathbf a_2)$.
	\[
	\begin{split}
	z = (1,0): & \quad s_6 \sigma_{576 4123} = \sigma_{576 4123} + \sigma_{7564123} - \sigma_{6574123}, \\
	z = (0,1): & \quad s_3 \sigma_{756 1243} = \sigma_{7561243} + \sigma_{756 4123} + \sigma_{756 1423} + \sigma_{756 2413}  - (\sigma_{7563124} + \sigma_{ 756 1324} + \sigma_{756 2314}).
	\end{split}
	\]
	For the vertex $(1,1)$, we need to consider the $s_3$-action on the linear relation related to the vertex $(1,0)$:
	\[
	s_3 (\sigma_{7564123} - \sigma_{6574123})
	= \sigma_{7563124} - \sigma_{6573124}
	\]
	which is a desired relation related to the vertex $w_z$ with $z = (1,1)$.
\end{example}

\begin{proof}[Proof of Theorem~\ref{thm:erasing conjecture}]
We denote the sum $\sum_{u \in \La} M(u)$ of the modules by $M$. Then $M$ is
contained in the whole vector space $H^{2k}(\mathcal H_n)$.
We first claim that
\begin{equation}\label{eq_M_is_H2k}
M =  H^{2k}(\mathcal H_n).
\end{equation}
To prove the claim, it is enough to show that $M$ contains $H^{2k}(\mathcal
H_n)$. We will show that for any composition $\mathbf a$ of $n$, the class
$\sigma_v$
satisfying $\mathbf a(v) = \mathbf a$ are all contained in $\sum_{u \in
\La}M(u)$, which proves the claim.
Let $\mathbf a $ be a composition and $w = w(\mathbf a)$. For any permutation $v$ satisfying $\mathbf a(v) = \mathbf a$, there exist $z \in V(G(\mathbf a))$  such that $v = w_z$ by Proposition~\ref{lemma_wz}.
On the other hand,  we have already shown in Lemma~\ref{lemma_main_thm} that $\sigma_{w_z}$ is also in $\sum_{u \in \Gk} M(u)$ for any $z \in V(G(\mathbf a))$.
Therefore, any element $\sigma_v$ satisfying $\mathbf a(v) = \mathbf a$ is
contained in $\sum_{u \in \La} M(u)$. This proves the
claim~\eqref{eq_M_is_H2k}.

For $u \in \La$,  the stabilizer subgroup of $\hats_u \in
H^{\ast}_T(\mathcal H_n)$ is the same as $\mathfrak{S}_u$   by
Lemma~\ref{lem:generator}(3). Therefore the stabilizer subgroup $\Stab(\hats_u)$
of $\hats_u \in H^{\ast}(\mathcal H_n)$ contains $\mathfrak{S}_u$.
Because of the construction of $M(u)$ and the orbit-stabilizer theorem, we obtain
\begin{equation}\label{eq_dim_Mu}
\dim M(u) \leq \frac{|\mathfrak{S}_n|}{|\Stab(\hats_u)|}
\leq \frac{|\mathfrak{S}_n|}{|\mathfrak{S}_u|}.
\end{equation}
Accordingly, we obtain
\begin{equation}\label{eq_H2k_and_M_ineq}
\dim H^{2k}(\mathcal H_n) \leq \sum_{u \in \La} \dim M(u)
\leq \sum_{u \in \La} \frac{|\mathfrak{S}_n|}{|\mathfrak{S}_u|}
= \dim H^{2k}(\mathcal H_n).
\end{equation}
Here, the first inequality derives from~\eqref{eq_M_is_H2k}, and the second
inequality derives from~\eqref{eq_dim_Mu}. The last equality is provided by
Proposition~\ref{prop:young subgroup type}.
From the inequalities~\eqref{eq_H2k_and_M_ineq}, we obtain
\[
\Stab(\hats_u) = \mathfrak{S}_u \quad \text{ and } \quad
\bigoplus_{u \in \La} M(u) = H^{2k}(\mathcal H_n),
\]
which proves the theorem.
\end{proof}

\subsection*{Acknowledgements}

We would like to thank Michel Brion for his kind explanation of the equivariant multiplicity, Megumi Harada and Martha Precup for their interest and comments, especially on the incomplete description of the closure of  Białynicki-Birula cells in the first version of this paper, and Patrick Brosnan and Timothy Chow for valuable discussions and suggestions leading us to the erasing conjecture.
We are grateful to the IBS Center for Geometry and Physics for their hospitality and support during our visit.
We   sincerely appreciate the referee's  careful reading and helpful suggestions, significantly improving the paper.


 \providecommand{\bysame}{\leavevmode\hbox to3em{\hrulefill}\thinspace}
 \providecommand{\MR}{\relax\ifhmode\unskip\space\fi MR }
 \providecommand{\MRhref}[2]{%
   \href{http://www.ams.org/mathscinet-getitem?mr=#1}{#2}
 }
 \providecommand{\href}[2]{#2}

\end{document}